\documentclass[final,onefignum,onetabnum]{siamonline250211}

\RequirePackage{amsfonts,amssymb}
\RequirePackage[authoryear]{natbib}
\RequirePackage{graphicx}
\graphicspath{{code_cp/}}

\hypersetup{
  colorlinks=true,
  citecolor=blue,
  urlcolor=blue,
  hypertexnames=false,
  pdftitle={Generalization Error Estimation for Primal--Dual Algorithms in Non-Smooth Regression},
  pdfauthor={Kai Tan and Pierre C. Bellec}
}
\usepackage{derivative}
\usepackage{cuted} 
\usepackage{enumitem}
\usepackage{float}
\usepackage{multirow}
\usepackage{array}

\makeatletter
\@addtoreset{theorem}{section}

\makeatother
\newsiamremark{assumption}{Assumption}
\theoremstyle{plain}
\theoremheaderfont{\normalfont\itshape}
\theorembodyfont{\normalfont}
\theoremseparator{.}
\theoremsymbol{}

\newtheorem{remark}{Remark}[section]
\usepackage{bm}

\newcommand{\boldzero}{\mathbf{0}}
\usepackage{mathtools}
\DeclarePairedDelimiter\norm{\lVert}{\rVert}
\DeclarePairedDelimiter\fnorm{\lVert}{\rVert_{\rm{F}}}
\DeclarePairedDelimiter\opnorm{\lVert}{\rVert_{\rm{op}}}

\def\op{\mathrm{op}}
\def\F{\mathrm{F}}

\def\cbA{\check{\mathbold{A}}}
\def\cbK{\check{\mathbold{K}}}

\def\cbF{\check{\mathbold{F}}}

\def\cbTheta{\check{\mathbold{\Theta}}}

\def\bzero{\mathbf{0}}

\def\bUpsilon{\mathbold{\Upsilon}}

\def\defas{\stackrel{\text{\tiny def}}{=}}

\def\var{{\rm var}}

\newcommand{\limp}{{\smash{\xrightarrow{~\mathrm{p}~}}}}

\def\argmin{\mathop{\rm arg\, min}}
\DeclareMathOperator{\prox}{prox}
\DeclareMathOperator{\soft}{soft}

\DeclareMathOperator{\tr}{Tr}
\DeclareMathOperator{\trace}{Tr}

\let\vec\relax
\DeclareMathOperator{\vec}{\mathbf{vec}}
\DeclareMathOperator{\diag}{\mathbf{diag}}

\usepackage{xspace} 
\def\iid{i.i.d.\@\xspace}

\def\I{{\mathbb{I}}}
\def\R{{\mathbb{R}}}
\def\E{{\mathbb{E}}}

\def\P{{\mathbb{P}}}

\def\mathbold{\boldsymbol} 
\def\cal{\mathcal} 

\def\bvartheta{\mathbold{\vartheta}}

\def\ba{\mathbold{a}}

\def\bA{\mathbold{A}}

\def\bb{\mathbold{b}}
\def\hbb{{\widehat{\bb}}}\def\tbb{{\widetilde{\bb}}}
\def\bB{\mathbold{B}}

\def\bfd{\mathbold{d}}

\def\bD{\mathbold{D}}\def\calD{{\cal D}}

\def\bF{\mathbold{F}}

\def\bG{\mathbold{G}}

\def\bh{\mathbold{h}}
\def\tbh{{\widetilde{\bh}}}
\def\bH{\mathbold{H}}
\def\tbH{{\tilde{\bH}}}

\def\bI{\mathbold{I}}

\def\bJ{\mathbold{J}}

\def\bK{\mathbold{K}}\def\hbK{{\widehat{\bK}}}

\def\bL{\mathbold{L}}

\def\bM{\mathbold{M}}
\def\calM{{\cal M}}

\def\bN{\mathbold{N}}

\def\bP{\mathbold{P}}
\def\calP{{\cal P}}

\def\bq{\mathbold{q}}

\def\bQ{\mathbold{Q}}
\def\calQ{{\cal Q}}

\def\br{\mathbold{r}}

\def\bR{\mathbold{R}}
\def\calR{{\cal R}}

\def\bS{\mathbold{S}}

\def\be{\mathbold{e}}
\def\beps{\mathbold{\epsilon}}

\def\bu{\mathbold{u}}
\def\tbu{{\widetilde{\bu}}}

\def\calU{{\cal U}}

\def\bv{\mathbold{v}}
\def\tbv{{\widetilde{\bv}}}

\def\bw{\mathbold{w}}

\def\bW{\mathbold{W}}\def\hbW{{\widehat{\bW}}}

\def\bx{\mathbold{x}}

\def\bX{\mathbold{X}}\def\tbX{{\tilde{\bX}}}

\def\by{\mathbold{y}}

\def\bz{\mathbold{z}}

\def\bZ{\mathbold{Z}}

\def\bDelta{\mathbold{\Delta}}

\def\ep{\varepsilon}
\def\bep{ {\mathbold{\ep} }}

\def\bzeta{\mathbold{\zeta}}

\def\bfeta{\mathbold{\eta}}

\def\tbfeta{{\widetilde{\bfeta}}}

\def\bTheta{\mathbold{\Theta}}

\def\bxi{\mathbold{\xi}}

\def\tbxi{{\widetilde{\bxi}}}

\def\bSigma{\mathbold{\Sigma}}

\def\bphi{\mathbold{\phi}}

\def\tbphi{{\widetilde{\bphi}}}

\def\bPhi{\mathbold{\Phi}}
\def\tbPhi{{\widetilde{\bPhi}}}

\def\bpsi{\mathbold{\psi}}

\def\tbpsi{{\widetilde{\bpsi}}}

\def\bPsi{\mathbold{\Psi}}

\newcommand\tPhi{\widetilde{\Phi}}
\newcommand\tPsi{\widetilde{\Psi}}

\DeclareMathOperator{\clip}{clip}
\DeclareMathOperator{\poly}{poly}

\begin{document}

\title{
	Generalization Error Estimation for Primal--Dual Algorithms in Non-Smooth Regression
}
\author{Kai Tan\thanks{Department of Statistics, Stanford University, Stanford, USA (\texttt{kaitan9@stanford.edu}).}
\and Pierre C. Bellec\thanks{Department of Statistics, Rutgers University, Piscataway, USA (\texttt{pierre.bellec@rutgers.edu}).}}
\headers{Generalization Error Estimation}{Tan and Bellec}
\maketitle

\begin{abstract}
This paper studies trajectory-wise estimation of generalization error for
primal--dual algorithms in non-smooth regression. Motivating examples include
\(\ell_1\)-penalized least absolute deviations regression and square-root Lasso regression, where the data-fitting loss is non-differentiable and existing risk
estimators for gradient-type optimization paths do not apply directly. We
develop a general recursive framework that includes the Chambolle--Pock
algorithm and related primal--dual splitting methods. We estimate risk by
correcting each in-sample fitted value with a weighted combination of
past dual iterates. The ideal weights are Stein derivative contractions and
depend on the design covariance. We construct replacement weights from
observable derivative contractions of the fitted-signal trajectory, yielding a
covariance-free, data-driven correction. For high-dimensional
Gaussian designs and fixed finite iteration horizon, we prove finite-sample
guarantees for both estimators. For square-root ridge, we further establish a
matched-Gaussian universality result beyond Gaussian designs. Numerical
experiments show that the proposed estimators accurately track the
out-of-sample risk along finite optimization paths.
\end{abstract}

\begin{keywords}
Generalization error, risk estimation, primal--dual algorithms, non-smooth
regression, Stein's formula
\end{keywords}

\begin{MSCcodes}
62J05, 62J07, 90C25
\end{MSCcodes}


\section{Introduction}

This paper considers regression problems with non-smooth data-fitting losses,
possibly together with non-smooth regularization. Examples include least
absolute deviations regression with an \(\ell_1\) penalty
\citep{wang2007robust} and square-root Lasso regression
\citep{belloni2011square,sun2012scaled}. In least absolute deviations
regression, the usual squared-error loss is replaced by the absolute-error loss,
which reduces sensitivity to heavy-tailed noise and outliers. In square-root
Lasso, a square-root loss is combined with an \(\ell_1\)-penalty to yield a
scale-adaptive high-dimensional sparse estimator. These examples are useful because they depart from smooth
least-squares modeling, and this also changes the optimization
algorithm. Standard differentiable-gradient updates are not directly applicable when the data-fitting loss itself is
non-smooth. Primal--dual splitting methods are designed for such composite
non-smooth problems
\citep{condat2013primal,parikh2014proximal},
and the Chambolle--Pock algorithm \citep{chambolle2011first,chambolle2016introduction} is a prominent
example. We study a general recursive class of primal--dual algorithms that
contains Chambolle--Pock as a special case, and our goal is to estimate the
generalization error of each iterate in a finite trajectory generated by this
recursion.

Prediction risk is often of interest along the entire iteration path, not only
at the final algorithmic output, because prediction performance can improve
during the early iterations and then deteriorate. Reliable risk estimates along
the path can therefore support early stopping and other data-driven tuning
decisions without rerunning the algorithm many times
\citep{raskutti2014early}. Moreover, the exact minimizer is often unavailable in
closed form, and in non-convex problems an algorithm may converge only to a
stationary point. In such settings, the statistically relevant object is the
performance of the iterates actually produced by the algorithm.

Classical methods estimate generalization error in many regression settings, but
they are not designed to provide risk estimates along an iterative algorithmic
path. These methods include cross-validation and sample splitting, generalized
cross-validation for ridge-type estimators \citep{wahba1985comparison},
leave-one-out and approximate leave-one-out methods
\citep{rad2018scalable,wang2018approximate,rad2020error,auddy2024approximate},
and Stein's unbiased risk estimation and related
degrees-of-freedom or derivative-based corrections for penalized regression
\citep{stein1981estimation,zou2007,tibshirani2012,vaiter2012degrees,dossal2013degrees,bellec2020out,bellec2021derivatives}.
These methods mainly target minimizers of the optimization problem, and do not
by themselves provide risk estimators for the iterates of coupled primal--dual
recursions.

Recent work has focused on estimating out-of-sample performance along
iterative optimization paths. \citet{luo2023iterative} proposed iterative
approximate cross-validation for approximating leave-one-out risk along
first-order optimization trajectories. Their approximation is built from
Taylor expansions of a smooth objective, and therefore does not apply directly when the
data-fitting loss itself is non-smooth, as in LAD or square-root-loss
regression. More directly related to our goal are
\cite{bellec2024uncertainty}, which treats full-batch GD and proximal GD for
least-squares problems, and \cite{tan2024estimating}, which treats SGD and
proximal SGD for robust regression with heavy-tailed noise. Both papers study
gradient-type algorithms
whose state is a primal iterate, possibly together with its stochastic-gradient
history. Our paper differs from these works not only in the optimization
algorithm but also in the structure of the correction. For gradient-type primal
algorithms, the correction is built from the primal trajectory: the adjusted
fitted value is obtained by subtracting a weighted combination of past primal
iterates, or of their fitted values. In the primal--dual setting studied here,
our proposed correction to the fitted value is a weighted combination of past dual
iterates. The covariance-dependent estimator uses weights collected in a matrix
\(\bW\), while the covariance-free estimator constructs replacement weights
through an empirical triangular system involving two additional matrices
\(\cbK\) and \(\cbA\). We now preview these two estimators.

Suppose the primal--dual algorithm is run for a fixed finite horizon \(T\),
producing primal iterates \(\hbb^t\) and dual iterates \(\bu^t\) from the data
\((\bX,\by)\).
Here \(\hbb^t\) is the regression-vector iterate and \(\bu^t\) is the
associated dual iterate; the formal recursion is defined in
Section~\ref{sec:chambolle-pock-review}. For a test loss \(\ell\), our target is
the conditional prediction risk
\[
\calR_t
=
\E\!\left[
\ell(y_{\rm new},\bx_{\rm new}^\top\hbb^t)\mid(\bX,\by)
\right],
\]
where \((\bx_{\rm new},y_{\rm new})\) is an independent test observation. We now
preview the two risk estimators proposed in this paper. Both are obtained by
correcting the training error:
\[
\begin{aligned}
\widetilde{\calR}_t
&=
\frac{1}{n}\sum_{i=1}^n
\ell\!\left(y_i,\, \bx_i^\top\hbb^t-\sum_{s=1}^t w_{t+1,s}u_i^s\right),
\\
\widehat{\calR}_t
&=
\frac{1}{n}\sum_{i=1}^n
\ell\!\left(y_i,\, \bx_i^\top\hbb^t-\sum_{s=1}^t \hat w_{t+1,s}u_i^s\right).
\end{aligned}
\]
The distinction of these two estimators is in the weights: \(\widetilde{\calR}_t\) uses
covariance-dependent weights $w_{t,s}$ that require the design covariance \(\bSigma\),
whereas \(\widehat{\calR}_t\) uses fully data-driven weights $\hat w_{t,s}$ that do not require
\(\bSigma\).
Here \(w_{t+1,s}\) and \(\hat w_{t+1,s}\) are entries of two
lower-triangular weight matrices \(\bW\) and \(\hbW\), respectively.
The following trajectory matrices are used to express the correction weights.
Let \(\bb^*\) denote the target regression vector and set
\[
\bH=[\hbb^0-\bb^*,\ldots,\hbb^{T-1}-\bb^*],
\qquad
\bF=[\bu^1,\ldots,\bu^T].
\]
Throughout, \(\be_i\), \(\be_j\), and \(\be_t\) denote the standard basis
vectors of \(\R^n\), \(\R^p\), and \(\R^T\), respectively.
The covariance-dependent estimator $\widetilde{\calR}_t$ uses the population
weight matrix \(\bW\in\R^{T\times T}\). The matrix \(\bW\) is characterized by the derivative-contraction
approximation
\begin{equation}
    \label{eq:W-motivation}
\sum_{i=1}^n\sum_{j=1}^p
\be_i\be_j^\top\bSigma\pdv{\bH}{x_{ij}}
\approx
\bF\bW^\top.
\end{equation}
Thus \(\bW\) is a strictly lower-triangular matrix (because the derivatives of $\hbb^t$ with respect to $x_{ij}$ do not depend on iterates $\bu^{s}$ for $s > t$), and its definition depends
on the design covariance \(\bSigma\).
Here ``derivative-contraction'' means summing out selected indices of a derivative tensor.

To remove this covariance dependence, the covariance-free estimator
\(\widehat{\calR}_t\) replaces \(\bW\) by the data-driven matrix
\(\hbW=\cbK^{-1}\cbA\). The matrices \(\cbK\) and \(\cbA\) are motivated by
similar derivative contractions of the trajectory \(\bX\bH\):

\begin{align}
	\label{eq:K-motivation}
\frac{1}{\sqrt n}\sum_{i=1}^n\sum_{j=1}^p
\be_j\be_i^\top
\pdv{\bX\bH}{x_{ij}}
&\approx
-\bH\cbK^\top,\\
\frac{1}{\sqrt n}\sum_{i=1}^n\sum_{j=1}^p
\be_i\be_j^\top\bX^\top
\pdv{\bX\bH}{x_{ij}}
&\approx
-\bF\cbA^\top.
	\label{eq:A-motivation}
\end{align}
The displays above are only motivational: they involve \(\bH\), and hence the
unknown vector \(\bb^*\). The formal definitions of \(\cbK\) and \(\cbA\),
given in Appendix~\ref{sec:formal-correction-definitions} of the Supplementary
Material, use only the
observed trajectory and its Jacobian blocks; they do not require either
\(\bSigma\) or \(\bb^*\). The matrix \(\cbK\) is lower triangular with diagonal
entries \(-\sqrt n\), so the empirical triangular system is invertible.
The Proposition~\ref{prop:approx-W} in the supplement establishes \(\cbK\bW\approx\cbA\), which
motivates the covariance-free replacement \(\hbW=\cbK^{-1}\cbA\).

The preceding construction leads to three main contributions. First, for a
finite trajectory of the primal--dual recursion, we construct a
covariance-dependent risk estimator whose correction weight matrix \(\bW\) is
built from derivative contractions, and we prove finite-sample accuracy under
high-dimensional Gaussian designs for both centered Lipschitz losses and
centered squared error. Second, we construct a covariance-free estimator by
replacing \(\bW\) with \(\hbW=\cbK^{-1}\cbA\); this replacement is computable by
a triangular solve. We prove finite-sample validity of this fully data-driven
estimator. Third, we prove a universality result for square-root ridge, showing
that the Gaussian-design theory remains valid for covariance-matched
non-Gaussian designs. Numerical experiments support the risk estimates along
finite iteration paths and illustrate the universality phenomenon beyond the
setting covered by the theorem.

The rest of the paper is organized as follows. Section~\ref{sec:chambolle-pock-review}
introduces the general iterative framework and shows how it specializes to the
non-smooth regression problems of interest, with Chambolle--Pock as the motivating
example. Section~\ref{sec:main-result} presents the proposed risk estimators and
their theoretical guarantees. Section~\ref{sec:numerical-experiments} reports
numerical experiments that complement the theory. Technical proofs and
additional simulations are deferred to the Supplementary Material,
in particular Sections~\ref{sec:proof-ingredients} through
\ref{sec:additional-numerical-figures}.

\section{Non-smooth Regression and General Iterations}
\label{sec:chambolle-pock-review}

Suppose we observe data $(\bX,\by)$, with $\bX\in\R^{n\times p}$ and
$\by\in\R^n$, satisfying the linear model
\begin{equation}
	\label{eq:linear-model}
	\by=\bX\bb^*+\bep,
\end{equation}
with unknown regression vector $\bb^*\in\R^p$ and noise vector
$\bep\in\R^n$. We consider penalized regression problems of the form
\begin{equation}
\label{eq:penalized-regression}
	\hbb
	\in
	\argmin_{\bb\in\R^p}
	f_{\by}(\bX\bb)+g(\bb),
\end{equation}
where $f_{\by}:\R^n\to\R$ is a possibly non-differentiable data-fitting loss
and $g:\R^p\to\R$ is a regularization function. Since \(f_{\by}\) may be
non-differentiable and is composed with the linear map \(\bX\bb\), we use a
primal--dual splitting recursion whose updates involve proximal maps of
\(f_{\by}^*\) and \(g\). The concrete losses and penalties used below, together
with their proximal maps, are listed in Tables~\ref{tab:loss-function}
and~\ref{tab:penalty-function}.

We first recall the Chambolle--Pock recursion for
\eqref{eq:penalized-regression}. We then rewrite it using normalized residual
vectors, which is the form used in the derivative analysis.
Given step sizes $\tau_n,\sigma_n>0$, an extrapolation parameter
$\theta\in[0,1]$, and initial values $\bu^0\in\R^n$, $\hbb^0\in\R^p$, with
$\bar{\bb}^0=\hbb^0$, the Chambolle--Pock updates are
\begin{equation}
\label{eq:CP-1}
\begin{aligned}
	\bu^{t+1}
	&=
	\prox[\sigma_n f_{\by}^*]\bigl(\bu^t+\sigma_n\bX\bar{\bb}^t\bigr),\\
	\hbb^{t+1}
	&=
	\prox[\tau_n g]\bigl(\hbb^t-\tau_n\bX^\top \bu^{t+1}\bigr),\\
	\bar{\bb}^{t+1}
	&=
	\hbb^{t+1}+\theta\bigl(\hbb^{t+1}-\hbb^t\bigr),
\end{aligned}
\end{equation}
where $f_{\by}^*$ denotes the convex conjugate of $f_{\by}$, defined by
\begin{align*}
	f_{\by}^*(\bu)
	=
	\sup_{\bv\in\R^n}
	\{\bu^\top\bv-f_{\by}(\bv)\},
\end{align*}
and $\prox[h]$ denotes the proximal operator of a function $h$, namely
\begin{align*}
	\prox[h](\bw)
	=
	\argmin_{\bv}
	\left\{
	\frac12\|\bw-\bv\|^2+h(\bv)
	\right\}.
\end{align*}

\subsection{Examples of common choices of $f$ and $g$}

Tables~\ref{tab:loss-function} and~\ref{tab:penalty-function} list the proximal
maps used in the main examples. They use a generic parameter \(\delta>0\), with
\(\delta=\sigma_n\) in the dual update and \(\delta=\tau_n\) in the primal
update. For the data-fitting losses, the formulas are written with
\(\bv:=\bu-\delta\by\). These examples identify the proximal maps used by the
recursion; the theoretical framework below is stated for a general class of
update maps.

\begin{table}[H]
\centering
\caption{Examples of data-fitting loss functions $f_{\by}$ and their proximal operators $\prox[\delta f_{\by}^*]$.}
\label{tab:loss-function}
\begin{tabular}{l | l}
\hline
$f_{\by}(\bz)$ & $\prox[\delta f_{\by}^*](\bu)$ \\
\hline\hline
$\dfrac{1}{\sqrt{n}}\|\by - \bz\|_1$
&
$\displaystyle
\bigl(\prox[\delta f_{\by}^*](\bu)\bigr)_i
=
\Pi_{[-1/\sqrt n,\,1/\sqrt n]}(u_i-\delta y_i)
$
\\[2ex]
\hline 
$\|\by - \bz\|_2$
&
$\displaystyle
\prox[\delta f_{\by}^*](\bu)
=
\Pi_{\|\cdot\|_2\le 1}(\bu-\delta\by)
$
\\
\hline
\end{tabular}
\end{table}

Here \(\Pi_{[a,b]}\) denotes scalar projection onto \([a,b]\), and
\(\Pi_{\|\cdot\|_2\le 1}\) denotes Euclidean projection onto the unit
\(\ell_2\) ball.

\begin{table}[H]
\centering
\caption{Common penalty functions $g$ and their proximal operators $\prox[\delta g]$.}
\label{tab:penalty-function}
\begin{tabular}{l | l}
\hline
$g(\bb)$ & $\prox[\delta g](\bb)$ \\
\hline\hline

$\lambda \|\bb\|_1$
&
$\displaystyle
\bigl(\prox[\delta g](\bb)\bigr)_j
=
\operatorname{sign}(b_j)
\max\{|b_j|-\delta\lambda,\,0\}
$
\\[2ex]

$\dfrac{\lambda}{2}\|\bb\|_2^2$
&
$\displaystyle
\prox[\delta g](\bb)
=
\frac{1}{1+\delta\lambda}\,\bb
$
\\[2ex]

$\lambda \alpha \|\bb\|_1 + \dfrac{\lambda}{2} (1-\alpha)\|\bb\|_2^2$
&
$\displaystyle
\prox[\delta g](\bb)
=
\frac{1}{1+\delta\lambda(1-\alpha)}
\soft_{\delta\lambda\alpha}(\bb)
$
\\[2ex]

\hline
\end{tabular}
\end{table}

Table~\ref{tab:penalty-function} lists the convex penalties needed for the
main examples and for the square-root ridge universality result.

\subsection{General iteration}

To pass from the concrete Chambolle--Pock update to the abstract recursion, we
first rewrite \eqref{eq:CP-1} in a compact residual form. We use step sizes
\(\sigma_n=\sigma/\sqrt n\) and \(\tau_n=\tau/\sqrt n\), where \(\sigma\) and
\(\tau\) are fixed constants. For the losses in
Table~\ref{tab:loss-function}, we write the dual proximal map as a function of
the residual-form argument \(\ba-\sigma_n\by\), and let \(\bpsi\) denote the
primal proximal map:
\[
\prox[\sigma_n f_{\by}^*](\ba)
=
\bphi(\ba-\sigma_n\by),
\qquad
\bpsi(\cdot)=\prox[\tau_n g](\cdot).
\]
For LAD, \(\bphi\) is the coordinatewise projection onto
\([-1/\sqrt n,1/\sqrt n]\); for square-root loss, it is the Euclidean
projection onto the unit \(\ell_2\) ball. Introduce the normalized residual
vector and the normalized vector
\[
\bv^t=\frac{\by-\bX\hbb^t}{\sqrt n},
\qquad
\bfeta^t=\frac{\bX^\top\bu^t}{\sqrt n}.
\]
Substituting these definitions into \eqref{eq:CP-1} gives the compact recursion
\begin{equation}
\label{eq:CP-update}
\begin{aligned}
	\bu^t
	&=
	\begin{cases}
		\bphi(\bu^{t-1}-\sigma\bv^{t-1}), & t=1,\\
		\bphi\Bigl(\bu^{t-1}-\sigma\bigl((1+\theta)\bv^{t-1}-\theta\bv^{t-2}\bigr)\Bigr), & t\ge 2,
	\end{cases}\\
	\hbb^t
	&=
	\bpsi(\hbb^{t-1}-\tau\bfeta^t).
\end{aligned}
\end{equation}
The extrapolated primal iterate satisfies
\(\bar{\bb}^{t-1}=(1+\theta)\hbb^{t-1}-\theta\hbb^{t-2}\), so
\[
\frac{\by-\bX\bar{\bb}^{t-1}}{\sqrt n}
=
(1+\theta)\bv^{t-1}-\theta\bv^{t-2},
\]
which explains the residual combination in the dual update for \(t\ge2\).
This residual form is the one used in the derivative analysis. It separates the
two kinds of inputs used by the algorithm: residual vectors \(\bv^s\) enter
the dual update, while dual-gradient vectors \(\bfeta^s\) enter the primal
update. In Chambolle--Pock, only the most recent such vectors are active. The
general form below keeps the same separation but allows each update to depend on
the whole available history. Consider
algorithms whose iterates can be written as
\begin{equation}
\label{eq:general-iteration}
\boxed{
\begin{aligned}
\bu^t
&=
\bphi^t(\bu^0,\cdots,\bu^{t-1},
\bv^0,\cdots,
\bv^{t-1}),\\
\hbb^t
&=
\bpsi^t(\hbb^0,\cdots,\hbb^{t-1},
\bfeta^0,\cdots,
\bfeta^t).
\end{aligned}
}
\end{equation}
Here \(\bphi^t\) and \(\bpsi^t\) are history-dependent update maps. They may
depend on \(n\) through the dimensions and step sizes. We assume
\(\bphi^t:\R^{2tn}\to\R^n\) is bounded and Lipschitz continuous, and
\(\bpsi^t:\R^{(2t+1)p}\to\R^p\) is Lipschitz continuous, but not necessarily
bounded. The dimensions correspond to the \(t\) previous \(\bu\)'s and \(t\)
previous \(\bv\)'s in the dual update, and to the \(t\) previous \(\hbb\)'s and
\(t+1\) vectors \(\bfeta\) in the primal update. The Chambolle--Pock recursion
\eqref{eq:CP-update} is obtained by
choosing these maps so that the unused history arguments are ignored.

\begin{remark}
The recursion \eqref{eq:general-iteration} also contains smooth-loss proximal-gradient methods. For example,
if \(f_{\by}(\bz)=\|\by-\bz\|^2/2\), then
\begin{equation}
    \label{prox_gradient_example}
\bu^t=\nabla f_{\by}(\bX\hbb^{t-1}),\qquad
\hbb^t=\prox[\alpha g]\bigl(\hbb^{t-1}-\alpha\bX^\top\bu^t\bigr).
\end{equation}
The form \eqref{eq:general-iteration} also contains linearized ADMM after eliminating the residual
variable and scaled multiplier; see Section~\ref{sec:linearized-admm} of the
Supplementary Material. The
analysis below focuses on the non-smooth primal--dual case.
\end{remark}

Since the update maps are Lipschitz, Rademacher's theorem gives
almost-everywhere Jacobians. At differentiability points, define the Jacobian
blocks
\begin{equation}
\label{eq:DJ-general}
\begin{aligned}
\Phi^u_{t,s}
&=
\pdv{\bphi^t(\cdot)}{\bu^s}\Bigm|_{\substack{\bu^0,\ldots,\bu^{t-1}\\\bv^0,\ldots,\bv^{t-1}}},
\qquad
\Phi^v_{t,s}
=
\pdv{\bphi^t(\cdot)}{\bv^s}\Bigm|_{\substack{\bu^0,\ldots,\bu^{t-1}\\\bv^0,\ldots,\bv^{t-1}}},\\
\Psi^b_{t,s}
&=
\pdv{\bpsi^t(\cdot)}{\hbb^s}\Bigm|_{\substack{\hbb^0,\ldots,\hbb^{t-1}\\\bfeta^0,\ldots,\bfeta^t}},
\qquad
\Psi^\eta_{t,s}
=
\pdv{\bpsi^t(\cdot)}{\bfeta^s}\Bigm|_{\substack{\hbb^0,\ldots,\hbb^{t-1}\\\bfeta^0,\ldots,\bfeta^t}}.
\end{aligned}
\end{equation}
Throughout the derivative analysis, we use the same convention as
\cite[Section~2.2]{bellec2024uncertainty}: the Jacobian blocks are chosen, if
necessary, as modified almost-everywhere derivatives so that the Lipschitz chain
rule holds. When the relevant update map is differentiable at the realized
argument, this convention agrees with the usual Jacobian. For coordinatewise
separable Lipschitz maps, the usual chain rule holds almost everywhere by
\cite[Theorem~2.1.11]{ziemer2012weakly}.

\begin{remark}
For Chambolle--Pock, each update depends only on the recent arguments displayed
in \eqref{eq:CP-update}. Hence many Jacobian blocks in \eqref{eq:DJ-general}
are zero:
\begin{equation}
\begin{aligned}
\Phi^u_{t,s} &= \boldzero_{n\times n},
&& \text{for } 0\le s<t-1,\\
\Phi^v_{t,s} &= \boldzero_{n\times n},
&& \text{for } 0\le s<t-2,\\
\Psi^b_{t,s} &= \boldzero_{p\times p},
&& \text{for } 0\le s<t-1,\\
\Psi^\eta_{t,s} &= \boldzero_{p\times p},
&& \text{for } 0\le s<t.
\end{aligned}
\end{equation}
\end{remark}


\section{Risk Estimation for General Iterations}
\label{sec:main-result}



This section gives the main risk-estimation results for the general recursion
\eqref{eq:general-iteration}. Throughout the section, the iteration horizon
\(T\) is fixed, and constants may depend on \(T\). We first state the
assumptions, define the conditional prediction risk, and explain the high-level
construction of the triangular correction matrices. We then introduce the
covariance-dependent estimator, whose correction uses the population design
covariance \(\bSigma\), followed by the covariance-free estimator that replaces
the population correction by an empirical triangular system. The section ends
with a square-root ridge result showing universality for the true risk and for
both proposed risk estimators.



\begin{assumption}\label{assu:X}
The data follow the linear model
\[
y_i=\bx_i^\top\bb^*+\ep_i,\qquad i=1,\dots,n,
\]
where the design matrix \(\bX\in\R^{n\times p}\) has \iid\ rows drawn from
\(N(\boldzero,\bSigma)\). The noise variables \(\ep_i\) are \iid, independent
of \(\bX\), and satisfy \(\E[\ep_i]=0\). The covariance matrix
\(\bSigma\in\R^{p\times p}\) is positive definite and satisfies
\[
\kappa^{-1}
\le
\lambda_{\min}(\bSigma)
\le
\lambda_{\max}(\bSigma)
\le
\kappa
\]
for some constant \(\kappa>1\). Here \(\lambda_{\min}(\bSigma)\) and
\(\lambda_{\max}(\bSigma)\) denote the smallest and largest eigenvalues of
\(\bSigma\), respectively. In addition, the signal satisfies
\(\norm{\bb^*}\le \zeta\).
\end{assumption}

\begin{assumption}\label{assu:regime}
The sample size $n$ and the ambient dimension $p$ satisfy $p/n\le \gamma$ for some constant $\gamma\in(0,\infty)$.
\end{assumption}

\begin{assumption}\label{assu:algorithm}
The algorithm \eqref{eq:general-iteration} is initialized at
\(\bu^0=\boldzero_n\) and \(\hbb^0=\boldzero_p\). For each
\(t\in\{1,\dots,T\}\), the functions \(\bphi^t\) and \(\bpsi^t\) in
\eqref{eq:general-iteration} are \(\zeta\)-Lipschitz continuous. In addition,
\[
\sup_{\ba\in\R^{2tn}}\norm{\bphi^t(\ba)}\le \zeta,
\qquad
\norm{\bpsi^t(\boldzero)}\le\zeta,
\qquad t=1,\ldots,T,
\]
where the zero vector in the second bound has the appropriate dimension.
\end{assumption}

For fixed iteration horizons, Assumption~\ref{assu:algorithm} is satisfied when
the history-dependent maps \(\bphi^t\) and \(\bpsi^t\), obtained by composing
finitely many one-step updates, have Lipschitz constants depending at most on
the fixed horizon \(T\). This condition follows, for example, if the one-step
maps in a concrete algorithm are uniformly Lipschitz and the intermediate
iterates remain uniformly bounded. In particular, the assumption is satisfied by the
Chambolle--Pock iteration \eqref{eq:CP-update} for the penalized regression
problem \eqref{eq:penalized-regression} with the data-fitting losses listed in
Table~\ref{tab:loss-function} and the convex penalty functions listed in
Table~\ref{tab:penalty-function}.

\subsection{Generalization error}

Let \((\bx_{\rm new},y_{\rm new})\) be an independent copy of a generic training
observation \((\bx_i,y_i)\). For each iterate $\hbb^t$ generated by the
algorithm \eqref{eq:general-iteration}, define its generalization error under a
test loss $\ell(\cdot,\cdot)$ by
\begin{equation}
\label{eq:rt}
\calR_t:=\calR(\hbb^t)
=
\E\bigl[\ell(y_{\rm new},\bx_{\rm new}^\top \hbb^t)\mid (\bX,\by)\bigr].
\end{equation}
We make the following assumptions on the test loss and the noise distribution.
\begin{assumption}\label{assu:test-function}
The test loss is centered at the zero prediction, \(\ell(y,0)=0\). We consider
either the Lipschitz regime
\[
|\ell(y,a)-\ell(y,b)|\le |a-b|,
\qquad y,a,b\in\R,
\qquad \E|\ep_i|\le\zeta,
\]
or the centered squared-error regime
\[
\ell(y,a)=(y-a)^2-y^2,
\qquad \E[\ep_i^2]\le\zeta.
\]
\end{assumption}
This centering is only a convention. Replacing an uncentered loss \(L(y,a)\) by
\(\ell(y,a)=L(y,a)-L(y,0)\) shifts all true and empirical risks by the same
iterate-independent baseline, so risk comparisons and early-stopping decisions
are unchanged. The corresponding uncentered risks can be recovered by adding
this baseline back. The 1-Lipschitz class includes centered absolute, Huber,
pseudo-Huber, \(\epsilon\)-insensitive, and quantile losses after normalization
by their Lipschitz constants.

Before describing the two estimators, we introduce the triangular correction
matrices that appear in their definitions.

\subsection{High-level construction of the triangular matrices}
\label{sec:high-level-formal}

The two estimators below use triangular correction matrices that summarize
derivative contractions of the algorithmic trajectory. The matrix \(\bW\) is the
population correction appearing in the Stein approximation of the leave-one-out
fitted values, and it depends on the design covariance \(\bSigma\). To remove
this covariance dependence, we construct two observable matrices \(\cbK\) and
\(\cbA\) from the observed trajectory and its Jacobian blocks. The formal
definitions are given in Appendix~\ref{sec:formal-correction-definitions} of the
Supplementary Material. This subsection gives the high-level construction. The
matrices \(\bW\), \(\cbK\), and \(\cbA\) are
motivated by the derivative approximations
\eqref{eq:W-motivation}--\eqref{eq:A-motivation}.

At iteration \(t\), the recursion \eqref{eq:general-iteration} proceeds as
follows:
\begin{enumerate}
    \item The dual update map \(\bphi^t\) maps
        \((\bu^0,\ldots,\bu^{t-1},\bv^0,\ldots,\bv^{t-1})\) to
        \(\bu^t\).
    \item Form the dual-gradient vector
        \(\bfeta^t=n^{-1/2}\bX^\top\bu^t\).
    \item The primal update map \(\bpsi^t\) maps
        \((\hbb^0,\ldots,\hbb^{t-1},\bfeta^0,\ldots,\bfeta^t)\) to
        \(\hbb^t\).
    \item Form the residual vector
        \[
        \bv^t=n^{-1/2}(\by-\bX\hbb^t)
        =n^{-1/2}\{\bep-\bX(\hbb^t-\bb^*)\}.
        \]
\end{enumerate}
When we unroll this recursion and apply the chain rule to the derivatives on the
left-hand sides of \eqref{eq:W-motivation}--\eqref{eq:A-motivation}, the two
explicit appearances of \(\bX\) identify two types of trajectory factors:
\begin{itemize}
\item The factor \(n^{-1/2}\bX^\top\bu^t\) in step 2 produces terms
carrying \(\bu^t\), and hence terms associated with the dual-trajectory
matrix \(\bF\) in \eqref{eq:W-motivation} and \eqref{eq:A-motivation}.
\item The factor \(-n^{-1/2}\bX(\hbb^t-\bb^*)\) in step 4 produces terms
carrying \(\hbb^t-\bb^*\), and hence terms associated with the primal-error
matrix \(\bH\) in \eqref{eq:K-motivation}.
\end{itemize}
After recursively substituting the derivatives of earlier iterates, the
chain-rule expansion is grouped by the trajectory factor multiplying each
coefficient. Terms carrying a factor \(\bu^s\) are assigned to the
\(\bF\)-coefficients, and terms carrying a factor \(\hbb^s-\bb^*\) are assigned
to the \(\bH\)-coefficients. The matrix $\bW$ is obtained by collecting all
\(\bF\)-proportional
terms in the chain-rule expansion of the left-hand side of
\eqref{eq:W-motivation}. Similarly, $\cbA$ collects all
\(\bF\)-proportional terms in the expansion of \eqref{eq:A-motivation}, while
\(\cbK\) collects all \(\bH\)-proportional terms in the expansion of
\eqref{eq:K-motivation}. The proof of our main results establishes that the
remaining terms are negligible, so that the approximations
\eqref{eq:W-motivation}--\eqref{eq:A-motivation} hold.

Another way to view this construction is to write
the recursion
\eqref{eq:general-iteration},
with
$$
\bv^t = \frac{\beps-\tbX(\hbb^t - \bb^*)}{\sqrt n},
\qquad
\bfeta^t = \frac{\bar\bX^\top\bu^t}{\sqrt n}
$$
for two matrices $\tbX,\bar\bX$ whose values are always set to $\bX$,
which lets us keep track of the two different appearances of $\bX$ in the chain rule. Holding $\bep$ fixed, the derivative operators in
the coordinate system $\bX$ and the coordinate system
$(\tbX,\bar\bX)$ are related by
$$
\frac{\partial}{\partial x_{ij}}
=
\frac{\partial}{\partial \tilde x_{ij}}
+
\frac{\partial}{\partial \bar x_{ij}}
$$
and the two terms on the right-hand side correspond to the two
bullet points above. Then the description of $\bW$, $\cbK$, and $\cbA$ above is equivalent to
\begin{align*}
\sum_{i=1}^n\sum_{j=1}^p
\be_i\be_j^\top\bSigma\pdv{\bH}{\bar x_{ij}}
&=
\bF\bW^\top,
\\
\frac{1}{\sqrt n}\sum_{i=1}^n\sum_{j=1}^p
\be_j\be_i^\top
\pdv{\tbX\bH}{\tilde x_{ij}}
&=
-\bH\cbK^\top,\\
\frac{1}{\sqrt n}\sum_{i=1}^n\sum_{j=1}^p
\be_i\be_j^\top\bar\bX^\top
\pdv{\tbX\bH}{\bar x_{ij}}
&=
-\bF\cbA^\top
\end{align*}
which are of a similar form
to \eqref{eq:W-motivation}-\eqref{eq:K-motivation}
but with a single derivative operator 
($\partial/\partial \tilde x_{ij}$
or 
$\partial/\partial \bar x_{ij}$)
on the left-hand side
and an equality.


\subsection{Covariance-dependent risk estimator}

We first motivate the estimator from leave-one-out prediction and the Stein
correction, then state its risk-estimation guarantee.

We begin with the leave-one-out (LOO) estimator, which evaluates the \(t\)th
iterate after deleting each observation one at a time:
\begin{align*}
	\frac{1}{n}\sum_{i=1}^n
	\ell\bigl(y_i,\bx_i^\top \hbb^{t,-i}\bigr),
\end{align*}
where \(\hbb^{t,-i}\) denotes the \(t\)th iterate computed after removing the
\(i\)th observation \((\bx_i,y_i)\). Direct computation requires rerunning the
algorithm \(n\) times for each iteration \(t\). Because \(\hbb^{t,-i}\) is
computed without the \(i\)th observation, it is independent of
\((\bx_i,y_i)\), making this quantity the natural leave-one-out proxy for
prediction risk.
The covariance-dependent estimator approximates the LOO fitted values without
recomputing \(\hbb^{t,-i}\). Conditioning on the data with the \(i\)th
observation removed, the vector \(\hbb^{t,-i}\) is fixed. Applying Gaussian
Stein's formula to the map \(\bx_i\mapsto\hbb^t-\hbb^{t,-i}\) gives
\[
\E\Bigl[\bx_i^\top(\hbb^t-\hbb^{t,-i})- \sum_{j=1}^p \be_j^\top \bSigma \pdv{\hbb^t}{x_{ij}}\Bigr]
= 0.
\]
The contraction defining \(\bW\) in \eqref{eq:W-motivation} gives the
approximation
\[
\sum_{j=1}^p \be_j^\top\bSigma\pdv{\hbb^t}{x_{ij}}
\approx
\be_i^\top\bF\bW^\top\be_{t+1},
\qquad
\bF=[\bu^1,\ldots,\bu^T],
\]
where the formal definition of \(\bW\) is given in
Appendix~\ref{sec:formal-correction-definitions}. The remaining
terms are controlled by the risk bound below.
This motivates the fitted-value approximation
\begin{align*}
\bx_i^\top \hbb^{t,-i}
&\approx
\bx_i^\top \hbb^t-\sum_{j=1}^p \be_j^\top \bSigma \pdv{\hbb^t}{x_{ij}}\\
&\approx
\bx_i^\top \hbb^t-\be_i^\top \bF\bW^\top \be_{t+1}.
\end{align*}
Motivated by substituting this approximation for
\(\bx_i^\top\hbb^{t,-i}\) in the LOO estimator, we define the
covariance-dependent estimator as follows.

\begin{definition}
For each $t\in\{0,\ldots,T-1\}$, define the covariance-dependent estimator
$\widetilde{\calR}_t$ of $\calR_t$ in \eqref{eq:rt} by
\begin{align}
\label{eq:rt-tilde}
\widetilde{\calR}_t
=
\frac{1}{n}\sum_{i=1}^n
\ell\Bigl(y_i,\, \bx_i^\top \hbb^t-\sum_{s=1}^t w_{t+1,s}u_i^s\Bigr),
\end{align}
where $w_{t+1,s}$ is the $(t+1,s)$th entry of the weight matrix
$\bW\in\R^{T\times T}$ defined formally in
Appendix~\ref{sec:formal-correction-definitions}. The subtracted term
in the fitted value is the Stein correction.
\end{definition}

The following theorem shows that this estimator is accurate for every fixed
iterate, with error of order \(n^{-1/2}\).

\begin{theorem}
\label{thm:rt-tilde}
Under Assumptions~\ref{assu:X}, \ref{assu:regime}, \ref{assu:algorithm}, and \ref{assu:test-function}, for every fixed \(t\in\{0,\ldots,T-1\}\),
\begin{equation}
\E\bigl[|\widetilde{\calR}_t-\calR_t|\bigr]
\le
\frac{C(T,\gamma,\zeta,\kappa)}{\sqrt n}.
\end{equation}
\end{theorem}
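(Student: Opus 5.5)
We reduce to the standard whitened setting and follow the second-order Stein strategy of \cite{bellec2024uncertainty} and \cite{tan2024estimating}, adapted to the primal--dual recursion. Write \(\bX=\bZ\bSigma^{1/2}\) with \(\bZ\) having \iid\ \(N(0,1)\) entries. The true risk is a function of the population quantities \(\bSigma^{1/2}(\hbb^t-\bb^*)\) and \(\bb^*\) only, together with the noise law. For a Lipschitz \(\ell\) this holds because \(\bx_{\rm new}^\top\hbb^t\mid(\bX,\by)\sim N(\bx_{\rm new}\text{-free mean }0,\|\bSigma^{1/2}\hbb^t\|^2)\) jointly with \(y_{\rm new}\). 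For the centered squared loss, \(\calR_t=\|\bSigma^{1/2}(\hbb^t-\bb^*)\|^2-\|\bSigma^{1/2}\bb^*\|^2\).

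It therefore suffices to show that the corrected fitted values \(\tilde{\bz}^t:=\bX\hbb^t-\bF\bW^\top\be_{t+1}\) behave, jointly with \(\by\), like \(n\) nearly independent draws of \((y_{\rm new},\bx_{\rm new}^\top\hbb^t)\). Concretely, the target is the key intermediate bound
\[
\E\Bigl\|\bX(\hbb^t-\bb^*)-\bF\bW^\top\be_{t+1}-\bZ^{\rm new}\Bigr\|^2\lesssim C,
\]
where \(\bZ^{\rm new}\) is, conditionally on the data, a vector with entries of the right Gaussian law. Equivalently, the aim is to control the LOO discrepancy \(\sum_i|\bx_i^\top\hbb^t-u\text{-correction}-\bx_i^\top\hbb^{t,-i}|^2=O(1)\) in expectation.

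First, we establish moment bounds by induction on \(t\) using Assumption~\ref{assu:algorithm}. Since \(\bphi^t\) is bounded, we get \(\|\bu^t\|\le\zeta\). Then \(\|\bfeta^t\|\le\opnorm{\bX}\zeta/\sqrt n\), and \(\|\hbb^t\|\) and \(\|\bv^t\|\) are bounded by polynomials in \(\opnorm{\bX}/\sqrt n\) and \(\|\bep\|/\sqrt n\). Using \(p/n\le\gamma\), all these quantities have bounded moments; for the Lipschitz regime only \(\E|\ep_i|\) is needed, handled by truncation or by keeping \(\bep\) linear. Second, by the chain rule along the recursion (using the Jacobian blocks \eqref{eq:DJ-general} and the \((\tbX,\bar\bX)\) bookkeeping of Section~\ref{sec:high-level-formal}), we bound derivatives: \(\sum_{ij}\|\partial\hbb^t/\partial z_{ij}\|^2\) and \(\sum_{ij}\|\partial\bu^t/\partial z_{ij}\|^2\) are of order \(n\cdot\poly(\opnorm{\bX}/\sqrt n)\) with the appropriate scaling. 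Third, we apply the second-order Stein inequality (Bellec–Zhang) to \(\bZ\mapsto \bu^{t'}\) and to the vector fields \(\bZ\bSigma^{1/2}(\hbb^t-\bb^*)\). This yields \(\bX(\hbb^t-\bb^*)\approx \text{(Gaussian part)}+\sum_j\Sigma\)-contracted divergence terms. The decomposition \(\partial/\partial x_{ij}=\partial/\partial\tilde x_{ij}+\partial/\partial\bar x_{ij}\) then shows that the \(\bar x\)-part equals exactly \(\bF\bW^\top\be_{t+1}\), which is precisely the correction subtracted in \eqref{eq:rt-tilde}. The \(\tilde x\)-part contributes terms like \(\frac1{\sqrt n}\sum_i\partial u_i/\partial\cdot\), which are \(O(1)\) in norm after division by \(\sqrt n\) relative to the \(\sqrt n\)-size vector and are thus negligible.

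Finally, we convert the intermediate bound to risks. In the Lipschitz case,
\(|\widetilde\calR_t-\frac1n\sum_i\ell(y_i,\bx_i^\top\hbb^{t,\rm LOO})|\le n^{-1}\sqrt n\,\|\text{discrepancy}\|\), which is \(O(n^{-1/2})\). A further concentration step (an empirical average of \(n\) conditionally independent terms, via Efron–Stein or Gaussian Poincaré using the derivative bounds) compares this average to \(\calR_t\) at rate \(n^{-1/2}\). In the squared case, we expand the square and use Cauchy–Schwarz together with the moment bounds. The main obstacle is the third step: carefully controlling the remainder terms of the chain-rule expansion. These terms involve products of Jacobians across the whole history with \(\opnorm{\bX}\) factors, and one must show that their Frobenius and contraction sizes are \(O(\sqrt n)\) rather than \(O(n)\). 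I would first try to prove this by induction on \(t\), tracking the operator norms of lower-triangular block Jacobians with all constants depending only on \((T,\gamma,\zeta,\kappa)\).
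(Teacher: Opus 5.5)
Your intended architecture matches the paper's. You whiten, compare \(\widetilde{\calR}_t\) with the leave-one-out average \(n^{-1}\sum_i\ell(y_i,\bx_i^\top\hbb^{t,-i})\), and replace \(\bx_i^\top\hbb^{t,-i}\) by \(\bx_i^\top\hbb^t-\sum_j\partial\hbb^t_j/\partial x_{ij}\) via second-order Stein. You then split that divergence into the \(\bar x\)-part \(\be_i^\top\bF\bW^\top\be_{t+1}\) and a negligible \(\tilde x\)-remainder (the paper's \(\bUpsilon_2\)). However, two load-bearing steps are missing.

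\emph{Leave-one-out stability.} You never establish \(\E\|\hbb^t-\hbb^{t,-i}\|_2^2\lesssim 1/n\), yet this is what makes the Stein remainder small. Stein has to be applied to the difference \(\bx_i\mapsto\hbb^t-\hbb^{t,-i}\). Applied to \(\hbb^t-\bb^*\) itself, it only bounds the size of an \(O(1)\) ``Gaussian part'' whose joint law with \(y_i\) is not identified. The same estimate is also needed to replace \(\calR(\hbb^{t,-i})=\E[\ell(y_i,\bx_i^\top\hbb^{t,-i})\mid\calD_{-i}]\) by \(\calR_t\). The paper obtains it by running the full and deleted recursions side by side. It combines Lipschitz propagation through \(\bphi^t,\bpsi^t\), the identity \(\Delta_v^{s,i}=-n^{-1/2}\bX\Delta_b^{s,i}+\be_i v_i^{s,-i}\), and a discrete Gr\"onwall argument driven only by \(v_i^{s,-i}=(y_i-\bx_i^\top\hbb^{s,-i})/\sqrt n\).

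\emph{Concentration.} Your concentration step rests on a false premise. The summands \(\ell(y_i,\bx_i^\top\hbb^{t,-i})\) are not conditionally independent, because \(\hbb^{t,-i}\) depends on every \((\bx_l,y_l)\), \(l\ne i\). An Efron--Stein bound that controls the \(n-1\) cross-terms by their individual \(O(n^{-1/2})\) size gives only an \(O(1)\) variance bound, not the needed \(O(1/n)\). The missing idea is leave-two-out decoupling of the centered summands: \(\E[U^iU^l]=\E[(U^i-U^{il})(U^l-U^{li})]\), with \(U^{il}\) built from \(\hbb^{t,-(i,l)}\). Together with double-deletion stability, this gives \(|\E[U^iU^l]|\lesssim 1/n\).

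\emph{Lipschitz regime.} There only \(\E|\ep_i|\le\zeta\) is assumed, and the stability argument does not supply your \(L^2\) target \(\sum_i\E[d_i^2]=O(1)\) for the leave-one-out discrepancy. The available pathwise bounds give \(\|\hbb^t-\hbb^{t,-i}\|_2\) of order \(\min\{1,(1+|\ep_i|)/\sqrt n\}\), up to design-dependent factors. When \(\E\ep_i^2=\infty\), the second moment of this quantity is not \(O(1/n)\). Your conversion via \(n^{-1}\sqrt n\,\|d\|_2\) then no longer yields the \(n^{-1/2}\) rate. You need instead the \(L^1\) comparison \(\sum_i\E|d_i|\lesssim\sqrt n\), obtained by applying Stein conditionally on \((\ep_i,\calD_{-i})\) and integrating over \(\ep_i\) last.

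\emph{Smaller points.} The derivative bound you need is \(\E\sum_{i,j}\|\partial\hbb^t/\partial x_{ij}\|_2^2=O(1)\), not of order \(n\); otherwise the Stein remainder does not vanish. Also, the step you flag as the main obstacle is routine here. Since \(\calM\) is block lower triangular with identity diagonal blocks, a finite Neumann series gives \(\|\calM^{-1}\bJ_{\bPhi^v}\|_{\op}\lesssim n^{-1/2}\poly_{T-1}(\|\bX\|_{\op}/\sqrt n)\), and hence \(\E\|\bUpsilon_2\|_{\op}^2\lesssim 1/n\).
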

Theorem~\ref{thm:rt-tilde} can be viewed as a primal--dual generalization of
the covariance-dependent risk-estimation result for SGD in
\citet[Theorem~3.6]{tan2024estimating}. The correction is
again derived from Gaussian Stein's formula, but the weights are now generated
by derivative contractions of a coupled primal--dual recursion. 

The result also
covers Lipschitz test losses, in addition to centered squared error; this
requires a different comparison argument because the squared-error expansion used
in the earlier setting is no longer available. The proof is given in
Appendix~\ref{sec:proof-rt-tilde} of the Supplementary Material.

\subsection{Covariance-free risk estimator}

The covariance-dependent estimator uses the matrix \(\bW\), which depends on
\(\bSigma\). The covariance-free estimator replaces it by the observable matrix
\[
\hbW:=\cbK^{-1}\cbA.
\]
The lower-triangular structure of \(\cbK\), with diagonal entries \(-\sqrt n\),
makes this triangular solve well-defined.
Here \(\cbK\) and \(\cbA\) are the observable correction matrices previewed in
\eqref{eq:K-motivation}--\eqref{eq:A-motivation} and described in
Section~\ref{sec:high-level-formal}; their row-by-row computation is given in
Appendix~\ref{sec:computation-tricks}. Replacing \(\bW\) by \(\hbW\) in the
covariance-dependent estimator \eqref{eq:rt-tilde} gives the following
covariance-free estimator.

\begin{definition}
For each \(t\in\{0,\ldots,T-1\}\), define the covariance-free estimator
\(\widehat{\calR}_t\) of \(\calR_t\) by
\begin{equation}
\label{eq:rt-hat}
\widehat{\calR}_t
=
\frac{1}{n}\sum_{i=1}^n
\ell\Bigl(y_i,\bx_i^\top\hbb^t-\sum_{s=1}^t \hat w_{t+1,s}u_i^s\Bigr),
\end{equation}
where $\hat w_{t+1,s}$ denotes the $(t+1,s)$ entry of the matrix $\hbW$.
\end{definition}

This estimator is data-driven: it is computed from the training data, the
observed trajectory, and the corresponding Jacobian blocks, without knowledge of
the population covariance \(\bSigma\). We first compare
\(\widehat{\calR}_t\) to \(\widetilde{\calR}_t\). Combined with
Theorem~\ref{thm:rt-tilde}, this yields consistency for the true risk
\(\calR_t\).
The result is stated on the high-probability event controlling the
operator norm of the design:
\begin{equation}
\label{eq:event-Omega}
\Omega = \left\{\bX\in \R^{n\times p}:
\frac{\opnorm{\bX}}{\sqrt{n}} \le \sqrt{\kappa}(2+\sqrt{\gamma})
\right\}.
\end{equation}
Under Assumptions~\ref{assu:X} and \ref{assu:regime}, standard random matrix theory gives \(\P(\Omega^c)\le e^{-n/2}\).

\begin{theorem}
\label{thm:rt-hat}
Under Assumptions~\ref{assu:X}, \ref{assu:regime}, \ref{assu:algorithm}, and
\ref{assu:test-function}, for every fixed
\(t\in\{0,\ldots,T-1\}\),
\begin{equation}
\E\bigl[\I_\Omega\,|\widehat{\calR}_t-\widetilde{\calR}_t|\bigr]
\le
C(T,\gamma,\zeta,\kappa)\,n^{-1/4}.
\end{equation}
Consequently, combining this bound with Theorem~\ref{thm:rt-tilde}, for every
\(\epsilon>0\),
\begin{equation}
\P\bigl(|\widehat{\calR}_t-\calR_t|>\epsilon\bigr)
\le
e^{-n/2}
+
\frac{C(T,\gamma,\zeta,\kappa)}{\epsilon\,n^{1/4}}.
\end{equation}
\end{theorem}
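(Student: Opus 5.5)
Since both estimators evaluate the same loss at fitted values that differ only in the correction weights, the argument reduces to controlling the weight discrepancy \(\hbW-\bW\). For the Lipschitz regime, the \(1\)-Lipschitz property of \(\ell\) gives
\[
|\widehat{\calR}_t-\widetilde{\calR}_t|
\le \frac1n\sum_{i=1}^n\Bigl|\sum_{s=1}^t(\hat w_{t+1,s}-w_{t+1,s})u_i^s\Bigr|
\le \frac{1}{\sqrt n}\,\norm{\bF(\hbW-\bW)^\top\be_{t+1}}
\le \frac{1}{\sqrt n}\,\fnorm{\bF}\,\fnorm{\hbW-\bW}.
\]
By Assumption~\ref{assu:algorithm}, \(\norm{\bu^s}\le\zeta\), so \(\fnorm{\bF}\le\sqrt T\zeta\). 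Hence it suffices to show \(\E[\I_\Omega\fnorm{\hbW-\bW}]\le C n^{-1/4}\cdot\sqrt n\). Note that \(\bW\) and \(\hbW\) have \(O(\sqrt n)\) entries under the paper's normalization, because \(\cbK\) has diagonal \(-\sqrt n\) and the weights multiply \(u_i^s=O(n^{-1/2})\). The correct target is therefore \(\E[\I_\Omega\fnorm{\hbW-\bW}]\le C n^{1/4}\). I would track normalizations so that everything is stated relative to the scale \(\sqrt n\).

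For the centered squared loss, \(\ell(y,a)-\ell(y,b)=(b-a)(2y-a-b)\). Cauchy--Schwarz then bounds the difference by \(n^{-1}\norm{\bF\bDelta}\cdot(2\norm{\by}+\norm{\ba}+\norm{\bb})\), where \(\ba,\bb\) are the two corrected fitted vectors. On \(\Omega\), \(\norm{\bX\hbb^t}\le C\sqrt n\) by Lipschitz propagation from \(\hbb^0=\boldzero\), which holds because \(\opnorm{\bX}/\sqrt n\) is bounded. Also \(\E\norm{\bep}\le\sqrt{n\zeta}\), and \(\norm{\bF\bW^\top\be_{t+1}}\) and \(\norm{\bF\hbW^\top\be_{t+1}}\) are \(O(\sqrt n)\) on \(\Omega\) once the weights are shown bounded at scale \(\sqrt n\). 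This leaves the same key quantity, multiplied by an \(O(1)\) factor after normalization. I would use Hölder, with second moments of \(\fnorm{\hbW-\bW}\) if needed.

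The core step rests on the identity \(\hbW-\bW=\cbK^{-1}(\cbA-\cbK\bW)\). First, \(\cbK\) is lower triangular with diagonal \(-\sqrt n\), and its off-diagonal entries are bounded by \(C\sqrt n\) on \(\Omega\). The entries are built from Jacobian blocks with norm at most \(\zeta\) and from traces normalized by \(\sqrt n\), and each such trace is at most \(\max(n,p)/\sqrt n\) times an operator norm. A triangular Neumann expansion of fixed size \(T\) then gives \(\opnorm{\cbK^{-1}}\le C/\sqrt n\). Second, I invoke Proposition~\ref{prop:approx-W} of the supplement, which asserts \(\cbK\bW\approx\cbA\), in the quantitative form \(\E[\I_\Omega\fnorm{\cbA-\cbK\bW}]\le C n^{3/4}\). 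Equivalently, the normalized discrepancy \(n^{-1/2}(\cbA-\cbK\bW)\) is \(O(n^{1/4})\) relative to entries of order \(\sqrt n\), that is, a relative error of \(n^{-1/4}\). Combining the two bounds gives \(\E[\I_\Omega\fnorm{\hbW-\bW}]\le Cn^{1/4}\), and therefore \(\E[\I_\Omega|\widehat{\calR}_t-\widetilde{\calR}_t|]\le Cn^{-1/4}\).

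The main obstacle is establishing Proposition~\ref{prop:approx-W} at rate \(n^{-1/4}\), if it is not already available in quantitative form. My approach would be the following. Contract \eqref{eq:K-motivation} and \eqref{eq:A-motivation} against \(\bH\) and \(\bF\), respectively. Use Gaussian Stein's formula, with second-order Stein inequalities for the variance, to show that \(\bX^\top\) applied to a vector-valued function of \(\bX\) is approximately equal to its derivative contraction involving \(\bSigma\). This links \(\bX^\top\bF\bW^\top\) to \(\sum\bX^\top\be_i\be_j^\top\bSigma\,\partial\bH/\partial x_{ij}\). The error terms are controlled by bounds of the form \(\E\sum_{ij}\fnorm{\partial \bF/\partial x_{ij}}^2\le C\), which follow from Lipschitz propagation through the recursion on \(\Omega\). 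Variance bounds give \(O(1)\) fluctuations at scale \(\sqrt n\); the \(n^{1/4}\) loss likely comes from taking a square root when dividing out a Gram-type quantity. Finally, the probability statement follows from Markov's inequality applied to \(\I_\Omega|\widehat{\calR}_t-\widetilde{\calR}_t|\) and to \(|\widetilde{\calR}_t-\calR_t|\), using Theorem~\ref{thm:rt-tilde} and \(\P(\Omega^c)\le e^{-n/2}\), with the \(n^{-1/2}\) term absorbed into \(n^{-1/4}\).
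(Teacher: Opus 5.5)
There is a genuine gap in your core step. Your reduction to the weight discrepancy, the bound $\opnorm{\cbK^{-1}}\le C/\sqrt n$ on $\Omega$, the loss-comparison step and the Markov argument are fine. The problem comes after you bound $\norm{\bF(\hbW-\bW)^\top\be_{t+1}}\le\fnorm{\bF}\fnorm{\hbW-\bW}$: you then need an \emph{unweighted} estimate such as $\E[\I_\Omega\fnorm{\cbA-\cbK\bW}]\le Cn^{3/4}$.

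Proposition~\ref{prop:approx-W} does not give this. It only controls the discrepancy weighted by the Gram matrix of the dual iterates,
\[
\E\bigl[\fnorm{\bF^\top\bF(\cbA^\top-\bW^\top\cbK^\top)}\bigr]\le C\sqrt n .
\]
The weighting is intrinsic. It comes from the exact identity $n^{-1}\bTheta_1\cbK^\top+\cbTheta_2+\cbTheta_4=n^{-1}\bF^\top\bF(\cbA^\top-\bW^\top\cbK^\top)$. There $\bW$ and $\cbA$ enter only through $\bF^\top\bF\bW^\top$ and $\bF^\top\bF\cbA^\top$, because they arise as divergence terms of $\bF^\top\be_i\be_j^\top\bH$ and $\bF^\top\be_i\be_j^\top\bX^\top\cbF$. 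Contracting against $\bF$, as your own sketch proposes, produces exactly this weighted form.

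Removing the weight would need a lower bound on $\lambda_{\min}(\bF^\top\bF)$, and none is available. Dual iterates can be nearly collinear, e.g.\ once the dual sequence stabilizes. They can even coincide exactly for update maps allowed by Assumption~\ref{assu:algorithm}. In the null directions of $\bF^\top\bF$ the weighted estimate says nothing about $\hbW-\bW$. So ``dividing out a Gram-type quantity'' is precisely the step that cannot be carried out.

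The missing idea is never to unweight. Set $\bDelta:=\bW-\hbW=\cbK^{-1}(\cbK\bW-\cbA)$ and $D_5:=\norm{\bF\bDelta^\top\be_{t+1}}^2$, and use
\[
D_5\le\fnorm{\bF\bDelta^\top}^2=\trace(\bDelta\bF^\top\bF\bDelta^\top)\le\sqrt T\,\opnorm{\bDelta}\,\fnorm{\bF^\top\bF\bDelta^\top}.
\]
Bound the first factor crudely on $\Omega$. Using $\opnorm{\cbK}\vee\opnorm{\bW}\le C\sqrt n$ and $\opnorm{\cbA}\le Cn$, you get $\opnorm{\bDelta}\le\opnorm{\cbK^{-1}}\opnorm{\cbK\bW-\cbA}\le (C/\sqrt n)(Cn)=C\sqrt n$. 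Keep the weight in the second factor: $\fnorm{\bF^\top\bF\bDelta^\top}\le\opnorm{\cbK^{-1}}\fnorm{\bF^\top\bF(\cbK\bW-\cbA)^\top}$, whose expectation on $\Omega$ is $O(1)$ by the weighted proposition. This gives $\E[\I_\Omega D_5]\le C\sqrt n$.

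Your comparison step then yields $n^{-1/2}(C\sqrt n)^{1/2}$ in the Lipschitz case and $n^{-1}(C\sqrt n)^{1/2}(Cn)^{1/2}$ in the squared case. Both are $Cn^{-1/4}$. So the proposition itself holds at the natural relative rate $n^{-1/2}$, and the $n^{-1/4}$ is lost only in this crude-times-sharp split, not in the proposition.

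Proving the weighted proposition also needs more than the first-order Stein contractions you describe. The mixed product-rule residual $\cbTheta_4$ couples the derivative contractions of $\bF$ and $\cbF$. The paper controls it with a second-order Stein-type bound.
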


The slower \(n^{-1/4}\) rate arises from replacing the population correction
matrix \(\bW\) by its data-driven counterpart \(\hbW\). This replacement
introduces an additional error in passing from the covariance-dependent
estimator \(\widetilde{\calR}_t\) to the covariance-free estimator
\(\widehat{\calR}_t\). Our current analysis yields only an
\(n^{-1/4}\) bound for this error, and we do not expect this rate to be
optimal.

Theorem~\ref{thm:rt-hat} extends the data-driven pathwise risk-estimation
results of \citet[Theorem~2.1]{bellec2024uncertainty} and
\citet[Theorem~3.7]{tan2024estimating} to the present primal--dual setting. In
the square-loss full-batch gradient descent and proximal-gradient settings of
\citet{bellec2024uncertainty}, the covariance-free correction is determined by
one empirical memory matrix. In the present notation,
this is the identity
\(
\cbK=-\sqrt n(\bI_T-\cbA/n)
\)
(valid only in the setting of \citet{bellec2024uncertainty}),
so \(\hbW=\cbK^{-1}\cbA\) is a function of \(\cbA\) alone.

This algebraic simplification does not extend beyond squared loss or to SGD, where random sampling breaks the full-batch identity. It also does not extend to
the Chambolle--Pock recursion studied here: the derivative contractions defining
\(\cbK\) and \(\cbA\) remain distinct, so the covariance-free correction must
use the triangular system \(\cbK\hbW=\cbA\). In contrast to the corresponding
matrix in \citet{tan2024estimating}, which is invertible only on a
high-probability event, \(\cbK\) from the current paper is always invertible because it
is lower triangular with diagonal entries \(-\sqrt n\).

The proof of Theorem~\ref{thm:rt-hat} is given in
Appendix~\ref{sec:proof-rt-hat} of the Supplementary Material. It also
justifies using \(\widehat{\calR}_t\) to choose an early stopping time. Since
only finitely many iterates are compared, uniform control of the estimation
error implies that the selected iterate has risk close to the best risk along
the computed path.

\begin{corollary}[Risk estimation for early stopping]
\label{cor:early-stopping}
Under the assumptions of Theorem~\ref{thm:rt-hat}, let
\[
\widehat t\in\argmin_{0\le t<T}\widehat{\calR}_t,
\qquad
t^\star\in\argmin_{0\le t<T}\calR_t,
\]
with ties broken arbitrarily. Then, for every \(\epsilon>0\),
\[
\P\left(
\calR_{\widehat t}-\calR_{t^\star}>\epsilon
\right)
\le
T e^{-n/2}
+
\frac{2T\,C(T,\gamma,\zeta,\kappa)}{\epsilon\,n^{1/4}}.
\]
\end{corollary}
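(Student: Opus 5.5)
The plan is a deterministic reduction to uniform control of the estimation error, followed by a union bound over the \(T\) iterates using Theorem~\ref{thm:rt-hat}. Set \(\Delta=\max_{0\le t<T}|\widehat{\calR}_t-\calR_t|\). By the definitions of \(\widehat t\) and \(t^\star\), the chain
\[
\calR_{\widehat t}
\le \widehat{\calR}_{\widehat t}+\Delta
\le \widehat{\calR}_{t^\star}+\Delta
\le \calR_{t^\star}+2\Delta
\]
holds for every realization. It does not matter how ties are broken, because only the minimizing property of \(\widehat t\) is used. Consequently
\[
\{\calR_{\widehat t}-\calR_{t^\star}>\epsilon\}\subseteq\{\Delta>\epsilon/2\}
=\bigcup_{t=0}^{T-1}\{|\widehat{\calR}_t-\calR_t|>\epsilon/2\}.
\]

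Next, apply the union bound over the \(T\) indices \(t\in\{0,\ldots,T-1\}\). For each fixed \(t\), invoke the second display of Theorem~\ref{thm:rt-hat} with \(\epsilon/2\) in place of \(\epsilon\):
\[
\P\bigl(|\widehat{\calR}_t-\calR_t|>\epsilon/2\bigr)\le e^{-n/2}+\frac{2C(T,\gamma,\zeta,\kappa)}{\epsilon n^{1/4}}.
\]
Summing these \(T\) bounds gives exactly \(Te^{-n/2}+2TC/(\epsilon n^{1/4})\), which is the stated estimate. One could sharpen the exponential term to a single \(e^{-n/2}\) by intersecting with \(\Omega\) only once, but the stated form does not require this.

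No step here is a genuine obstacle; the argument is a direct consequence of Theorem~\ref{thm:rt-hat}. The only points needing care are the following.
\begin{itemize}
\item The constant \(C(T,\gamma,\zeta,\kappa)\) in Theorem~\ref{thm:rt-hat} should be taken uniform over \(t<T\). If it varies with \(t\), replace it by its maximum over \(t\), which still depends only on \(T,\gamma,\zeta,\kappa\).
\item The random index \(\widehat t\) is handled without any measurability subtlety, because the event inclusion above is pointwise.
\end{itemize}
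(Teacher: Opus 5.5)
Your proposal is correct and matches the paper's proof: the same deterministic chain \(\calR_{\widehat t}\le\calR_{t^\star}+2\Delta\), the inclusion \(\{\calR_{\widehat t}-\calR_{t^\star}>\epsilon\}\subseteq\{\Delta>\epsilon/2\}\), and a union bound over the \(T\) applications of Theorem~\ref{thm:rt-hat} at level \(\epsilon/2\). Your side remarks, that the constant can be taken uniform in \(t\) and that the pointwise inclusion avoids measurability issues for the random index \(\widehat t\), are sound points the paper leaves implicit.
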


Thus, for fixed \(T\), minimizing \(\widehat{\calR}_t\) is asymptotically
oracle optimal among the first \(T\) iterates.
The proof of Corollary~\ref{cor:early-stopping} is given in Appendix~\ref{sec:proof-early-stopping}
of the Supplementary Material.

\subsection{Beyond Gaussian designs: universality for square-root ridge}

We next give a non-Gaussian extension of the risk-estimation results in a
setting where the primal--dual trajectory can be analyzed spectrally:
square-root ridge at a fixed iteration horizon, with squared test loss. The
restriction to ridge is useful because the ridge proximal map is linear. As a
result, the Chambolle--Pock iterates and the derivative contractions defining
\(\bW,\cbK,\cbA\) admit finite-dimensional spectral representations generated by
\(\bX^\top\bX\) and \(\bSigma\). This structure allows a direct comparison
between a non-Gaussian design and its covariance-matched Gaussian counterpart.
The uniform-design simulations in Section~\ref{sec:numerical-experiments}
suggest that the same universality phenomenon may hold more broadly, but the
theorem below is restricted to this square-root ridge setting.

\begin{assumption}[Square-root ridge universality setting]
\label{assu:sqrt-ridge-universality}
In this subsection, \(p=p_n\), \(\bSigma_n\), and \(\bb^*=\bb_n^*\) may depend
on \(n\). The following conditions hold.
\begin{enumerate}[label=(\roman*)]
\item Square-root ridge is run with penalty
\(g(\bb)=\lambda\|\bb\|_2^2/2\), where \(\lambda>0\) is fixed, and prediction
risk is evaluated using squared test loss. The Chambolle--Pock constants
\((\sigma,\tau,\theta)\) are fixed, and the step sizes are
\(\sigma_n=\sigma/\sqrt n\) and \(\tau_n=\tau/\sqrt n\). The recursion is
initialized at \(\bu^0=\boldzero_n\) and \(\hbb^0=\boldzero_p\).

\item The non-Gaussian design and its matched Gaussian design are
\[
\bX=\bZ\bSigma_n^{1/2},
\qquad
\bX_{\rm G}=\bG\bSigma_n^{1/2},
\]
where \(\bG\) has iid standard Gaussian entries and \(\bZ\) has iid mean-zero,
variance-one entries with uniformly bounded sub-Gaussian norm. The matrices
\(\bZ\) and \(\bG\) are independent.

\item The asymptotic regime satisfies \(p/n\to\gamma_0\in(0,\infty)\). The
covariance matrices \(\bSigma_n\) have eigenvalues bounded above and below by
positive constants, and \(\norm{\bb^*}_2\) is bounded. For each fixed
\(k\ge0\), the empirical spectral moments \(p^{-1}\trace(\bSigma_n^k)\) and the
signal-alignment moments \(\bb^{*\top}\bSigma_n^k\bb^*\) have limits.

\item The noise variables are iid, independent of \(\bZ\) and \(\bG\), with
mean zero, variance \(\sigma_\ep^2\), and finite fourth moment.
\end{enumerate}
\end{assumption}

Let \(\by=\bX\bb^*+\bep\) and
\(\by_{\rm G}=\bX_{\rm G}\bb^*+\bep\), where the noise realization in the
two training samples is the same. Run the same Chambolle--Pock recursion on the
non-Gaussian data \((\bX,\by)\) and on the matched Gaussian data
\((\bX_{\rm G},\by_{\rm G})\). In both models, prediction risk is evaluated
using a fresh Gaussian test point with covariance \(\bSigma_n\) and an
independent noise draw with the same law. Thus the two risks differ only through
the training design, not through the test distribution. The shared-noise
coupling is used only to compare the two trajectories; each marginal model has
the noise distribution specified in
Assumption~\ref{assu:sqrt-ridge-universality}. We use superscript \({\rm Z}\)
for the non-Gaussian design and superscript \({\rm G}\) for the matched
Gaussian design. Thus
\[
\calR_t^{\rm Z},\ \widetilde{\calR}_t^{\rm Z},\ \widehat{\calR}_t^{\rm Z}
\quad\text{and}\quad
\calR_t^{\rm G},\ \widetilde{\calR}_t^{\rm G},\ \widehat{\calR}_t^{\rm G}
\]
denote the corresponding risks and estimators in the two models.

\begin{assumption}[No-boundary condition]
\label{assu:sqrt-ridge-no-boundary}
For the fixed \(t\) under consideration, let \(\mu_s\) denote the deterministic
limit of the squared norm of the vector entering the square-root projection
\(\bz\mapsto \bz/\max\{1,\|\bz\|_2\}\) at iteration \(s\), as constructed in
Appendix~\ref{sec:universality}. We assume
\[
\mu_s\neq 1,\qquad s=1,\ldots,t.
\]
The value \(\mu_s=1\) corresponds to the nondifferentiability boundary of the
projection. Thus this condition keeps both the non-Gaussian and matched
Gaussian trajectories away from that boundary with probability tending to one.
\end{assumption}

Informally, this is a nondegeneracy condition: the limiting trajectory does not
land on the kink of the square-root projection map.
Assumption~\ref{assu:sqrt-ridge-no-boundary} is used only for the estimator
universality statements below. The true-risk comparison uses only the
continuity of the square-root projection, while the correction estimators use
its Jacobian, which is discontinuous at the projection boundary.

\begin{theorem}[Universality for square-root ridge]
\label{thm:main-universality-sqrt-ridge}
Under Assumption~\ref{assu:sqrt-ridge-universality}, for every fixed
\(t\in\{0,\ldots,T-1\}\),
\[
\calR_t^{\rm Z}-\calR_t^{\rm G}\limp 0.
\]
If Assumption~\ref{assu:sqrt-ridge-no-boundary} also holds,
then
\[
\widetilde{\calR}_t^{\rm Z}-\widetilde{\calR}_t^{\rm G}\limp 0,
\qquad
\widehat{\calR}_t^{\rm Z}-\widehat{\calR}_t^{\rm G}\limp 0.
\]
Combining these universality comparisons with the Gaussian-design bounds in
Theorems~\ref{thm:rt-tilde} and~\ref{thm:rt-hat} gives
the following consistency statements for the non-Gaussian design:
\[
\widetilde{\calR}_t^{\rm Z}-\calR_t^{\rm Z}\limp 0,
\qquad
\widehat{\calR}_t^{\rm Z}-\calR_t^{\rm Z}\limp 0.
\]
\end{theorem}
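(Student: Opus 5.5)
Because the ridge proximal map is linear, the plan is to reduce all three comparisons to statements about a fixed, finite collection of scalar functionals of the designs. Writing \(\bpsi(\bw)=\bw/(1+\tau_n\lambda)\) and \(\bphi(\bz)=\bz/\max\{1,\|\bz\|_2\}\), an induction on \(t\) shows the following. Up to the scalar normalizations \(c_s=1/\max\{1,\|\bz^s\|_2\}\) produced by the square-root projection, each iterate \(\hbb^t\) and \(\bu^t\) is a finite linear combination, with coefficients polynomial in \((c_1,\dots,c_t)\), of vectors of the form \(P(\bX^\top\bX/n)\bX^\top\bep/\sqrt n\), \(P(\bX^\top\bX/n)\bX^\top\bX\bb^*/n\), \(\bX P(\bX^\top\bX/n)\bX^\top\bep/n\), and so on. Here \(P\) ranges over polynomials of degree bounded in terms of \(t\). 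The squared norms \(\|\bz^s\|_2^2\), the true risk \(\calR_t=\|\bSigma_n^{1/2}(\hbb^t-\bb^*)\|^2+\sigma_\ep^2-\text{const}\), and the training-error terms in \(\widetilde{\calR}_t,\widehat{\calR}_t\) are then continuous functions of finitely many quadratic forms. These are of the type \(\bep^\top \bX P(\bX^\top\bX/n)\bX^\top\bep/n^2\), \(\bb^{*\top}Q(\bX^\top\bX/n,\bSigma_n)\bb^*\), and \(\bep^\top \bX Q\bb^*/n\), where \(Q\) is a noncommutative polynomial in \(\bX^\top\bX/n\) and \(\bSigma_n\).

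The first step is to show that each such quadratic form converges in probability to the same deterministic limit under \(\bZ\) and under \(\bG\).
\begin{itemize}
\item For forms with \(\bep\) on both sides, I condition on \(\bX\) and use the finite fourth moment of the noise. The form concentrates around \(\sigma_\ep^2 n^{-2}\trace(\bX P\bX^\top)\), a normalized trace.
\item Cross terms \(\bep^\top(\cdot)\bb^*\) have conditional variance \(O(1/n)\) and vanish.
\item Normalized traces of polynomials in \(\bX^\top\bX/n\) and \(\bSigma_n\) have universal limits. This follows by the moment method: expand the trace as a sum over words, and observe that only pair partitions (noncrossing, after the usual counting) survive. Their contribution depends only on the second moments of the entries and on the spectral moments \(p^{-1}\trace(\bSigma_n^k)\), which converge by assumption. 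Variances are \(O(n^{-2})\) by the standard sub-Gaussian counting.
\item Signal forms \(\bb^{*\top}Q\bb^*\) are handled in the same way, using the alignment moments \(\bb^{*\top}\bSigma_n^k\bb^*\).
\end{itemize}
Alternatively, a Lindeberg swap of the entries of \(\bZ\) for those of \(\bG\) works, applied to a smoothed version of these bounded-degree polynomials. This gives the limits \(\mu_s\) of \(\|\bz^s\|_2^2\) and all the other scalars.

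For the true risk, the map \(x\mapsto 1/\max\{1,x\}\) is continuous. So the \(c_s\) converge to common limits, and the continuous mapping theorem applied to the finitely many quadratic forms gives \(\calR_t^{\rm Z}-\calR_t^{\rm G}\limp0\), with no boundary condition needed. For the estimators, the Jacobian of \(\bphi\) at \(\bz\) is \(\bI\) if \(\|\bz\|<1\) and \(\|\bz\|^{-1}(\bI-\bz\bz^\top/\|\bz\|^2)\) otherwise. Under Assumption~\ref{assu:sqrt-ridge-no-boundary}, with probability tending to one every \(\|\bz^s\|^2\) lies on the same side of \(1\) as \(\mu_s\) in both models, so the active branch is identical in both models. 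On that event, the entries of \(\bW,\cbK,\cbA\) obtained from the formal definitions in Appendix~\ref{sec:formal-correction-definitions} are again continuous functions of traces and quadratic forms of the same type. The rank-one terms \(\bz\bz^\top\) contribute quadratic forms, and traces of the Jacobians contribute normalized traces. Hence they converge to common limits, and so do \(\hbW=\cbK^{-1}\cbA\); this uses that \(\cbK/\sqrt n\) has diagonal \(-1\), so the triangular inverse is continuous. The corrected training errors \(n^{-1}\|\by-\bX\hbb^t+\sum_s w_{t+1,s}\bu^s\|^2-n^{-1}\|\by\|^2\) expand into the same quadratic forms, giving the two estimator comparisons.

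The final consistency claims follow by the triangle inequality. For example, \(\widehat{\calR}_t^{\rm Z}-\calR_t^{\rm Z}\) is split as \((\widehat{\calR}_t^{\rm Z}-\widehat{\calR}_t^{\rm G})+(\widehat{\calR}_t^{\rm G}-\calR_t^{\rm G})+(\calR_t^{\rm G}-\calR_t^{\rm Z})\). The middle term is \(o_P(1)\) by Theorems~\ref{thm:rt-tilde} and~\ref{thm:rt-hat}, via Markov's inequality and \(\P(\Omega^c)\le e^{-n/2}\). This requires checking that the Gaussian model satisfies Assumptions~\ref{assu:X}--\ref{assu:test-function}. That check is straightforward, since the square-root projection is bounded and \(1\)-Lipschitz, the ridge prox is a contraction, and the squared-loss regime holds. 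I expect the main obstacle to be the bookkeeping that puts the Jacobian-based matrices \(\cbK,\cbA,\bW\) into closed form as finite trace and quadratic-form expressions, together with the universality of mixed traces involving both \(\bSigma_n\) and \(\bX^\top\bX/n\). My first attempt at the latter would be the Lindeberg replacement applied to resolvent-free polynomial functionals, with bounds uniform in the bounded degree.
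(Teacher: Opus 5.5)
Your proposal is correct and follows essentially the same route as the paper: a Krylov representation of the square-root ridge trajectory whose coefficients are continuous functions of finitely many primitive traces and quadratic forms, matched-Gaussian universality of those primitives (conditioning on the design for noise forms, vanishing cross terms, Lindeberg or moment arguments for design-only terms), the no-boundary condition to freeze the branch of the projection Jacobian, the unit-diagonal triangular structure of $\cbK/\sqrt n$ for continuity of $\hbW/\sqrt n$, and the triangle inequality with Theorems~\ref{thm:rt-tilde} and~\ref{thm:rt-hat}. The bookkeeping you flag as the main obstacle is done in the paper along the lines you anticipate: an induction shows that the blocks of $\calM^{-1}\bJ_{\bPsi^\eta}$ and $\calM^{-1}\bJ_{\bPhi^v}$ stay in finite-dimensional classes, with coefficients continuous in the primitives, spanned by polynomials in $\bA_n$ or $\bS_n$ (times $\bX$ or $\bX^\top$) and by rank-one terms such as $\bA_n^a\tilde\by\tilde\by^\top\bX\bS_n^b$.
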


The proof is given in Appendix~\ref{sec:universality} of the
Supplementary Material. It uses a Krylov
representation of the square-root ridge trajectory and of the correction
weights. This representation exploits the linear spectral structure of the ridge
proximal map, and the squared test loss allows the risk to be expressed through
the same spectral quantities. These are the reasons the present universality
proof is limited to ridge penalties and squared test loss.

\section{Numerical experiments}
\label{sec:numerical-experiments}

This section studies the finite-sample behavior of the proposed risk
estimators along the computed Chambolle--Pock path. We first compare the true
conditional risk with the covariance-dependent estimator
\(\widetilde{\calR}_t\) and the covariance-free estimator
\(\widehat{\calR}_t\) for nonsmooth regression problems with several penalties.
We then examine design universality by comparing Gaussian designs with
covariance-matched uniform designs. The main universality comparison is for
square-root ridge, matching Theorem~\ref{thm:main-universality-sqrt-ridge};
the remaining Gaussian-versus-uniform comparisons are empirical robustness
checks beyond the theorem. We next describe the common simulation design, the
algorithms used to generate the trajectories, and the way the risks are
evaluated.

\subsection{Simulation design}

We generate data from the linear model
\(\by=\bX\bb^*+\bep\) with sample size \(n=2000\), ambient dimensions
\(p=1000\) and \(p=2400\), and iteration horizon \(T=300\). The covariance
matrix has Toeplitz form
\((\bSigma)_{jk}=0.5^{|j-k|}\). The main text reports the \(p=1000\) case; the
\(p=2400\) case is deferred to
Section~\ref{sec:additional-numerical-figures} of the Supplementary Material; the
same section also contains additional simulations for the remaining method
combinations.
In the Gaussian-design panels,
the rows of \(\bX\) are sampled independently from \(N(\bzero,\bSigma)\). For the
universality plots, we compare this design with the matched non-Gaussian design
\(\bX=\bZ\bSigma^{1/2}\), where the entries of \(\bZ\) are sampled
independently from the uniform distribution on \([-\sqrt 3,\sqrt 3]\). This
choice gives mean zero and variance one before the covariance transformation,
so the two designs have the same population covariance. The true coefficient
vector \(\bb^*\) is sparse: only the first \(\lfloor p/10\rfloor\) entries are
nonzero, and it is scaled so that \(\|\bSigma^{1/2}\bb^*\|^2=2\). Table
\ref{tab:regression-methods} lists the noise distribution and test loss used for
each data-fitting loss. Each plotted curve is the Monte Carlo mean over \(100\)
independent simulation runs; shaded bands, when present, show \(\pm 2\) standard
errors.

We consider eight procedures obtained by combining
two data-fitting losses with four penalties. The loss is either the normalized
LAD loss \(n^{-1/2}\|\by-\bX\bb\|_1\) or the square-root loss
\(\|\by-\bX\bb\|_2\). The penalty is ridge, Lasso, MCP
\citep{zhang10-mc+}, or SCAD \citep{fan2001variable}. The resulting methods and
evaluation regimes are listed in Table~\ref{tab:regression-methods}.
MCP and SCAD are included only
as empirical checks using their single-valued proximal maps in
Appendix~\ref{sec:additional-proximal-formulas} of the Supplementary Material;
no convergence guarantee is
claimed for these non-convex objectives.

\begin{table}[ht]
\centering
\small
\renewcommand{\arraystretch}{1.2}
\setlength{\tabcolsep}{4pt}
\begin{tabular}{lll|cccc}
\hline
\multirow{2}{*}{\textbf{Data-fitting loss}}
& \multirow{2}{*}{\textbf{Noise}}
& \multirow{2}{*}{\textbf{Test loss}}
& \multicolumn{4}{c}{\textbf{Penalty}} \\
\cline{4-7}
 & & & Ridge & Lasso & MCP & SCAD \\
\hline
LAD
& Student-\(t_2\)
& Absolute value
& \(\checkmark\) & \(\checkmark\) & \(\checkmark\) & \(\checkmark\) \\
Square-root loss
& $N(0,1)$
& Squared error
& \(\checkmark\) & \(\checkmark\) & \(\checkmark\) & \(\checkmark\) \\
\hline
\end{tabular}
\caption{Simulation regimes and penalty combinations considered in the experiments.}
\label{tab:regression-methods}
\end{table}

For all methods in Table~\ref{tab:regression-methods},
we run the residual-form Chambolle--Pock recursion \eqref{eq:CP-update} with
constants \((\theta,\sigma,\tau)=(0.5,0.1,0.1)\), equivalently step sizes
\(\sigma_n=\sigma/\sqrt n\) and \(\tau_n=\tau/\sqrt n\). We initialize
\(\bu^0=\boldzero_n\) and \(\hbb^0=\boldzero_p\), and run \(T=300\)
iterations.

The dual map \(\bphi\) is determined by the data-fitting loss and is listed in
Table~\ref{tab:loss-function}; the primal map \(\bpsi\) is determined by the
penalty. The ridge and Lasso proximal maps are listed in
Table~\ref{tab:penalty-function}, and the MCP/SCAD proximal maps are given in
Section~\ref{sec:additional-proximal-formulas} of the Supplementary Material. We set
\(\lambda=0.5\), \(\gamma_{\mathrm{mcp}}=3\) for MCP, and \(a=3.7\)
for SCAD. Computing the risk estimators requires \(\bW\) and the matrices
\(\cbK\) and \(\cbA\) used to obtain \(\hbW\). We approximate the trace terms
in their entrywise definitions \eqref{eq:W-row-recursion}--\eqref{eq:A-row-recursion}
by Hutchinson's method, using \(m=5\) independent Rademacher probe vectors.
Propagating the probe products through the linearized Chambolle--Pock recursion
requires \(O(mnpT^2)\) operations and \(O(m(n+p)T+T^2)\) memory, followed by an
\(O(T^3)\) lower-triangular solve for \(\hbW\). The true-risk curves below do
not use this stochastic approximation.

The figures use uncentered risks
on the standard test-loss scale; this differs from
Assumption~\ref{assu:test-function} only by an iterate-independent baseline. For
LAD-based procedures, the benchmark is
\[
\calR_t^{\rm abs}
=
\E\bigl[
|y_{\rm new}-\bx_{\rm new}^\top\hbb^t|
\mid(\bX,\by)
\bigr].
\]
For square-root-loss procedures, the benchmark is
\[
\calR_t^{\rm sq}
=
\E\bigl[
(y_{\rm new}-\bx_{\rm new}^\top\hbb^t)^2
\mid(\bX,\by)
\bigr]
=
(\hbb^t-\bb^*)^\top\bSigma(\hbb^t-\bb^*)+1,
\]
and the corrected estimators use the same test loss as their benchmark. For the
absolute-value benchmark, the Gaussian and uniform cases are evaluated by the
corresponding deterministic one-dimensional integrals; the squared-risk
benchmark is computed from the displayed formula.

\subsection{Results}

Figure~\ref{fig:gaussian-risk-comparison} shows the \(n>p\) risk-estimation
comparison under Gaussian design. Columns correspond to penalties, and rows
correspond to data-fitting loss. The LAD rows use
absolute-value risk and the square-root rows use squared risk, so the vertical
scales should be read row by row. The purpose of these plots is to assess risk
estimation along the computed finite path, not only at a terminal or converged
iterate; some trajectories have nearly stabilized by the end of the run, while
others have not. Across all panels, \(\widetilde{\calR}_t\) and
\(\widehat{\calR}_t\) track the true test-risk curve and the location of its minimum along the computed path. The Supplementary Material reports the corresponding
\(n<p\) comparison in Section~\ref{sec:additional-numerical-figures}.

Figure~\ref{fig:universality-square-root-ridge} gives the main empirical
universality check. It focuses on square-root ridge, the setting covered by
Theorem~\ref{thm:main-universality-sqrt-ridge}, and directly compares the
Gaussian design with the covariance-matched uniform design. The three panels
show the true risk \(\calR_t\), the covariance-dependent estimator
\(\widetilde{\calR}_t\), and the covariance-free estimator
\(\widehat{\calR}_t\). In all three panels, the Gaussian and uniform curves
nearly overlap, illustrating the universality comparison for both the true risk
and both risk estimators.
Section~\ref{sec:additional-numerical-figures} includes additional Gaussian-versus-uniform
comparisons for the \(n<p\) square-root ridge setting and other loss--penalty
combinations. These empirical results suggest that the universality phenomenon
may extend more broadly.
\begin{figure}[!tbp]
	\centering
	\includegraphics[width=0.98\textwidth]{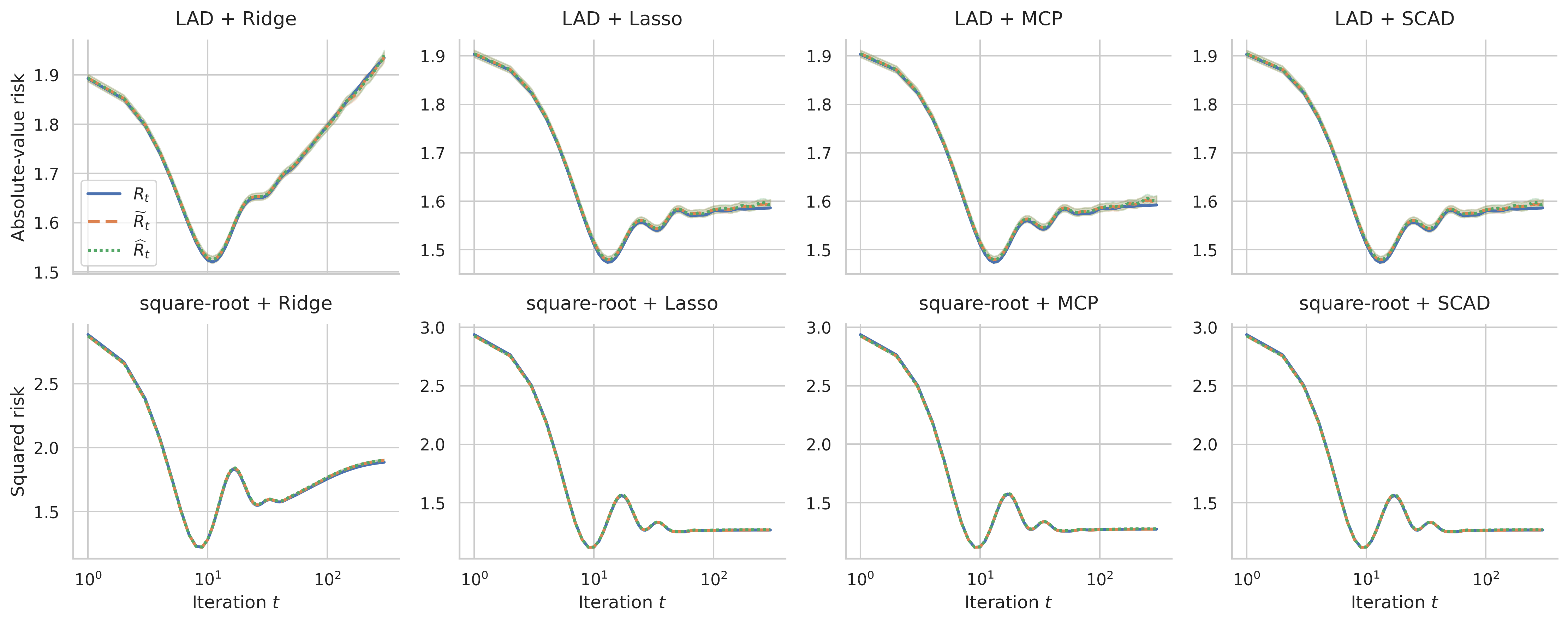}
		\caption[Matched test-loss risk estimates for n larger than p]{Risk estimates over the iteration path with \((n,p,T)=(2000,1000,300)\). Columns correspond to penalties, and rows correspond to data-fitting loss. The curves show the true test risk \(\calR_t\), the covariance-dependent estimator \(\widetilde{\calR}_t\), and the covariance-free estimator \(\widehat{\calR}_t\).}
	\label{fig:gaussian-risk-comparison}
\end{figure}

\begin{figure}[H]
	\centering
	\includegraphics[width=0.96\textwidth]{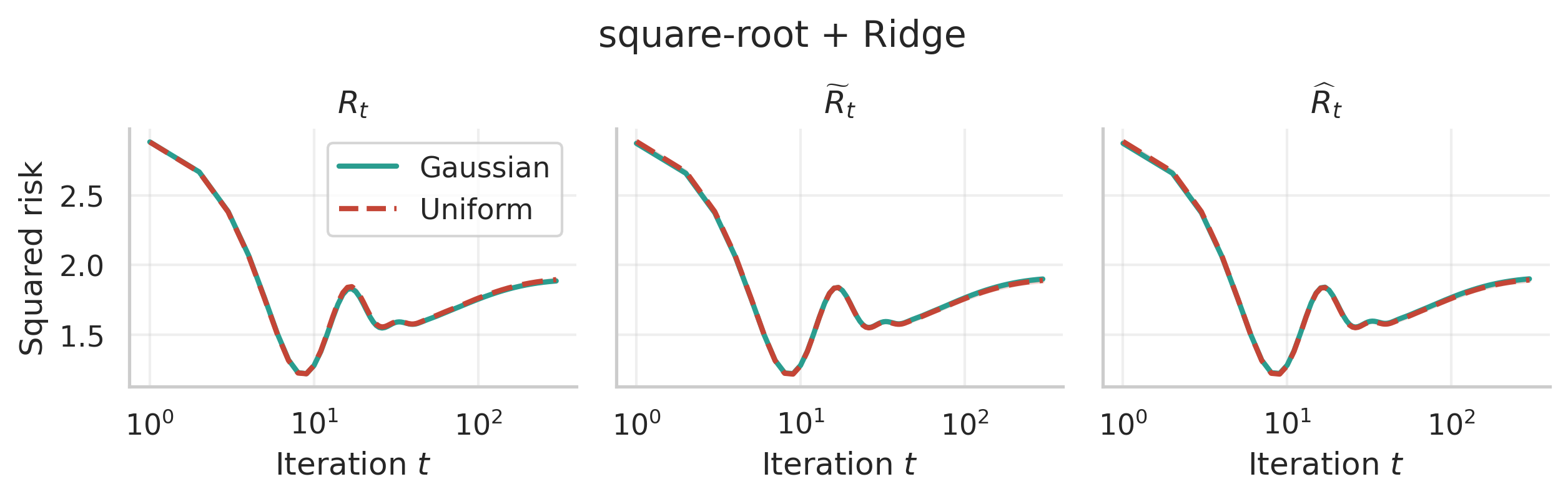}
	\caption[Universality for square-root ridge]{Gaussian-versus-uniform comparison for square-root ridge with \((n,p,T)=(2000,1000,300)\), using squared-error risk. The left, middle, and right panels compare \(\calR_t\), \(\widetilde{\calR}_t\), and \(\widehat{\calR}_t\), respectively; overlap of the paired curves illustrates the universality comparison.}
	\label{fig:universality-square-root-ridge}
\end{figure} 

\section{Discussion}

This paper develops two risk estimators for nonsmooth regression problems solved
by primal--dual algorithms, with the Chambolle--Pock recursion
\eqref{eq:CP-1} as the main example. The first estimator is
covariance-dependent and uses the population covariance in its correction
weights. The second estimator is covariance-free and replaces these weights by
observable derivative contractions computed from the algorithmic trajectory. For
Gaussian designs and a fixed iteration horizon, both estimators are consistent
for the conditional prediction risk. For square-root ridge, we further
prove a universality result showing that the risk and both correction estimators
have the same limits under Gaussian and covariance-matched non-Gaussian designs.

The analysis also points to several directions for further work. The main risk
bounds are finite-time results, with the iteration horizon fixed as the sample
size grows, and are proved under Gaussian designs. The universality theorem is
more specialized: it is established for square-root ridge, where the ridge
proximal map yields a spectral representation of both the iterates and the
correction terms. Extending the theory to growing iteration horizons, to
nonlinear proximal maps such as square-root Lasso, and to broader non-Gaussian
designs would make the results more directly applicable to long optimization
paths and more complex penalties. 
These
directions would further clarify how the proposed risk estimators can be used as a general tool for monitoring and tuning modern iterative regression
algorithms.

\bibliographystyle{plainnat}
\bibliography{biblio}    

\newpage
\setcounter{page}{1}
\begin{center}
{\LARGE\sc Supplementary Material}
\bigskip
\end{center}
\appendix
\noindent\textbf{Appendix roadmap.}
The appendix has two proof streams, followed by computational and numerical
supplements. The first proof stream establishes the Gaussian-design risk bounds
and the covariance-free replacement \(\hbW=\cbK^{-1}\cbA\). The second proves
the square-root ridge universality result. The remaining sections give the
formal correction definitions, their efficient computation, and additional
algorithmic and numerical material. Table~\ref{tab:appendix-roadmap}
summarizes the role of each appendix section and the results it supports.

\begin{table}[!htbp]
\centering
\small
\setlength{\tabcolsep}{4pt}
\renewcommand{\arraystretch}{1.12}
\begin{tabular}{
>{\raggedright\arraybackslash}p{0.18\textwidth}
>{\raggedright\arraybackslash}p{0.48\textwidth}
>{\raggedright\arraybackslash}p{0.25\textwidth}}
\hline
\textbf{Section} & \textbf{Task} & \textbf{Used for}\\
\hline
Section~\ref{sec:proof-ingredients}
& Decomposes the estimation error into \(R_1,\ldots,R_5\) and proves the
leave-one-out, Stein-correction, and residual bounds needed for these terms.
& Theorems~\ref{thm:rt-tilde} and~\ref{thm:rt-hat}.\\
Section~\ref{sec:proof-main}
& Assembles the residual bounds, applies the isotropic reduction, and proves the
main Gaussian-design guarantees.
& Theorems~\ref{thm:rt-tilde}--\ref{thm:rt-hat} and
Corollary~\ref{cor:early-stopping}.\\
Section~\ref{sec:useful-lemmas}
& Derives the pathwise derivative identities, Stein contractions, and algebraic
relations that produce the correction matrices.
& The residual bounds in Section~\ref{sec:proof-ingredients}.\\
Section~\ref{sec:reduction-isotropic}
& Reduces the general covariance model to the isotropic Gaussian analysis while
tracking how \(\bW\), \(\cbK\), and \(\cbA\) transform.
& Theorems~\ref{thm:rt-tilde} and~\ref{thm:rt-hat} under general covariance.\\
Section~\ref{sec:norm-moment-bounds}
& Collects operator-norm, inverse-\(\cbK\), trajectory, and derivative moment
bounds.
& All residual controls, especially the covariance-free \(R_5\) bound.\\
Section~\ref{sec:auxiliary-proofs}
& Proves the general derivative formula used by the Stein and contraction
arguments.
& Section~\ref{sec:useful-lemmas}.\\
Section~\ref{sec:universality}
& Proves the finite-dimensional Krylov representation and polynomial
universality argument for square-root ridge.
& Theorem~\ref{thm:main-universality-sqrt-ridge}.\\
Section~\ref{sec:computation-tricks}
& Gives the formal definitions and row-by-row computation of
\(\bW\), \(\cbK\), and \(\cbA\), including Hutchinson trace approximations.
& Implementation of \(\widetilde{\calR}_t\) and
\(\widehat{\calR}_t\).\\
Section~\ref{sec:algorithmic-supplements}
& Records additional proximal formulas, the linearized ADMM reduction, and
simulation figures omitted from the main text.
& Examples and numerical experiments.\\
\hline
\end{tabular}
\caption{Appendix roadmap.}
\label{tab:appendix-roadmap}
\end{table}

\section{Proof strategy and residual bounds}\label{sec:proof-ingredients}
We first reduce the two risk-estimation claims to five residual terms. The
bounds for these residuals yield the main theorems directly in the next
section. Their technical inputs--the derivative identities, covariance
reduction, and moment estimates--are established later in the appendix.
The analysis is split according to the two cases in
Assumption~\ref{assu:test-function}: globally 1-Lipschitz losses use only the
first moment of the noise, while the centered squared loss uses the second
moment.

Throughout this appendix, for a positive integer \(m\), write
\([m]=\{1,\ldots,m\}\), and write \(\I_E\) for the indicator of an event \(E\).
Let
\(\calD = \{(\bx_i,y_i):i\in[n]\}\) denote the full data set,
\(\calD_{-i} = \{(\bx_k,y_k):k\in[n]\setminus\{i\}\}\) the data set excluding the
\(i\)th sample, and
\(\calD_{-(i,l)} = \{(\bx_k,y_k):k\in[n]\setminus\{i,l\}\}\) the data set
excluding the \(i\)th and \(l\)th samples. We denote the iterates computed from
\(\calD\), \(\calD_{-i}\), and \(\calD_{-(i,l)}\) by \(\hbb^t\),
\(\hbb^{t,-i}\), and \(\hbb^{t,-(i,l)}\), respectively. Throughout this
leave-one-out analysis, the deleted and double-deleted iterates are defined by
rerunning the same recursion on the reduced datasets while keeping the ambient
normalization \(1/\sqrt n\), the step sizes, and all tuning parameters fixed at
their full-data values.

\subsection{Error decomposition}

The generalization risk for the iterate \(\hbb^t\) is
\[
\calR_t \defas \E[\ell(y_{\rm new}, \bx_{\rm new}^\top \hbb^{t}) \mid (\bX,\by)],
\]
where \((\bX,\by)\) is the training data and the expectation is taken over a fresh sample \((\bx_{\rm new},y_{\rm new})\) with the same distribution as the training observations.

The following decomposition is written in isotropic coordinates,
\(\bSigma=\bI_p\). For a general covariance matrix, the Stein divergence in the
original coordinates contains the covariance weight \(\bSigma\); the
change-of-variables argument in Section~\ref{sec:reduction-isotropic} converts
it to the unweighted form displayed below and transfers the final bounds back to
the original model.
We will use the following approximations:
\begin{align*}
	\calR_t 
	& \approx \frac 1n \sum_{i=1}^n \E[\ell(y_i, \bx_i^\top \hbb^{t,-i})| \calD_{-i}]\\
	& \approx \frac 1n \sum_{i=1}^n \ell(y_i, \bx_i^\top \hbb^{t,-i})\\
	&\approx \frac 1n \sum_{i=1}^n \ell(y_i, \bx_i^\top \hbb^{t} - \sum_j \pdv{\hbb^t_j}{x_{ij}}) \\
	& \approx 
	\underbrace{\frac 1n \sum_{i=1}^n \ell\big(y_i, \bx_i^\top \hbb^{t} - 
	\be_i^\top \bF \bW^\top\be_{t+1}\big)}_{\widetilde{\calR}_t}\\
	& \approx 
	\underbrace{\frac 1n \sum_{i=1}^n \ell\big(y_i, \bx_i^\top \hbb^{t} - 
	\be_i^\top \bF \hbW^\top\be_{t+1}\big)}_{\widehat{\calR}_t}.
\end{align*}

We denote the five approximation errors as \(R_1,\dots,R_5\), namely
\begin{align}
	&R_1 = \E[\ell(y_{\rm new}, \bx_{\rm new}^\top \hbb^{t})| \calD] - \frac 1n \sum_{i=1}^n \E[\ell(y_i, \bx_i^\top \hbb^{t,-i})| \calD_{-i}],\label{R1}\\
	&R_2 = \frac 1n \sum_{i=1}^n \E[\ell(y_i, \bx_i^\top \hbb^{t,-i})| \calD_{-i}] - \frac 1n \sum_{i=1}^n [\ell(y_i, \bx_i^\top \hbb^{t,-i})],\label{R2}\\
	&R_3 = \frac 1n \sum_{i=1}^n \ell(y_i, \bx_i^\top \hbb^{t,-i}) - \frac 1n \sum_{i=1}^n \ell\Bigl(y_i, \bx_i^\top \hbb^{t} - \sum_j \pdv{\hbb^t_j}{x_{ij}}\Bigr),\label{R3}\\
	&R_4 = \frac 1n \sum_{i=1}^n \ell\Bigl(y_i, \bx_i^\top \hbb^{t} - \sum_j \pdv{\hbb^t_j}{x_{ij}}\Bigr) - 
	\frac 1n \sum_{i=1}^n \ell\Big(y_i, \bx_i^\top \hbb^{t} - 
	\be_i^\top \bF \bW^\top\be_{t+1}\Big),\label{R4}\\
	&R_5 = \frac 1n \sum_{i=1}^n \ell\big(y_i, \bx_i^\top \hbb^{t} -
	\be_i^\top \bF \bW^\top\be_{t+1}\big) -
	\frac 1n \sum_{i=1}^n \ell\big(y_i, \bx_i^\top \hbb^{t} -
	\be_i^\top \bF \hbW^\top\be_{t+1}\big).\label{R5}
\end{align}
Note that the error terms \(R_1\) and \(R_2\) correspond to the terms \(V_2\) and \(V_1\), respectively, in Appendix C of \cite{rad2020error}.
By the triangle inequality,
\begin{equation}\label{eq:error-bound}
	\E[|\widetilde{\calR}_t - \calR_t|]
	\le \sum_{i=1}^4 \E[|R_i|] 
	\quad \text{and} \quad
	\E[|\widehat{\calR}_t - \calR_t|]
	\le \sum_{i=1}^5 \E[|R_i|]. 
\end{equation}
The proof below is organized around the five residuals. The preliminary lemmas
collect the stability, prediction-moment, and comparison estimates used in both
regimes. Lemmas~\ref{lem:R1}--\ref{lem:R5} then control \(R_1,\ldots,R_5\) in
order; each lemma is stated under Assumption~\ref{assu:test-function}, either in
case \((i)\) or in case \((ii)\), and its proof records which noise moment is
used.

\subsection{Preliminary lemmas}
\label{sec:preliminary-lemmas}

\begin{lemma}[Second-moment leave-one-out stability]
\label{lem:loo-perturbation}
Under Assumptions~\ref{assu:X}--\ref{assu:algorithm} and the second-moment
condition \(\E[\ep_i^2]\le\zeta\), let $(\bu^t,\hbb^t)$ be the iterates
generated by \eqref{eq:general-iteration} on the full dataset, and let
$(\bu^{t,-i},\hbb^{t,-i})$ be the iterates obtained by running the same algorithm
on the dataset with sample $i$ removed. Then for any $t\in\{1,\ldots,T\}$, 
\[
\E\Big[\sum_{i=1}^n \|\hbb^t-\hbb^{t,-i}\|_2^2\Big]
\;\le\;
C(T,\gamma,\zeta,\kappa),
\]
where $C(T,\gamma,\zeta,\kappa)$ does not depend on $n$ or $p$.
\end{lemma}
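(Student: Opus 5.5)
Throughout, \(C\) denotes a constant depending only on \((T,\gamma,\zeta,\kappa)\). The plan is to control the leave-one-out perturbation by induction over the iterations, propagating differences through the Lipschitz update maps. Write \(\bDelta_b^t=\hbb^t-\hbb^{t,-i}\) and \(\bDelta_u^t=\bu^t-\bu^{t,-i}\). The leave-one-out run uses the same recursion with row \(i\) removed. Equivalently, it runs on \((\bX_{-i},\by_{-i})\) with row \(i\) zeroed. In that formulation \(\bu^{t,-i}\in\R^n\) has \(i\)th coordinate fixed at zero, or more generally at whatever the dual map gives for a zeroed row; it is cleanest to define the leave-one-out recursion so that its \(i\)th dual coordinate is set to zero.

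\textbf{Primal step.} By Assumption~\ref{assu:algorithm},
\[
\|\bDelta_b^t\|\le \zeta\sum_{s<t}\|\bDelta_b^s\|+\zeta\sum_{s\le t}\|\bfeta^s-\bfeta^{s,-i}\|.
\]
Here \(\bfeta^s-\bfeta^{s,-i}=n^{-1/2}\bX^\top\bDelta_u^s+n^{-1/2}\bx_i u^{s,-i}_i\). The last term vanishes if we zero the \(i\)th leave-one-out dual coordinate. Otherwise, we write the difference as \(n^{-1/2}\bX^\top\bDelta_u^s+n^{-1/2}\bx_i u_i^s\), with \(u_i^s\) the full-data coordinate.

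\textbf{Dual step.} For the dual, the residuals differ by \(\bv^s-\bv^{s,-i}=-n^{-1/2}\bX_{-i}\bDelta_b^s\) on coordinates \(k\neq i\). On coordinate \(i\), we instead get a term \(n^{-1/2}(y_i-\bx_i^\top\hbb^s)\).

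Taking squared norms and using \(\opnorm{\bX}/\sqrt n\le C\) on \(\Omega\) gives the recursive bound
\[
\|\bDelta_b^t\|^2+\|\bDelta_u^t\|^2\le C\sum_{s<t}\Bigl(\|\bDelta_b^s\|^2+\|\bDelta_u^s\|^2\Bigr)+C\,a_i,
\]
with
\[
a_i=\frac1n\sum_{s\le T}\Bigl(\|\bx_i\|^2 (u_i^s)^2+(y_i-\bx_i^\top\hbb^s)^2\Bigr)+\sum_{s\le T}(u^s_i)^2 .
\]
The base case holds because \(\hbb^0=\bzero\) and \(\bu^0=\bzero\) for both runs. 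Unrolling over at most \(T\) steps gives \(\|\bDelta_b^t\|^2\le C(T)\,a_i\) on \(\Omega\).

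\textbf{Summing over \(i\).} Summing the bound over \(i\), three contributions appear.
\begin{itemize}
\item The term \(\sum_i (u_i^s)^2=\|\bu^s\|^2\le\zeta^2\), by boundedness of \(\bphi^t\).
\item The term \(n^{-1}\sum_i(y_i-\bx_i^\top\hbb^s)^2=\|\bv^s\|^2\le 2\|\bep\|^2/n+2C\|\hbb^s-\bb^*\|^2\). Its expectation is bounded using \(\E\ep_i^2\le\zeta\) together with the bound \(\|\hbb^s\|\le C(1+\opnorm{\bX}/\sqrt n)^{C}\). The latter follows from Lipschitzness of \(\bpsi^s\), \(\|\bpsi^s(\bzero)\|\le\zeta\), and \(\|\bfeta^s\|\le\opnorm{\bX}\zeta/\sqrt n\).
\item The delicate term is \(n^{-1}\sum_i\|\bx_i\|^2(u_i^s)^2\), since \(\|\bx_i\|^2\approx p\). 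If it were bounded only by \(\max_i\|\bx_i\|^2/n\cdot\|\bu^s\|^2\), it would be of order \(\gamma\log n\)-free after all: \(\max_i\|\bx_i\|^2/n\le C\) with high probability, since \(\|\bx_i\|^2\le \kappa(p+C\sqrt{p\log n})\). So this term is \(O(1)\).
\end{itemize}

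\textbf{Off \(\Omega\).} Outside the high-probability event (operator norm plus row norms), we use crude polynomial moment bounds on \(\|\hbb^t\|\) in terms of \(\opnorm{\bX}/\sqrt n\). Cauchy--Schwarz then gives a contribution of order \(n\cdot\mathrm{poly}\cdot e^{-cn}\), which is negligible. Note that \(\sum_i\|\bDelta_b^t\|^2\) has \(n\) terms, so this requires the indicator probability to be exponentially small. That holds for \(\Omega\), and for the row-norm event via Gaussian concentration.

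\textbf{Main obstacle.} The main difficulty is the bookkeeping of the \(i\)th coordinate, that is, making the removed sample's contribution appear only through \(\bx_i u_i^s\) and \(v_i^s\) rather than through an \(O(1)\)-per-\(i\) term. If the \(i\)th leave-one-out dual coordinate were not small, summing over \(i\) would give order \(n\). I would first try to define the deleted recursion so that its dual vector lives in \(\R^{n-1}\), embedded with zero at coordinate \(i\). Then the perturbation is driven by \(u_i^s\), whose squares sum to \(\|\bu^s\|^2\le\zeta^2\). For general history-dependent \(\bphi^t\) that are not coordinatewise, this separation needs the maps to act compatibly on the reduced dimension. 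I would argue this via the paper's convention that the deleted run uses the same maps with the \(i\)th row removed, combined with Lipschitzness in the full \(\R^{2tn}\) input.
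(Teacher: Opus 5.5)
Your proof is correct, but it handles the forcing term differently from the paper. Both arguments embed the deleted dual and residual vectors with a zero in coordinate $i$. Both then run the same Lipschitz/Gr\"onwall propagation. The difference is the treatment of the $i$th residual coordinate.

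\textbf{The paper's route.} The paper writes that coordinate as $-n^{-1/2}\bx_i^\top(\hbb^s-\hbb^{s,-i})+v_i^{s,-i}$, where $v_i^{s,-i}=(y_i-\bx_i^\top\hbb^{s,-i})/\sqrt n$. This makes the forcing term independent of $(\bx_i,\ep_i)$ given $\calD_{-i}$. Conditioning on $\calD_{-i}$ and using Gaussian moments then gives $\E[(1+\lambda_X)^m(v_i^{s,-i})^2]\le C/n$, where $\lambda_X=\opnorm{\bX}^2/n$. Hence $\E\|\hbb^t-\hbb^{t,-i}\|_2^2\le C/n$ uniformly in $i$, and summing over $i$ finishes the proof.

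\textbf{Your route.} You keep the full-data residual $v_i^s$ as the forcing term and sum over $i$ pathwise, using $\sum_i(v_i^s)^2=\|\bv^s\|_2^2$. This needs no conditioning and no Gaussian moment computation for the forcing term. The only inputs are $\E\|\bep\|_2^2/n\le\zeta$ and the deterministic iterate bound, so it is shorter for the aggregate statement.

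\textbf{What each buys.} The paper's route gives a per-sample bound, uniform in $i$ and with arbitrary polynomial weights $(1+\lambda_X)^m$. That per-sample form is what Lemma~\ref{lem:loo-perturbation-weighted} and the double-deletion bounds reuse, and the latter feed the leave-two-out control of $R_2$. Lemma~\ref{lem:loo-first-moment} reuses the same independence structure for its $L^q$ bounds. From your sum bound, you would recover a per-$i$ statement only through exchangeability, which needs permutation equivariance of the maps $\bphi^t$.

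\textbf{Minor simplifications.}
\begin{itemize}
\item Your separate row-norm event is redundant, because $\max_i\|\bx_i\|_2^2\le\opnorm{\bX}^2$. If you kept it, the threshold would have to be $Cn$ rather than $p+C\sqrt{p\log n}$ for its complement to be exponentially small.
\item You can drop events altogether. Your recursion gives, pathwise, $\sum_i\|\hbb^t-\hbb^{t,-i}\|_2^2\le C(T,\zeta)(1+\lambda_X)^{2T}\sum_{s<t}\|\bv^s\|_2^2$. Combine this with $\|\bv^s\|_2^2\le 2\|\bep\|_2^2/n+2\lambda_X\|\bh^s\|_2^2$, the deterministic bound $\|\bh^s\|_2\le C(1+\lambda_X^{1/2})$, independence of $\bep$ and $\bX$, and Gaussian operator-norm moments.
\item Under the zero-embedding convention, the extra terms $n^{-1}\|\bx_i\|_2^2(u_i^s)^2$ and $(u_i^s)^2$ in $a_i$ do not arise. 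They are harmless in any case.
\end{itemize}
The compatibility of $\bphi^t$ with deletion that you flag is also assumed without comment in the paper's Lipschitz propagation step. It is therefore not a gap relative to the paper.
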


\begin{proof}[Proof of Lemma~\ref{lem:loo-perturbation}]
\label{proof:loo-stability}
Fix $i\in[n]$. Let $(\bu^t,\hbb^t)$ be the iterates from the full dataset, and
let $(\bu^{t,-i},\hbb^{t,-i})$ be the iterates obtained by rerunning the same
algorithm on the dataset with sample $i$ removed. Since $\bu^{t,-i}\in\R^{n-1}$,
we embed it into $\R^n$ by inserting a zero at coordinate $i$; denote the embedded
vector by $\tilde\bu^{t,-i}\in\R^n$. Define the leave-one-out differences
\[
\Delta_u^{t,i}:=\bu^t-\tilde\bu^{t,-i}\in\R^n,
\qquad
\Delta_b^{t,i}:=\hbb^t-\hbb^{t,-i}\in\R^p.
\]
Also define
\[
\bv^s := \frac{\by-\bX\hbb^s}{\sqrt n},
\qquad
\bfeta^s := \frac{\bX^\top\bu^s}{\sqrt n},
\]
and define $\tilde\bv^{s,-i}$ and $\tilde\bfeta^{s,-i}$ analogously using
$\tilde\bu^{s,-i}$ and $\hbb^{s,-i}$ (with the same embedding convention).
For convenience, also write
\[
v_i^{s,-i}:=\frac{y_i-\bx_i^\top \hbb^{s,-i}}{\sqrt n}.
\]

\medskip

\noindent\textbf{Lipschitz propagation.}
By Assumption~\ref{assu:algorithm}, $\bphi^t$ and $\bpsi^t$ are $\zeta$-Lipschitz
(with respect to the Euclidean norm on the concatenated inputs). Hence for $t\ge1$,
\begin{align}
\|\Delta_u^{t,i}\|_2^2
&\le
\zeta^2\Big(\sum_{s=0}^{t-1}\|\Delta_u^{s,i}\|_2^2
+
\sum_{s=0}^{t-1}\|\Delta_v^{s,i}\|_2^2\Big),
\label{eq:loo-u-Lip}\\
\|\Delta_b^{t,i}\|_2^2
&\le
\zeta^2\Big(\sum_{s=0}^{t-1}\|\Delta_b^{s,i}\|_2^2
+
\sum_{s=0}^{t}\|\Delta_\eta^{s,i}\|_2^2\Big),
\label{eq:loo-b-Lip}
\end{align}
where $\Delta_v^{s,i}:=\bv^s-\tilde\bv^{s,-i}$ and
$\Delta_\eta^{s,i}:=\bfeta^s-\tilde\bfeta^{s,-i}$.

Moreover,
\begin{equation}
\Delta_\eta^{s,i}
=
\frac{1}{\sqrt n}\bX^\top \Delta_u^{s,i},
\qquad
\|\Delta_\eta^{s,i}\|_2^2
\le
\frac{\|\bX\|_{\op}^2}{n}\,\|\Delta_u^{s,i}\|_2^2.
\label{eq:loo-eta}
\end{equation}
For the residual vectors, one checks that
\begin{equation}
\Delta_v^{s,i}
=
-\frac{1}{\sqrt n}\bX\,\Delta_b^{s,i}
+\be_i\,v_i^{s,-i},
\label{eq:loo-v-decomp}
\end{equation}
and therefore
\begin{equation}
\|\Delta_v^{s,i}\|_2^2
\le
\frac{2\|\bX\|_{\op}^2}{n}\,\|\Delta_b^{s,i}\|_2^2
+
2\,(v_i^{s,-i})^2.
\label{eq:loo-v}
\end{equation}

\medskip

\noindent\textbf{Bounding the forcing term.}
Set
\[
\lambda_X:=\frac{\|\bX\|_{\op}^2}{n},
\qquad
\lambda_{-i}:=\frac{\|\bX_{-i}\|_{\op}^2}{n},
\]
where \(\bX_{-i}\) is the design matrix with row \(i\) removed. Since \(\bX_{-i}\)
is a submatrix of \(\bX\), we have \(\lambda_{-i}\le \lambda_X\).

By the same deterministic Gr\"onwall argument as in the proof of
Lemma~\ref{lem:moment-H-F}, applied to the deleted recursion, for every \(s\le T\),
\[
\|\hbb^{s,-i}\|_2
\le
C(T,\zeta)\bigl(1+\lambda_{-i}^{1/2}\bigr).
\]
Moreover,
\[
\bX^\top\bX=\bX_{-i}^\top\bX_{-i}+\bx_i\bx_i^\top,
\]
so
\[
\lambda_X
\le
\lambda_{-i}+\frac{\|\bx_i\|_2^2}{n},
\qquad
1+\lambda_X
\le
\bigl(1+\lambda_{-i}\bigr)\Bigl(1+\frac{\|\bx_i\|_2^2}{n}\Bigr).
\]
Fix an integer \(m\ge 0\). Since \(\hbb^{s,-i}\) depends only on \(\calD_{-i}\),
conditioning on \(\calD_{-i}\) and using the independence of \((\bx_i,\ep_i)\)
from \(\calD_{-i}\), we obtain
\begin{align*}
\E\Big[(1+\lambda_X)^m (v_i^{s,-i})^2 \,\Big|\, \calD_{-i}\Big]
&\le
\frac{C(m)}{n}\bigl(1+\lambda_{-i}\bigr)^m
\E\Bigg[
\Bigl(1+\frac{\|\bx_i\|_2^2}{n}\Bigr)^m
\Bigl(y_i^2+(\bx_i^\top \hbb^{s,-i})^2\Bigr)
\Bigm| \calD_{-i}
\Bigg].
\end{align*}
Because \(\bx_i\sim N(\boldzero,\bSigma)\), \(p/n\le\gamma\), \(\opnorm{\bSigma}\le\kappa\),
\(\|\bb^*\|_2\le\zeta\), and \(\E[\ep_i^2]\le \zeta\), standard Gaussian moment bounds imply
\[
\E\Bigg[
\Bigl(1+\frac{\|\bx_i\|_2^2}{n}\Bigr)^m y_i^2
\Bigg]
\le
C(m,\gamma,\zeta,\kappa).
\]
Likewise, conditional on \(\calD_{-i}\),
\[
\E\Bigg[
\Bigl(1+\frac{\|\bx_i\|_2^2}{n}\Bigr)^m
(\bx_i^\top \hbb^{s,-i})^2
\Bigm| \calD_{-i}
\Bigg]
\le
C(m,\gamma,\kappa)\,\|\hbb^{s,-i}\|_2^2.
\]
Therefore,
\[
\E\Big[(1+\lambda_X)^m (v_i^{s,-i})^2 \,\Big|\, \calD_{-i}\Big]
\le
\frac{C(m,\gamma,\zeta,\kappa)}{n}\,
\bigl(1+\lambda_{-i}\bigr)^m
\Bigl(1+\|\hbb^{s,-i}\|_2^2\Bigr).
\]
Using the deterministic bound on \(\hbb^{s,-i}\), we conclude that
\begin{equation}
\sup_{1\le i\le n}\sup_{0\le s\le T}
\E\Big[(1+\lambda_X)^m (v_i^{s,-i})^2\Big]
\le
\frac{C(T,m,\gamma,\zeta,\kappa)}{n}.
\label{eq:r-bound-weighted}
\end{equation}

\medskip

\noindent\textbf{Closing the recursion.}
Set
\[
A_t:=\|\Delta_u^{t,i}\|_2^2,
\qquad
B_t:=\|\Delta_b^{t,i}\|_2^2,
\qquad
E_t:=A_t+B_t.
\]
Combining \eqref{eq:loo-u-Lip}, \eqref{eq:loo-eta}, and \eqref{eq:loo-v}, we obtain
for \(t\le T\),
\begin{align*}
A_t
&\le
\zeta^2\sum_{s=0}^{t-1}A_s
+
2\zeta^2\lambda_X\sum_{s=0}^{t-1}B_s
+
2\zeta^2\sum_{s=0}^{t-1}(v_i^{s,-i})^2,\\
B_t
&\le
\zeta^2\sum_{s=0}^{t-1}B_s
+
\zeta^2\lambda_X\sum_{s=0}^{t}A_s.
\end{align*}
Hence
\[
E_t
\le
C(T,\zeta)(1+\lambda_X)^2\sum_{s=0}^{t-1}E_s
+
C(T,\zeta)(1+\lambda_X)\sum_{s=0}^{t-1}(v_i^{s,-i})^2.
\]
Since \(E_0=0\) and \(T\) is fixed, a discrete Gr\"onwall argument yields
\[
E_t
\le
C(T,\zeta)(1+\lambda_X)^{2T}
\sum_{s=0}^{t-1}(v_i^{s,-i})^2.
\]
Taking expectations and applying \eqref{eq:r-bound-weighted} with \(m=2T\), we obtain
\[
\E\|\hbb^t-\hbb^{t,-i}\|_2^2
\le
\E[E_t]
\le
\frac{C(T,\gamma,\zeta,\kappa)}{n}.
\]
By exchangeability of the samples,
\[
\E\Big[\sum_{i=1}^n\|\hbb^t-\hbb^{t,-i}\|_2^2\Big]
=
n\,\E\big[\|\hbb^t-\hbb^{t,-1}\|_2^2\big]
\le
C(T,\gamma,\zeta,\kappa),
\]
which completes the proof.
\end{proof}

\begin{lemma}[Weighted second-moment leave-one-out stability]
\label{lem:loo-perturbation-weighted}
Fix an integer \(m\ge 0\), and define
\[
\lambda_X:=\frac{\|\bX\|_{\op}^2}{n},
\qquad
\lambda_{-i}:=\frac{\|\bX_{-i}\|_{\op}^2}{n}.
\]
Under Assumptions~\ref{assu:X}--\ref{assu:algorithm} and the second-moment
condition \(\E[\ep_i^2]\le\zeta\), for any
\(t\in\{1,\ldots,T\}\),
\[
\sup_{1\le i\le n}
\E\Big[(1+\lambda_X)^m \|\hbb^t-\hbb^{t,-i}\|_2^2\Big]
\le
\frac{C(T,m,\gamma,\zeta,\kappa)}{n}.
\]
Consequently,
\[
\sup_{1\le i\ne l\le n}
\E\Big[(1+\lambda_{-i})^m \|\hbb^{t,-i}-\hbb^{t,-(i,l)}\|_2^2\Big]
\le
\frac{C(T,m,\gamma,\zeta,\kappa)}{n}.
\]
\end{lemma}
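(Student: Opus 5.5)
The plan is to rerun the proof of Lemma~\ref{lem:loo-perturbation} but keep the weight \((1+\lambda_X)^m\) inside the expectation, and stop before the exchangeability step. That proof gives, deterministically and for each fixed \(i\), the Gr\"onwall bound
\[
\|\hbb^t-\hbb^{t,-i}\|_2^2\le E_t\le C(T,\zeta)(1+\lambda_X)^{2T}\sum_{s=0}^{t-1}(v_i^{s,-i})^2 .
\]
Multiplying by \((1+\lambda_X)^m\) yields \((1+\lambda_X)^{m+2T}\sum_{s<t}(v_i^{s,-i})^2\). Taking expectations and applying \eqref{eq:r-bound-weighted} with exponent \(m+2T\) in place of \(m\) bounds each summand by \(C(T,m,\gamma,\zeta,\kappa)/n\), uniformly in \(i\) and \(s\). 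Summing the \(t\le T\) terms gives the first claim with the supremum over \(i\). The only input to check is that \eqref{eq:r-bound-weighted} was proved for an arbitrary integer exponent. It was: the conditioning argument on \(\calD_{-i}\) used \(1+\lambda_X\le(1+\lambda_{-i})(1+\|\bx_i\|_2^2/n)\), Gaussian moments of \(\|\bx_i\|^2/n\), and the deterministic bound \(\|\hbb^{s,-i}\|\le C(1+\lambda_{-i}^{1/2})\). The remaining factor \(\E(1+\lambda_{-i})^{m'}\) is bounded by Gaussian operator-norm moment bounds, since \(\E\lambda_X^{k}\le C(k,\gamma,\kappa)\).

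For the second claim, apply the first claim to a reduced problem. Take the data set \(\calD_{-i}\) as the full data, with design \(\bX_{-i}\) having \(n-1\) rows, and delete sample \(l\) from it. By the convention that deleted recursions keep the ambient normalization \(1/\sqrt n\), the step sizes and the tuning parameters, the deleted and double-deleted iterates are exactly the full and leave-one-out iterates of the same algorithm on \(\calD_{-i}\). The deterministic part of the argument carries over verbatim: the Lipschitz propagation \eqref{eq:loo-u-Lip}--\eqref{eq:loo-v} with \(\bX\) replaced by \(\bX_{-i}\), the embedding of dual vectors, and Gr\"onwall. The weight \(\lambda_X\) is replaced by \(\lambda_{-i}=\|\bX_{-i}\|_{\op}^2/n\). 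This gives
\[
\|\hbb^{t,-i}-\hbb^{t,-(i,l)}\|_2^2\le C(1+\lambda_{-i})^{2T}\sum_{s<t}(v_l^{s,-(i,l)})^2,
\qquad v_l^{s,-(i,l)}=(y_l-\bx_l^\top\hbb^{s,-(i,l)})/\sqrt n .
\]
We then condition on \(\calD_{-(i,l)}\), write \(1+\lambda_{-i}\le(1+\lambda_{-(i,l)})(1+\|\bx_l\|^2/n)\), and use the independence of \((\bx_l,\ep_l)\) from \(\calD_{-(i,l)}\). Together with the deterministic bound \(\|\hbb^{s,-(i,l)}\|\le C(1+\lambda_{-(i,l)}^{1/2})\), this gives the analogue of \eqref{eq:r-bound-weighted}, namely \(\E[(1+\lambda_{-i})^{m'}(v_l^{s,-(i,l)})^2]\le C/n\). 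It follows that the second bound holds uniformly over \(i\ne l\).

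The main point needing care is the normalization mismatch: the reduced problem has \(n-1\) samples but keeps the \(1/\sqrt n\) scaling. This affects only constants, because \(p/(n-1)\le 2\gamma\) and the Lipschitz and boundedness constants in Assumption~\ref{assu:algorithm} are unchanged. The Gaussian moment bounds hold for \(\lambda_{-(i,l)}\le\lambda_X\) without modification.
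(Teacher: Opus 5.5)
Your proposal is correct and follows the paper's proof. For the first bound, you multiply the pathwise Gr\"onwall bound from Lemma~\ref{lem:loo-perturbation} by $(1+\lambda_X)^m$ and invoke \eqref{eq:r-bound-weighted} with exponent $m+2T$; for the second, you rerun the same argument on $\calD_{-i}$. Your explicit redo of the conditioning step on $\calD_{-(i,l)}$ keeps the ambient $1/\sqrt n$ normalization and the weight $\lambda_{-i}=\|\bX_{-i}\|_{\op}^2/n$. This is slightly more careful than the paper's appeal to the first part for an $(n-1)$-sample problem, but it is the same route.
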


\begin{proof}[Proof of Lemma~\ref{lem:loo-perturbation-weighted}]
\label{proof:loo-stability-weighted}
The proof of Lemma~\ref{lem:loo-perturbation} already yields the pathwise bound
\[
\|\hbb^t-\hbb^{t,-i}\|_2^2
\le
C(T,\zeta)(1+\lambda_X)^{2T}\sum_{s=0}^{t-1}(v_i^{s,-i})^2.
\]
Multiplying both sides by \((1+\lambda_X)^m\), taking expectations, and applying
\eqref{eq:r-bound-weighted} with \(m+2T\) in place of \(m\), we obtain
\[
\E\Big[(1+\lambda_X)^m \|\hbb^t-\hbb^{t,-i}\|_2^2\Big]
\le
\frac{C(T,m,\gamma,\zeta,\kappa)}{n},
\]
uniformly in \(i\). This proves the first display.

For the second display, fix \(i\). The deleted dataset \(\calD_{-i}\) consists
of \(n-1\) \iid\ observations from the same linear model, and
\(p/(n-1)\le 2\gamma\). Therefore, the first part of the lemma applies to the
deleted recursion with constants depending only on
\((T,m,\gamma,\kappa,\zeta)\). In that recursion, \(\lambda_{-i}\) plays the
role of \(\lambda_X\), and deleting the sample indexed by \(l\ne i\) produces
\(\hbb^{t,-(i,l)}\). Hence
\[
\sup_{l\ne i}
\E\Big[(1+\lambda_{-i})^m \|\hbb^{t,-i}-\hbb^{t,-(i,l)}\|_2^2\Big]
\le
\frac{C(T,m,\gamma,\zeta,\kappa)}{n-1}
\le
\frac{C(T,m,\gamma,\zeta,\kappa)}{n},
\]
and taking the supremum over \(i\) completes the proof.
\end{proof}

\begin{lemma}
\label{lem:deleted-moments}
Under Assumptions~\ref{assu:X}--\ref{assu:algorithm}, for every finite integer \(r\ge 1\),
\[
\sup_{0\le s\le T}
\left(
\E\|\hbb^s\|_2^{2r}
\;+\;
\sup_{1\le i\le n}\E\|\hbb^{s,-i}\|_2^{2r}
\;+\;
\sup_{1\le i\ne l\le n}\E\|\hbb^{s,-(i,l)}\|_2^{2r}
\right)
\le
C(T,r,\gamma,\zeta,\kappa),
\]
where the constant does not depend on \(n\) or \(p\).
\end{lemma}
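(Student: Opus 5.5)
The plan is to combine a deterministic pathwise bound on the iterates with moment bounds for the operator norm of the design. The bound is the same deterministic Grönwall argument already invoked in the proof of Lemma~\ref{lem:loo-perturbation}.

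\textbf{Pathwise bound.} Set \(\lambda_X=\opnorm{\bX}^2/n\). By Assumption~\ref{assu:algorithm}, \(\norm{\bu^t}\le\zeta\) for every \(t\), and \(\bu^0=\boldzero\). Hence
\[
\norm{\bfeta^t}\le \lambda_X^{1/2}\zeta .
\]
Next, \(\bpsi^t\) is \(\zeta\)-Lipschitz and \(\norm{\bpsi^t(\boldzero)}\le\zeta\), with \(\hbb^0=\boldzero\). This gives
\[
\norm{\hbb^t}\le \zeta+\zeta\Bigl(\sum_{s<t}\norm{\hbb^s}^2+\sum_{s\le t}\norm{\bfeta^s}^2\Bigr)^{1/2}.
\]
Inducting over \(t\le T\) yields
\[
\norm{\hbb^t}\le C(T,\zeta)\bigl(1+\lambda_X^{1/2}\bigr).
\]
The point worth stressing is that the dual map is bounded, so \(\bv^s\) and hence \(\by\) never enter this bound. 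No noise moments are needed, which is consistent with the lemma assuming only Assumptions~\ref{assu:X}--\ref{assu:algorithm}.

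\textbf{Same bound for the deleted recursions.} The leave-one-out and leave-two-out recursions use the same maps, the same ambient normalization \(1/\sqrt n\), and the same initialization. Their iterates therefore satisfy the same bound with \(\lambda_{-i}\) or \(\lambda_{-(i,l)}\) in place of \(\lambda_X\). Both of these are at most \(\lambda_X\), because the deleted designs are submatrices of \(\bX\). So in all three cases
\[
\norm{\cdot}^{2r}\le C(T,\zeta,r)\,(1+\lambda_X)^r .
\]

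\textbf{Moments of \(\lambda_X\).} It remains to show \(\E[\lambda_X^r]\le C(r,\gamma,\kappa)\). Write \(\bX=\bG\bSigma^{1/2}\), so that \(\lambda_X\le\kappa\opnorm{\bG}^2/n\). The Gaussian concentration bound
\[
\P\bigl(\opnorm{\bG}\ge\sqrt n+\sqrt p+s\bigr)\le e^{-s^2/2}
\]
then controls all moments. Concretely, \(\opnorm{\bG}/\sqrt n\le 1+\sqrt\gamma+Z/\sqrt n\) with \(Z\) having sub-Gaussian tails, so every moment is bounded by a constant depending only on \((r,\gamma)\). Taking expectations and suprema over \(s,i,l\) finishes the proof.

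No step here is a real obstacle. The only care needed is that the deleted recursions keep the full-data normalization, so that their \(\lambda_{-i}\) is dominated by \(\lambda_X\) rather than by \(\opnorm{\bX_{-i}}^2/(n-1)\).
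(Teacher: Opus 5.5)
Your proposal is correct and follows essentially the same route as the paper. You use a deterministic Gr\"onwall bound $\max_{s\le T}\|\hbb^{s}\|_2\le C(T,\zeta)(1+\|\bX'\|_{\op}/\sqrt n)$ that relies only on the boundedness of $\bphi^t$ and the Lipschitz property of $\bpsi^t$, so no noise moments enter; you apply it to the full, deleted, and double-deleted designs $\bX'$, use $\|\bX'\|_{\op}\le\|\bX\|_{\op}$ for row-submatrices under the common $1/\sqrt n$ normalization, and finish with the Davidson--Szarek operator-norm moment bound, exactly as the paper does.
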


\begin{proof}[Proof of Lemma~\ref{lem:deleted-moments}]
\label{proof:lem:deleted-moments}
The full-data bound for \(\hbb^s\) is already contained in the proof of
Lemma~\ref{lem:moment-H-F}. It remains to treat the deleted and double-deleted
trajectories.

Fix a dataset obtained from \(\calD\) by deleting either one sample or two
samples, and write its design matrix as \(\bX'\). Let
\(\{\hbb^{s;\bX'}\}_{s=0}^T\) denote the corresponding primal trajectory generated
by the same recursion and the same initialization. Repeating the deterministic
Gr\"onwall argument from the proof of Lemma~\ref{lem:moment-H-F}, with \(\bX\)
replaced by \(\bX'\), yields the pathwise bound
\[
\max_{0\le s\le T}\|\hbb^{s;\bX'}\|_2
\le
C(T,\zeta)\left(1+\frac{\|\bX'\|_{\op}}{\sqrt n}\right).
\]
Since \(\bX'\) is a row-submatrix of \(\bX\), we have
\(\|\bX'\|_{\op}\le \|\bX\|_{\op}\). Therefore, for every \(r\ge 1\),
\[
\max_{0\le s\le T}\|\hbb^{s;\bX'}\|_2^{2r}
\le
C(T,r,\zeta)\left(1+\frac{\|\bX\|_{\op}}{\sqrt n}\right)^{2r}.
\]
Taking expectations and using the Gaussian operator-norm moment bound under
Assumptions~\ref{assu:X} and \ref{assu:regime}, we obtain
\[
\sup_{0\le s\le T}\E\|\hbb^{s;\bX'}\|_2^{2r}
\le
C(T,r,\gamma,\zeta,\kappa).
\]
Applying this to \(\bX'=\bX_{-i}\) and \(\bX'=\bX_{-(i,l)}\), and combining with
the already established full-data bound, proves the claim.
\end{proof}

\begin{lemma}[Prediction Moment Bounds]
\label{lem:prediction-moments}
Under Assumptions~\ref{assu:X}--\ref{assu:algorithm} and the second-moment
condition \(\E[\ep_i^2]\le\zeta\), for each fixed
\(t\in\{0,\ldots,T-1\}\),
\begin{align*}
&\E[y_{\rm new}^2]
+ \E[(\bx_{\rm new}^\top \hbb^t)^2]
+ \sup_{1\le i\le n}\E[(\bx_{\rm new}^\top \hbb^{t,-i})^2]\\
&\qquad
+ \sup_{1\le i\le n}\E[(\bx_i^\top \hbb^{t,-i})^2]
+ \sup_{1\le i\ne l\le n}\E[(\bx_i^\top \hbb^{t,-(i,l)})^2]
\le
C(T,\gamma,\zeta,\kappa).
\end{align*}

Assume further that \(\bSigma=\bI_p\). Define
\[
a_i^{\rm loo}:=\bx_i^\top \hbb^{t,-i},
\qquad
a_i^{\rm stein}:=\bx_i^\top \hbb^t-\sum_{j=1}^p \pdv{\hbb_j^t}{x_{ij}},
\qquad
a_i^W:=\bx_i^\top \hbb^t-\be_i^\top\bF\bW^\top \be_{t+1}.
\]
Then
\[
\sum_{i=1}^n
\E\Big[(a_i^{\rm loo})^2+(a_i^{\rm stein})^2+(a_i^W)^2\Big]
\le
C(T,\gamma,\zeta,\kappa)\,n,
\]
\end{lemma}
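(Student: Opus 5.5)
The proof splits into the first display, which is a routine consequence of independence plus Lemma~\ref{lem:deleted-moments}, and the second display, which needs a pathwise bound on the Stein correction. I would first treat the first display term by term.

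\textbf{First display.} For \(y_{\rm new}^2\), write \(y_{\rm new}=\bx_{\rm new}^\top\bb^*+\ep_{\rm new}\). Then \(\E[y_{\rm new}^2]\le 2\kappa\zeta^2+2\zeta\).

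For the remaining terms in the first display, use that the relevant vector is independent of the Gaussian row multiplying it. Specifically, \(\bx_{\rm new}\) is independent of \(\hbb^t\) and of \(\hbb^{t,-i}\), \(\bx_i\) is independent of \(\hbb^{t,-i}\), and \(\bx_i\) is independent of \(\hbb^{t,-(i,l)}\). Conditioning on that vector \(\bb\) gives \(\E[(\bx^\top\bb)^2\mid\bb]=\bb^\top\bSigma\bb\le\kappa\|\bb\|_2^2\). Lemma~\ref{lem:deleted-moments} with \(r=1\) then bounds the expectation uniformly in \(i,l\).

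\textbf{Second display (\(\bSigma=\bI_p\)).} The term \(a_i^{\rm loo}\) is already controlled by the first part, with \(n\) summands. For the other two, write
\[
a_i^{\rm stein}=a_i^{\rm loo}+\bx_i^\top(\hbb^t-\hbb^{t,-i})-\textstyle\sum_j\partial\hbb^t_j/\partial x_{ij}.
\]
The middle term sums over \(i\) to at most
\[
\sum_i\|\bx_i\|^2\|\hbb^t-\hbb^{t,-i}\|^2.
\]
Apply Cauchy--Schwarz to each summand, combining \(\E\|\bx_i\|^4\le Cn^2\) with a fourth-moment version of the stability bound, \(\E\|\hbb^t-\hbb^{t,-i}\|^4\le C/n^2\). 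This version follows from the pathwise Gr\"onwall bound in the proof of Lemma~\ref{lem:loo-perturbation} together with the conditional fourth-moment analogue of \eqref{eq:r-bound-weighted}; it requires \(\E\ep_i^4\) only if the fourth moment of \(v_i^{s,-i}\) is needed. To avoid that, I would instead use the weighted Lemma~\ref{lem:loo-perturbation-weighted} directly, via \(\|\bx_i\|^2\le n\lambda_X\), which bounds the sum by
\[
n\sum_i\E\bigl[\lambda_X\|\hbb^t-\hbb^{t,-i}\|^2\bigr]\le Cn.
\]

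\textbf{Main obstacle: the Stein correction.} It remains to bound the correction vectors
\[
\bd^{\rm stein}=\Bigl(\textstyle\sum_j\partial\hbb^t_j/\partial x_{ij}\Bigr)_{i\le n}
\qquad\text{and}\qquad
\bF\bW^\top\be_{t+1}.
\]
It suffices to show \(\E\|\bd^{\rm stein}\|^2+\E\|\bF\bW^\top\be_{t+1}\|^2\le Cn\). For the second vector, \(\|\bF\bW^\top\be_{t+1}\|\le\|\bF\|_{\rm op}\|\bW\|_{\rm op}\). Moreover \(\|\bF\|_{\rm F}^2\le T\zeta^2\) by boundedness of \(\bphi^t\), and the entries of \(\bW\) are normalized traces of products of Jacobian blocks. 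These blocks have operator norm at most \(\zeta\), and each \(\bX\) factor contributes at most \(\sqrt{\lambda_X}\), so \(|w_{t,s}|\le C(T,\zeta)(1+\lambda_X)^{C(T)}\cdot n\) at worst. I expect this is where the normalization must be checked against the formal definitions in Appendix~\ref{sec:formal-correction-definitions}. With \(\bu^s\) of norm at most \(\zeta\) and the \(w\)'s of order \(\sqrt n\), the squared norm is \(O(n)\) times a polynomial in \(\lambda_X\), whose moments are bounded.

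For \(\bd^{\rm stein}\), I would use the chain-rule expansion of Section~\ref{sec:high-level-formal}. It writes \(\bd^{\rm stein}=\bF\bW^\top\be_{t+1}+\) remainder terms of the form (Jacobian products)\(\times n^{-1/2}\bX\times\) trajectory factors, each with squared norm bounded by \(C(1+\lambda_X)^{C}(1+\|\bep\|^2/n+\|\bH\|_{\rm F}^2)\cdot n\). Taking expectations with Lemma~\ref{lem:deleted-moments} and Cauchy--Schwarz on the \(\lambda_X\) powers gives \(Cn\), completing the bound.
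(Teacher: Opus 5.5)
Your proposal is correct. The first display is handled exactly as in the paper, but for the second display you take a cruder route. The paper never separates the leave-one-out difference from the divergence. It sets $d_i^{(3)}=\bx_i^\top(\hbb^{t,-i}-\hbb^t)+\sum_j\pdv{\hbb^t_j}{x_{ij}}$ and applies the second-order Stein identity to $\bx_i\mapsto\hbb^t-\hbb^{t,-i}$. That exploits the cancellation between the two pieces and gives $\sum_i\E[(d_i^{(3)})^2]\le\sum_i\E\|\hbb^t-\hbb^{t,-i}\|_2^2+\E\|\bH\|_{\partial}^2=O(1)$. It then uses the exact identity \eqref{eq:sum-dH}, $\sum_j\pdv{\hbb^t_j}{x_{ij}}=\be_i^\top\bF\bW^\top\be_{t+1}-\be_i^\top\bUpsilon_2^\top\be_{t+1}$, together with Lemma~\ref{lem:moment-Upsilon}, to get $\sum_i\E[(a_i^{\rm stein}-a_i^W)^2]=O(1/n)$. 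You instead bound each piece by $O(n)$ on its own: Lemma~\ref{lem:loo-perturbation-weighted} with $m=1$ handles $\sum_i\|\bx_i\|_2^2\|\hbb^t-\hbb^{t,-i}\|_2^2$, and norm bounds handle the two correction vectors. This avoids second-order Stein and suffices for the lemma as stated. It does not, however, give the sharp $O(1)$ and $O(1/n)$ bounds that the paper reuses in Lemma~\ref{lem:R34-comparison-inputs} for $R_3$ and $R_4$.

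Two steps of your sketch need pinning down. First, for $\bW$ the estimate $|w_{t,s}|\lesssim n$ you mention would not suffice. What actually holds (here $\bSigma=\bI_p$) is $w_{t+1,s}=\trace(\widetilde{\bQ}_{t,s})$ with $\|\widetilde{\bQ}_{t,s}\|_{\op}\le\|\calM^{-1}\bJ_{\bPsi^\eta}\|_{\op}\lesssim n^{-1/2}\poly(\lambda_X^{1/2})$, coming from the $n^{-1/2}$ in $\bJ_{\bPsi^\eta}$. Hence $|w_{t+1,s}|\lesssim p/\sqrt n$; this is Lemma~\ref{lem:op-bound-WAK}, which gives $\E\|\bW\|_{\op}^2\le Cn$. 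Second, for the divergence vector no chain-rule remainder is needed. Cauchy--Schwarz over $j$ gives $\sum_i(\sum_j\pdv{\hbb^t_j}{x_{ij}})^2\le p\,\|\bH\|_{\partial}^2$, and Lemma~\ref{lem:moment-H-F} finishes. With these two bounds even your leave-one-out split is unnecessary, since $\E\|\bX\hbb^t\|_2^2\le n\,\E[\lambda_X\|\hbb^t\|_2^2]\le Cn$.
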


\begin{proof}[Proof of Lemma~\ref{lem:prediction-moments}]
\label{proof:lem:prediction-moments}
Since \(y_{\rm new}=\bx_{\rm new}^\top\bb^*+\ep_{\rm new}\), Assumption~\ref{assu:X}
and the displayed second-moment condition imply
\[
\E[y_{\rm new}^2]\le C(\zeta,\kappa).
\]
For the prediction terms, if \(\bx\sim N(\boldzero,\bSigma)\) is independent of a
random vector \(v\), then
\[
\E[(\bx^\top v)^2]=\E[v^\top\bSigma v]\le \kappa\,\E\|v\|_2^2,
\]
and, similarly,
\[
\E[(\bx^\top v)^4]\le C(\kappa)\,\E\|v\|_2^4.
\]
Applying these bounds with \(v\in\{\hbb^t,\hbb^{t,-i},\hbb^{t,-(i,l)}\}\) and
using Lemma~\ref{lem:deleted-moments} with \(r=1\) and \(r=2\) proves the first
two claims.

For the corrected predictors, let \(\bUpsilon_2\in\R^{T\times n}\) denote the
residual matrix defined by
\[
\sum_{j=1}^p \pdv{\be_j^\top\bH}{x_{ij}}
=
\be_i^\top\bF\bW^\top-\be_i^\top\bUpsilon_2^\top,
\qquad i\in[n].
\]
The identity and the moment bound for this residual are proved in
Lemmas~\ref{lem:sum-derivative} and \ref{lem:moment-Upsilon}. Define
\[
d_i^{(3)}
:=
\bx_i^\top(\hbb^{t,-i}-\hbb^t)
+
\sum_{j=1}^p \pdv{\hbb_j^t}{x_{ij}},
\qquad
d_i^{(4)}
:=
\be_i^\top \bUpsilon_2^\top \be_{t+1}.
\]
Then
\[
a_i^{\rm stein}=a_i^{\rm loo}-d_i^{(3)},
\qquad
a_i^W=a_i^{\rm stein}-d_i^{(4)}.
\]
Since \(\hbb^{t,-i}\) does not depend on \((\bx_i,y_i)\), the second-order Stein
identity \cite{bellec2021second} applied to the map
\(\bx_i\mapsto \hbb^t-\hbb^{t,-i}\) gives
\[
\sum_{i=1}^n \E[(d_i^{(3)})^2]
\le
\sum_{i=1}^n \E\|\hbb^{t,-i}-\hbb^t\|_2^2
+
\sum_{i=1}^n\sum_{j=1}^p \E\Big\|\pdv{\hbb^t}{x_{ij}}\Big\|_2^2
\le
C(T,\gamma,\zeta,\kappa),
\]
where we used Lemma~\ref{lem:loo-perturbation} and Lemma~\ref{lem:moment-H-F}
in the last step. Moreover,
\[
\sum_{i=1}^n \E[(d_i^{(4)})^2]
=
\E\|\bUpsilon_2^\top \be_{t+1}\|_2^2
\le
\E\|\bUpsilon_2\|_{\op}^2
\le
\frac{C(T,\gamma,\zeta,\kappa)}{n}
\le
C(T,\gamma,\zeta,\kappa),
\]
by Lemma~\ref{lem:moment-Upsilon}. Therefore,
\begin{align*}
\sum_{i=1}^n \E[(a_i^{\rm stein})^2]
&\le
2\sum_{i=1}^n \E[(a_i^{\rm loo})^2]
+
2\sum_{i=1}^n \E[(d_i^{(3)})^2]
\le
C(T,\gamma,\zeta,\kappa)\,n,\\
\sum_{i=1}^n \E[(a_i^W)^2]
&\le
2\sum_{i=1}^n \E[(a_i^{\rm stein})^2]
+
2\sum_{i=1}^n \E[(d_i^{(4)})^2]
\le
C(T,\gamma,\zeta,\kappa)\,n.
\end{align*}
\end{proof}

The next two deterministic comparison lemmas convert predictor-level
approximation errors into risk errors under the two test-loss regimes.

\begin{lemma}[Weighted Comparison]
\label{lem:weighted-comparison}
Let \(A_i,B_i\in\R\), define
\[
d_i:=A_i-B_i,
\]
and assume Assumption~\ref{assu:test-function}. If
Assumption~\ref{assu:test-function}(i) holds, set \(G_i:=1\). If
Assumption~\ref{assu:test-function}(ii) holds, set
\[
G_i:=1+|y_i|+|A_i|+|B_i|.
\]
Then
\[
\E\Bigg|\frac1n\sum_{i=1}^n
\Bigl(\ell(y_i,A_i)-\ell(y_i,B_i)\Bigr)\Bigg|
\le
\frac{C}{n}
\E\Bigg[\Big(\sum_{i=1}^n G_i^2\Big)^{1/2}
\Big(\sum_{i=1}^n d_i^2\Big)^{1/2}\Bigg].
\]
In particular, if \(\E[\sum_{i=1}^n G_i^2]\le C_0 n\), then
\[
\E\Bigg|\frac1n\sum_{i=1}^n
\Bigl(\ell(y_i,A_i)-\ell(y_i,B_i)\Bigr)\Bigg|
\le
\frac{C C_0^{1/2}}{\sqrt n}
\Bigg(\sum_{i=1}^n \E[d_i^2]\Bigg)^{1/2}.
\]
Moreover, for an event \(E\), if
\[
\E\Big[\I_E \sum_{i=1}^n G_i^2\Big]
\le
C_0 n
+
C_1 \E\Big[\I_E \sum_{i=1}^n d_i^2\Big],
\]
then
\begin{align*}
\E\Bigg[\I_E\Bigg|\frac1n\sum_{i=1}^n
\Bigl(\ell(y_i,A_i)-\ell(y_i,B_i)\Bigr)\Bigg|\Bigg]
\le\;&
\frac{C(C_0)}{\sqrt n}
\E\Big[\I_E \sum_{i=1}^n d_i^2\Big]^{1/2}\\
&+
\frac{C(C_1)}{n}
\E\Big[\I_E \sum_{i=1}^n d_i^2\Big].
\end{align*}
\end{lemma}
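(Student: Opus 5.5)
The plan is to prove a deterministic pointwise bound for each summand, sum it, and then take expectations. Set \(\Delta:=\frac1n\sum_i(\ell(y_i,A_i)-\ell(y_i,B_i))\). I will first show the pointwise inequality
\[
|\ell(y_i,A_i)-\ell(y_i,B_i)|\le C\,G_i\,|d_i|
\]
in each regime.

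In regime (i) this is immediate from the 1-Lipschitz property, with \(C=1\) and \(G_i=1\).

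In regime (ii), \(\ell(y,a)=(y-a)^2-y^2=a^2-2ya\). Hence
\[
\ell(y_i,A_i)-\ell(y_i,B_i)=(A_i-B_i)(A_i+B_i-2y_i),
\]
and \(|A_i+B_i-2y_i|\le 2G_i\). This gives the inequality with \(C=2\).

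Summing over \(i\) and applying Cauchy--Schwarz yields
\[
|\Delta|\le \frac Cn\Big(\sum_i G_i^2\Big)^{1/2}\Big(\sum_i d_i^2\Big)^{1/2}.
\]
Taking expectations gives the first display.

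For the second display, apply Cauchy--Schwarz again, this time in expectation:
\[
\E\big[(\textstyle\sum G_i^2)^{1/2}(\sum d_i^2)^{1/2}\big]\le (\E\sum G_i^2)^{1/2}(\E\sum d_i^2)^{1/2}\le (C_0n)^{1/2}(\textstyle\sum_i\E d_i^2)^{1/2}.
\]
Dividing by \(n\) then gives the factor \(C C_0^{1/2}/\sqrt n\).

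For the event version, multiply the pointwise bound by \(\I_E\) and use Cauchy--Schwarz in expectation as before:
\[
\E[\I_E|\Delta|]\le\frac Cn\,\E\Big[\I_E\sum G_i^2\Big]^{1/2}\,\E\Big[\I_E\sum d_i^2\Big]^{1/2}.
\]
Write \(D:=\E[\I_E\sum d_i^2]\). The hypothesis gives \(\E[\I_E\sum G_i^2]^{1/2}\le (C_0n)^{1/2}+(C_1D)^{1/2}\), using \(\sqrt{a+b}\le\sqrt a+\sqrt b\). Substituting, the right side becomes
\[
\frac{C C_0^{1/2}}{\sqrt n}D^{1/2}+\frac{C C_1^{1/2}}{n}D,
\]
which is exactly the claimed form.

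There is no real obstacle. The only point needing care is regime (ii), where the loss is not globally Lipschitz; the factorization of the difference of squares handles this. The rest is the bookkeeping of where \(\I_E\) sits, so that the hypothesis on \(\E[\I_E\sum G_i^2]\) can be inserted directly. The constants \(C\) are absolute (\(1\) or \(2\)), and \(C(C_0)\) and \(C(C_1)\) are of the form \(2C_0^{1/2}\) and \(2C_1^{1/2}\).
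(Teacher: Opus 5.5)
Your proof is correct and follows the paper's route. The steps are the same: a pointwise bound \(|\ell(y_i,A_i)-\ell(y_i,B_i)|\le C\,G_i|d_i|\) in each regime, then Cauchy--Schwarz over \(i\), then Cauchy--Schwarz in expectation (with \(\I_E\) placed in both factors for the event version), and finally \(\sqrt{a+b}\le\sqrt a+\sqrt b\). Your explicit factorization \((A_i-B_i)(A_i+B_i-2y_i)\) in regime (ii) makes precise the paper's claim that the centered squared loss obeys ``the same bound up to an absolute constant.''
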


\begin{proof}[Proof of Lemma~\ref{lem:weighted-comparison}]
\label{proof:lem:weighted-comparison}
Under Assumption~\ref{assu:test-function}(i), global 1-Lipschitzness gives
\[
\bigl|\ell(y_i,A_i)-\ell(y_i,B_i)\bigr|\le |d_i|=G_i|d_i|.
\]
Under Assumption~\ref{assu:test-function}(ii), the centered squared loss
satisfies the same bound up to an absolute constant with
\(G_i=1+|y_i|+|A_i|+|B_i|\). Hence, in either case,
\[
\Bigg|\frac1n\sum_{i=1}^n
\Bigl(\ell(y_i,A_i)-\ell(y_i,B_i)\Bigr)\Bigg|
\le
\frac{C}{n}\sum_{i=1}^n G_i |d_i|.
\]
Applying Cauchy--Schwarz in the index \(i\) gives the first claim. If
\(\E[\sum_i G_i^2]\le C_0 n\), another Cauchy--Schwarz inequality in probability
yields
\[
\E\Bigg[\Big(\sum_{i=1}^n G_i^2\Big)^{1/2}
\Big(\sum_{i=1}^n d_i^2\Big)^{1/2}\Bigg]
\le
\E\Big[\sum_{i=1}^n G_i^2\Big]^{1/2}
\E\Big[\sum_{i=1}^n d_i^2\Big]^{1/2},
\]
which proves the second claim.

For the event version, the same argument gives
\begin{align*}
\E\Bigg[\I_E\Bigg|\frac1n\sum_{i=1}^n
\Bigl(\ell(y_i,A_i)-\ell(y_i,B_i)\Bigr)\Bigg|\Bigg]
&\le
\frac{C}{n}
\E\Bigg[
\Bigl(\I_E\sum_{i=1}^n G_i^2\Bigr)^{1/2}
\Bigl(\I_E\sum_{i=1}^n d_i^2\Bigr)^{1/2}
\Bigg]\\
&\le
\frac{C}{n}
\E\Big[\I_E\sum_{i=1}^n G_i^2\Big]^{1/2}
\E\Big[\I_E\sum_{i=1}^n d_i^2\Big]^{1/2}.
\end{align*}
Using the assumed bound on \(\E[\I_E\sum_i G_i^2]\) and
\(\sqrt{a+b}\le \sqrt a+\sqrt b\) proves the result.
\end{proof}

\begin{lemma}[Ready-to-use two-case comparison]
\label{lem:ready-two-case-comparison}
Let \(A_i,B_i\in\R\), \(d_i=A_i-B_i\), and
\[
Q(A,B):=
\frac1n\sum_{i=1}^n
\Bigl(\ell(y_i,A_i)-\ell(y_i,B_i)\Bigr).
\]
Under Assumption~\ref{assu:test-function}(i), if
\(\sum_{i=1}^n\E|d_i|\le a_n\), then
\[
\E|Q(A,B)|\le \frac{a_n}{n}.
\]
Under Assumption~\ref{assu:test-function}(ii), if
\[
\E\sum_{i=1}^n\bigl(1+|y_i|+|A_i|+|B_i|\bigr)^2\le M n,
\qquad
\sum_{i=1}^n\E[d_i^2]\le b_n,
\]
then
\[
\E|Q(A,B)|
\le
C(M)\sqrt{\frac{b_n}{n}}.
\]
Moreover, for an event \(E\) and a constant \(M\ge1\), suppose
\[
b_{n,E}:=\E\left[\I_E\sum_{i=1}^n d_i^2\right].
\]
In case \((ii)\), assume additionally that
\[
\E\left[\I_E
\sum_{i=1}^n\bigl(1+|y_i|+|A_i|+|B_i|\bigr)^2
\right]
\le
M n+M b_{n,E}.
\]
Then, in either case of Assumption~\ref{assu:test-function},
\[
\E[\I_E|Q(A,B)|]
\le
\frac{C(M)}{\sqrt n}b_{n,E}^{1/2}
+
\frac{C(M)}{n}b_{n,E}.
\]
\end{lemma}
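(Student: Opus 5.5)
The plan is to derive Lemma~\ref{lem:ready-two-case-comparison} directly from the pointwise loss bounds, and in case (ii) from Lemma~\ref{lem:weighted-comparison}.

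\emph{Case (i).} Global 1-Lipschitzness gives
\[
|Q(A,B)|\le \frac1n\sum_{i=1}^n|\ell(y_i,A_i)-\ell(y_i,B_i)|\le\frac1n\sum_{i=1}^n|d_i|.
\]
Taking expectations gives \(\E|Q(A,B)|\le a_n/n\). No Cauchy--Schwarz step is needed, so no moment condition on \(A_i,B_i\) enters.

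\emph{Case (ii).} For the centered squared loss,
\[
\ell(y,A)-\ell(y,B)=(B-A)(2y-A-B),
\]
so \(|\ell(y_i,A_i)-\ell(y_i,B_i)|\le 2G_i|d_i|\) with \(G_i=1+|y_i|+|A_i|+|B_i|\). This is exactly the setting of the second claim of Lemma~\ref{lem:weighted-comparison} with \(C_0=M\). That claim gives
\[
\E|Q(A,B)|\le \frac{CM^{1/2}}{\sqrt n}\,b_n^{1/2}=C(M)\sqrt{b_n/n}.
\]

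\emph{Event version.} In case (i), write \(\I_E|Q(A,B)|\le n^{-1}\sum_i \I_E|d_i|\). Cauchy--Schwarz in \(i\) and then in probability gives
\[
\E[\I_E|Q(A,B)|]\le n^{-1}\sqrt n\,b_{n,E}^{1/2}=n^{-1/2}b_{n,E}^{1/2},
\]
which is dominated by the stated right-hand side.

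In case (ii), the additional hypothesis is precisely the event condition of Lemma~\ref{lem:weighted-comparison} with \(C_0=C_1=M\). That lemma yields
\[
\E[\I_E|Q(A,B)|]\le \frac{C(M)}{\sqrt n}\,b_{n,E}^{1/2}+\frac{C(M)}{n}\,b_{n,E}.
\]
Equivalently, one can redo the step by hand: \(\E[\I_E\sum_i G_i^2]^{1/2}\le \sqrt{Mn}+\sqrt{M b_{n,E}}\) by \(\sqrt{a+b}\le\sqrt a+\sqrt b\), and multiplying by \(n^{-1}b_{n,E}^{1/2}\) gives the two terms.

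There is no real obstacle here; the lemma is bookkeeping. The only points to check are the factor-2 constant in the squared-loss identity and that case (i) requires no \(G_i\)-moment assumption.
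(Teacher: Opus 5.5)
Your proposal is correct and matches the paper's proof: case (i) follows directly from global 1-Lipschitzness, and case (ii) together with the event version are the non-event and event conclusions of Lemma~\ref{lem:weighted-comparison} with \(C_0=C_1=M\). Your direct Cauchy--Schwarz argument for the case-(i) event bound is the same computation the paper encodes by noting that the event-weight condition holds automatically with \(G_i\equiv1\) (using \(M\ge1\)).
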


\begin{proof}[Proof of Lemma~\ref{lem:ready-two-case-comparison}]
The first display follows directly from global 1-Lipschitzness. The second and
event displays are the non-event and event conclusions of
Lemma~\ref{lem:weighted-comparison}; in case \((i)\), the event-weight condition
is automatic with \(G_i\equiv1\).
\end{proof}

When a stronger noise moment is available, we write, for \(q\in[1,2]\),
\begin{equation}
\label{eq:lq-moment-condition}
\E|\ep_i|^q\le\zeta.
\end{equation}
The endpoint \(q=1\) is the first-moment regime in
Assumption~\ref{assu:test-function}(i), while \(q=2\) matches the
second-moment regime in Assumption~\ref{assu:test-function}(ii).

\begin{lemma}[First-moment and \(L^q\) leave-one-out stability]
\label{lem:loo-first-moment}
Fix \(m\ge0\). Under Assumption~\ref{assu:X},
Assumptions~\ref{assu:regime}--\ref{assu:algorithm}, and the first-moment
condition \(\E|\ep_i|\le\zeta\), for every fixed \(t\in\{1,\ldots,T\}\),
\[
\sup_{1\le i\le n}
\E\Big[(1+\lambda_X)^m\|\hbb^t-\hbb^{t,-i}\|_2\Big]
\le
\frac{C(T,m,\gamma,\zeta,\kappa)}{\sqrt n}.
\]
Moreover,
\[
\sup_{1\le i\le n}
\E\Big[(1+\lambda_X)^m\|\hbb^t-\hbb^{t,-i}\|_2^2\Big]
\le
\frac{C(T,m,\gamma,\zeta,\kappa)}{\sqrt n}.
\]
The same conclusions hold for double-deleted iterates: for every \(i\ne l\),
with \(\lambda_{-i}=\|\bX_{-i}\|_{\op}^2/n\),
\[
\E\Big[(1+\lambda_{-i})^m
\|\hbb^{t,-i}-\hbb^{t,-(i,l)}\|_2\Big]
\le
\frac{C(T,m,\gamma,\zeta,\kappa)}{\sqrt n},
\]
and
\[
\E\Big[(1+\lambda_{-i})^m
\|\hbb^{t,-i}-\hbb^{t,-(i,l)}\|_2^2\Big]
\le
\frac{C(T,m,\gamma,\zeta,\kappa)}{\sqrt n}.
\]
More generally, if \eqref{eq:lq-moment-condition} holds for some
\(q\in[1,2]\), then
\[
\sup_{1\le i\le n}
\E\Big[(1+\lambda_X)^m\|\hbb^t-\hbb^{t,-i}\|_2^q\Big]
\le
\frac{C(T,m,q,\gamma,\zeta,\kappa)}{n^{q/2}},
\]
and, for every \(i\ne l\),
\[
\E\Big[(1+\lambda_{-i})^m
\|\hbb^{t,-i}-\hbb^{t,-(i,l)}\|_2^q\Big]
\le
\frac{C(T,m,q,\gamma,\zeta,\kappa)}{n^{q/2}}.
\]
\end{lemma}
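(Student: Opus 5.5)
The plan is to reuse the deterministic pathwise bound already obtained in the proof of Lemma~\ref{lem:loo-perturbation}, namely
\[
\|\hbb^t-\hbb^{t,-i}\|_2^2
\le
C(T,\zeta)(1+\lambda_X)^{2T}\sum_{s=0}^{t-1}(v_i^{s,-i})^2 .
\]
This bound is purely algebraic: Lipschitz propagation plus discrete Gr\"onwall. It does not use any noise moment, so it remains valid under the first-moment condition. Taking the \(q/2\)-th power with \(q\in[1,2]\) and using subadditivity of \(x\mapsto x^{q/2}\) gives
\[
\|\hbb^t-\hbb^{t,-i}\|_2^q
\le
C(T,\zeta,q)(1+\lambda_X)^{Tq}\sum_{s=0}^{t-1}|v_i^{s,-i}|^q,
\qquad
|v_i^{s,-i}|^q\le \frac{C(q)}{n^{q/2}}\bigl(|\ep_i|^q+|\bx_i^\top(\bb^*-\hbb^{s,-i})|^q\bigr).
\]
So the whole task reduces to the weighted forcing bound
\[
\E\bigl[(1+\lambda_X)^{m'}|v_i^{s,-i}|^q\bigr]\le C\,n^{-q/2},
\qquad m'=m+Tq .
\]

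For that bound I would proceed exactly as in \eqref{eq:r-bound-weighted}, conditioning on \(\calD_{-i}\) and using \(1+\lambda_X\le(1+\lambda_{-i})(1+\|\bx_i\|_2^2/n)\).

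The noise part needs care, because \(\ep_i\) is independent of \(\bx_i\) and of \(\calD_{-i}\). Since \(\lambda_X\) does not involve \(\ep_i\), the expectation factorizes:
\[
\E[(1+\lambda_X)^{m'}|\ep_i|^q]=\E|\ep_i|^q\,\E(1+\lambda_X)^{m'} .
\]
The second factor is bounded by Gaussian operator-norm moment bounds under Assumptions~\ref{assu:X}--\ref{assu:regime}. For the first factor, Jensen gives \(\E|\ep_i|^q\le\zeta\) under \eqref{eq:lq-moment-condition}.

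For the design part, conditionally on \(\calD_{-i}\) the variable \(\bx_i^\top(\bb^*-\hbb^{s,-i})\) is Gaussian with variance at most \(\kappa\|\bb^*-\hbb^{s,-i}\|_2^2\). Cauchy--Schwarz against \((1+\|\bx_i\|_2^2/n)^{m'}\) then bounds the conditional expectation by
\[
C(1+\lambda_{-i})^{m'}\bigl(1+\|\hbb^{s,-i}\|_2^q\bigr).
\]
The deterministic bound \(\|\hbb^{s,-i}\|_2\le C(1+\lambda_{-i}^{1/2})\) from the proof of Lemma~\ref{lem:deleted-moments}, together with operator-norm moments, controls its expectation. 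This yields the general \(L^q\) claim, and the case \(q=1\) is the first display.

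For the second display under only a first moment, I would not use the square of the \(q=1\) bound. Instead I would split \(\|\Delta\|_2^2=\|\Delta\|_2\cdot\|\Delta\|_2\) and bound one factor by the deterministic estimate
\[
\|\hbb^t-\hbb^{t,-i}\|_2\le\|\hbb^t\|_2+\|\hbb^{t,-i}\|_2\le C(1+\lambda_X^{1/2}).
\]
This bound holds without any noise moment, because \(\bphi^t\) is bounded and \(\hbb^0=\boldzero\), so the primal trajectory depends on \(\by\) only through the bounded \(\bu^s\). Absorbing this factor into the weight \((1+\lambda_X)^{m+1}\) reduces the squared bound to the first-moment bound, which gives the stated rate \(n^{-1/2}\).

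The double-deleted statements follow by applying the single-deletion results to the recursion on \(\calD_{-i}\). That recursion has \(n-1\) i.i.d.\ samples, \(p/(n-1)\le2\gamma\), the same \(1/\sqrt n\) normalization, and \(\lambda_{-i}\) in place of \(\lambda_X\), exactly as in the proof of Lemma~\ref{lem:loo-perturbation-weighted}.

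The main obstacle is the deterministic claim that \(\|\hbb^s\|_2\le C(1+\lambda_X^{1/2})\) with no dependence on \(\bep\). It must be checked via the Gr\"onwall argument of Lemma~\ref{lem:moment-H-F}, using \(\|\bfeta^s\|_2\le\lambda_X^{1/2}\zeta\) and the bound \(\|\bpsi^t(\boldzero)\|\le\zeta\). If this failed, the fallback would be to obtain the squared bound by combining the \(L^1\) bound with Hölder and the high-probability event \(\Omega\).
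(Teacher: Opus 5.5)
Your proposal is correct and follows the paper's proof essentially step by step. The shared steps are: a deterministic Gr\"onwall pathwise bound driven by $|v_i^{s,-i}|$; a weighted forcing bound obtained by conditioning on $\calD_{-i}$, using the independence of $(\bx_i,\ep_i)$ and the noise-free bound $\|\hbb^{s,-i}\|_2\le C(1+\lambda_{-i}^{1/2})$; the crude estimate $\|\hbb^t-\hbb^{t,-i}\|_2\le C(1+\lambda_X)^{1/2}$ for the squared claim; and the deleted-dataset argument for double deletions. The only cosmetic difference is that you get the $L^q$ pathwise bound by taking the $q/2$-th power of the squared bound from Lemma~\ref{lem:loo-perturbation}, whereas the paper redoes the Gr\"onwall argument at the level of norms and then uses $(\sum_s a_s)^q\le T^{q-1}\sum_s a_s^q$.
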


\begin{proof}[Proof of Lemma~\ref{lem:loo-first-moment}]
We prove the single-deletion bounds; the double-deletion bounds follow by
applying the same argument to the deleted dataset \(\calD_{-i}\).
Let
\[
a_t=\|\Delta_u^{t,i}\|_2,\qquad
b_t=\|\Delta_b^{t,i}\|_2,\qquad
e_t=a_t+b_t.
\]
The Lipschitz propagation inequalities used in the proof of
Lemma~\ref{lem:loo-perturbation} also give the norm bounds
\[
a_t
\le
\zeta\sum_{s=0}^{t-1}a_s
+
\zeta\sum_{s=0}^{t-1}\|\Delta_v^{s,i}\|_2,
\qquad
b_t
\le
\zeta\sum_{s=0}^{t-1}b_s
+
\zeta\sum_{s=0}^{t}\|\Delta_\eta^{s,i}\|_2.
\]
Using
\[
\|\Delta_\eta^{s,i}\|_2
\le
\lambda_X^{1/2}a_s,
\qquad
\|\Delta_v^{s,i}\|_2
\le
\lambda_X^{1/2}b_s+|v_i^{s,-i}|,
\]
a discrete Gronwall argument over the fixed horizon \(T\) yields the pathwise
bound
\begin{equation}
\label{eq:first-moment-loo-pathwise}
\|\hbb^t-\hbb^{t,-i}\|_2
\le e_t
\le
C(T,\zeta)(1+\lambda_X)^{T/2}
\sum_{s=0}^{t-1}|v_i^{s,-i}|.
\end{equation}

It remains to control the first moment of \(v_i^{s,-i}\). Since
\[
v_i^{s,-i}
=
\frac{\bx_i^\top(\bb^*-\hbb^{s,-i})+\ep_i}{\sqrt n},
\]
\(\hbb^{s,-i}\) is independent of \((\bx_i,\ep_i)\). Also
\[
\lambda_X\le \lambda_{-i}+\frac{\|\bx_i\|_2^2}{n}.
\]
Hence, for any \(r\ge0\), conditioning on \(\calD_{-i}\) and using Gaussian
moment bounds gives
\begin{align}
\E\Big[(1+\lambda_X)^r|v_i^{s,-i}|\,\Bigm|\,\calD_{-i}\Big]
&\le
\frac{C(r,\gamma,\kappa)}{\sqrt n}(1+\lambda_{-i})^r
\Big( \|\bb^*\|_2+\|\hbb^{s,-i}\|_2+\E|\ep_i| \Big).
\label{eq:first-moment-v-bound}
\end{align}
The deterministic iterate bound from the proof of Lemma~\ref{lem:deleted-moments}
does not use any moment of the noise and gives
\[
\|\hbb^{s,-i}\|_2
\le
C(T,\zeta)(1+\lambda_{-i}^{1/2}).
\]
Taking expectations in \eqref{eq:first-moment-v-bound}, using the Gaussian
operator-norm moment bounds, and then applying
\eqref{eq:first-moment-loo-pathwise} gives the first displayed claim.

For the squared claim, use the crude pathwise bound
\[
\|\hbb^t-\hbb^{t,-i}\|_2
\le
\|\hbb^t\|_2+\|\hbb^{t,-i}\|_2
\le
C(T,\zeta)(1+\lambda_X)^{1/2}.
\]
Multiplying this bound by the first-moment bound just proved, with a larger
weight exponent, yields
\[
\E\Big[(1+\lambda_X)^m\|\hbb^t-\hbb^{t,-i}\|_2^2\Big]
\le
\frac{C(T,m,\gamma,\zeta,\kappa)}{\sqrt n}.
\]
This proves the single-deletion statements. The proof for
\(\hbb^{t,-i}-\hbb^{t,-(i,l)}\) is identical after conditioning on the fixed
deleted sample \(i\), because \(\calD_{-i}\) is again an iid sample from the same
model up to the harmless change from \(n\) to \(n-1\).

It remains to prove the \(L^q\) strengthening. Under
\eqref{eq:lq-moment-condition}, the same conditional argument gives, for every
\(r\ge0\),
\begin{align}
\E\Big[(1+\lambda_X)^r|v_i^{s,-i}|^q\,\Bigm|\,\calD_{-i}\Big]
&\le
\frac{C(r,q,\gamma,\kappa)}{n^{q/2}}(1+\lambda_{-i})^r
\Big( \|\bb^*\|_2^q+\|\hbb^{s,-i}\|_2^q+\E|\ep_i|^q \Big).
\label{eq:lq-v-bound}
\end{align}
Using the deterministic deleted-iterate bound and Gaussian operator-norm moment
bounds, we obtain
\[
\sup_{i,s}
\E\Big[(1+\lambda_X)^r|v_i^{s,-i}|^q\Big]
\le
\frac{C(T,r,q,\gamma,\zeta,\kappa)}{n^{q/2}}.
\]
Raising \eqref{eq:first-moment-loo-pathwise} to the power \(q\), using
\((\sum_{s<t} a_s)^q\le T^{q-1}\sum_{s<t}a_s^q\), and applying the last display
with \(r=m+qT/2\) proves the single-deletion \(L^q\) bound. The double-deletion
\(L^q\) bound follows by the same deleted-dataset argument as above.
\end{proof}

To control the empirical fluctuation term \(R_2\), we use the following
leave-two-out decoupling device.

\begin{lemma}[Leave-two-out covariance decoupling]
\label{lem:loo-covariance-decoupling}
Fix \(i\ne l\). Suppose \(U^i,U^l,U^{il},U^{li}\) satisfy
\[
\E[U^i\mid\calD_{-i}]=0,
\qquad
\E[U^l\mid\calD_{-l}]=0,
\qquad
\E[U^{il}\mid\calD_{-(i,l)}]=
\E[U^{li}\mid\calD_{-(i,l)}]=0.
\]
Assume also that \(U^{il}\) is \(\calD_{-l}\)-measurable, \(U^{li}\) is
\(\calD_{-i}\)-measurable, and \(U^{il}\) and \(U^{li}\) are conditionally
independent given \(\calD_{-(i,l)}\). Then
\[
\E[U^lU^{il}]=0,
\qquad
\E[U^{il}U^{li}]=0,
\qquad
\E[U^iU^l]
=
\E[(U^i-U^{il})(U^l-U^{li})].
\]
\end{lemma}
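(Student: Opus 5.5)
The statement is a pure conditional-expectation argument; no properties of the algorithm are needed. Implicitly all four variables are square integrable, so every product below is integrable. The one structural fact we use is that \(\calD_{-(i,l)}\subset\calD_{-l}\) and \(\calD_{-(i,l)}\subset\calD_{-i}\). Here each \(\calD_{-S}\) is identified with the \(\sigma\)-field it generates, so these inclusions give the tower property.

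\emph{First identity.} Since \(U^{il}\) is \(\calD_{-l}\)-measurable, condition on \(\calD_{-l}\):
\[
\E[U^lU^{il}]=\E\bigl[U^{il}\,\E[U^l\mid\calD_{-l}]\bigr]=0.
\]
By symmetry, \(U^{li}\) is \(\calD_{-i}\)-measurable, and the same argument gives \(\E[U^iU^{li}]=0\).

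\emph{Second identity.} Condition on \(\calD_{-(i,l)}\) and use the assumed conditional independence:
\[
\E[U^{il}U^{li}\mid\calD_{-(i,l)}]=\E[U^{il}\mid\calD_{-(i,l)}]\,\E[U^{li}\mid\calD_{-(i,l)}]=0.
\]
Taking expectations gives \(\E[U^{il}U^{li}]=0\).

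\emph{Third identity.} Expand the right-hand side:
\[
\E[(U^i-U^{il})(U^l-U^{li})]=\E[U^iU^l]-\E[U^iU^{li}]-\E[U^{il}U^l]+\E[U^{il}U^{li}].
\]
The second term vanishes by the symmetric version of the first identity. The third term vanishes by the first identity. The fourth term vanishes by the second identity. What remains is \(\E[U^iU^l]\), which proves the claim.

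The only point requiring care is matching each cross term with the right conditioning \(\sigma\)-field, namely \(\calD_{-i}\) for \(U^iU^{li}\) and \(\calD_{-l}\) for \(U^{il}U^l\). There is no genuine obstacle.
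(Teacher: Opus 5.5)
Your proof is correct and essentially identical to the paper's. Both kill $\E[U^lU^{il}]$ and $\E[U^iU^{li}]$ by conditioning on $\calD_{-l}$ and $\calD_{-i}$, kill $\E[U^{il}U^{li}]$ via conditional independence given $\calD_{-(i,l)}$, and then expand the product (your explicit square-integrability caveat is a sensible addition the paper leaves implicit, while the $\sigma$-field inclusions you flag are never actually used).
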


\begin{proof}[Proof of Lemma~\ref{lem:loo-covariance-decoupling}]
Since \(U^{il}\) is \(\calD_{-l}\)-measurable and
\(\E[U^l\mid\calD_{-l}]=0\),
\[
\E[U^lU^{il}]
=
\E\{U^{il}\E[U^l\mid\calD_{-l}]\}
=0.
\]
Similarly, \(\E[U^iU^{li}]=0\). Conditional on \(\calD_{-(i,l)}\), the variables
\(U^{il}\) and \(U^{li}\) are independent and centered, so
\[
\E[U^{il}U^{li}]
=
\E\{\E[U^{il}U^{li}\mid\calD_{-(i,l)}]\}
=0.
\]
Expanding
\(\E[(U^i-U^{il})(U^l-U^{li})]\) and using the three zero covariance identities
gives the claimed equality.
\end{proof}

\begin{lemma}[Centered-sum variance bound]
\label{lem:centered-sum-variance}
Let \(U^1,\ldots,U^n\) be real random variables and set
\[
S_n:=-\frac1n\sum_{i=1}^n U^i.
\]
If
\[
\max_i\E[(U^i)^2]\le M,
\qquad
\max_{i\ne l}|\E[U^iU^l]|\le \frac{M}{n},
\]
then
\[
\E|S_n|\le \frac{C(M)}{\sqrt n}.
\]
\end{lemma}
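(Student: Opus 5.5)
The plan is to bound the second moment of \(S_n\) and then pass to the first moment by Jensen's inequality, \(\E|S_n|\le (\E[S_n^2])^{1/2}\). No independence or structure of the \(U^i\) is needed beyond the two displayed moment conditions, so the argument is purely algebraic.

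First expand the square:
\[
\E[S_n^2]
=
\frac{1}{n^2}\sum_{i=1}^n\E[(U^i)^2]
+
\frac{1}{n^2}\sum_{i\ne l}\E[U^iU^l].
\]
Bound the diagonal part using \(\max_i\E[(U^i)^2]\le M\); it contributes at most \(nM/n^2=M/n\). For the off-diagonal part, use \(|\E[U^iU^l]|\le M/n\) for each of the at most \(n^2\) ordered pairs \(i\ne l\); this gives at most \(n^2\cdot (M/n)/n^2=M/n\). Hence \(\E[S_n^2]\le 2M/n\).

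Taking square roots then gives \(\E|S_n|\le \sqrt{2M}/\sqrt n\), so the claim holds with \(C(M)=\sqrt{2M}\).

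There is no genuine obstacle in this lemma. The only point to verify is that the off-diagonal covariance bound of order \(M/n\) is exactly what compensates for the \(n^2\) cross terms. In the intended application to \(R_2\), the real work of establishing \(|\E[U^iU^l]|\le M/n\) is done elsewhere, via Lemma~\ref{lem:loo-covariance-decoupling} together with the leave-two-out stability bounds of Lemmas~\ref{lem:loo-perturbation-weighted} and \ref{lem:loo-first-moment}.
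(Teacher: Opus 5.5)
Your proposal is correct and follows essentially the same route as the paper: both expand \(\E[S_n^2]\) into the diagonal part (at most \(M/n\)) and the off-diagonal part (at most \(M/n\)), then pass to \(\E|S_n|\le \E[S_n^2]^{1/2}\) by Cauchy--Schwarz (your Jensen step). Your explicit constant \(C(M)=\sqrt{2M}\) is a valid instance of the paper's generic \(C(M)\).
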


\begin{proof}[Proof of Lemma~\ref{lem:centered-sum-variance}]
By Cauchy--Schwarz,
\[
\E|S_n|\le \E[S_n^2]^{1/2}.
\]
The assumed diagonal and off-diagonal bounds give
\[
\E[S_n^2]
\le
\frac1{n^2}\sum_{i=1}^n\E[(U^i)^2]
+
\frac1{n^2}\sum_{i\ne l}|\E[U^iU^l]|
\le
\frac{C(M)}{n},
\]
which proves the claim.
\end{proof}

\begin{lemma}[Comparison inputs for \(R_3\) and \(R_4\)]
\label{lem:R34-comparison-inputs}
Under Assumptions~\ref{assu:X}--\ref{assu:algorithm}, with
\(\bSigma=\bI_p\), define
\[
A_i^{(3)}:=\bx_i^\top\hbb^{t,-i},
\qquad
B_i^{(3)}:=\bx_i^\top\hbb^t-\sum_{j=1}^p\pdv{\hbb_j^t}{x_{ij}},
\qquad
d_i^{(3)}:=A_i^{(3)}-B_i^{(3)},
\]
and
\[
A_i^{(4)}:=B_i^{(3)},
\qquad
B_i^{(4)}:=\bx_i^\top\hbb^t-\be_i^\top\bF\bW^\top\be_{t+1},
\qquad
d_i^{(4)}:=A_i^{(4)}-B_i^{(4)}.
\]
Under Assumption~\ref{assu:test-function}(i),
\[
\sum_{i=1}^n\E|d_i^{(3)}|
\le
C(T,\gamma,\zeta,\kappa)\sqrt n,
\qquad
\sum_{i=1}^n\E|d_i^{(4)}|
\le
C(T,\gamma,\zeta,\kappa).
\]
Under Assumption~\ref{assu:test-function}(ii),
\[
\sum_{i=1}^n\E[(d_i^{(3)})^2]
\le
C(T,\gamma,\zeta,\kappa),
\qquad
\sum_{i=1}^n\E[(d_i^{(4)})^2]
\le
\frac{C(T,\gamma,\zeta,\kappa)}{n},
\]
and, for \(r=3,4\),
\[
\E\sum_{i=1}^n
\bigl(1+|y_i|+|A_i^{(r)}|+|B_i^{(r)}|\bigr)^2
\le
C(T,\gamma,\zeta,\kappa)n.
\]
\end{lemma}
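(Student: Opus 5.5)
The proof splits by the two regimes and, within each, treats $d_i^{(3)}$ and $d_i^{(4)}$ separately, reusing the ingredients from the proof of Lemma~\ref{lem:prediction-moments}.

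\textbf{Regime (ii): the $d^{(4)}$ bound.} By the residual identity defining $\bUpsilon_2$ (Lemma~\ref{lem:sum-derivative}), we have $d_i^{(4)}=\be_i^\top\bUpsilon_2^\top\be_{t+1}$. Hence
\[
\sum_i\E[(d_i^{(4)})^2]=\E\|\bUpsilon_2^\top\be_{t+1}\|_2^2\le \E\|\bUpsilon_2\|_{\op}^2\le C/n
\]
by Lemma~\ref{lem:moment-Upsilon}.

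\textbf{Regime (ii): the $d^{(3)}$ bound.} Apply the second-order Stein identity \cite{bellec2021second} conditionally on $\calD_{-i}$ to the map $\bx_i\mapsto \hbb^t-\hbb^{t,-i}$. With $\bSigma=\bI_p$, its divergence is $\sum_j \partial \hbb^t_j/\partial x_{ij}$, since $\hbb^{t,-i}$ does not depend on $\bx_i$. This gives
\[
\E[(d_i^{(3)})^2]\le \E\|\hbb^t-\hbb^{t,-i}\|^2+\sum_j\E\|\partial\hbb^t/\partial x_{ij}\|^2 .
\]
Summing over $i$, Lemma~\ref{lem:loo-perturbation} and the derivative moment bound of Lemma~\ref{lem:moment-H-F} give a constant.

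\textbf{Regime (ii): the weight condition.} Note that $A^{(3)}=a^{\rm loo}$, $B^{(3)}=A^{(4)}=a^{\rm stein}$, and $B^{(4)}=a^W$. The bound $\E\sum_i(1+|y_i|+|A_i|+|B_i|)^2\le Cn$ then follows from Lemma~\ref{lem:prediction-moments}, using $\E y_i^2\le C$.

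\textbf{Regime (i).} Only $\E|\ep_i|\le\zeta$ is available, so the second-moment lemmas cannot be used directly.

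For $d^{(4)}$, the quantities $\bUpsilon_2$, $\bF$, $\bH$ and their derivatives depend on the noise only through the bounded dual maps and through $\bv^s$. I expect Lemma~\ref{lem:moment-Upsilon}'s proof to control $\E\|\bUpsilon_2\|_{\op}$ using deterministic bounds on $\bu^t$ and $\hbb^t$ (the Gr\"onwall bound $\|\hbb^s\|\le C(1+\lambda_X^{1/2})$ needs no noise moment). This would give $\E\|\bUpsilon_2\|_{\F}\le C/\sqrt n$, and then Cauchy--Schwarz over $i$ gives
\[
\sum_i\E|d_i^{(4)}|\le \sqrt n\,\E\|\bUpsilon_2^\top\be_{t+1}\|\le C .
\]
If the cited lemma used $\E\ep^2$, I would instead check that the residual terms are products of Jacobians (bounded by powers of $1+\lambda_X$) with Gaussian chaos in $\bX$, which is noise-free.

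For $d^{(3)}$, bound $\E|d_i^{(3)}|\le \E[(d_i^{(3)})^2\mid\ldots]^{1/2}$ conditionally on $(\calD_{-i},\ep_i)$. The Stein identity holds conditionally on $\ep_i$, since only $\bx_i$ is integrated. Next, split $\|\hbb^t-\hbb^{t,-i}\|$ using the pathwise bound \eqref{eq:first-moment-loo-pathwise}: it is at most $C(1+\lambda_X)^{T/2}\sum_s|v_i^{s,-i}|$, whose first moment is $C/\sqrt n$ by Lemma~\ref{lem:loo-first-moment}. The derivative term $\sum_j\|\partial\hbb^t/\partial x_{ij}\|^2$ should be $O(1/n)$ per $i$ in conditional $L^1$ under deterministic Lipschitz/operator-norm bounds times $(1+|\ep_i|/\sqrt n)^2$-type factors; its square root then has first moment $O(n^{-1/2})$ using only $\E|\ep_i|$. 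Summing over $i$ gives $C\sqrt n$.

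\textbf{Main obstacle.} The delicate point is this last step: obtaining conditional second-moment Stein control while only a first noise moment is available. I would handle it by conditioning on $\ep_i$ throughout and taking square roots before integrating $\ep_i$.
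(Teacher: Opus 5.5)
Your proposal is correct and follows the paper's proof. It uses the same ingredients: the identity $d_i^{(4)}=\be_i^\top\bUpsilon_2^\top\be_{t+1}$ with Lemma~\ref{lem:moment-Upsilon} (which indeed needs no noise moment), the second-order Stein bound for $d_i^{(3)}$ via Lemmas~\ref{lem:loo-perturbation} and~\ref{lem:moment-H-F}, and Lemma~\ref{lem:prediction-moments} for the weights. In case (i) the paper only invokes an ``$L^1$ form'' of the Stein comparison, and your conditioning on $(\calD_{-i},\ep_i)$ before taking square roots is the right way to make that precise. Two refinements apply: the leave-one-out piece needs the conditional second-moment computation from the proof of Lemma~\ref{lem:loo-first-moment}, not just its first-moment conclusion. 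The derivative piece needs no per-$i$ bound, because Jensen and Cauchy--Schwarz over $i$, together with the noise-free bound $\E\|\bH\|_\partial^2\le C$, give $C\sqrt n$ directly.
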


\begin{proof}[Proof of Lemma~\ref{lem:R34-comparison-inputs}]
For \(d_i^{(3)}\), the \(L^1\) form of the second-order Stein comparison
\cite{bellec2021second}, applied to the map
\(\bx_i\mapsto\hbb^t-\hbb^{t,-i}\), gives
\[
\sum_{i=1}^n\E|d_i^{(3)}|
\le
C(T,\gamma,\zeta,\kappa)\sqrt n.
\]
This uses Lemma~\ref{lem:loo-first-moment} for the leave-one-out perturbation and
Lemma~\ref{lem:moment-H-F} for the derivative term.

For \(d_i^{(4)}\), Lemma~\ref{lem:sum-derivative} gives
\[
d_i^{(4)}
=
\be_i^\top\bUpsilon_2^\top\be_{t+1}.
\]
Therefore
\[
\sum_{i=1}^n\E|d_i^{(4)}|
\le
\sqrt n\,\E\|\bUpsilon_2^\top\be_{t+1}\|_2
\le
C(T,\gamma,\zeta,\kappa),
\]
where the last step follows from Lemma~\ref{lem:moment-Upsilon}.

Under Assumption~\ref{assu:test-function}(ii), the second-order Stein identity
gives
\[
\sum_{i=1}^n\E[(d_i^{(3)})^2]
\le
\sum_{i=1}^n\E\|\hbb^{t,-i}-\hbb^t\|_2^2
+
\sum_{i=1}^n\sum_{j=1}^p
\E\Big\|\pdv{\hbb^t}{x_{ij}}\Big\|_2^2
\le
C(T,\gamma,\zeta,\kappa),
\]
by Lemmas~\ref{lem:loo-perturbation} and~\ref{lem:moment-H-F}. Also,
\[
\sum_{i=1}^n\E[(d_i^{(4)})^2]
=
\E\|\bUpsilon_2^\top\be_{t+1}\|_2^2
\le
\E\|\bUpsilon_2\|_{\op}^2
\le
\frac{C(T,\gamma,\zeta,\kappa)}{n}.
\]

It remains to verify the squared-loss weights. Since
\[
\bigl(1+|y_i|+|A_i^{(r)}|+|B_i^{(r)}|\bigr)^2
\le
C\bigl(1+|y_i|^2+|A_i^{(r)}|^2+|B_i^{(r)}|^2\bigr),
\]
the second-moment condition in Assumption~\ref{assu:test-function}(ii), together
with Lemma~\ref{lem:prediction-moments}, yields the claimed bound for
\(r=3,4\).
\end{proof}

\begin{lemma}[Comparison inputs for \(R_5\)]
\label{lem:R5-comparison-inputs}
Under Assumptions~\ref{assu:X}--\ref{assu:algorithm}, with
\(\bSigma=\bI_p\), let \(\Omega\) be the event in \eqref{eq:event-Omega} and
let \(\hbW=\cbK^{-1}\cbA\). Define
\[
A_i^{(5)}
:=
\bx_i^\top\hbb^t-\be_i^\top\bF\bW^\top\be_{t+1},
\qquad
B_i^{(5)}
:=
\bx_i^\top\hbb^t-\be_i^\top\bF\hbW^\top\be_{t+1},
\]
and \(d_i^{(5)}:=A_i^{(5)}-B_i^{(5)}\). Then
\[
\E\left[\I_\Omega\sum_{i=1}^n(d_i^{(5)})^2\right]
\le
C(T,\gamma,\zeta,\kappa)\sqrt n.
\]
If Assumption~\ref{assu:test-function}(ii) holds, then
\[
\E\left[
\I_\Omega
\sum_{i=1}^n
\bigl(1+|y_i|+|A_i^{(5)}|+|B_i^{(5)}|\bigr)^2
\right]
\le
C(T,\gamma,\zeta,\kappa)n
+
C\,\E\left[\I_\Omega\sum_{i=1}^n(d_i^{(5)})^2\right].
\]
\end{lemma}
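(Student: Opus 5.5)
We write \(d_i^{(5)}=\be_i^\top\bF(\hbW-\bW)^\top\be_{t+1}\), so that
\(\sum_i(d_i^{(5)})^2=\|\bF(\hbW-\bW)^\top\be_{t+1}\|_2^2\le\|\bF\|_{\op}^2\|\hbW-\bW\|_{\F}^2\).
Since every column \(\bu^s\) of \(\bF\) is bounded in norm by \(\zeta\) (Assumption~\ref{assu:algorithm}), we have \(\|\bF\|_{\op}^2\le\|\bF\|_{\F}^2\le T\zeta^2\) deterministically. The first claim therefore reduces to proving
\(\E[\I_\Omega\|\hbW-\bW\|_{\F}^2]\le C\sqrt n\).

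To obtain this, we use \(\hbW-\bW=\cbK^{-1}(\cbA-\cbK\bW)\), which gives
\[
\|\hbW-\bW\|_{\F}\le\|\cbK^{-1}\|_{\op}\,\|\cbA-\cbK\bW\|_{\F}.
\]
On \(\Omega\), \(\cbK\) is lower triangular with diagonal \(-\sqrt n\). We will use the inverse-\(\cbK\) bound collected in Section~\ref{sec:norm-moment-bounds}, which should give \(\|\cbK^{-1}\|_{\op}\lesssim n^{-1/2}\) on \(\Omega\). If that bound is not available directly, we argue as follows. Write \(\cbK=-\sqrt n(\bI_T-\bN)\) with \(\bN\) strictly lower triangular, so that \(\bN\) is nilpotent of order at most \(T\). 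Then \(\cbK^{-1}=-n^{-1/2}\sum_{k<T}\bN^k\), and it suffices to have moment bounds on \(\|\bN\|_{\op}\) on \(\Omega\). These come from the entries of \(\cbK\) being traces of Jacobian products of Lipschitz maps, each of size \(O(\sqrt n)\) after the \(1/\sqrt n\) normalization.

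Next we invoke Proposition~\ref{prop:approx-W} (\(\cbK\bW\approx\cbA\)) in its quantitative form, namely \(\E[\I_\Omega\|\cbA-\cbK\bW\|_{\F}^{2k}]\le C\,n^{k}\cdot n^{k/2}\) for \(k=1\) or \(2\). In words, the error matrix is of order \(n^{3/4}\) while \(\cbK\) is of order \(\sqrt n\). Combining this with the \(n^{-1/2}\) factor from \(\cbK^{-1}\) (via Cauchy--Schwarz if only moment bounds are available) gives
\(\E[\I_\Omega\|\hbW-\bW\|_{\F}^2]\le C\,n^{-1}\cdot n^{3/2}=C\sqrt n\).

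This estimate on \(\cbA-\cbK\bW\) is the main obstacle, and it is where the \(n^{1/4}\) loss appears. Its proof would follow the derivative representations in Section~\ref{sec:useful-lemmas}. Contracting the identity defining \(\bW\) against \(\bX^\top\) and comparing with the contractions defining \(\cbK\) and \(\cbA\) leaves residuals that are divergence-type terms. Gaussian Stein/second-order Stein identities, with the moment bounds for \(\bH,\bF\) and their derivatives (Lemma~\ref{lem:moment-H-F}) and for \(\bUpsilon_2\) (Lemma~\ref{lem:moment-Upsilon}), control these residuals. A fluctuation of order \(\sqrt{n}\cdot\|\cdot\|\) remains, bounded via a chi-square-type estimate, and this is what produces \(\sqrt n\) rather than \(O(1)\).

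For the second claim, we bound
\((1+|y_i|+|A_i|+|B_i|)^2\le C(1+y_i^2+A_i^2+(A_i-d_i)^2)\le C(1+y_i^2+A_i^2+d_i^2)\).
Here \(\sum_i\E[y_i^2]\le Cn\) by Assumption~\ref{assu:test-function}(ii) and Lemma~\ref{lem:prediction-moments}. Since \(A_i^{(5)}=a_i^W\), Lemma~\ref{lem:prediction-moments} gives \(\sum_i\E[(A_i^{(5)})^2]\le Cn\). Inserting \(\I_\Omega\le1\) in these terms and keeping \(\I_\Omega\sum_i d_i^2\) unchanged yields the stated inequality.
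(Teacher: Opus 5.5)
There is a genuine gap in the first claim. Your reduction $\sum_i(d_i^{(5)})^2\le\|\bF\|_{\op}^2\|\hbW-\bW\|_{\F}^2$ is valid but discards the $\bF$-weighting. You then need an \emph{unweighted} estimate $\E[\I_\Omega\|\cbA-\cbK\bW\|_{\F}^2]\le Cn^{3/2}$, and Proposition~\ref{prop:approx-W} does not provide it. That proposition is only an $L^1$ bound on the weighted error $\bF^\top\bF(\cbA^\top-\bW^\top\cbK^\top)$. The weighting is built into its proof: $\bW$ and $\cbA$ enter the Stein residuals $\bTheta_1,\cbTheta_2$ only through $\bF^\top\bF\bW^\top$ and $\bF^\top\bF\cbA^\top$, because they arise as $\bF$-coefficients of derivative contractions of $\bF^\top\bX\bH$ and $\bF^\top\bX\bX^\top\cbF$.

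You cannot strip the weighting off, because no lower bound on $\lambda_{\min}(\bF^\top\bF)$ is available. Dual iterates may vanish or coincide: for LAD, fully clipped iterates are sign vectors that can repeat, and in the general recursion $\bphi^s$ may even be identically zero while $\Psi^\eta_{t,s}\neq\boldzero$. In such directions the proposition says nothing about $\cbK\bW-\cbA$. Your alternative sketch of an unweighted $n^{3/4}$ bound (a residual fluctuation ``bounded via a chi-square-type estimate'') is not an argument. So the step carrying all the difficulty is unsupported.

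The fix is to keep one copy of the weighting. Let $\boldsymbol{E}:=\cbK\bW-\cbA$, so that $\bW-\hbW=\cbK^{-1}\boldsymbol{E}$, and use $\|\bM\|_{\F}^2\le\sqrt T\,\|\bM^\top\bM\|_{\F}$ for $\bM\in\R^{n\times T}$:
\[
\sum_{i=1}^n(d_i^{(5)})^2
\le\|\bF(\bW-\hbW)^\top\|_{\F}^2
\le\sqrt T\,\|\cbK^{-1}\boldsymbol{E}\,\bF^\top\bF\,\boldsymbol{E}^\top\cbK^{-\top}\|_{\F}
\le\sqrt T\,\|\cbK^{-1}\|_{\op}^2\,\|\boldsymbol{E}\|_{\op}\,\|\bF^\top\bF\boldsymbol{E}^\top\|_{\F}.
\]
On $\Omega$, Lemma~\ref{lem:Kinv-opnorm} gives $\|\cbK^{-1}\|_{\op}^2\le C/n$. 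Lemma~\ref{lem:op-bound-WAK} gives the crude bound $\|\boldsymbol{E}\|_{\op}\le\|\cbK\|_{\op}\|\bW\|_{\op}+\|\cbA\|_{\op}\le Cn$. The last factor is exactly the quantity in Proposition~\ref{prop:approx-W}, whose expectation is at most $C\sqrt n$.

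This yields $\E[\I_\Omega\sum_i(d_i^{(5)})^2]\le C\sqrt n$ without higher moments of $\boldsymbol{E}$ or Cauchy--Schwarz. The $\sqrt n$, and hence the $n^{-1/4}$ rate, comes from the crude operator-norm treatment of one factor of $\boldsymbol{E}$, not from a separate fluctuation estimate. Your handling of $\cbK^{-1}$ and of the second (weight) claim is fine and matches the paper.
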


\begin{proof}[Proof of Lemma~\ref{lem:R5-comparison-inputs}]
Set
\[
D_5:=\sum_{i=1}^n(d_i^{(5)})^2
=
\|\bF(\bW-\hbW)^\top\be_{t+1}\|_2^2.
\]
Since \(D_5\le \|\bF(\bW-\hbW)^\top\|_{\rm F}^2\), and since
\[
\|\bA\|_{\F}^2
\le
\sqrt T\,\|\bA^\top\bA\|_{\F}
\quad \text{for } \bA\in\mathbb R^{n\times T},
\]
we have
\[
D_5
\le
\sqrt T\,
\|(\bW-\hbW)\bF^\top\bF(\bW-\hbW)^\top\|_{\F}.
\]
Using \(\bW-\hbW=\cbK^{-1}(\cbK\bW-\cbA)\), we obtain
\begin{align*}
\E[\I_\Omega D_5]
&\le
C(T)\E\!\left[
\I_\Omega
\|\cbK^{-1}\|_{\op}^2
\|\cbK\bW-\cbA\|_{\op}
\|\bF^\top\bF(\cbK\bW-\cbA)^\top\|_{\F}
\right].
\end{align*}
On \(\Omega\), Lemma~\ref{lem:Kinv-opnorm} gives
\(\|\cbK^{-1}\|_{\op}\le C(T,\gamma,\zeta,\kappa)n^{-1/2}\), and
Lemma~\ref{lem:op-bound-WAK} gives
\(\|\cbK\bW-\cbA\|_{\op}\le C(T,\gamma,\zeta,\kappa)n\). Proposition~\ref{prop:approx-W}
therefore implies
\[
\E[\I_\Omega D_5]
\le
C(T,\gamma,\zeta,\kappa)\sqrt n.
\]

For the weight bound, note that \(B_i^{(5)}=A_i^{(5)}-d_i^{(5)}\), so
\[
\sum_{i=1}^n
\bigl(1+|y_i|+|A_i^{(5)}|+|B_i^{(5)}|\bigr)^2
\le
C\sum_{i=1}^n
\bigl(1+|y_i|^2+|A_i^{(5)}|^2+(d_i^{(5)})^2\bigr).
\]
The second-moment condition in Assumption~\ref{assu:test-function}(ii) and
Lemma~\ref{lem:prediction-moments} control the first three terms by
\(C(T,\gamma,\zeta,\kappa)n\), and the remaining term is \(D_5\).
\end{proof}

\subsection{Control of $R_1$}

\begin{lemma}[Control of \(R_1\)]
\label{lem:R1}
Under Assumptions~\ref{assu:X}, \ref{assu:regime}, and
\ref{assu:algorithm}, and under Assumption~\ref{assu:test-function}, either in
case \((i)\) or in case \((ii)\), we have
\[
\E[|R_1|]
\le
\frac{C(T,\gamma,\zeta,\kappa)}{\sqrt n}.
\]
\end{lemma}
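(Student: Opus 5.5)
We will write $R_1$ as an average of per-sample differences of conditional risks and control each with the leave-one-out stability lemmas. Since $(\bx_i,y_i)$ is independent of $\calD_{-i}$ and has the same law as $(\bx_{\rm new},y_{\rm new})$, we have
\[
\E[\ell(y_i,\bx_i^\top\hbb^{t,-i})\mid\calD_{-i}]=\calR(\hbb^{t,-i}),
\]
the population risk of the deleted iterate. Hence
\[
R_1=\frac1n\sum_{i=1}^n\bigl(\calR(\hbb^t)-\calR(\hbb^{t,-i})\bigr),
\qquad
\E|R_1|\le \sup_i \E\bigl|\calR(\hbb^t)-\calR(\hbb^{t,-i})\bigr|.
\]
The bound therefore reduces to a single-sample comparison, which we carry out separately in the two cases of Assumption~\ref{assu:test-function}. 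For $t=0$ both iterates equal $\boldzero$, so $R_1=0$, and we may assume $t\ge1$.

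\emph{Case (i).} Let $\bDelta:=\hbb^t-\hbb^{t,-i}$. Since $\ell$ is 1-Lipschitz and $\bx_{\rm new}\sim N(\boldzero,\bSigma)$ is independent of $\calD$,
\[
\bigl|\calR(\hbb^t)-\calR(\hbb^{t,-i})\bigr|
\le \E\bigl[|\bx_{\rm new}^\top\bDelta|\mid\calD\bigr]
\le \sqrt{\kappa}\,\|\bDelta\|_2.
\]
Lemma~\ref{lem:loo-first-moment} with $m=0$ gives $\E\|\bDelta\|_2\le C/\sqrt n$ uniformly in $i$, using only $\E|\ep_i|\le\zeta$. This proves the claim in case (i).

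\emph{Case (ii).} With the centered squared loss, write $a=\bx_{\rm new}^\top\hbb^t$ and $b=\bx_{\rm new}^\top\hbb^{t,-i}$. Then
\[
\ell(y,a)-\ell(y,b)=(a-b)(a+b-2y).
\]
Applying Cauchy--Schwarz inside the conditional expectation,
\[
\bigl|\calR(\hbb^t)-\calR(\hbb^{t,-i})\bigr|
\le \bigl(\bDelta^\top\bSigma\bDelta\bigr)^{1/2}
\bigl(\E[(a+b-2y_{\rm new})^2\mid\calD]\bigr)^{1/2}.
\]
Taking expectations and applying Cauchy--Schwarz again, the first factor contributes $(\kappa\,\E\|\bDelta\|_2^2)^{1/2}\le C/\sqrt n$ by Lemma~\ref{lem:loo-perturbation-weighted} with $m=0$. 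The second factor contributes a constant: by the first part of Lemma~\ref{lem:prediction-moments}, $\E[y_{\rm new}^2]$, $\E[(\bx_{\rm new}^\top\hbb^t)^2]$ and $\sup_i\E[(\bx_{\rm new}^\top\hbb^{t,-i})^2]$ are all bounded by $C(T,\gamma,\zeta,\kappa)$. This gives $C/\sqrt n$ in case (ii).

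No isotropic reduction is needed, since both bounds are stated for general $\bSigma$. The only delicate step is the first identity, which relies on the convention that the deleted iterate $\hbb^{t,-i}$ is computed with the ambient normalization and the full-data tuning parameters. Under this convention $\hbb^{t,-i}$ is exactly $\calD_{-i}$-measurable, and the conditional expectation equals the population risk of $\hbb^{t,-i}$. With that identity in place, the rest is a direct application of the stability lemmas already proved.
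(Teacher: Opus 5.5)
Your proof is correct and follows essentially the same route as the paper: write $R_1$ as an average of differences between the population risks of $\hbb^t$ and $\hbb^{t,-i}$, then use the Lipschitz bound with Lemma~\ref{lem:loo-first-moment} in case (i), and the squared-loss increment bound with Cauchy--Schwarz, Lemma~\ref{lem:prediction-moments}, and second-moment leave-one-out stability in case (ii). The differences are cosmetic: you make the conditional-expectation identity and the trivial $t=0$ case explicit, and you use the uniform-in-$i$ bound of Lemma~\ref{lem:loo-perturbation-weighted} with $m=0$ where the paper uses the summed bound of Lemma~\ref{lem:loo-perturbation} plus Cauchy--Schwarz over $i$.
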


\begin{proof}[Proof of Lemma~\ref{lem:R1}]
\label{proof:lem:R1}
Set
\[
\Delta_i:=\hbb^t-\hbb^{t,-i}.
\]

First suppose Assumption~\ref{assu:test-function}(i) holds. The loss is
centered and globally 1-Lipschitz, so
\[
|\ell(y,a)-\ell(y,b)|\le |a-b|.
\]
Since \(\bx_{\rm new}\) is independent of the training data,
\[
\E|R_1|
\le
\frac{1}{n}\sum_{i=1}^n
\E|\bx_{\rm new}^{\top}\Delta_i|
\le
\frac{C(\kappa)}{n}\sum_{i=1}^n\E\|\Delta_i\|_2.
\]
Lemma~\ref{lem:loo-first-moment} gives
\(\sup_i\E\|\Delta_i\|_2\le C(T,\gamma,\zeta,\kappa)n^{-1/2}\), and hence
\[
\E|R_1|\le \frac{C(T,\gamma,\zeta,\kappa)}{\sqrt n}.
\]

Now suppose Assumption~\ref{assu:test-function}(ii) holds. Set
\[
G_i:=1+|y_{\rm new}|+|\bx_{\rm new}^\top \hbb^t|+|\bx_{\rm new}^\top \hbb^{t,-i}|.
\]
By the centered squared-loss increment bound from
Assumption~\ref{assu:test-function}(ii),
\begin{align*}
\E[|R_1|]
&\le
\frac{C}{n}
\sum_{i=1}^n
\E\!\left[
G_i\,|\bx_{\rm new}^\top\Delta_i|
\right].
\end{align*}

\[
G_i^2
\le
4\Bigl(
1+|y_{\rm new}|^2+|\bx_{\rm new}^\top \hbb^t|^2+|\bx_{\rm new}^\top \hbb^{t,-i}|^2
\Bigr).
\]
Lemma~\ref{lem:prediction-moments} gives
\[
\E[G_i^2]\le C(T,\gamma,\zeta,\kappa).
\]
Hence, by Cauchy--Schwarz and the Gaussian second-moment bound,
\[
\E\!\left[
G_i\,|\bx_{\rm new}^\top\Delta_i|
\right]
\le
\E[G_i^2]^{1/2}\E[(\bx_{\rm new}^\top\Delta_i)^2]^{1/2}
\le
C(T,\gamma,\zeta,\kappa)\,\bigl(\E\|\Delta_i\|_2^2\bigr)^{1/2}.
\]
Summing over \(i\) and using \(\sum_{i=1}^n a_i\le \sqrt n (\sum_{i=1}^n a_i^2)^{1/2}\), we obtain
\[
\E[|R_1|]
\le
\frac{C(T,\gamma,\zeta,\kappa)}{\sqrt n}
\Big(\sum_{i=1}^n \E\|\Delta_i\|_2^2\Big)^{1/2}.
\]
The second-moment leave-one-out stability bound in
Lemma~\ref{lem:loo-perturbation} gives
\[
\sum_{i=1}^n \E\|\Delta_i\|_2^2
\le C(T,\gamma,\zeta,\kappa),
\]
so this is \(O(n^{-1/2})\).
This completes the proof.
\end{proof}

\subsection{Control of $R_2$}

\begin{lemma}[Control of \(R_2\)]
\label{lem:R2}
Under Assumptions~\ref{assu:X}, \ref{assu:regime}, and
\ref{assu:algorithm}, and under Assumption~\ref{assu:test-function}, either in
case \((i)\) or in case \((ii)\), we have
\[
\E[|R_2|]
\le 
\frac{C(T,\gamma,\zeta,\kappa)}{\sqrt n}.
\]
\end{lemma}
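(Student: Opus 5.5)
The plan is to write \(R_2=-\frac1n\sum_{i=1}^n U^i\) with
\[
U^i:=\ell(y_i,\bx_i^\top\hbb^{t,-i})-\E[\ell(y_i,\bx_i^\top\hbb^{t,-i})\mid\calD_{-i}],
\]
so that \(\E[U^i\mid\calD_{-i}]=0\). We then verify the two hypotheses of Lemma~\ref{lem:centered-sum-variance}.

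\textbf{Diagonal bound.} This is an immediate consequence of the centering \(\ell(y,0)=0\).
\begin{itemize}
\item In case (i), \(|\ell(y_i,a)|\le|a|\), so \(\E[(U^i)^2]\le 4\E[(\bx_i^\top\hbb^{t,-i})^2]\).
\item In case (ii), \(|\ell(y,a)|\le 2|y||a|+a^2\). One would then need fourth moments of \(y_i\), which are unavailable (only \(\E\ep_i^2\le\zeta\)).
\end{itemize}
So for case (ii) I would first expand \(\ell(y_i,a)=a^2-2\ep_i a-2(\bx_i^\top\bb^*)a\) and treat each piece. Conditionally on \(\calD_{-i}\), the Gaussian part is a quadratic form in \(\bx_i\) with moments controlled by Lemma~\ref{lem:deleted-moments}. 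The noise part \(\ep_i a\) has conditional second moment \(\E\ep_i^2\,\|\hbb^{t,-i}\|^2\)-type bounds, using independence of \(\ep_i\) from \((\bx_i,\calD_{-i})\). Hence \(\max_i\E[(U^i)^2]\le M\) with \(M=C(T,\gamma,\zeta,\kappa)\) in both cases.

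\textbf{Off-diagonal terms.} For \(i\ne l\), define the leave-two-out analogues
\[
U^{il}:=\ell(y_i,\bx_i^\top\hbb^{t,-(i,l)})-\E[\,\cdot\mid\calD_{-(i,l)}],
\]
and \(U^{li}\) symmetrically. Then \(U^{il}\) is \(\calD_{-l}\)-measurable and centered given \(\calD_{-(i,l)}\). Moreover \(U^{il},U^{li}\) are conditionally independent given \(\calD_{-(i,l)}\), since they depend on the distinct independent samples \(i\) and \(l\). Lemma~\ref{lem:loo-covariance-decoupling} then gives
\[
\E[U^iU^l]=\E[(U^i-U^{il})(U^l-U^{li})],
\]
so \(|\E U^iU^l|\le \E[(U^i-U^{il})^2]^{1/2}\E[(U^l-U^{li})^2]^{1/2}\). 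It therefore suffices to show \(\E[(U^i-U^{il})^2]\le C/n\).

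The difference of the uncentered parts is bounded by \(G\,|\bx_i^\top(\hbb^{t,-i}-\hbb^{t,-(i,l)})|\), where \(G=1\) in case (i) and \(G=1+|y_i|+|\bx_i^\top\hbb^{t,-i}|+|\bx_i^\top\hbb^{t,-(i,l)}|\) in case (ii). The conditional-expectation parts are handled by Jensen. Since \(\bx_i\) is independent of \(\hbb^{t,-i}-\hbb^{t,-(i,l)}\), conditioning on \(\calD_{-i}\) reduces matters to \(\E\|\hbb^{t,-i}-\hbb^{t,-(i,l)}\|^2\). This quantity is \(\le C/n\) by the second display of Lemma~\ref{lem:loo-perturbation-weighted} in case (ii).

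\textbf{Main obstacle.} In case (i) only \(\E|\ep_i|\le\zeta\) holds, and Lemma~\ref{lem:loo-first-moment} yields only \(\E\|\hbb^{t,-i}-\hbb^{t,-(i,l)}\|^2\le C/\sqrt n\). That would give off-diagonal covariance \(O(n^{-1/2})\) and hence a weaker rate. The key fix I would try is to exploit boundedness. In case (i), \(|U^i-U^{il}|\le |\bx_i^\top\Delta|+\E[|\bx_i^\top\Delta|\mid\cdot]\) with \(\Delta\) depending only on \(\calD_{-i}\), and \(\Delta\) is driven by \(v_l\) through the pathwise Gr\"onwall bound \eqref{eq:first-moment-loo-pathwise}. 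So \(\E[(U^i-U^{il})^2]\lesssim\E\|\Delta\|^2\).

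I would handle the heavy tail by truncating \(\ep_l\) at level \(\sqrt n\):
\begin{itemize}
\item On \(\{|\ep_l|\le\sqrt n\}\), \(|v_l^{s,-l}|^2\lesssim n^{-1}(\cdots)+n^{-1}\ep_l^2\wedge 1\), and \(\E[\ep_l^2\wedge n]/n\le \zeta/\sqrt n\).
\item Combined with the symmetric product structure, this yields \(\E[(U^i-U^{il})(U^l-U^{li})]\le C/n\), using that the two factors involve \(\ep_l\) and \(\ep_i\) separately, together with Cauchy--Schwarz conditionally on the noises.
\end{itemize}
If this fails, the fallback is to bound \(\E|\E U^iU^l|\) via the \(L^1\)-type product estimate with boundedness \(|U^i-U^{il}|\le 2C(1+\lambda)^{1/2}\|\Delta\|\) and the first-moment bound \(C/\sqrt n\) on each factor independently. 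This yields the needed \(C/n\) once the two factors are decoupled through their dependence on \(\ep_l\) and \(\ep_i\) respectively.

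Finally, Lemma~\ref{lem:centered-sum-variance} gives \(\E|R_2|\le C/\sqrt n\).
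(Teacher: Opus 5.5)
Your route is the paper's: the same $U^i,U^{il},U^{li}$, and Lemma~\ref{lem:loo-covariance-decoupling} combined with Lemma~\ref{lem:centered-sum-variance}. The diagonal bounds come from $\ell(y,0)=0$ and conditional Gaussian moments; in case (ii) only $\E\ep_i^2$ is needed, since $\ep_i$ is independent of $\bx_i^\top\hbb^{t,-i}$. In case (ii) the off-diagonal terms reduce to $\E[(U^i-U^{il})^2]\le C/n$ via Lemma~\ref{lem:loo-perturbation-weighted} with $m=1$, which absorbs the factor $1+\lambda_{-i}$ produced by the weight $G$. You also correctly see that in case (i) this Cauchy--Schwarz step gives only $n^{-1/2}$ for the off-diagonal terms.

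However, the mechanisms you propose do not close case (i). Truncating at $|\ep_l|\le\sqrt n$ reproduces the same rate, since $\E[\ep_l^2\wedge n]/n\le\zeta/\sqrt n$. The fallback multiplies two $L^1$ bounds, which requires an independence that fails. With $\Delta_{il}:=\hbb^{t,-i}-\hbb^{t,-(i,l)}$, the factor $U^i-U^{il}$ depends on $\bx_i$ and on sample $l$ (through $\Delta_{il}$), while $U^l-U^{li}$ depends on $\bx_l$ and on sample $i$. Moreover, the pathwise bound $|U^i-U^{il}|\le 2C(1+\lambda)^{1/2}\|\Delta\|$ is false: $|\bx_i^\top\Delta_{il}|$ is controlled pathwise only by $\|\bx_i\|_2\|\Delta_{il}\|_2\approx\sqrt p\,\|\Delta_{il}\|_2$. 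The paper itself treats this step in a single sentence, so the argument below is also what makes that sentence rigorous.

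Your remark about Cauchy--Schwarz conditionally on the noises is the right fix, and no truncation is needed. Let $\mathcal E:=(\calD_{-(i,l)},\ep_i,\ep_l)$ and $\lambda_{-(i,l)}:=\|\bX_{-(i,l)}\|_{\op}^2/n$. Because $\hbb^{t,-(i,l)}$ is $\calD_{-(i,l)}$-measurable, the conditional expectation given $\calD_{-(i,l)}$ in $U^{il}$ coincides with the one given $\calD_{-i}$. Hence in case (i) $|U^i-U^{il}|\le|\bx_i^\top\Delta_{il}|+\kappa^{1/2}\|\Delta_{il}\|_2$, and symmetrically for $U^l-U^{li}$. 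Apply Cauchy--Schwarz over $(\bx_i,\bx_l)$ only, given $\mathcal E$. As $\bx_i$ is independent of $\Delta_{il}$, integrating it out leaves only $\|\Delta_{il}\|_2^2$:
\[
\E\bigl[|U^i-U^{il}|\,|U^l-U^{li}|\bigm|\mathcal E\bigr]\le a_l\,a_i,
\qquad
a_l^2:=4\kappa\,\E\bigl[\|\Delta_{il}\|_2^2\bigm|\calD_{-(i,l)},\ep_l\bigr],
\]
with $a_i$ defined symmetrically, so $a_l$ involves only $\ep_l$ and $a_i$ only $\ep_i$.

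By \eqref{eq:first-moment-loo-pathwise} applied to $\calD_{-i}$, $\|\Delta_{il}\|_2\le C(1+\lambda_{-i})^{T/2}\sum_{s<t}|v_l^{s,-(i,l)}|$, where $\sqrt n\,v_l^{s,-(i,l)}=\ep_l-\bx_l^\top(\hbb^{s,-(i,l)}-\bb^*)$. Integrate over $\bx_l$ alone, using $\lambda_{-i}\le\lambda_{-(i,l)}+\|\bx_l\|_2^2/n$ and $\|\hbb^{s,-(i,l)}\|_2\le C(1+\lambda_{-(i,l)}^{1/2})$. This gives $a_l\le Cn^{-1/2}(1+\lambda_{-(i,l)})^{(T+1)/2}(1+|\ep_l|)$.

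Since $\ep_i$ and $\ep_l$ are independent of each other and of $\calD_{-(i,l)}$, integrating them separately gives $\E[a_la_i\mid\calD_{-(i,l)}]\le Cn^{-1}(1+\lambda_{-(i,l)})^{T+1}(1+\zeta)^2$. Hence $|\E[U^iU^l]|\le C/n$, and each heavy-tailed noise enters only through its first moment.
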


\begin{proof}[Proof of Lemma~\ref{lem:R2}]
\label{proof:lem:R2}
Write
\[
R_2=-\frac1n\sum_{i=1}^n U^i,
\qquad
U^i:=\ell(y_i,\bx_i^\top\hbb^{t,-i})
-
\E[\ell(y_i,\bx_i^\top\hbb^{t,-i})\mid\calD_{-i}].
\]
For \(i\ne l\), also define
\[
U^{il}
:=
\ell(y_i,\bx_i^\top\hbb^{t,-(i,l)})
-
\E[\ell(y_i,\bx_i^\top\hbb^{t,-(i,l)})\mid\calD_{-(i,l)}],
\]
and define \(U^{li}\) analogously.
The deletion construction gives the centering and measurability requirements of
Lemma~\ref{lem:loo-covariance-decoupling}, as well as the required conditional
independence. Hence
\[
\E[U^iU^l]=\E[(U^i-U^{il})(U^l-U^{li})]
\qquad (i\ne l).
\]
By Lemma~\ref{lem:centered-sum-variance}, it remains to prove, in each case,
\[
\E[(U^i)^2]\le C(T,\gamma,\zeta,\kappa),
\qquad
|\E[U^iU^l]|\le \frac{C(T,\gamma,\zeta,\kappa)}{n}
\quad (i\ne l).
\]

First suppose Assumption~\ref{assu:test-function}(i) holds. Then
\(|\ell(y,a)|\le |a|\) and
\(|\ell(y,a)-\ell(y,b)|\le |a-b|\). The diagonal terms satisfy
\[
\E[(U^i)^2]
\le
\E[\ell(y_i,\bx_i^\top\hbb^{t,-i})^2]
\le
C\E[(\bx_i^\top\hbb^{t,-i})^2]
\le
C(T,\gamma,\zeta,\kappa),
\]
where we conditioned on \(\calD_{-i}\) and used
Lemma~\ref{lem:deleted-moments}. For \(i\ne l\), conditioning on
\(\calD_{-(i,l)}\), applying the Lipschitz bound to the two centered
differences, and then using the conditional double-deletion version of
Lemma~\ref{lem:loo-first-moment} gives
\[
\E\!\left[
\big|(U^i-U^{il})(U^l-U^{li})\big|
\Bigm|\calD_{-(i,l)}
\right]
\le
\frac{C(T,\gamma,\zeta,\kappa)}{n}.
\]
The decoupling identity therefore yields
\[
|\E[U^iU^l]|
\le
\frac{C(T,\gamma,\zeta,\kappa)}{n}.
\]

Now suppose Assumption~\ref{assu:test-function}(ii) holds. Set
\(a_i:=\bx_i^\top \hbb^{t,-i}\). Since
\[
\E[(U^i)^2]
=
\E[\var(\ell(y_i,a_i)\mid\calD_{-i})]
\le
\E[\ell(y_i,a_i)^2],
\]
and the centered squared loss satisfies
\[
|\ell(y_i,a_i)|
\le
C(1+|y_i|+|a_i|)|a_i|,
\qquad
\ell(y_i,a_i)^2
\le
C(a_i^2+y_i^2a_i^2+a_i^4),
\]
it is enough to bound the three displayed moments. Conditioning on
\(\hbb^{t,-i}\), using Gaussian second- and fourth-moment identities, and then
using Lemma~\ref{lem:deleted-moments}, we obtain
\[
\E[a_i^2]+\E[a_i^4]
\le
C(T,\gamma,\zeta,\kappa).
\]
Also, from \(y_i=\bx_i^\top\bb^*+\ep_i\),
\[
\E[y_i^2a_i^2\mid \hbb^{t,-i}]
\le
C(\zeta,\kappa)\|\hbb^{t,-i}\|_2^2,
\]
and hence \(\E[y_i^2a_i^2]\le C(T,\gamma,\zeta,\kappa)\). This proves the
diagonal bound in case \((ii)\).

For the off-diagonal bound, the decoupling identity and Cauchy--Schwarz reduce
the proof to
\[
\E[(U^i-U^{il})^2]\le \frac{C(T,\gamma,\zeta,\kappa)}{n}
\qquad (i\ne l).
\]
Let
\[
\Delta_{il}:=\hbb^{t,-i}-\hbb^{t,-(i,l)},
\qquad
G_{il}:=1+|y_i|+|\bx_i^\top \hbb^{t,-i}|+
|\bx_i^\top \hbb^{t,-(i,l)}|.
\]
Jensen's inequality and the squared-loss increment bound give
\[
\frac12\E[(U^i-U^{il})^2]
\le
C\,\E\big[G_{il}^2(\bx_i^\top\Delta_{il})^2\big].
\]
Condition on \(\calD_{-i}\). Then \(\hbb^{t,-i}\),
\(\hbb^{t,-(i,l)}\), and \(\Delta_{il}\) are fixed, while
\((\bx_i,\ep_i)\) is independent of \(\calD_{-i}\). Using
\[
G_{il}^2\le
C\Bigl(1+y_i^2+(\bx_i^\top\hbb^{t,-i})^2+
(\bx_i^\top\hbb^{t,-(i,l)})^2\Bigr)
\]
and the Gaussian bound
\[
\E[(\bx_i^\top a)^2(\bx_i^\top b)^2]
\le
3\kappa^2\|a\|_2^2\|b\|_2^2,
\]
we get
\[
\E\big[G_{il}^2(\bx_i^\top\Delta_{il})^2\mid\calD_{-i}\big]
\le
C(T,\zeta,\kappa)(1+\lambda_{-i})\|\Delta_{il}\|_2^2,
\]
where
\(\lambda_{-i}=\|\bX_{-i}\|_{\op}^2/n\). Here we used the deterministic
deleted-iterate bound
\[
\|\hbb^{t,-i}\|_2^2+\|\hbb^{t,-(i,l)}\|_2^2
\le
C(T,\zeta)(1+\lambda_{-i}),
\]
which follows from the same Gr\"onwall argument as in
Lemma~\ref{lem:deleted-moments}. Taking expectations and applying
Lemma~\ref{lem:loo-perturbation-weighted} to the deleted dataset \(\calD_{-i}\)
with \(m=1\) gives
\[
\E\big[G_{il}^2(\bx_i^\top\Delta_{il})^2\big]
\le
\frac{C(T,\gamma,\zeta,\kappa)}{n}.
\]
Thus \(\E[(U^i-U^{il})^2]\le C(T,\gamma,\zeta,\kappa)/n\).
Cauchy--Schwarz gives the off-diagonal bound in case \((ii)\).

Combining the diagonal and off-diagonal bounds with
Lemma~\ref{lem:centered-sum-variance} proves the claim.
\end{proof}

\subsection{Control of $R_3$}

\begin{lemma}[Control of \(R_3\)]
\label{lem:R3}
Under Assumptions~\ref{assu:X}, \ref{assu:regime}, and
\ref{assu:algorithm}, under Assumption~\ref{assu:test-function}, either in case
\((i)\) or in case \((ii)\), and with \(\bSigma=\bI_p\), we have
\[
\E[|R_3|]
\le
\frac{C(T,\gamma,\zeta,\kappa)}{\sqrt n}.
\]
\end{lemma}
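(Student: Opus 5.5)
The plan is to write \(R_3=Q(A^{(3)},B^{(3)})\) in the notation of Lemma~\ref{lem:ready-two-case-comparison}. Here
\[
A_i^{(3)}=\bx_i^\top\hbb^{t,-i},
\qquad
B_i^{(3)}=\bx_i^\top\hbb^t-\sum_{j=1}^p\pdv{\hbb^t_j}{x_{ij}},
\]
exactly as in Lemma~\ref{lem:R34-comparison-inputs}. We then feed in the inputs already established there and treat the two cases of Assumption~\ref{assu:test-function} separately. The case \(t=0\) is trivial: \(\hbb^0=\hbb^{0,-i}=\boldzero\), the derivative vanishes, and \(R_3=0\). So we may take \(t\ge1\).

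\emph{Case (i), 1-Lipschitz loss.} The first display of Lemma~\ref{lem:ready-two-case-comparison} gives \(\E|R_3|\le n^{-1}\sum_i\E|d_i^{(3)}|\). Lemma~\ref{lem:R34-comparison-inputs} bounds this sum by \(C\sqrt n\), which yields the rate \(C/\sqrt n\).

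The substantive input is the \(L^1\) second-order Stein bound. Conditionally on \(\calD_{-i}\), the map \(\bx_i\mapsto\hbb^t-\hbb^{t,-i}\) is Lipschitz, because the whole trajectory is a composition of Lipschitz maps. The second-order Stein inequality then gives
\[
\E\bigl[(d_i^{(3)})^2\mid\calD_{-i}\bigr]\le \E\bigl[\|\hbb^t-\hbb^{t,-i}\|^2+\textstyle\sum_j\|\partial_{x_{ij}}\hbb^t\|^2\mid \calD_{-i}\bigr].
\]
This holds only for the noise \(\ep_i\) held fixed, since the map also depends on \(\ep_i\). Under only a first moment on \(\ep_i\), I would pass to \(\E|d_i^{(3)}|\) via Jensen and use the \(L^1\) stability of Lemma~\ref{lem:loo-first-moment}. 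Two facts matter here. First, the pathwise bound \eqref{eq:first-moment-loo-pathwise} shows that \(\|\hbb^t-\hbb^{t,-i}\|\) is linear in \(\sum_s|v_i^{s,-i}|\), hence linear in \(|\ep_i|\). Second, the derivative terms satisfy \(\sum_i\sum_j\E\|\partial_{x_{ij}}\hbb^t\|^2\le C\) (Lemma~\ref{lem:moment-H-F}), and the same weighting by \((1+\lambda_X)^m\) applies to them.

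\emph{Case (ii), centered squared loss.} I would use the second display of Lemma~\ref{lem:ready-two-case-comparison}. Lemma~\ref{lem:R34-comparison-inputs} provides both inputs:
\[
\sum_{i=1}^n\E\bigl[(d_i^{(3)})^2\bigr]\le C=:b_n,
\qquad
\E\sum_{i=1}^n\bigl(1+|y_i|+|A_i^{(3)}|+|B_i^{(3)}|\bigr)^2\le Mn.
\]
The weight bound comes from Lemma~\ref{lem:prediction-moments}. Together these give \(\E|R_3|\le C(M)\sqrt{b_n/n}=C/\sqrt n\).

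The main obstacle is the case (i) input \(\sum_i\E|d_i^{(3)}|\lesssim\sqrt n\) under only \(\E|\ep_i|\le\zeta\). The squared second-order Stein bound would require \(\E\ep_i^2<\infty\) through the forcing term \(v_i^{s,-i}\). To handle this, I would apply the Stein inequality conditionally on \((\calD_{-i},\ep_i)\). This yields a conditional \(L^2\) bound of the form \(C(1+\lambda)^m(1+|\ep_i|^2)/n\) for the right-hand side. Taking the square root before integrating over \(\ep_i\) gives \(\E|d_i^{(3)}|\le C(1+\E|\ep_i|)/\sqrt n\). Summing over \(i\) then yields the required \(C\sqrt n\).
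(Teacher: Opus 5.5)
Your proposal is correct and follows the paper's proof. The paper also writes \(R_3\) as \(Q(A^{(3)},B^{(3)})\) and feeds the inputs of Lemma~\ref{lem:R34-comparison-inputs} into Lemma~\ref{lem:ready-two-case-comparison}, which gives \(n^{-1}\cdot C\sqrt n\) in case \((i)\) and \(C\sqrt{b_n/n}\) with \(b_n=O(1)\) in case \((ii)\); your extra step of applying the second-order Stein bound conditionally on \((\calD_{-i},\ep_i)\) and taking square roots before integrating over \(\ep_i\) is a sound way to make explicit the first-moment \(L^1\) input that the paper only asserts in the proof of Lemma~\ref{lem:R34-comparison-inputs} (the map \(\bx_i\mapsto\hbb^t-\hbb^{t,-i}\) is only locally Lipschitz, but weak differentiability with square-integrable gradient is all the Stein identity needs).
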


\begin{proof}[Proof of Lemma~\ref{lem:R3}]
\label{proof:lem:R3}
Let $\hbb^{t,-i}$ denote the $t$-th iterate computed with the $i$-th sample removed.
Define
\[
A_i:=\bx_i^\top \hbb^{t,-i},
\qquad
B_i:=\bx_i^\top \hbb^t-\sum_j \be_j^\top  \pdv{\hbb^t}{x_{ij}},
\qquad
d_i
:=
A_i-B_i.
\]
Lemma~\ref{lem:R34-comparison-inputs} gives the assumptions of
Lemma~\ref{lem:ready-two-case-comparison}: in case \((i)\),
\(\sum_i\E|d_i|\le C(T,\gamma,\zeta,\kappa)\sqrt n\), while in case \((ii)\),
the squared-loss weight is bounded by \(C(T,\gamma,\zeta,\kappa)n\) and
\(\sum_i\E[d_i^2]\le C(T,\gamma,\zeta,\kappa)\). Applying
Lemma~\ref{lem:ready-two-case-comparison} to these predictors proves the claim.
\end{proof}

\subsection{Control of $R_4$}

\begin{lemma}[Control of \(R_4\)]
\label{lem:R4}
Under Assumptions~\ref{assu:X}, \ref{assu:regime}, and
\ref{assu:algorithm}, under Assumption~\ref{assu:test-function}, either in case
\((i)\) or in case \((ii)\), and with \(\bSigma=\bI_p\), we have
\[
\E[|R_4|]
\le
\frac{C(T,\gamma,\zeta,\kappa)}{\sqrt n}.
\]
\end{lemma}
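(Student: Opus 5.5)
The bound for \(R_4\) follows the same template as the one for \(R_3\). We reduce it to the deterministic comparison Lemma~\ref{lem:ready-two-case-comparison}, with inputs supplied by Lemma~\ref{lem:R34-comparison-inputs}. Set
\[
A_i:=\bx_i^\top\hbb^t-\sum_{j=1}^p\pdv{\hbb_j^t}{x_{ij}},
\qquad
B_i:=\bx_i^\top\hbb^t-\be_i^\top\bF\bW^\top\be_{t+1},
\qquad
d_i:=A_i-B_i.
\]
With these choices, \(R_4=Q(A,B)\) in the notation of Lemma~\ref{lem:ready-two-case-comparison}. By the residual identity of Lemma~\ref{lem:sum-derivative}, the difference is exactly the \((t+1)\)th column of the Stein-contraction residual,
\[
d_i=\be_i^\top\bUpsilon_2^\top\be_{t+1}.
\]
This identity holds pathwise, since it simply rewrites the chain-rule expansion of \(\sum_j\partial\hbb^t_j/\partial x_{ij}\) in isotropic coordinates.

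\emph{Case (i) of Assumption~\ref{assu:test-function}.} Lemma~\ref{lem:R34-comparison-inputs} gives
\[
\sum_{i=1}^n\E|d_i|\le\sqrt n\,\E\|\bUpsilon_2^\top\be_{t+1}\|_2\le C(T,\gamma,\zeta,\kappa).
\]
The first display of Lemma~\ref{lem:ready-two-case-comparison} then yields \(\E|R_4|\le C/n\), which is stronger than the claimed \(n^{-1/2}\).

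\emph{Case (ii) of Assumption~\ref{assu:test-function}.} Lemma~\ref{lem:R34-comparison-inputs} gives two inputs. First, \(\sum_i\E[d_i^2]\le\E\|\bUpsilon_2\|_{\op}^2\le C/n\). Second, the weight bound \(\E\sum_i(1+|y_i|+|A_i|+|B_i|)^2\le Cn\), which comes from Lemma~\ref{lem:prediction-moments} through the moments of \(a_i^{\rm stein}\) and \(a_i^W\) together with \(\E[y_i^2]\le C\). The second display of Lemma~\ref{lem:ready-two-case-comparison}, with \(b_n=C/n\), gives
\[
\E|R_4|\le C\sqrt{b_n/n}=C/n .
\]

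In both cases the bound is at least as good as \(C(T,\gamma,\zeta,\kappa)/\sqrt n\), which proves the lemma.

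The only substantive input is the moment bound \(\E\|\bUpsilon_2\|_{\op}^2\le C/n\) from Lemma~\ref{lem:moment-Upsilon}, which I take as given. That bound is where the real work lies. One has to show that, after grouping the chain-rule terms into \(\bF\bW^\top\), the leftover terms consist of Gaussian Stein contractions of the form
\[
\sum_{i,j}\be_i\be_j^\top(\text{derivative of a trajectory factor}).
\]
Each such term is an \(n^{-1/2}\)-scaled derivative that is small in operator norm. Controlling these requires the derivative moment bounds of Lemma~\ref{lem:moment-H-F} on the event \(\Omega\), plus a crude bound off \(\Omega\) using \(\P(\Omega^c)\le e^{-n/2}\).
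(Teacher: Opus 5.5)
Your proof is correct and is the paper's own argument. You identify $d_i=\be_i^\top\bUpsilon_2^\top\be_{t+1}$ via Lemma~\ref{lem:sum-derivative}, feed the $r=4$ inputs of Lemma~\ref{lem:R34-comparison-inputs} into Lemma~\ref{lem:ready-two-case-comparison}, and obtain the stronger $C/n$ rate in both cases. One minor point: your closing account of how $\E\|\bUpsilon_2\|_{\op}^2\le C/n$ is proved does not match the paper. The paper bounds it via $\|\bUpsilon_2\|_{\op}\le T\|\calM^{-1}\bJ_{\bPhi^v}\|_{\op}\|\bH\|_{\rm F}$, treating it as an algebraic chain-rule remainder rather than a Stein contraction, and then uses polynomial moments of $\|\bX\|_{\op}/\sqrt n$, with no split on $\Omega$.
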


\begin{proof}[Proof of Lemma~\ref{lem:R4}]
\label{proof:lem:R4}
Let
\[
A_i:=\bx_i^\top\hbb^t-\sum_{j=1}^p\pdv{\hbb_j^t}{x_{ij}},
\qquad
B_i:=\bx_i^\top\hbb^t-\be_i^\top\bF\bW^\top\be_{t+1},
\qquad
d_i:=A_i-B_i.
\]
These are the \(r=4\) comparison variables in
Lemma~\ref{lem:R34-comparison-inputs}.
Lemma~\ref{lem:R34-comparison-inputs} gives
\(\sum_i\E|d_i|\le C(T,\gamma,\zeta,\kappa)\) in case \((i)\), and gives the
squared-loss weight bound together with
\(\sum_i\E[d_i^2]\le C(T,\gamma,\zeta,\kappa)/n\) in case \((ii)\). Therefore
Lemma~\ref{lem:ready-two-case-comparison} gives \(\E|R_4|\le
C(T,\gamma,\zeta,\kappa)/n\), which is stronger than the stated bound.
\end{proof}

\subsection{Control of $R_5$}

\begin{lemma}[Control of \(R_5\)]
\label{lem:R5}
Under Assumptions~\ref{assu:X}--\ref{assu:algorithm}, under
Assumption~\ref{assu:test-function}, either in case \((i)\) or in case
\((ii)\), and with \(\bSigma=\bI_p\), let \(\Omega\) be the event defined in
\eqref{eq:event-Omega}.
Using this $\hbW:=\cbK^{-1}\cbA$, define the estimator
\[
\widehat{\calR}_t :=
\frac 1n \sum_{i=1}^n
\ell\!\left(y_i, \bx_i^\top \hbb^{t}
- \be_i^\top \bF \hbW^\top \be_{t+1}\right).
\]
Then
\[
\E[\I_{\Omega}|R_5|]
\le
\frac{C(T,\gamma,\zeta,\kappa)}{n^{1/4}}.
\]
\end{lemma}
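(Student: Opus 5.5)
The plan is to treat \(R_5\) as one more instance of the event-restricted comparison in Lemma~\ref{lem:ready-two-case-comparison}, using the predictors
\[
A_i^{(5)}=\bx_i^\top\hbb^t-\be_i^\top\bF\bW^\top\be_{t+1},
\qquad
B_i^{(5)}=\bx_i^\top\hbb^t-\be_i^\top\bF\hbW^\top\be_{t+1}
\]
from Lemma~\ref{lem:R5-comparison-inputs}, together with \(E=\Omega\). By definition, \(R_5=Q(A^{(5)},B^{(5)})\) exactly. So everything reduces to checking the hypotheses of the event version of Lemma~\ref{lem:ready-two-case-comparison} and then reading off the rate.

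First, set
\[
b_{n,\Omega}:=\E\Bigl[\I_\Omega\sum_{i=1}^n(d_i^{(5)})^2\Bigr].
\]
Lemma~\ref{lem:R5-comparison-inputs} gives \(b_{n,\Omega}\le C(T,\gamma,\zeta,\kappa)\sqrt n\). This bound ultimately rests on Proposition~\ref{prop:approx-W}, the approximation \(\cbK\bW\approx\cbA\), combined with \(\|\cbK^{-1}\|_{\op}\lesssim n^{-1/2}\) on \(\Omega\) from Lemma~\ref{lem:Kinv-opnorm}. It is the only genuinely substantive input, and I take it as given.

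Second, I verify the weight condition. In case (i) there is nothing to check, since \(G_i\equiv1\). In case (ii), Lemma~\ref{lem:R5-comparison-inputs} gives
\[
\E\Bigl[\I_\Omega\sum_i\bigl(1+|y_i|+|A_i^{(5)}|+|B_i^{(5)}|\bigr)^2\Bigr]\le Mn+Mb_{n,\Omega},
\]
with \(M=\max\{C(T,\gamma,\zeta,\kappa),C,1\}\).

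Third, I plug into the event conclusion of Lemma~\ref{lem:ready-two-case-comparison}:
\[
\E[\I_\Omega|R_5|]\le \frac{C(M)}{\sqrt n}\,b_{n,\Omega}^{1/2}+\frac{C(M)}{n}\,b_{n,\Omega}
\le \frac{C\,n^{1/4}}{\sqrt n}+\frac{C\sqrt n}{n}
= C\,n^{-1/4}+C\,n^{-1/2}.
\]
The first term dominates, which gives the claimed \(n^{-1/4}\) rate.

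The one point needing care is case (i) on the event. There the bound should follow directly from 1-Lipschitzness and Cauchy--Schwarz, \(\frac1n\sum_i|d_i|\le n^{-1/2}(\sum_i d_i^2)^{1/2}\), followed by Jensen under \(\I_\Omega\). This uses only \(b_{n,\Omega}\) and no noise moments, so the first-moment regime suffices.

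The main obstacle, the \(\sqrt n\) bound on \(b_{n,\Omega}\), is already packaged in Lemma~\ref{lem:R5-comparison-inputs}. Within this lemma the remaining work is just bookkeeping of constants.
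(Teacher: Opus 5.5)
Your proposal is correct and is essentially the paper's proof. You set \(E=\Omega\), take \(b_{n,\Omega}\le C\sqrt n\) and (in case \((ii)\)) the event-weight bound from Lemma~\ref{lem:R5-comparison-inputs}, note that the weight hypothesis is automatic in case \((i)\) because \(G_i\equiv 1\), and read off \(C n^{-1/4}+C n^{-1/2}\) from the event part of Lemma~\ref{lem:ready-two-case-comparison}, exactly as the paper does, with the only substantive input being the \(\sqrt n\) bound on \(\E[\I_\Omega D_5]\) that comes from Proposition~\ref{prop:approx-W} together with Lemmas~\ref{lem:Kinv-opnorm} and~\ref{lem:op-bound-WAK}.
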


\begin{proof}[Proof of Lemma~\ref{lem:R5}]
\label{proof:lem:R5}
By definition,
\[
R_5
=
\frac1n\sum_{i=1}^n
\ell\!\left(y_i,\bx_i^\top\hbb^t-\be_i^\top\bF\bW^\top\be_{t+1}\right)
-
\frac1n\sum_{i=1}^n
\ell\!\left(y_i,\bx_i^\top\hbb^t-\be_i^\top\bF\hbW^\top\be_{t+1}\right).
\]

Define
\[
\begin{aligned}
A_i&:=\bx_i^\top\hbb^t-\be_i^\top\bF\bW^\top\be_{t+1},\\
B_i&:=\bx_i^\top\hbb^t-\be_i^\top\bF\hbW^\top\be_{t+1},\\
d_i&:=A_i-B_i=\be_i^\top\bF(\bW-\hbW)^\top\be_{t+1}.
\end{aligned}
\]
Set
\[
D_5:=\sum_{i=1}^n d_i^2.
\]
Lemma~\ref{lem:R5-comparison-inputs} gives
\(\E[\I_\Omega D_5]\le C(T,\gamma,\zeta,\kappa)\sqrt n\), and in case \((ii)\)
also gives the event-weight hypothesis in
Lemma~\ref{lem:ready-two-case-comparison}. In case \((i)\), that event-weight
hypothesis is automatic because \(G_i\equiv1\). Applying the event part of
Lemma~\ref{lem:ready-two-case-comparison} gives
\[
\E[\I_\Omega |R_5|]
\le
\frac{C(T,\gamma,\zeta,\kappa)}{n^{1/4}}.
\]
\end{proof}

\section{Proofs of the main theorems}\label{sec:proof-main}
This section combines the ingredients from Section~\ref{sec:proof-ingredients} with the isotropic reduction from Section~\ref{sec:reduction-isotropic} to prove Theorems~\ref{thm:rt-tilde} and \ref{thm:rt-hat}, and then proves Corollary~\ref{cor:early-stopping}.

\subsection{Proof of Theorem~\ref{thm:rt-tilde}}
\label{sec:proof-rt-tilde}
Using the change-of-variable argument in Section~\ref{sec:reduction-isotropic}, we can assume without loss of generality that $\bSigma = \bI_p$. For general $\bSigma$, the same results hold with constants depending additionally on $\kappa$ from Assumption~\ref{assu:X}.

\begin{proof}[Proof of Theorem~\ref{thm:rt-tilde}]
Recall from \eqref{eq:error-bound} that
\[
\E[|\widetilde{\calR}_t-\calR_t|]
\le
\sum_{i=1}^4 \E|R_i|.
\]
Lemmas~\ref{lem:R1}, \ref{lem:R2}, \ref{lem:R3}, and~\ref{lem:R4} give
\(\E|R_r|\le C(T,\gamma,\zeta,\kappa)n^{-1/2}\), \(r=1,\ldots,4\), in either
case of Assumption~\ref{assu:test-function}. In case \((i)\), these bounds use
only the first moment of the noise; in case \((ii)\), they use the second moment
required for the centered squared loss. Hence
\[
\E[|\widetilde{\calR}_t-\calR_t|]
\le
\sum_{i=1}^4 \E|R_i|
\le
\frac{C(T,\gamma,\zeta,\kappa)}{\sqrt n}.
\]

Finally, Markov's inequality implies
\[
\P\big(|\widetilde{\calR}_t-\calR_t|>\epsilon\big)
\le
\frac{\E|\widetilde{\calR}_t-\calR_t|}{\epsilon}
\le
\frac{C(T,\gamma,\zeta,\kappa)}{\epsilon\sqrt n}.
\]
\end{proof}

\subsection{Proof of Theorem~\ref{thm:rt-hat}}
\label{sec:proof-rt-hat}

\begin{proof}[Proof of Theorem~\ref{thm:rt-hat}]
As in the proof of Theorem~\ref{thm:rt-tilde}, we first apply the change-of-variables reduction in Section~\ref{sec:reduction-isotropic}. Under this reduction, \(\cbK\), \(\cbA\), and hence \(\hbW=\cbK^{-1}\cbA\) are unchanged, while the transformed population weight matrix is the original covariance-dependent matrix \(\bW\); only the constants acquire their dependence on \(\kappa\). The event \(\Omega\) from \eqref{eq:event-Omega} satisfies \(\P(\Omega)\ge 1-e^{-n/2}\). Since \(R_5=\widetilde{\calR}_t-\widehat{\calR}_t\), the first displayed bound follows from Lemma~\ref{lem:R5}, applied in the corresponding case of Assumption~\ref{assu:test-function}.

Using
\[
|\widehat{\calR}_t-\calR_t|
\le
|\widetilde{\calR}_t-\calR_t|
+
|\widehat{\calR}_t-\widetilde{\calR}_t|
=
|\widetilde{\calR}_t-\calR_t|+|R_5|,
\]
we obtain from Theorem~\ref{thm:rt-tilde}
together with Lemma~\ref{lem:R5} that
\begin{align*}
	\E[\I_\Omega |\widehat{\calR}_t-\calR_t|]
	&\le \E[|\widetilde{\calR}_t-\calR_t|]+\E[\I_\Omega |R_5|]\\
	&\le \frac{C(T,\gamma,\zeta,\kappa)}{\sqrt n} + \frac{C(T,\gamma,\zeta,\kappa)}{n^{1/4}}\\
	&\le \frac{C(T,\gamma,\zeta,\kappa)}{n^{1/4}}.
\end{align*}

For the probability bound,
\[
\P(|\widehat{\calR}_t-\calR_t|>\epsilon)
\le
\P(\Omega^c)
+
\P(\I_\Omega |\widehat{\calR}_t-\calR_t|>\epsilon).
\]
Since \(\P(\Omega^c)\le e^{-n/2}\) and
\[
\P(\I_\Omega |\widehat{\calR}_t-\calR_t|>\epsilon)
\le
\frac{\E[\I_\Omega |\widehat{\calR}_t-\calR_t|]}{\epsilon},
\]
the result follows.
\end{proof}

\subsection{Proof of Corollary~\ref{cor:early-stopping}}
\label{sec:proof-early-stopping}

\begin{proof}[Proof of Corollary~\ref{cor:early-stopping}]
Let
\[
\Delta_T=\max_{0\le t<T}|\widehat{\calR}_t-\calR_t|.
\]
By the definitions of \(\widehat t\) and \(t^\star\),
\[
\calR_{\widehat t}
\le
\widehat{\calR}_{\widehat t}+\Delta_T
\le
\widehat{\calR}_{t^\star}+\Delta_T
\le
\calR_{t^\star}+2\Delta_T.
\]
Therefore,
\[
\{\calR_{\widehat t}-\calR_{t^\star}>\epsilon\}
\subseteq
\{\Delta_T>\epsilon/2\}.
\]
The stated bound follows by applying Theorem~\ref{thm:rt-hat} to each
\(t=0,\ldots,T-1\) and taking a union bound.
\end{proof}

\section{Derivative identities and Stein tools}
\label{sec:useful-lemmas}
This section fixes the notation used in the appendix and records the main derivative identities and algebraic relations behind the correction matrices.
Short proofs are included inline when they are self-contained; longer auxiliary arguments are deferred to Section~\ref{sec:auxiliary-proofs}.

For $a,b \in \mathbb{R}$, we define 
$a \vee b \coloneqq \max(a,b)$ and $a \wedge b \coloneqq \min(a,b)$. 
We use $a \lesssim b$ to denote $a \leq Cb$ for some absolute constant $C > 0$ 
that does not depend on problem parameters. 
We use $C(\cdot)$ to denote a positive constant whose arguments list only the parameters on which it depends. By convention, structural parameters such as $T,m,r,k$ are listed first, followed by distributional parameters such as $\gamma,\zeta,\kappa$.

For each $t$, we define $\bh^t = \hbb^t - \bb^*$, and 
we define the following matrices that will be used in the proof:
\[
\bH = [\bh^0, \bh^1, \dots, \bh^{T-1}] \in \R^{p \times T}
\quad \text{and} \quad
\bF = [\bu^1, \bu^2, \dots, \bu^T] \in \R^{n \times T}.
\]
We also define the signal-error matrix
\[
\cbF
\defas
\frac{\bX\bH}{\sqrt n}
=
\left[
\frac{\bX(\hbb^0-\bb^*)}{\sqrt n},
\ldots,
\frac{\bX(\hbb^{T-1}-\bb^*)}{\sqrt n}
\right]\in\R^{n\times T}.
\]

We focus on the case $\bSigma = \bI_p$. 
The general case with general invertible covariance $\bSigma$ can be reduced to this setting via a change-of-variables argument described in Section~\ref{sec:reduction-isotropic}. Such a reduction is standard in the literature on generalization error estimation; see, for example, \cite{bellec2024uncertainty, tan2024estimating}. The only effect of this transformation is that the constants appearing in our main theorems additionally depend on $\kappa$ from Assumption~\ref{assu:X}.

The event \(\Omega\) defined in \eqref{eq:event-Omega} is used throughout the
finite-sample proofs. Under Assumptions~\ref{assu:X} and \ref{assu:regime}, we
have from \cite[Theorem II.13]{DavidsonS01} that 
\begin{align*}
	\P(\Omega) \ge 1 - e^{-n/2}. 
\end{align*}
Indeed, after writing \(\bG=\bX\bSigma^{-1/2}\), \cite[Theorem II.13]{DavidsonS01} gives
\[
\P\left(\|\bG\|_{\op}>\sqrt n+\sqrt p+s\right)\le e^{-s^2/2}.
\]
Since \(p/n\le\gamma\), the threshold
\((2+\sqrt\gamma)\sqrt n\) is at least \(\sqrt n+\sqrt p+\sqrt n\), so taking
\(s=\sqrt n\) gives the displayed probability.

\begin{lemma}\label{lem:moment-X}
		Under Assumptions~\ref{assu:X}--\ref{assu:regime}, we have for any finite integer $k$,
		\begin{align*}
			\E[\opnorm{\bX}^{2k}] \le n^{k}C(\gamma,\kappa,k).
		\end{align*}
	\end{lemma}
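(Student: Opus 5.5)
We reduce to the isotropic case and then integrate a Gaussian tail bound. Write \(\bX=\bG\bSigma^{1/2}\), where \(\bG\in\R^{n\times p}\) has \iid\ \(N(0,1)\) entries. Submultiplicativity of the operator norm and Assumption~\ref{assu:X} give
\[
\opnorm{\bX}\le\opnorm{\bG}\,\opnorm{\bSigma^{1/2}}\le\sqrt\kappa\,\opnorm{\bG}.
\]
So it suffices to show \(\E[\opnorm{\bG}^{2k}]\le C(\gamma,k)\,n^k\); the final constant then picks up the factor \(\kappa^k\).

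For the Gaussian matrix we use the concentration bound already quoted above from \cite[Theorem II.13]{DavidsonS01}:
\[
\P\bigl(\opnorm{\bG}>\sqrt n+\sqrt p+s\bigr)\le e^{-s^2/2},\qquad s\ge0.
\]
Set \(a:=\sqrt n+\sqrt p\le(1+\sqrt\gamma)\sqrt n\) by Assumption~\ref{assu:regime}, and split \(\opnorm{\bG}\le a+(\opnorm{\bG}-a)_+\). By the elementary inequality \((x+y)^{2k}\le 2^{2k-1}(x^{2k}+y^{2k})\),
\[
\E\opnorm{\bG}^{2k}\le 2^{2k-1}\Bigl(a^{2k}+\E\bigl[(\opnorm{\bG}-a)_+^{2k}\bigr]\Bigr).
\]
The first term is at most \(2^{2k-1}(1+\sqrt\gamma)^{2k}n^k\).

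For the second term we use the layer-cake formula together with the tail bound:
\[
\E\bigl[(\opnorm{\bG}-a)_+^{2k}\bigr]=\int_0^\infty 2k\,s^{2k-1}\,\P\bigl(\opnorm{\bG}>a+s\bigr)\,ds\le\int_0^\infty 2k\,s^{2k-1}e^{-s^2/2}\,ds=2^k k!.
\]
This constant depends only on \(k\), so it is bounded by \(2^k k!\,n^k\) since \(n\ge1\).

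Combining the two terms and restoring the covariance factor gives
\[
\E\opnorm{\bX}^{2k}\le \kappa^k\,2^{2k-1}\bigl((1+\sqrt\gamma)^{2k}+2^k k!\bigr)\,n^k=:C(\gamma,\kappa,k)\,n^k.
\]
No step is a genuine obstacle. The only points to check are that the tail bound holds for all \(s\ge0\), which is the standard Gordon-plus-Lipschitz-concentration statement, and that the fluctuation term does not grow with \(n\), which it does not.
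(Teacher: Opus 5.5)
Your proof is correct and follows the paper's argument: write $\bG=\bX\bSigma^{-1/2}$ so that $\opnorm{\bX}\le\sqrt\kappa\,\opnorm{\bG}$, then integrate the Davidson--Szarek tail bound around $a=\sqrt n+\sqrt p\le(1+\sqrt\gamma)\sqrt n$. The only cosmetic difference is that you split $\opnorm{\bG}\le a+(\opnorm{\bG}-a)_+$ and use convexity, whereas the paper applies the tail-integral formula to $\opnorm{\bG}^{2k}$ directly from $a$; your explicit constant $2^k k!$ for the fluctuation term is correct.
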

	\begin{proof}[Proof of Lemma~\ref{lem:moment-X}]
	\label{proof:lem:moment-X}
	Let \(\bG=\bX\bSigma^{-1/2}\).  By Assumption~\ref{assu:X},
	\(\opnorm{\bX}\le \sqrt{\kappa}\opnorm{\bG}\).  Moreover,
	\cite[Theorem II.13]{DavidsonS01} gives
	\[
	\P\left(\opnorm{\bG}>\sqrt n+\sqrt p+s\right)\le e^{-s^2/2},
	\qquad s\ge 0 .
	\]
	Set \(a=\sqrt n+\sqrt p\).  For \(r=2k\), the tail integral formula gives
	\[
	\E\opnorm{\bG}^{r}
	\le
	a^r+r\int_0^\infty (a+s)^{r-1}e^{-s^2/2}\,ds
	\le C(\gamma,k)n^{r/2},
	\]
	where the last inequality uses \(p/n\le\gamma\).  Therefore
	\[
	\E[\opnorm{\bX}^{2k}]
	\le \kappa^k\E[\opnorm{\bG}^{2k}]
	\le C(\gamma,\kappa,k)n^k .
	\]
	This completes the proof.
\end{proof}

\subsection{Derivative formulae}
\label{sec:derivative-formulae}

For orientation, fix an entry \(x_{ij}\), and let a dot denote differentiation
with respect to this entry; in particular,
\(\dot\bb^t=\partial\hbb^t/\partial x_{ij}\). Then the Jacobian blocks in
\eqref{eq:DJ-general} give the chain-rule recursions
\[
\dot\bu^t
=
\sum_{s=0}^{t-1}\Phi^u_{t,s}\dot\bu^s
+
\sum_{s=0}^{t-1}\Phi^v_{t,s}\dot\bv^s,
\qquad
\dot\bb^t
=
\sum_{s=0}^{t-1}\Psi^b_{t,s}\dot\bb^s
+
\sum_{s=0}^{t}\Psi^\eta_{t,s}\dot\bfeta^s.
\]
The lemmas below solve these recursions in the block-matrix form used in the
proofs.

Using the Jacobian blocks in \eqref{eq:DJ-general}, define the block matrices
\(\bPhi^u,\bPhi^v\in\R^{Tn\times Tn}\) and
\(\bPsi^b,\bPsi^\eta\in\R^{Tp\times Tp}\) by
\[
(\bPhi^u)_{t,s}=\Phi^u_{t,s}\I\{1\le s<t\},\qquad
(\bPhi^v)_{t,s}=\Phi^v_{t,s-1}\I\{1\le s\le t\},
\]
\[
(\bPsi^b)_{t,s}=\Psi^b_{t,s}\I\{1\le s<t\},\qquad
(\bPsi^\eta)_{t,s}=\Psi^\eta_{t,s}\I\{1\le s\le t\},
\]
for \(1\le t,s\le T\).
Let \(\bL=\sum_{t=2}^T \be_t\be_{t-1}^\top\) be the \(T\times T\) lag
matrix, and define
\[
\bB_1 := \bI_T \otimes
\begin{bmatrix}
\bI_n\\
\bzero_{p\times n}
\end{bmatrix},
\qquad
\bB_2 := \bI_T \otimes
\begin{bmatrix}
\bzero_{n\times p}\\
\bI_p
\end{bmatrix}.
\]
The block matrix used in the derivative formulas is
\begin{equation}
\label{eq:def-M}
\calM
=
\bI_{T(n+p)}
+
\begin{bmatrix}
\bB_1 & \bB_2
\end{bmatrix}
\begin{bmatrix}
-\bPhi^u & \frac{1}{\sqrt n}\,\bPhi^v(\bL\otimes \bX)\\[2mm]
-\frac{1}{\sqrt n}\,\bPsi^\eta (\bI_T\otimes \bX^\top) & -\bPsi^b
\end{bmatrix}
\begin{bmatrix}
\bB_1^\top \\[6pt]
\bB_2^\top
\end{bmatrix}.
\end{equation}
We also set
\begin{equation}
\label{eq:def-JH-JF}
\bJ_{\bPhi^v}
=
\frac{1}{\sqrt n}\bB_1\bPhi^v,
\qquad
\bJ_{\bPsi^\eta}
=
\frac{1}{\sqrt n}\bB_2\bPsi^\eta.
\end{equation}
Finally, define
\begin{equation}
\label{eq:MHMF}
\begin{aligned}
\calM_H
&=
-\bI_{nT}
+
(\bL\otimes \bX)
\bigl(\bI_T\otimes[\boldzero_{p\times n},\bI_p]\bigr)
\calM^{-1}\bJ_{\bPhi^v},\\
\calM_F
&=
(\bL\otimes \bX)
\bigl(\bI_T\otimes[\boldzero_{p\times n},\bI_p]\bigr)
\calM^{-1}\bJ_{\bPsi^\eta}.
\end{aligned}
\end{equation}

\begin{lemma}
\label{lem:dot-b-general}
For ${\bu^t}$ and ${\hbb^t}$ defined in \eqref{eq:general-iteration}, 
if $\bu^0$ and $\hbb^0$ are independent of $\bX$, then for any $t\ge 1$ and $i\in [n], j\in [p]$, we have 
\begin{align*}
\pdv{\bu^t}{x_{ij}}
&= (\be_t^\top \otimes \bI_n)
\bigl(\bI_T \otimes [\bI_n, \boldzero_{n\times p}]\bigr) \calM^{-1} 
\bigl[-\bJ_{\bPhi^v} \bigl((\bH^\top \be_j) \otimes \be_i\bigr) + \bJ_{\bPsi^\eta} \bigl((\bF^\top \be_i) \otimes \be_j\bigr)\bigr],\\
\pdv{\hbb^t}{x_{ij}}
&= (\be_t^\top \otimes \bI_p)
\bigl(\bI_T \otimes [\boldzero_{p\times n}, \bI_p]\bigr) \calM^{-1} 
\bigl[-\bJ_{\bPhi^v} \bigl((\bH^\top \be_j) \otimes \be_i\bigr) + \bJ_{\bPsi^\eta} \bigl((\bF^\top \be_i) \otimes \be_j\bigr)\bigr].
\end{align*}
where \(\calM\), \(\bJ_{\bPhi^v}\), and \(\bJ_{\bPsi^\eta}\) are defined in
\eqref{eq:def-M} and \eqref{eq:def-JH-JF}.
\end{lemma}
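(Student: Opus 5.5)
We differentiate the recursion \eqref{eq:general-iteration} with respect to a fixed entry \(x_{ij}\), stack the resulting linear system over \(t=1,\ldots,T\), and solve it with \(\calM^{-1}\). Write \(\dot{(\cdot)}=\partial(\cdot)/\partial x_{ij}\). The initial values \(\bu^0,\hbb^0\) do not depend on \(\bX\), so \(\dot\bu^0=\boldzero\) and \(\dot\bb^0=\boldzero\). This is why the block matrices \(\bPhi^u,\bPsi^b\) only carry indices \(s\ge1\).

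The first step is to compute the derivatives of the two intermediate quantities. From \(\bfeta^s=n^{-1/2}\bX^\top\bu^s\) we get
\[
\dot\bfeta^s=\frac{1}{\sqrt n}\bigl(\bX^\top\dot\bu^s+u_i^s\be_j\bigr).
\]
From \(\bv^s=n^{-1/2}\{\bep-\bX\bh^s\}\), with \(\bh^s=\hbb^s-\bb^*\) and \(\bep\) held fixed, we get
\[
\dot\bv^s=-\frac{1}{\sqrt n}\bigl(\bX\dot\bb^s+(\be_j^\top\bh^s)\be_i\bigr).
\]
These give the two explicit source terms. The term \(u_i^s\be_j\) becomes \((\bF^\top\be_i)\otimes\be_j\) once stacked over \(s=1,\ldots,T\). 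The term \((\be_j^\top\bh^s)\be_i\) becomes \((\bH^\top\be_j)\otimes\be_i\) once stacked over \(s=0,\ldots,T-1\).

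The second step is to substitute these into the chain-rule recursions displayed in Section~\ref{sec:derivative-formulae}. These recursions hold under the modified-Jacobian convention cited from \cite{bellec2024uncertainty}.
\begin{itemize}
\item The dual update at time \(t\) uses \(\dot\bv^{s}\) for \(s\le t-1\). Re-indexing by \(s+1\) explains why \((\bPhi^v)_{t,s}=\Phi^v_{t,s-1}\), and why the coupling of \(\dot\bv\) to \(\dot\bb\) goes through the lag \(\bL\otimes\bX\) applied to the stacked vector \((\dot\bb^1,\ldots,\dot\bb^T)\). The \(s=0\) term only involves \(\dot\bb^0=0\), so it contributes nothing.
\item The primal update uses \(\dot\bfeta^s\) for \(1\le s\le t\) (with \(\bu^0=\boldzero\) and \(\dot\bu^0=\boldzero\)). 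This gives the block \(-n^{-1/2}\bPsi^\eta(\bI_T\otimes\bX^\top)\).
\end{itemize}
Collecting these, the stacked vector \(\bz=(\dot\bu^1,\dot\bb^1,\ldots,\dot\bu^T,\dot\bb^T)\), interleaved through \(\bB_1,\bB_2\), satisfies
\[
\calM\bz=-\bJ_{\bPhi^v}\bigl((\bH^\top\be_j)\otimes\be_i\bigr)+\bJ_{\bPsi^\eta}\bigl((\bF^\top\be_i)\otimes\be_j\bigr).
\]
The care needed here is the bookkeeping: the signs, and the factor \(n^{-1/2}\) absorbed into \(\bJ\).

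The third step is invertibility. In time-block ordering, every off-diagonal block of \(\calM-\bI\) couples block \(t\) to strictly earlier blocks, with one exception: \(\Psi^\eta_{t,t}\) couples \(\dot\bb^t\) to \(\dot\bu^t\) at the same time \(t\). Ordering the unknowns as \(\dot\bu^t\) before \(\dot\bb^t\) within each time makes \(\calM\) block unit lower triangular. Hence \(\calM\) is invertible and \(\bz=\calM^{-1}(\cdots)\). Extracting the \(t\)-th \(\bu\)- or \(\bb\)-block with \((\be_t^\top\otimes\bI)(\bI_T\otimes[\cdot,\cdot])\) gives both formulas. I expect the main obstacle to be confirming this nilpotent/triangular structure exactly, given the interleaving conventions of \(\bB_1\) and \(\bB_2\) and the lag placement. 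Everything else is direct substitution.
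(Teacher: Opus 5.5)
Your proposal is correct and follows the paper's proof essentially step for step. You differentiate the recursion to get \(\dot\bv^s\) and \(\dot\bfeta^s\), with sources \(\dot\bX\bh^s=(\be_j^\top\bh^s)\be_i\) and \(\dot\bX^\top\bu^s=u_i^s\be_j\); you stack the chain-rule equations, dropping the \(s=0\) coupling terms since \(\dot\bu^0=\dot\bb^0=\boldzero\), into \(\calM\bz=-\bJ_{\bPhi^v}\vec(\dot\bX\bH)+\bJ_{\bPsi^\eta}\vec(\dot\bX^\top\bF)\); and you invert the block lower-triangular \(\calM\). As in the paper, removing the \(\dot\bfeta^0\) source relies on \(\bu^0=\boldzero\) from Assumption~\ref{assu:algorithm}, not merely on \(\bu^0\) being independent of \(\bX\).
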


Let $\check\bu^t$ denote the \(t\)th column of \(\cbF\), then
\begin{equation}
\label{eq:check-u}
	\check\bu^{t}
	=
	\frac{\bX(\hbb^{t-1}-\bb^*)}{\sqrt n},
	\qquad t=1,\ldots,T,
\end{equation}
and we have the following derivative formula.

\begin{lemma}
\label{lem:dot-check-u}

For the sequence $\{\check\bu^t\}_{t=1}^T$ defined in \eqref{eq:check-u}, we have for any $t\ge 1$ and $i\in [n], j\in [p]$,
\begin{align*}
	\pdv{\check\bu^t}{x_{ij}}
	&= \frac{1}{\sqrt{n}} 
	(\be_t^\top \otimes \bI_n)
	\Bigl(
	-\calM_H \bigl((\bH^\top \be_j)\otimes \be_i\bigr)
	+ \calM_F \bigl((\bF^\top \be_i)\otimes \be_j\bigr)
	\Bigr),
\end{align*}
where \(\calM_H\) and \(\calM_F\) are defined in \eqref{eq:MHMF}.
\end{lemma}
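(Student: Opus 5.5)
The plan is to obtain Lemma~\ref{lem:dot-check-u} directly from Lemma~\ref{lem:dot-b-general} by the product rule. By \eqref{eq:check-u}, \(\check\bu^t=n^{-1/2}\bX\bh^{t-1}\) with \(\bh^{t-1}=\hbb^{t-1}-\bb^*\), and \(\bb^*\) does not depend on \(\bX\). Hence
\[
\pdv{\check\bu^t}{x_{ij}}
=\frac{1}{\sqrt n}\Bigl(\be_i\be_j^\top\bh^{t-1}+\bX\pdv{\hbb^{t-1}}{x_{ij}}\Bigr).
\]
The first term is \(\be_i\,(\bH^\top\be_j)_t\). Written as a stacked \(nT\)-vector over \(t=1,\ldots,T\), this is exactly \((\bH^\top\be_j)\otimes\be_i\), because the \(t\)th column of \(\bH\) is \(\bh^{t-1}\). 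This term will supply the \(-(-\bI_{nT})\) piece of \(-\calM_H\).

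For the second term, I stack \(\bX\,\partial\hbb^{t-1}/\partial x_{ij}\) over \(t\). The shift from index \(t-1\) to \(t\) is exactly the action of the lag matrix, so the stacked vector equals
\[
(\bL\otimes\bX)\,\bigl(\partial\hbb^{s}/\partial x_{ij}\bigr)_{s=1}^T .
\]
The \(t=1\) entry vanishes because \(\hbb^0=\boldzero\) does not depend on \(\bX\), and this matches the zero first row of \(\bL\). Lemma~\ref{lem:dot-b-general} gives the stacked vector \((\partial\hbb^s/\partial x_{ij})_{s}\) as
\[
\bigl(\bI_T\otimes[\boldzero_{p\times n},\bI_p]\bigr)\calM^{-1}\bigl[-\bJ_{\bPhi^v}((\bH^\top\be_j)\otimes\be_i)+\bJ_{\bPsi^\eta}((\bF^\top\be_i)\otimes\be_j)\bigr].
\]
Multiplying this by \(\bL\otimes\bX\) and comparing with \eqref{eq:MHMF}:
\begin{itemize}
\item the \(\bJ_{\bPsi^\eta}\) part becomes \(\calM_F((\bF^\top\be_i)\otimes\be_j)\);
\item the \(\bJ_{\bPhi^v}\) part becomes \(-(\calM_H+\bI_{nT})((\bH^\top\be_j)\otimes\be_i)\).
\end{itemize}
Adding the first term \((\bH^\top\be_j)\otimes\be_i\) cancels the \(-\bI_{nT}\) contribution. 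This leaves \(-\calM_H((\bH^\top\be_j)\otimes\be_i)+\calM_F((\bF^\top\be_i)\otimes\be_j)\). Extracting the \(t\)th block with \(\be_t^\top\otimes\bI_n\) and keeping the prefactor \(n^{-1/2}\) gives the claim.

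The only delicate point is index bookkeeping. Two things must be checked: that \(\bL\otimes\bX\) indeed maps the block \(s=t-1\) to block \(t\), given the convention \(\bL=\sum_t\be_t\be_{t-1}^\top\); and that the ordering \((\bH^\top\be_j)\otimes\be_i\) (block index outer, \(n\)-index inner) agrees with the stacking used in Lemma~\ref{lem:dot-b-general}. Differentiability is handled as in that lemma, through the Lipschitz chain-rule convention, so no new analytic issue arises.
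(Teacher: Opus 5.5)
Your proposal is correct and follows the paper's proof essentially step for step. You apply the product rule to \(\check\bu^t=n^{-1/2}\bX\bh^{t-1}\), identify \(\be_i\be_j^\top\bh^{t-1}\) with block \(t\) of \((\bH^\top\be_j)\otimes\be_i\), use the lag matrix \(\bL\) to shift \(\partial\hbb^{t-1}/\partial x_{ij}\) (with the \(t=1\) block vanishing), and substitute Lemma~\ref{lem:dot-b-general} into the definitions \eqref{eq:MHMF}; your observation that the \(+(\bH^\top\be_j)\otimes\be_i\) term cancels the \(-\bI_{nT}\) inside \(\calM_H\) is exactly the bookkeeping the paper leaves implicit in its final ``combining'' step.
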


\begin{proof}[Proof of Lemma~\ref{lem:dot-check-u}]
\label{proof:lem:dot-check-u}
We now derive the derivative of $\check\bu^t$ with respect to $x_{ij}$, which plays a key role in defining $\cbK$ and $\cbA$. 
Recall that
\begin{align*}
\check\bu^t
=
\frac{\bX\bh^{t-1}}{\sqrt n},
\qquad
\bh^{t-1}=\hbb^{t-1}-\bb^*.
\end{align*}
Therefore
\begin{align*}
	\pdv{\check\bu^t}{x_{ij}}
	&=
	\frac{1}{\sqrt{n}}
	\bigl(\dot\bX\bh^{t-1} + \bX\dot\bb^{t-1}\bigr),
\end{align*}
which is the same derivative as that of \(-\bv^{t-1}\), since \(-\bv^{t-1}=\bX\bh^{t-1}/\sqrt n-\bep/\sqrt n\) and the noise vector is held fixed in the pathwise derivative.

Since $\bH = [\bh^0, \bh^1, \ldots, \bh^{T-1}]$, we have
\[
\bh^{t-1} = \bH \be_t, \qquad t=1,\ldots,T.
\]
Moreover,
\begin{align*}
\dot\bX \bh^{t-1}
= \be_i \be_j^\top \bh^{t-1}
= \be_i \be_j^\top \bH \be_t
= (\be_t^\top \otimes \bI_n)(\bH^\top \be_j \otimes \be_i).
\end{align*}

For compactness, set
\[
\bq_{ij}
:=
\calM^{-1}
\Bigl[
-\bJ_{\bPhi^v}\bigl((\bH^\top\be_j)\otimes\be_i\bigr)
+\bJ_{\bPsi^\eta}\bigl((\bF^\top\be_i)\otimes\be_j\bigr)
\Bigr].
\]
Then Lemma~\ref{lem:dot-b-general} gives
\begin{align*}
\pdv{\hbb^t}{x_{ij}}
&= (\be_t^\top \otimes \bI_p)
\bigl(\bI_T \otimes [\boldzero_{p\times n}, \bI_p]\bigr)\bq_{ij}.
\end{align*}
Therefore, for $t=1$, we have $\bX \pdv{\hbb^{t-1}}{x_{ij}} = \boldzero$, while for $t\ge 2$,
\begin{align*}
\bX \pdv{\hbb^{t-1}}{x_{ij}}
&= (\be_{t-1}^\top \otimes \bX)
\bigl(\bI_T \otimes [\boldzero_{p\times n}, \bI_p]\bigr)\bq_{ij}\\
&= (\be_{t-1}^\top \otimes \bI_n)
(\bI_T \otimes \bX)
\bigl(\bI_T \otimes [\boldzero_{p\times n}, \bI_p]\bigr)\bq_{ij}.
\end{align*}

Introducing the lag matrix $\bL = \sum_{t=2}^T \be_t \be_{t-1}^\top$, we can write, for all $t\ge 1$,
\begin{align*}
\bX \pdv{\hbb^{t-1}}{x_{ij}}
&= (\be_t^\top \otimes \bI_n)
(\bL \otimes \bX)
\bigl(\bI_T \otimes [\boldzero_{p\times n}, \bI_p]\bigr)\bq_{ij}.
\end{align*}

Combining the above expressions, we obtain
\begin{align*}
	\pdv{\check\bu^t}{x_{ij}}
	&= \frac{1}{\sqrt{n}} 
	(\be_t^\top \otimes \bI_n)
	\Bigl(
	-\calM_H \bigl((\bH^\top \be_j)\otimes \be_i\bigr)
	+ \calM_F \bigl((\bF^\top \be_i)\otimes \be_j\bigr)
	\Bigr),
\end{align*}
with \(\calM_H\) and \(\calM_F\) as in \eqref{eq:MHMF}.
\end{proof}

We also have the following identity that summarizes the tensor contraction of the derivatives of $\bH$ and $\bF$ with respect to $x_{ij}$, which give rise to the definitions of
$\bW$, \(\hbK\), \(\cbK\), and \(\cbA\). Here \(\bW\) is the weight matrix used in \eqref{eq:rt-tilde}; \(\hbK\) is an auxiliary contraction associated with the derivative of \(\bF\) and is not used to define \(\hbW\); and we replace \(\bW\) by \(\hbW := \cbK^{-1}\cbA\) in the definition of \(\widehat{\calR}_t\). The matrices
\(\bUpsilon_1,\bUpsilon_2,\bUpsilon_3,\check\bUpsilon_1,\check\bUpsilon_5\)
below are residual matrices: they collect the terms left over after the leading
contractions are expressed through \(\bW,\hbK,\cbK,\cbA\), and their explicit
algebraic forms are given in the proof of the lemma.

\begin{lemma}
\label{lem:sum-derivative}
For $\bSigma = \bI_p$, we have the following identities:
\begin{align}
	\label{eq:sum-dF}
	&\sum_{i=1}^n \pdv{\be_i^\top \bF}{x_{ij}} 
	= -\be_j^\top \bH \hbK^\top +\be_j^\top \bUpsilon_1^\top,\\
	\label{eq:sum-dH}
	&\sum_{j=1}^p \pdv{\be_j^\top \bH}{x_{ij}}
	= \be_i^\top \bF \bW^\top - \be_i^\top \bUpsilon_2^\top,\\
	\label{eq:sum-dFH}
	&\sum_{i=1}^n\sum_{j=1}^p \pdv{\bF^\top \be_i\be_j^\top \bH}{x_{ij}}
	= - \hbK \bH^\top \bH + \bF^\top \bF \bW^\top 
	- \bUpsilon_3\\
	&\sum_{i=1}^n \pdv{\be_i^\top \cbF}{x_{ij}}
	= -\be_j^\top \bH \cbK^\top + \be_j^\top \check\bUpsilon_1^\top, \label{eq:checkF-derivative-1}\\
	&\sum_{i=1}^n\sum_{j=1}^p \pdv{\bF^\top \be_i\be_j^\top \bX^\top \cbF}{x_{ij}}
	= - \hbK \bH^\top \bX^\top \cbF + p \bF^\top \cbF - \bF^\top \bF \cbA^\top - \check\bUpsilon_5, \label{eq:checkF-derivative-2}
\end{align}
where 
\begin{align*}
\bW 
&= \sum_{j=1}^p 
	(\bI_T\otimes \be_j^\top) 
	(\bL\otimes \bI_p)
	\bigl(\bI_T \otimes [\boldzero_{p\times n}, \bI_p]\bigr) \calM^{-1} \bJ_{\bPsi^\eta} 
	(\bI_T\otimes \be_j),\\
\hbK
&= \sum_{i=1}^n 
	(\bI_T\otimes \be_i^\top) 
	\bigl(\bI_T \otimes [\bI_n, \boldzero_{n\times p}]\bigr) \calM^{-1} \bJ_{\bPhi^v} 
	(\bI_T\otimes \be_i),\\
\cbK
&= \frac{1}{\sqrt{n}} \sum_{i=1}^n (\bI_T\otimes \be_i^\top)\calM_H (\bI_T \otimes \be_i),\\
\cbA
&=-
\frac{1}{\sqrt n}\sum_{j=1}^p
(\bI_T\otimes \be_j^\top \bX^\top)
\calM_F(\bI_T\otimes \be_j).
\end{align*}
\end{lemma}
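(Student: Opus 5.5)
Each identity will be obtained by differentiating with the derivative formulas of Lemmas~\ref{lem:dot-b-general} and~\ref{lem:dot-check-u}, contracting indices, and splitting the result into a ``diagonal'' part and a remainder. The diagonal part is where a basis vector meets its own transpose. It produces \(\bW,\hbK,\cbK,\cbA\), and the remainder defines the \(\bUpsilon\) matrices. The lemma is algebraic: the residuals are \emph{defined} so that the identities hold, and their smallness is proved later, in Section~\ref{sec:norm-moment-bounds}. So the work is bookkeeping with Kronecker products rather than estimation.

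For \eqref{eq:sum-dH}, I write \(\partial(\be_j^\top\bH)/\partial x_{ij}\) column by column. Since \(\bh^{t-1}=\hbb^{t-1}-\bb^*\), this is \(\be_j^\top\partial\hbb^{t-1}/\partial x_{ij}\), and the lag shift is recorded by \((\bL\otimes\bI_p)\). Lemma~\ref{lem:dot-b-general} then splits the row vector into two pieces: a \(\bJ_{\bPsi^\eta}((\bF^\top\be_i)\otimes\be_j)\) piece and a \(-\bJ_{\bPhi^v}((\bH^\top\be_j)\otimes\be_i)\) piece. Using \((\bF^\top\be_i)\otimes\be_j=(\bI_T\otimes\be_j)\bF^\top\be_i\), summing over \(j\) gives exactly \(\bW\bF^\top\be_i\) from the first piece; transposing yields \(\be_i^\top\bF\bW^\top\). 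The \(\bJ_{\bPhi^v}\) piece, with its sum over \(j\), is then declared to be \(-\be_i^\top\bUpsilon_2^\top\). The same procedure gives \eqref{eq:sum-dF}. There I use the \([\bI_n,\boldzero]\) block for \(\partial\bu^t\) and sum over \(i\). The \(\bJ_{\bPhi^v}((\bH^\top\be_j)\otimes\be_i)\) term contracts to \(\hbK\bH^\top\be_j\), and the \(\bJ_{\bPsi^\eta}\) term goes into \(\bUpsilon_1\).

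The product-rule identities \eqref{eq:sum-dFH} and \eqref{eq:checkF-derivative-2} follow by applying the product rule and reusing the two single identities. For \eqref{eq:sum-dFH}, differentiating \(\bF^\top\be_i\) and summing over \(i\) gives \(-\hbK\bH^\top\be_j\,\be_j^\top\bH\) plus residual, and summing over \(j\) gives \(-\hbK\bH^\top\bH\). The other factor gives \(\bF^\top\be_i\,\be_i^\top\bF\bW^\top\), which sums to \(\bF^\top\bF\bW^\top\). Everything else goes into \(\bUpsilon_3\). For \eqref{eq:checkF-derivative-2} there are three factors \(\bF^\top\be_i\), \(\be_j^\top\bX^\top\) and \(\cbF\). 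I handle them as follows:
\begin{itemize}
\item Differentiating \(\bF\) gives \(-\hbK\bH^\top\bX^\top\cbF\).
\item Differentiating \(\bX^\top\) gives \(\be_j^\top\be_j\be_i^\top\). Its double sum is \(\sum_j 1=p\), which produces \(p\bF^\top\cbF\).
\item Differentiating \(\cbF\) uses Lemma~\ref{lem:dot-check-u}. Only its \(\calM_F((\bF^\top\be_i)\otimes\be_j)\) part, contracted against \(\be_j^\top\bX^\top\), gives a diagonal term. With the sign and \(1/\sqrt n\) conventions of \(\cbA\) this is \(-\bF^\top\bF\cbA^\top\).
\end{itemize}
The \(\calM_H\) part and the cross terms define \(\check\bUpsilon_5\). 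Identity \eqref{eq:checkF-derivative-1} is the analogous contraction: the \(\calM_H((\bH^\top\be_j)\otimes\be_i)\) term summed over \(i\) gives \(\cbK\bH^\top\be_j\), and this carries a minus sign.

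The main obstacle is getting index orders, transposes and lag shifts exactly right. The key tools are the identity \((\ba\otimes\be_j)=(\bI_T\otimes\be_j)\ba\) and the ordering of \(\bH\) (columns \(\bh^0,\dots,\bh^{T-1}\)) against \(\bF\) (columns \(\bu^1,\dots,\bu^T\)), which is what \(\bL\) absorbs. I would check each contraction on the \(T=1,2\) cases before writing the general formula.
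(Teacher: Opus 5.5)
Your proposal is correct and follows essentially the same route as the paper's proof. You substitute the derivative formulas of Lemmas~\ref{lem:dot-b-general} and~\ref{lem:dot-check-u}, and contract using $(\ba\otimes\be_j)=(\bI_T\otimes\be_j)\ba$ together with $\sum_i\be_i\be_i^\top=\bI_n$ (and its $p$- and $T$-analogues) to isolate $\hbK$, $\bW$, $\cbK$, $\cbA$. The leftover contractions define $\bUpsilon_1$, $\bUpsilon_2$, $\check\bUpsilon_1$, and the product rule gives the two compound identities, with $\check\bUpsilon_5$ collecting the $\bUpsilon_1\bX^\top\cbF$ cross term and the $\calM_H$ contraction. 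The only extra step in the paper is rewriting each residual as a sum over $t$ to record the operator-norm bounds used later; the identities themselves do not need this.
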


The triangular structure follows from the time ordering of the derivative
system. Since \(\calM\), \(\bJ_{\bPhi^v}\), and \(\bJ_{\bPsi^\eta}\) are block
lower triangular, the products
\(\calM^{-1}\bJ_{\bPhi^v}\) and \(\calM^{-1}\bJ_{\bPsi^\eta}\) are block lower
triangular. The lag matrix \(\bL\) shifts every block row forward by one time
step, so the definitions of \(\bW\) and \(\cbA\) have zero diagonal entries.
The definitions of \(\hbK\) and \(\cbK\) preserve lower triangularity. On the
diagonal, only the direct block
\(\frac1{\sqrt n}\Phi^v_{t,t-1}\) contributes to \(\hbK\), while only the
\(-\bI_{nT}\) term in \(\calM_H\) contributes to \(\cbK\). Hence
\[
(\hbK)_{t,t}
=
\frac{1}{\sqrt n}\trace(\Phi^v_{t,t-1}), 
\qquad
(\cbK)_{t,t}
=-\sqrt n,
\qquad t=1,\dots,T.
\]
The advantage of using the signal-error stack \(\cbF=\bX\bH/\sqrt n\) is that
the deterministic diagonal of \(\cbK\) guarantees invertibility, which allows us
to define \(\hbW=\cbK^{-1}\cbA\) in a stable manner. In contrast, \(\hbK\) may
not be invertible. For example, if \(\bphi\) is the entrywise clip function
\(\phi(x)=\clip(x,-1/\sqrt n,1/\sqrt n)\), then its derivative is zero whenever
the input is outside the interval \([-1/\sqrt n,1/\sqrt n]\). In this case,
\(\hbK\) can have zero diagonal entries.

Our next proposition provides the weighted relation that makes the
covariance-free estimator possible. The implicit derivative approximations for
\(\bW\), \(\cbK\), and \(\cbA\) stated in the main text are the right
intuition, but they are not sufficient by themselves: the proof also needs the
mixed product-rule cancellation produced when the two derivative contractions
are multiplied. We encode the three formal residuals needed for this argument
as \(\bTheta_1\), \(\cbTheta_2\), and \(\cbTheta_4\). Here
\(\bTheta_1\) is the population \(\bW\) Stein residual, \(\cbTheta_2\) is the
\(\cbA\) Stein residual after the \(\bF^\top\)-weighting, and
\(\cbTheta_4\) is the mixed product-rule residual coupling the \(\cbK\)
contraction with the population correction. The exact bridge to
Proposition~\ref{prop:approx-W} is the algebraic identity in the proof below.

We define the following $T\times T$ matrices: 
\begin{align*}
	\bTheta_1 
	&\defas \bF^\top \bX \bH + \hbK \bH^\top \bH - \bF^\top \bF \bW^\top,\\
	\cbTheta_2 &\defas n^{-1} [\bF^\top \bX \bX^\top \cbF + \hbK \bH^\top \bX^\top \cbF - p\bF^\top \cbF
	+ \bF^\top \bF \cbA^\top],\\
	\cbTheta_4
	&\defas n^{-1} [p \bF^\top \cbF  - (\hbK \bH^\top + \bF^\top \bX)(\cbK \bH^\top + \cbF^\top \bX)^\top].
\end{align*}

\begin{lemma}
	\label{lem:moment-theta}
Under Assumptions~\ref{assu:X},\ref{assu:regime},\ref{assu:algorithm}, and $\bSigma = \bI_p$, we have 
\begin{align*}
	\E[\fnorm{\bTheta_1}^2] 
	&\le C(T,\gamma,\zeta),\\
	\E[\fnorm{\cbTheta_2}^2] 
	&\le n^{-1} C(T,\gamma,\zeta),\\
	\E[\fnorm{\cbTheta_4}] 
	&\le n^{-1/2} C(T,\gamma,\zeta).
\end{align*}
\end{lemma}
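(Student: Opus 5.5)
The plan is to treat each residual as a "Stein-type" mean-zero fluctuation and control it with the Gaussian Stein identities of \cite{bellec2021second}, using the derivative identities of Lemma~\ref{lem:sum-derivative} to see that the leading derivative terms are exactly those removed in $\bTheta_1,\cbTheta_2,\cbTheta_4$. Throughout we work with $\bSigma=\bI_p$, and we use the moment bounds on $\opnorm{\bX}$ (Lemma~\ref{lem:moment-X}), on $\bH,\bF$ and their derivatives (Lemma~\ref{lem:moment-H-F}), and on the residual matrices $\bUpsilon_k$ (Lemma~\ref{lem:moment-Upsilon}). $T$ is fixed, so it suffices to bound each entry $(t,s)$.

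\textbf{Step 1: $\bTheta_1$.} Its $(s,t)$ entry is $\sum_{i,j}\bigl[u_i^s x_{ij}h_j^{t-1}\bigr]+(\hbK\bH^\top\bH-\bF^\top\bF\bW^\top)_{s,t}$. Combining \eqref{eq:sum-dFH} with \eqref{eq:sum-dF}--\eqref{eq:sum-dH} gives
\[
\bTheta_1=\bF^\top\bX\bH-\sum_{i,j}\pdv{(\bF^\top\be_i\be_j^\top\bH)}{x_{ij}}-\bUpsilon_3 .
\]
This is exactly the quantity controlled by the second-order Stein inequality for the matrix-valued function $\bX\mapsto \bF^\top\be_i\be_j^\top\bH$ of the Gaussian matrix $\bX$. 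That inequality yields
\[
\E\fnorm{\cdot}^2\lesssim \E\bigl[\fnorm{\bF}^2\fnorm{\bH}^2\bigr]+\sum_{ij}\E\Bigl\|\pdv{(\bF,\bH)}{x_{ij}}\Bigr\|^2(\fnorm{\bF}^2+\fnorm{\bH}^2).
\]
Here $\fnorm{\bF}^2\le T\zeta^2$ by boundedness of $\bphi^t$, and $\fnorm{\bH}$ has bounded moments. The derivative sums are $O(1)$ in expectation by Lemma~\ref{lem:moment-H-F} (the total squared Frobenius norm of the Jacobian scales like $n\cdot(\text{op norm})^2/n$). Adding the bound on $\E\fnorm{\bUpsilon_3}^2$ then gives $C(T,\gamma,\zeta)$.

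\textbf{Step 2: $\cbTheta_2$.} We apply the same inequality to the function $\bF^\top\be_i\be_j^\top\bX^\top\cbF$ and use \eqref{eq:checkF-derivative-2}. The bracket in $\cbTheta_2$ equals the Stein residual $\bF^\top\bX\bX^\top\cbF-\sum_{ij}\partial_{ij}(\cdots)$ minus $\check\bUpsilon_5$. Since $\opnorm{\bX}^2/n$ has bounded moments and $\fnorm{\cbF}\le\opnorm{\bX}\fnorm{\bH}/\sqrt n$, the second moment of the bracket is $O(n)$. This accounts for the extra factor $\opnorm{\bX}$ relative to Step 1, since that factor costs $\sqrt n$. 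After dividing by $n^2$ we get $O(n^{-1})$.

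\textbf{Step 3: $\cbTheta_4$, the main obstacle.} The quantity here is a product of two contractions rather than a single Stein residual. I would expand
\[
(\hbK\bH^\top+\bF^\top\bX)(\cbK\bH^\top+\cbF^\top\bX)^\top,
\]
and rewrite each factor as a sum of a Stein residual and a derivative sum. For the left factor, $\bF^\top\bX+\hbK\bH^\top=\sum_i \be_i\text{-contraction}$ of $\bF^\top\be_i\bx_i^\top-\partial_{i\cdot}\bF^\top$ plus $\bUpsilon_1$, using \eqref{eq:sum-dF}. For the right factor, \eqref{eq:checkF-derivative-1} gives $\sqrt n(\cbK\bH^\top+\cbF^\top\bX)$ as a residual $\sum_i(\cbF^\top\be_i\bx_i^\top-\partial\cbF^\top/\partial x_{i\cdot})$ plus $\check\bUpsilon_1$. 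The product of two such residuals, summed over $j$, is handled by the "product" Stein formula of \cite{bellec2021second}. That formula says that $\E$ of the product of residuals of vector fields $\bU,\bV\colon\R^n\to\R^n$ equals $\E[\bU^\top\bV]$ plus divergence cross terms. Here $\sum_j$ over columns produces $p\bF^\top\cbF$ (the $\bU^\top\bV$ term with $p$ copies), which cancels the $p\bF^\top\cbF$ in $\cbTheta_4$. What remains are cross-derivative traces of size $O(\sqrt n)$ and the fluctuation of the whole product around its mean. The delicate part is that one needs a fluctuation bound, not merely an expectation identity. I would first try to control $\E\fnorm{\cdot}$ of the full bilinear residual via the variance form (Theorem on products of two Stein residuals), which gives $O(\sqrt n)$. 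Multiplying by $n^{-1}$ then yields $n^{-1/2}$, and the $\bUpsilon_1,\check\bUpsilon_1$ cross terms are handled by Cauchy--Schwarz with Steps 1--2-type moment bounds. If the direct product formula is awkward, a fallback is to condition on $\bF,\cbF$-structure through a leave-column-out argument in $j$. I expect this bookkeeping, and verifying that the $\calM^{-1}$ operator norms stay bounded on moments so that the derivative traces are $O(\sqrt n)$, to be the hardest step.
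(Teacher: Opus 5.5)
Your proposal follows the paper's proof. Via Lemma~\ref{lem:sum-derivative} you write the three quantities as $\bTheta_1+\bUpsilon_3$, $n\cbTheta_2+\check\bUpsilon_5$ and $n\cbTheta_4+\check\bUpsilon_4$ (Gaussian Stein residuals, with $\check\bUpsilon_4$ collecting the $\bUpsilon_1,\check\bUpsilon_1$ cross terms), bound the first two by the second-order Stein inequality, and bound the third by the matrix product-of-residuals ($\chi^2$-type) estimate, which the paper cites as Lemma E.12 of \cite{tan2022noise}, giving the $O(\sqrt p)$ fluctuation. One small correction: in Step 3 the right factor is $\cbK\bH^\top+\cbF^\top\bX=\sum_i(\cbF^\top\be_i\bx_i^\top-\partial_{i\cdot}\cbF^\top\be_i)+\check\bUpsilon_1$ with no factor $\sqrt n$ (that factor only appears if you use $\bX\bH$ in place of $\cbF$), and this normalization is exactly what makes your cancellation of $p\bF^\top\cbF$ go through.
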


\begin{proof}[Proof of Lemma~\ref{lem:moment-theta}]
\label{proof:lem:moment-theta}
	We first notice that the following identities hold:
\begin{align*}
	&\bF^\top \bX \bH 
	- \sum_{i=1}^n\sum_{j=1}^p \pdv{\bF^\top \be_i\be_j^\top \bH}{x_{ij}}
	= \bTheta_1 + \bUpsilon_3,\\
	&\bF^\top \bX \bX^\top \cbF - \sum_{i=1}^n\sum_{j=1}^p \pdv{\bF^\top \be_i\be_j^\top \bX^\top \cbF}{x_{ij}}
	= n \cbTheta_2 + \check\bUpsilon_5,\\
	&\Bigl[p \bF^\top \cbF - \sum_{j=1}^p \Bigl(\sum_{i=1}^n \pdv{\bF^\top \be_i}{x_{ij}} - \bF^\top \bX \be_j\Bigr)
		\Bigl(\sum_{i=1}^n \pdv{\cbF^\top \be_i}{x_{ij}} - \cbF^\top \bX \be_j\Bigr)^\top
	\Bigr]\\
	&= n \cbTheta_4 + 
	\underbrace{\bUpsilon_1 \Bigl(\cbK \bH^\top + \cbF^\top \bX \Bigr)^\top
	+ \Bigl(\hbK \bH^\top + \bF^\top \bX \Bigr) \check\bUpsilon_1^\top
	- \bUpsilon_1 \check\bUpsilon_1^\top}_{\check\bUpsilon_4}.
\end{align*}
It follows from Lemma E.10 of \cite{tan2022noise} that 
\begin{align*}
	\E[\fnorm{\bTheta_1}^2] 
	&\lesssim \E\Bigl[\fnorm{\bF^\top \bX \bH 
	- \sum_{i=1}^n\sum_{j=1}^p \pdv{\bF^\top \be_i\be_j^\top \bH}{x_{ij}}}^2 \Bigr] + \E[\opnorm{\bUpsilon_3}^2],\\
	&\le \E [\fnorm{\bF}^2 \fnorm{\bH}^2]+ \E \sum_{ij}\Big[
	\fnorm{\bH}^2\fnorm{ \pdv{\bF}{x_{ij}} }^2
	+ \fnorm{\bF}^2\fnorm{ \pdv{\bH}{x_{ij}} }^2 \Big] + \E[\fnorm{\bUpsilon_3}^2],\\
	&\le C(T,\gamma,\zeta), 
\end{align*}
where the last inequality follows from Lemma~\ref{lem:moment-H-F} and Lemma~\ref{lem:moment-Upsilon}.

Similarly, we have 
\begin{align*}
	n^2 \E[\fnorm{\cbTheta_2}^2] 
	&\lesssim \E\Bigl[\fnorm{\bF^\top \bX \bX^\top \cbF - \sum_{i=1}^n\sum_{j=1}^p \pdv{\bF^\top \be_i\be_j^\top \bX^\top \cbF}{x_{ij}}}^2\Bigr] + \E[\opnorm{\check\bUpsilon_5}^2],\\
	&\le \E [\fnorm{\cbF}^4 \opnorm{\bX}^2]+ \E \Big[(1+\opnorm{\bX}^2)
	\fnorm{\bF}^2\sum_{ij}\fnorm{ \pdv{\cbF}{x_{ij}} }^2
	\Big] + \E[\opnorm{\check\bUpsilon_5}^2]\\
	&\le n C(T,\gamma,\zeta).
\end{align*}

By definition of \(\check\bUpsilon_4\) and submultiplicativity,
\begin{align*}
\|\check\bUpsilon_4\|_{\op}
\le\;&
\|\bUpsilon_1\|_{\op}\Bigl(\|\cbK\|_{\op}\|\bH\|_{\rm F}
+ \|\cbF\|_{\rm F}\|\bX\|_{\op}\Bigr)\\
&+
\Bigl(\|\hbK\|_{\op}\|\bH\|_{\rm F}
+ \|\bF\|_{\rm F}\|\bX\|_{\op}\Bigr)\|\check\bUpsilon_1\|_{\op}
+ \|\bUpsilon_1\|_{\op}\|\check\bUpsilon_1\|_{\op}.
\end{align*}
Moreover, the definition of \(\hbK\) and the triangle inequality give
\[
\|\hbK\|_{\op}
\le
n\Bigl\|
\bigl(\bI_T \otimes [\bI_n,\boldzero_{n\times p}]\bigr)
\calM^{-1}\bJ_{\bPhi^v}
\Bigr\|_{\op}.
\]
Hence, by Lemmas~\ref{lem:opnorm-Minv-JH-JF}, \ref{lem:moment-H-F},
\ref{lem:moment-Upsilon}, and \ref{lem:op-bound-WAK},
\[
\E[\|\check\bUpsilon_4\|_{\op}^2]
\le
C(T,\gamma,\zeta).
\]

Using Lemma E.12 of \cite{tan2022noise}, we have
\begin{align*}
	n \E[\fnorm{\cbTheta_4}]
	&\lesssim (1 + 2\sqrt{p}) 
	\bigl( 
		\E [\fnorm{\bF}^4]^{1/2}
		+ \E [\norm{\cbF}^4_{\partial}]^{1/2}
		\bigr) 
+ \E[\opnorm{\check\bUpsilon_{4}}^2]\\
&\le \sqrt{n} C(T,\gamma,\zeta).
\end{align*}
\end{proof}

\begin{lemma}
\label{lem:moment-Upsilon}
Under Assumptions~\ref{assu:X},\ref{assu:regime},\ref{assu:algorithm}, and $\bSigma = \bI_p$, the matrices $\bUpsilon_1, \bUpsilon_2, \bUpsilon_3, \check\bUpsilon_1, \check\bUpsilon_5$ in Lemma~\ref{lem:sum-derivative} satisfy 
\begin{align*}
	\max_{k\in\{1,2,3\}}\E[\opnorm{\bUpsilon_k}^2] \le \frac{C(T,\gamma,\zeta)}{n}, \quad 
\E[\opnorm{\check\bUpsilon_1}^2] 
	\le \frac{C(T,\gamma,\zeta)}{n}, \quad
	\E[\opnorm{\check\bUpsilon_5}^2] 
	\le C(T,\gamma,\zeta).
\end{align*}
\end{lemma}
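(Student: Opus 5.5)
The plan is to first write each residual explicitly, by expanding the contractions with Lemma~\ref{lem:dot-b-general} and Lemma~\ref{lem:dot-check-u}, and then bound the pieces using operator norms on $\Omega$ and moment bounds off $\Omega$. For \eqref{eq:sum-dH}, set $i$ and contract $\sum_j(\be_j^\top\otimes\cdot)$ against the formula for $\partial\hbb^{t-1}/\partial x_{ij}$ (shifted by $\bL$). The formula has two terms:
\begin{itemize}
\item the $\bJ_{\bPsi^\eta}((\bF^\top\be_i)\otimes\be_j)$ term, which reproduces exactly $\be_i^\top\bF\bW^\top$ by the definition of $\bW$;
\item the $\bJ_{\bPhi^v}((\bH^\top\be_j)\otimes\be_i)$ term, which is $-\be_i^\top\bUpsilon_2^\top$.
\end{itemize}
So $\bUpsilon_2^\top=\sum_j (\bI_T\otimes\be_j^\top)(\bL\otimes\bI_p)\bigl(\bI_T\otimes[\boldzero,\bI_p]\bigr)\calM^{-1}\bJ_{\bPhi^v}(\bH^\top\be_j\otimes\cdot)$, as a linear map in $i$. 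The same bookkeeping defines $\bUpsilon_1$ (the $\bJ_{\bPsi^\eta}$ part of $\sum_i\partial\bu^t/\partial x_{ij}$), $\check\bUpsilon_1$ (the $\calM_F$ part in Lemma~\ref{lem:dot-check-u}), $\bUpsilon_3$ (the cross terms of the product rule for $\bF^\top\be_i\be_j^\top\bH$), and $\check\bUpsilon_5$.

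The key structural observation is that each $\bUpsilon$-type residual is a contraction in which the free index is summed against a trajectory vector, not against a trace. For example, $\be_i^\top\bUpsilon_2^\top\be_t$ has the form $\sum_j\be_j^\top \bQ_{t}\,(\bH^\top\be_j\otimes\be_i)$, where $\bQ$ is a block of $n^{-1/2}\calM^{-1}\bB_1\bPhi^v$ composed with $\bX^\top$. Stacking over $i$, the $n\times T$ matrix is $n^{-1/2}\sum_{s}\bX$-free operator blocks applied to columns of $\bH$, multiplied by $\bX$ or $\bX^\top$ blocks carrying an extra $n^{-1/2}$. I would then bound it as follows:
\begin{enumerate}
\item Use $\|\bUpsilon_2\|_{\op}\le \|\bUpsilon_2\|_{\F}\le C(T)\, n^{-1/2}\|\calM^{-1}\|_{\op}\max_{t,s}\|\Phi^v\|_{\op}\,(\|\bX\|_{\op}/\sqrt n)^{k}\,\|\bH\|_{\F}$.
\item Bound $\|\calM^{-1}\|_{\op}$ and $\|\calM^{-1}\bJ\|_{\op}$ by Lemma~\ref{lem:opnorm-Minv-JH-JF}. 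Because $\calM$ is block unit lower triangular with nilpotent strictly-lower part, its inverse is a finite Neumann series of length $T$, with norm polynomial in $\zeta$ and $\|\bX\|_{\op}/\sqrt n$.
\item Bound the Jacobian blocks by $\zeta$ using Lipschitzness.
\item Take expectations with Lemma~\ref{lem:moment-X}, Lemma~\ref{lem:moment-H-F} and Cauchy--Schwarz. This gives $\E\|\bUpsilon_k\|_{\op}^2\le C/n$ for $k=1,2$, and likewise for $\check\bUpsilon_1$, whose extra $\bX/\sqrt n$ factor is harmless.
\end{enumerate}

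For $\bUpsilon_3$, I would apply the product rule to $\sum_{ij}\partial(\bF^\top\be_i\be_j^\top\bH)/\partial x_{ij}$. The result equals $\sum_j(\sum_i\partial\bF^\top\be_i/\partial x_{ij})\be_j^\top\bH+\sum_i\bF^\top\be_i(\sum_j\partial\be_j^\top\bH/\partial x_{ij})$. Inserting \eqref{eq:sum-dF} and \eqref{eq:sum-dH} gives $\bUpsilon_3=-\bUpsilon_1^\top\bH+\bF^\top\bUpsilon_2^\top$ up to sign and transpose. Its operator norm is then bounded by $\|\bUpsilon_1\|_{\op}\|\bH\|_{\F}+\|\bF\|_{\F}\|\bUpsilon_2\|_{\op}$, and by Cauchy--Schwarz with fourth moments (from Lemma~\ref{lem:moment-H-F}, and Assumption~\ref{assu:algorithm} for $\|\bF\|_{\F}\le C\zeta$) this is $O(n^{-1/2})$ in $L^2$. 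Here the residual bounds for $\bUpsilon_1,\bUpsilon_2$ must hold with fourth moments, which the same polynomial-in-$\|\bX\|_{\op}/\sqrt n$ argument gives.

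For $\check\bUpsilon_5$, the product rule on $\bF^\top\be_i\be_j^\top\bX^\top\cbF$ produces three groups:
\begin{itemize}
\item the $\partial\bF$ group, giving $-\hbK\bH^\top\bX^\top\cbF$ plus $\bUpsilon_1$-type terms times $\bX^\top\cbF$;
\item the explicit $\partial\bX^\top$ group, giving $p\bF^\top\cbF$ plus a term $\sum_{ij}\bF^\top\be_i\be_j^\top\be_j\be_i^\top\cbF$-like piece of size $\bF^\top\cbF$;
\item the $\partial\cbF$ group, giving $-\bF^\top\bF\cbA^\top$ plus an $\calM_H$ remainder.
\end{itemize}
Only $O(1)$ bounds are needed, so each leftover term is controlled crudely by $\|\bF\|_{\F}\|\cbF\|_{\F}(1+\|\bX\|_{\op}/\sqrt n)^k$ times the Neumann-series constants. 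The main obstacle is exactly this last group. The $\calM_H$ remainder involves $\sum_j \bX^\top$-contractions against $\calM^{-1}\bJ_{\bPhi^v}(\bH^\top\be_j\otimes\be_i)$, whose naive Frobenius bound carries an extra $\sqrt n$. I would first try to write that contraction as a trace-free bilinear form $\bF^\top(\cdot)\bH$ and use operator (not Frobenius) norms throughout, so that no factor of $p$ appears.
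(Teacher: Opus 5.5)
Your proposal is correct and is essentially the paper's proof. The paper also collapses each index sum onto the $T$ trajectory columns (for example, the $i$-sum in $\bUpsilon_1$ taken against $\bF^\top\be_i$ becomes a sum over $t$ against $\bu^t$). This gives $\|\bUpsilon_1\|_{\op}\le T\|\calM^{-1}\bJ_{\bPsi^\eta}\|_{\op}\|\bF\|_{\rm F}$, $\|\bUpsilon_2\|_{\op}\le T\|\calM^{-1}\bJ_{\bPhi^v}\|_{\op}\|\bH\|_{\rm F}$, $\|\check\bUpsilon_1\|_{\op}\le Tn^{-1/2}\|\calM_F\|_{\op}\|\bF\|_{\rm F}$, and the product bound for $\bUpsilon_3$. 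The paper then concludes with Lemma~\ref{lem:opnorm-Minv-JH-JF}, Lemma~\ref{lem:moment-H-F} and Cauchy--Schwarz, as you do.

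The $\calM_H$ remainder you flagged in $\check\bUpsilon_5$ is handled precisely by the collapse you suggest, $\sum_j(\bI_T\otimes\be_j^\top\bX^\top)\calM_H((\bH^\top\be_j)\otimes\bI_n)=\sum_t(\bI_T\otimes\be_t^\top\bH^\top\bX^\top)\calM_H(\be_t\otimes\bI_n)$. This gives the $p$-free bound $C(T)\,n^{-1/2}\|\bX\|_{\op}\|\calM_H\|_{\op}\|\bH\|_{\rm F}\|\bF\|_{\rm F}$.
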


\begin{proof}[Proof of Lemma~\ref{lem:moment-Upsilon}]
\label{proof:moment-Upsilon}
For $\bUpsilon_1$, using the inequality \eqref{eq:opnorm-Upsilon1}, we obtain
\begin{align*}
	\E[\opnorm{\bUpsilon_1}^2]
	&\le \E[(T \opnorm{\calM^{-1} \bJ_{\bPsi^\eta}} \opnorm{\bF})^2]\\
	&\le T^2 \E[\opnorm{\calM^{-1} \bJ_{\bPsi^\eta}}^4]^{1/2} \E[\opnorm{\bF}^4]^{1/2}\\
	&\le \frac{C(T,\gamma,\zeta)}{n},
\end{align*}
where the last inequality follows from Lemma~\ref{lem:opnorm-Minv-JH-JF} and Lemma~\ref{lem:moment-H-F}.

For $\bUpsilon_2$, using the inequality \eqref{eq:opnorm-Upsilon2}, we have
\begin{align*}
	\E[\opnorm{\bUpsilon_2}^2]
	&\le \E[(T \opnorm{\calM^{-1} \bJ_{\bPhi^v}} \opnorm{\bH})^2]\\
	&\le T^2 \E[\opnorm{\calM^{-1} \bJ_{\bPhi^v}}^4]^{1/2} \E[\opnorm{\bH}^4]^{1/2}\\
	&\le \frac{C(T,\gamma,\zeta)}{n}.
\end{align*}

For $\bUpsilon_3$, using the inequality \eqref{eq:opnorm-Upsilon3}, we have
\begin{align*}
	\E[\opnorm{\bUpsilon_3}^2]
	&\le \E[(\opnorm{\bUpsilon_1}\fnorm{\bH} + \fnorm{\bF} \opnorm{\bUpsilon_2})^2]\\
	&\le 2\E[\opnorm{\bUpsilon_1}^2 \fnorm{\bH}^2] + 2\E[\fnorm{\bF}^2 \opnorm{\bUpsilon_2}^2]\\
	&\le 2\E[\opnorm{\bUpsilon_1}^4]^{1/2} \E[\fnorm{\bH}^4]^{1/2} + 2\E[\fnorm{\bF}^4]^{1/2} \E[\opnorm{\bUpsilon_2}^4]^{1/2}\\
	&\le \frac{C(T,\gamma,\zeta)}{n}.
\end{align*}

For $\check\bUpsilon_1$, using the inequality \eqref{eq:opnorm-checkUpsilon1}, we have 
\begin{align*}
	\E[\opnorm{\check\bUpsilon_1}^2]
	&\le \frac{T^2}{n} \E[( \opnorm{\calM_F} \fnorm{\bF})^2]\\
	&\le \frac{C(T,\gamma,\zeta)}{n}.
\end{align*}

For $\check\bUpsilon_5$, recall from the proof of Lemma~\ref{lem:sum-derivative} that
\[
\check\bUpsilon_5
=
\bUpsilon_1\bX^\top \cbF + (\bUpsilon_5^*)^\top,
\]
with
\[
\opnorm{\bUpsilon_5^*}
\le
C(T)\frac{\opnorm{\bX}}{\sqrt n}
\opnorm{\calM_H}\fnorm{\bH}\fnorm{\bF}.
\]
Therefore,
\begin{align*}
	\E[\opnorm{\check\bUpsilon_5}^2]
	&\le
	2\,\E[\opnorm{\bUpsilon_1}^2\opnorm{\bX}^2\fnorm{\cbF}^2]
	+
	2\,\E[\opnorm{\bUpsilon_5^*}^2]\\
	&\le
	C(T,\gamma,\zeta),
\end{align*}
where we used Lemmas~\ref{lem:moment-H-F} and \ref{lem:opnorm-Minv-JH-JF},
and the operator-norm bounds on \(\calM_H\) from
Lemma~\ref{lem:opnorm-Minv-JH-JF}.

This completes the proof of Lemma~\ref{lem:moment-Upsilon}.
\end{proof}

\begin{proposition}
\label{prop:approx-W}
Under Assumptions~\ref{assu:X}--\ref{assu:algorithm}, we have
\begin{align*} 
	\E[\fnorm{\bF^\top \bF (\cbA^\top - \bW^\top \cbK^\top)}] \le \sqrt{n} C(T,\gamma,\zeta).
\end{align*}
\end{proposition}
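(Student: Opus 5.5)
The plan is to prove Proposition~\ref{prop:approx-W} by a purely algebraic identity that writes \(\bF^\top\bF(\cbA^\top-\bW^\top\cbK^\top)\) in terms of the three residual matrices \(\bTheta_1\), \(\cbTheta_2\), \(\cbTheta_4\). Their moments are already controlled by Lemma~\ref{lem:moment-theta}. As in the rest of this section, I first reduce to \(\bSigma=\bI_p\) via Section~\ref{sec:reduction-isotropic}; under that reduction \(\cbK\) and \(\cbA\) are unchanged and \(\bW\) becomes the transformed population weight.

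\textbf{Step 1 (exact identity).} Expand \(n\cbTheta_4\) by multiplying out \((\hbK\bH^\top+\bF^\top\bX)(\bH\cbK^\top+\bX^\top\cbF)\). This produces four terms:
\[
\hbK\bH^\top\bH\cbK^\top,\quad
\hbK\bH^\top\bX^\top\cbF,\quad
\bF^\top\bX\bH\cbK^\top,\quad
\bF^\top\bX\bX^\top\cbF.
\]
Adding \(n\cbTheta_2\) cancels three things: the \(p\bF^\top\cbF\) terms, the \(\hbK\bH^\top\bX^\top\cbF\) term, and the \(\bF^\top\bX\bX^\top\cbF\) term. This leaves
\[
n(\cbTheta_2+\cbTheta_4)
=
\bF^\top\bF\cbA^\top-(\hbK\bH^\top\bH+\bF^\top\bX\bH)\cbK^\top .
\]
By the definition of \(\bTheta_1\), we have \(\hbK\bH^\top\bH+\bF^\top\bX\bH=\bTheta_1+\bF^\top\bF\bW^\top\). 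Substituting gives
\[
\bF^\top\bF(\cbA^\top-\bW^\top\cbK^\top)
=
n\cbTheta_2+n\cbTheta_4+\bTheta_1\cbK^\top .
\]

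\textbf{Step 2 (moment bounds).} I bound the three terms on the right separately.
\begin{itemize}
\item By Lemma~\ref{lem:moment-theta}, \(n\,\E\fnorm{\cbTheta_2}\le n(\E\fnorm{\cbTheta_2}^2)^{1/2}\le C\sqrt n\).
\item Also by Lemma~\ref{lem:moment-theta}, \(n\,\E\fnorm{\cbTheta_4}\le C\sqrt n\).
\item For the last term, Cauchy--Schwarz gives \(\E\fnorm{\bTheta_1\cbK^\top}\le(\E\fnorm{\bTheta_1}^2)^{1/2}(\E\opnorm{\cbK}^2)^{1/2}\). The first factor is \(O(1)\) by Lemma~\ref{lem:moment-theta}.
\end{itemize}
So it remains to show \(\E\opnorm{\cbK}^2\le C(T,\gamma,\zeta)\,n\).

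\textbf{Step 3 (main obstacle: size of \(\cbK\)).} Each entry \((\cbK)_{t,s}\) equals \(n^{-1/2}\trace\) of the \((t,s)\) \(n\times n\) block of \(\calM_H\). Hence \(|(\cbK)_{t,s}|\le n^{-1/2}\cdot n\,\opnorm{\calM_H}\), and therefore \(\opnorm{\cbK}\le T\sqrt n\,\opnorm{\calM_H}\). The diagonal \(-\sqrt n\) is consistent with this scaling. The needed input is a second-moment bound \(\E\opnorm{\calM_H}^2\le C(T,\gamma,\zeta)\), which I take from Lemma~\ref{lem:opnorm-Minv-JH-JF}. Recall \(\calM_H=-\bI_{nT}+(\bL\otimes\bX)(\cdots)\calM^{-1}\bJ_{\bPhi^v}\). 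The operator norm of \(\calM^{-1}\bJ_{\bPhi^v}\) is \(O(n^{-1/2})\) in moments, since \(\calM\) is block unit-lower-triangular with off-diagonal blocks involving \(\bX/\sqrt n\) and Lipschitz Jacobians bounded by \(\zeta\). Together with \(\E\opnorm{\bX}^{2k}\le Cn^k\) from Lemma~\ref{lem:moment-X}, this makes the product \(O(1)\) in every moment, which is exactly the bound needed.

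If only a bound on \(\Omega\) were available for \(\opnorm{\calM_H}\), I would instead use a crude polynomial-in-\(\opnorm{\bX}/\sqrt n\) bound from the finite Neumann expansion of the nilpotent part of \(\calM\). Combining Steps 1--3 then yields \(\E\fnorm{\bF^\top\bF(\cbA^\top-\bW^\top\cbK^\top)}\le C(T,\gamma,\zeta)\sqrt n\).
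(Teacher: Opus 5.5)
Your proof is correct and is essentially the paper's argument. You derive the same exact identity \(\bF^\top\bF(\cbA^\top-\bW^\top\cbK^\top)=\bTheta_1\cbK^\top+n\cbTheta_2+n\cbTheta_4\), then apply Lemma~\ref{lem:moment-theta} and Cauchy--Schwarz with \(\E\opnorm{\cbK}^2\le C n\); your Step~3 just re-derives this last bound, which the paper takes from Lemma~\ref{lem:op-bound-WAK} (itself proved from Lemmas~\ref{lem:opnorm-Minv-JH-JF} and~\ref{lem:moment-X} exactly as you sketch).
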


\begin{proof}[Proof of Proposition~\ref{prop:approx-W}]
\label{proof:prop:approx-W}
By the definitions of \(\bTheta_1\), \(\cbTheta_2\), and \(\cbTheta_4\),
\begin{align*}
&n^{-1}\bTheta_1\cbK^\top+\cbTheta_2+\cbTheta_4\\
&=
\frac1n\Big[
\bigl(\bF^\top\bX\bH+\hbK\bH^\top\bH
-\bF^\top\bF\bW^\top\bigr)\cbK^\top\\
&\qquad{}+\bF^\top\bX\bX^\top\cbF
+\hbK\bH^\top\bX^\top\cbF
-p\bF^\top\cbF
+\bF^\top\bF\cbA^\top\\
&\qquad{}+p\bF^\top\cbF
-(\hbK\bH^\top+\bF^\top\bX)(\cbK\bH^\top+\cbF^\top\bX)^\top
\Big].
\end{align*}
Expanding the final product gives
\begin{align*}
(\hbK\bH^\top+\bF^\top\bX)(\cbK\bH^\top+\cbF^\top\bX)^\top
&=
\hbK\bH^\top\bH\cbK^\top
+\hbK\bH^\top\bX^\top\cbF\\
&\quad{}+\bF^\top\bX\bH\cbK^\top
+\bF^\top\bX\bX^\top\cbF.
\end{align*}
All cross-terms cancel, leaving the exact identity
\begin{align*}
	n^{-1}\bTheta_1 \cbK^\top +\cbTheta_2+\cbTheta_4 
	&= n^{-1}\bF^\top \bF \big(\cbA^\top-\bW^\top \cbK^\top\big).
\end{align*}
Therefore,
\begin{align*}
	\E[\fnorm{\bF^\top \bF (\cbA^\top - \bW^\top \cbK^\top)}]
	&\le \E[\fnorm{\bTheta_1 \cbK^\top}] + n \bigl(\E[\fnorm{\cbTheta_2}] + 
	\E[\fnorm{\cbTheta_4}]\bigr)\\
	&\le \E[\fnorm{\bTheta_1}^2]^{1/2} \E[\opnorm{\cbK}^2]^{1/2} + n \bigl(\E[\fnorm{\cbTheta_2}^2]^{1/2} +
	\E[\fnorm{\cbTheta_4}]\bigr). 
\end{align*}
The desired upper bounds then follow by the moment bounds of $\opnorm{\cbK}$ in Lemma~\ref
{lem:op-bound-WAK}, and moment bounds of $\fnorm{\bTheta_1}$, $\fnorm{\cbTheta_2}$, $\fnorm{\cbTheta_4}$ in Lemma~\ref{lem:moment-theta}.
This completes the proof.
\end{proof}

We now give the deferred proof of Lemma~\ref{lem:sum-derivative}.

\begin{proof}[Proof of Lemma~\ref{lem:sum-derivative}]
\label{pf:sum-derivative}
We first prove \eqref{eq:sum-dF}. We will use the following cancellation identities:
\begin{equation}
\label{eq:sum-to-I}
\sum_{i=1}^n \be_i \be_i^\top = \bI_n,
\qquad
\sum_{j=1}^p \be_j \be_j^\top = \bI_p,
\qquad
\sum_{t=1}^T \be_t \be_t^\top = \bI_T.
\end{equation}
For compactness, write
\begin{align*}
\calP_{\Phi}
&:= \bigl(\bI_T \otimes [\bI_n,\boldzero_{n\times p}]\bigr)
\calM^{-1}\bJ_{\bPhi^v},
&
\calP_{\Psi}
&:= \bigl(\bI_T \otimes [\bI_n,\boldzero_{n\times p}]\bigr)
\calM^{-1}\bJ_{\bPsi^\eta},\\
\calQ_{\Phi}
&:= \bigl(\bI_T \otimes [\boldzero_{p\times n},\bI_p]\bigr)
\calM^{-1}\bJ_{\bPhi^v},
&
\calQ_{\Psi}
&:= \bigl(\bI_T \otimes [\boldzero_{p\times n},\bI_p]\bigr)
\calM^{-1}\bJ_{\bPsi^\eta}.
\end{align*}
By definition,
\[
\bF = [\bu^1, \bu^2, \ldots, \bu^T] = \sum_{t=1}^T \bu^t \be_t^\top.
\]
Therefore,
\begin{align*}
\sum_{i=1}^n \pdv{\be_i^\top \bF}{x_{ij}}
&= \sum_{i=1}^n \sum_{t=1}^T \be_i^\top \pdv{\bu^t}{x_{ij}} \be_t^\top\\
&= \sum_{i=1}^n \sum_{t=1}^T \be_i^\top 
(\be_t^\top \otimes \bI_n)
\Bigl[-\calP_{\Phi}\bigl((\bH^\top \be_j) \otimes \be_i\bigr)
+\calP_{\Psi}\bigl((\bF^\top \be_i) \otimes \be_j\bigr)\Bigr]\be_t^\top\\
&= -\sum_{i=1}^n \sum_{t=1}^T \be_i^\top 
(\be_t^\top \otimes \bI_n)
\calP_{\Phi}\bigl((\bH^\top \be_j) \otimes \be_i\bigr)
\be_t^\top\\
&\qquad
+ \sum_{i=1}^n \sum_{t=1}^T \be_i^\top 
(\be_t^\top \otimes \bI_n)
\calP_{\Psi}\bigl((\bF^\top \be_i) \otimes \be_j\bigr)
\be_t^\top.
\end{align*}
We now simplify the final two terms in the last display.

For the first term,
\begin{align*}
&\sum_{i=1}^n \sum_{t=1}^T \be_i^\top 
(\be_t^\top \otimes \bI_n)
\bigl(\bI_T \otimes [\bI_n, \boldzero_{n\times p}]\bigr) \calM^{-1} \bJ_{\bPhi^v} \bigl((\bH^\top \be_j) \otimes \be_i\bigr)
\be_t^\top\\
&=
\Bigl[\sum_{i=1}^n \sum_{t=1}^T  
\be_t(\be_t^\top \otimes \be_i^\top)
\calP_{\Phi}\bigl((\bH^\top \be_j) \otimes \be_i\bigr)\Bigr]
^\top\\
&=
\Bigl[\underbrace{\sum_{i=1}^n (\bI_T \otimes \be_i^\top)
\calP_{\Phi}(\bI_T \otimes \be_i)}_{\hbK} \bH^\top \be_j\Bigr]
^\top\\
&= \be_j^\top \bH \hbK^\top.
\end{align*}

For the second term,
\begin{align*}
&\sum_{i=1}^n \sum_{t=1}^T \be_i^\top 
(\be_t^\top \otimes \bI_n)
\bigl(\bI_T \otimes [\bI_n, \boldzero_{n\times p}]\bigr) \calM^{-1} \bJ_{\bPsi^\eta} \bigl((\bF^\top \be_i) \otimes \be_j\bigr)
\be_t^\top\\
&=
\Bigl[\sum_{i=1}^n \sum_{t=1}^T 
\be_t(\be_t^\top \otimes \be_i^\top)
\calP_{\Psi}\bigl((\bF^\top \be_i) \otimes \be_j\bigr)
\Bigr]^\top\\
&=
\Bigl[\sum_{i=1}^n 
(\bI_T \otimes \be_i^\top)
\calP_{\Psi}\bigl((\bF^\top \be_i) \otimes \be_j\bigr)
\Bigr]^\top\\
&=
\Bigl[ \underbrace{\sum_{i=1}^n 
(\bI_T \otimes \be_i^\top)
\calP_{\Psi}\bigl((\bF^\top \be_i) \otimes \bI_p\bigr)}_{\bUpsilon_1}\be_j
\Bigr]^\top\\
&= \be_j^\top \bUpsilon_1^\top.
\end{align*}

We rewrite $\bUpsilon_1$ as
\begin{align*}
\bUpsilon_1 
&= 
\sum_{i=1}^n \sum_{t=1}^T
(\bI_T \otimes \be_i^\top)
\calP_{\Psi}\bigl((\be_t\be_t^\top \bF^\top \be_i) \otimes \bI_p\bigr)\\
&= 
\sum_{i=1}^n \sum_{t=1}^T
(\bI_T \otimes (\be_t^\top \bF^\top \be_i \be_i^\top))
\calP_{\Psi}\bigl(\be_t \otimes \bI_p\bigr)\\
&= 
\sum_{t=1}^T
(\bI_T \otimes (\be_t^\top \bF^\top))
\calP_{\Psi}\bigl(\be_t \otimes \bI_p\bigr).
\end{align*}
It follows that
\begin{equation}\label{eq:opnorm-Upsilon1}
\opnorm{\bUpsilon_1} 
\le T \opnorm{\calM^{-1} \bJ_{\bPsi^\eta}} \fnorm{\bF}.
\end{equation}
Combining the above identities proves \eqref{eq:sum-dF}.

Next, we prove \eqref{eq:sum-dH}. By definition,
\[
\bH = [\bh^0, \bh^1, \ldots, \bh^{T-1}] = \sum_{t=1}^{T} \bh^{t-1} \be_t^\top.
\]
Since $\dot\bh^0=\dot\bb^0=\boldzero$ because $\hbb^0$ is independent of the data $(\bX, \by)$, we have
\begin{align*}
\sum_{j=1}^p \pdv{\be_j^\top \bH}{x_{ij}}
&= \sum_{j=1}^p \sum_{t=1}^T \be_j^\top \pdv{\bh^{t-1}}{x_{ij}} \be_t^\top\\
&= \sum_{j=1}^p \sum_{t=1}^T \be_j^\top \pdv{\hbb^{t-1}}{x_{ij}} \be_t^\top\\
&= \sum_{j=1}^p \sum_{t=2}^T \be_j^\top  \pdv{\hbb^{t-1}}{x_{ij}} \be_t^\top \\
&= \sum_{j=1}^p \sum_{t=2}^T \be_j^\top 
(\be_{t-1}^\top \otimes \bI_p)
\Bigl[-\calQ_{\Phi}\bigl((\bH^\top \be_j) \otimes \be_i\bigr)
+\calQ_{\Psi}\bigl((\bF^\top \be_i) \otimes \be_j\bigr)\Bigr]
\be_t ^\top\\
&= \be_i^\top \bF \bW^\top - \be_i^\top \bUpsilon_2^\top.
\end{align*}
Here the last line uses
\begin{align*}
&\sum_{j=1}^p \sum_{t=2}^T \be_j^\top  
(\be_{t-1}^\top \otimes \bI_p)
\calQ_{\Psi}\bigl((\bF^\top \be_i) \otimes \be_j\bigr)
\be_t ^\top \\
&= \Bigl[\sum_{j=1}^p \sum_{t=2}^T \be_t  
(\be_{t-1}^\top \otimes \be_j^\top )
\calQ_{\Psi}\bigl((\bF^\top \be_i) \otimes \be_j\bigr)
\Bigr]^\top \\
&= \Bigl[\underbrace{\sum_{j=1}^p   
(\bL \otimes \be_j^\top )
\calQ_{\Psi}(\bI_T \otimes \be_j)}_{\bW} \bF^\top \be_i
\Bigr]^\top\\
&= \be_i^\top \bF \bW^\top,
\end{align*}
and
\begin{align*}
&\sum_{j=1}^p \sum_{t=2}^T \be_j^\top  (\be_{t-1}^\top \otimes \bI_p)
\calQ_{\Phi}\bigl((\bH^\top \be_j) \otimes \be_i\bigr)\be_t^\top\\
&= \Bigl[\sum_{j=1}^p \sum_{t=2}^T  \be_t (\be_{t-1}^\top \otimes \be_j^\top )
\calQ_{\Phi}\bigl((\bH^\top \be_j) \otimes \be_i\bigr)
\Bigr]^\top\\
&= \Bigl[\underbrace{\sum_{j=1}^p  (\bL \otimes \be_j^\top )
\calQ_{\Phi}\bigl((\bH^\top \be_j) \otimes \bI_n \bigr)}_{\bUpsilon_2} \be_i
\Bigr]^\top
&&\mbox{since $\bL = \sum_{t=2}^T \be_t \be_{t-1}^\top$}\\
&= \be_i^\top \bUpsilon_2^\top.
\end{align*}

We rewrite $\bUpsilon_2$ as
\begin{align*}
\bUpsilon_2
&= \sum_{j=1}^p \sum_{t=1}^T (\bL \otimes \be_j^\top )
\calQ_{\Phi}\bigl((\be_t\be_t^\top \bH^\top \be_j) \otimes \bI_n \bigr)
\\
&= \sum_{j=1}^p \sum_{t=1}^T (\bL \otimes (\be_t^\top \bH^\top \be_j\be_j^\top ))
\calQ_{\Phi}\bigl(\be_t \otimes \bI_n \bigr)
\\
&= \sum_{t=1}^T (\bL \otimes (\be_t^\top \bH^\top ))
\calQ_{\Phi}\bigl(\be_t \otimes \bI_n \bigr).
\end{align*}
It follows that
\begin{equation}
\label{eq:opnorm-Upsilon2}
\opnorm{\bUpsilon_2} 
\le T\opnorm{\calM^{-1} \bJ_{\bPhi^v}} \fnorm{\bH}.
\end{equation}

Next, we prove \eqref{eq:sum-dFH}. Using the product rule for differentiation and \eqref{eq:sum-dF}, \eqref{eq:sum-dH}, we have
\begin{align*}
&\sum_{i=1}^n\sum_{j=1}^p \pdv{\bF^\top \be_i\be_j^\top \bH}{x_{ij}}\\
&= \sum_{i=1}^n\sum_{j=1}^p \pdv{\bF^\top \be_i}{x_{ij}}\be_j^\top \bH + \bF^\top \be_i \pdv{\be_j^\top \bH}{x_{ij}}\\
&= - (\hbK \bH^\top - \bUpsilon_1) \bH  + \bF^\top (\bF \bW^\top - \bUpsilon_2^\top) \\
&= -\hbK \bH^\top \bH + \bF^\top \bF \bW^\top - 
\underbrace{(-\bUpsilon_1 \bH + \bF^\top \bUpsilon_2^\top)}_{\bUpsilon_3}.
\end{align*}
We have
\begin{equation}\label{eq:opnorm-Upsilon3}
\opnorm{\bUpsilon_3}
\le \opnorm{\bUpsilon_1}\fnorm{\bH} + \fnorm{\bF} \opnorm{\bUpsilon_2}.
\end{equation}
We can further bound $\opnorm{\bUpsilon_1}$ and $\opnorm{\bUpsilon_2}$ using
\eqref{eq:opnorm-Upsilon1} and \eqref{eq:opnorm-Upsilon2}.

We now prove \eqref{eq:checkF-derivative-1}. Since
\[
\cbF = [\check\bu^1,\dots,\check\bu^T],
\]
we write
\begin{align*}
\sum_{i=1}^n \pdv{\be_i^\top \cbF}{x_{ij}}
= \sum_{i=1}^n \sum_{t=1}^T \pdv{\be_i^\top \check\bu^t}{x_{ij}} \be_t^\top.
\end{align*}
Substituting the derivative formula
\[
\pdv{\check\bu^t}{x_{ij}}
=
\frac{1}{\sqrt{n}} 
(\be_t^\top \otimes \bI_n)
\Bigl(
-\calM_H \bigl((\bH^\top \be_j) \otimes \be_i\bigr)
+
\calM_F \bigl((\bF^\top \be_i) \otimes \be_j\bigr)
\Bigr),
\]
we obtain
\begin{align*}
\sum_{i=1}^n \pdv{\be_i^\top \cbF}{x_{ij}}
&= \frac{1}{\sqrt{n}} \sum_{i=1}^n\sum_{t=1}^T 
\be_i^\top 
(\be_t^\top \otimes \bI_n)
\Bigl(
-\calM_H \bigl((\bH^\top \be_j) \otimes \be_i\bigr)
+ \calM_F \bigl((\bF^\top \be_i) \otimes \be_j\bigr)
\Bigr)\be_t^\top.
\end{align*}
Rewriting in Kronecker form and summing over \(t\),
\begin{align*}
\sum_{i=1}^n \pdv{\be_i^\top \cbF}{x_{ij}}
&= \frac{1}{\sqrt{n}} \sum_{i=1}^n
\Bigl[
(\bI_T \otimes \be_i^\top)
\Bigl(
-\calM_H \bigl((\bH^\top \be_j) \otimes \be_i\bigr)
+ \calM_F \bigl((\bF^\top \be_i) \otimes \be_j\bigr)
\Bigr)
\Bigr]^\top,
\end{align*}
Separating the two terms yields
\begin{align*}
\sum_{i=1}^n \pdv{\be_i^\top \cbF}{x_{ij}}
&= - \Bigl[
\underbrace{\frac{1}{\sqrt{n}} \sum_{i=1}^n
(\bI_T \otimes \be_i^\top)\calM_H (\bI_T \otimes \be_i)}_{\cbK}
\, \bH^\top \be_j
\Bigr]^\top\\
&\quad
+ \Bigl[
\underbrace{\frac{1}{\sqrt{n}} \sum_{i=1}^n
(\bI_T \otimes \be_i^\top)\calM_F
\bigl((\bF^\top \be_i)\otimes \bI_p\bigr)}_{\check\bUpsilon_1}\be_j
\Bigr]^\top\\
&= - \be_j^\top \bH \cbK^\top + \be_j^\top \check\bUpsilon_1^\top.
\end{align*}
Rewrite $\check \bUpsilon_1$ as sum over \(t\) instead of \(i\):
\begin{align*}
\check\bUpsilon_1
&= \frac{1}{\sqrt{n}} \sum_{i=1}^n
(\bI_T \otimes \be_i^\top)\calM_F
\bigl((\bF^\top \be_i)\otimes \bI_p\bigr)\\
&= \frac{1}{\sqrt{n}} \sum_{i=1}^n \sum_{t=1}^T
(\bI_T \otimes \be_i^\top)\calM_F
\bigl((\be_t\be_t^\top\bF^\top \be_i)\otimes \bI_p\bigr)\\
&= \frac{1}{\sqrt{n}} \sum_{t=1}^T
(\bI_T \otimes (\be_t^\top \bF^\top))\calM_F
\bigl(\be_t \otimes \bI_p\bigr).
\end{align*}
It follows that
\begin{equation}\label{eq:opnorm-checkUpsilon1}
\opnorm{\check\bUpsilon_1}
\le \frac{T}{\sqrt{n}} \opnorm{\calM_F} \fnorm{\bF}.
\end{equation}

\vspace{0.5em}

We now prove \eqref{eq:checkF-derivative-2}. By the product rule,
\begin{align*}
&\sum_{i=1}^n\sum_{j=1}^p \pdv{\bF^\top \be_i\be_j^\top \bX^\top \cbF}{x_{ij}}\\
=& \sum_{i=1}^n\sum_{j=1}^p 
\Bigl[
\pdv{\bF^\top \be_i}{x_{ij}}\be_j^\top \bX^\top \cbF
+ \bF^\top \be_i\be_j^\top \be_j \be_i^\top \cbF
+ \bF^\top \be_i \be_j^\top \bX^\top \pdv{\cbF}{x_{ij}}
\Bigr]\\
=& - (\hbK \bH^\top + \bUpsilon_1)\bX^\top \cbF
+ p\,\bF^\top \cbF
+ \sum_{i=1}^n\sum_{j=1}^p \bF^\top \be_i \be_j^\top \bX^\top \pdv{\cbF}{x_{ij}}.
\end{align*}

It remains to simplify the last term. Writing
\[
\cbF = [\check\bu^1,\dots,\check\bu^T],
\]
we have
\begin{align*}
\sum_{i,j} \bF^\top \be_i \be_j^\top \bX^\top \pdv{\cbF}{x_{ij}}
= \sum_{i,j,t}
\bF^\top \be_i \be_j^\top \bX^\top
\pdv{\check\bu^t}{x_{ij}} \be_t^\top.
\end{align*}
Substituting the derivative expression and regrouping terms, we obtain
\begin{align*}
\sum_{i,j} \bF^\top \be_i \be_j^\top \bX^\top \pdv{\cbF}{x_{ij}}
=
-(\bUpsilon_5^*)^\top
-
(\cbA\,\bF^\top\bF)^\top,
\end{align*}
where
\begin{align*}
\bUpsilon_5^*
&:=
\frac{1}{\sqrt n}\sum_{j=1}^p
(\bI_T\otimes \be_j^\top \bX^\top)
\calM_H\bigl((\bH^\top\be_j)\otimes \bI_n\bigr)\bF,\\
\cbA
&:=- 
\frac{1}{\sqrt n}\sum_{j=1}^p
(\bI_T\otimes \be_j^\top \bX^\top)
\calM_F(\bI_T\otimes \be_j).
\end{align*}

Combining terms yields
\begin{align*}
&\sum_{i,j} \pdv{\bF^\top \be_i\be_j^\top \bX^\top \cbF}{x_{ij}}\\
=& - \hbK \bH^\top \bX^\top \cbF
+ p\,\bF^\top \cbF
- \bF^\top \bF\,\cbA^\top
-\check\bUpsilon_5,
\end{align*}
where
\[
\check\bUpsilon_5
:= \bUpsilon_1\bX^\top \cbF + (\bUpsilon_5^*)^\top.
\]
Hence, 
\begin{align*}
	\opnorm{\check\bUpsilon_5}
	&\le \opnorm{\bUpsilon_1}\opnorm{\bX} \fnorm{\cbF} + \opnorm{\bUpsilon_5^*}\\
	&\le T \opnorm{\calM^{-1} \bJ_{\bPsi^\eta}} \fnorm{\bF} \opnorm{\bX} \fnorm{\cbF} + \opnorm{\bUpsilon_5^*}.
\end{align*}
This proves \eqref{eq:checkF-derivative-2}.

Finally, rewriting \(\bUpsilon_5^*\) by expanding \(\bH^\top \be_j\) gives
\begin{align*}
\bUpsilon_5^*
&=\frac{1}{\sqrt n}\sum_{j=1}^p
(\bI_T\otimes \be_j^\top \bX^\top)\calM_H\bigl((\bH^\top\be_j)\otimes \bI_n\bigr)\bF\\
&=\frac{1}{\sqrt n}\sum_{j=1}^p\sum_{t=1}^T
(\bI_T\otimes \be_j^\top \bX^\top)\calM_H\bigl((\be_t \be_t^\top \bH^\top \be_j)\otimes \bI_n\bigr)\bF\\
&=\frac{1}{\sqrt n}\sum_{j=1}^p\sum_{t=1}^T
(\bI_T\otimes \be_t^\top \bH^\top \be_j\be_j^\top \bX^\top)\calM_H\bigl(\be_t\otimes \bI_n\bigr)\bF\\
&=\frac{1}{\sqrt n}\sum_{t=1}^T
(\bI_T\otimes \be_t^\top \bH^\top \bX^\top)\calM_H\bigl(\be_t\otimes \bI_n\bigr)\bF.
\end{align*}
Therefore, 
\begin{align*}
\opnorm{\bUpsilon_5^*}
&\le C(T) \frac{\opnorm{\bX}}{\sqrt n}  
\opnorm{\calM_H} \fnorm{\bH} \fnorm{\bF}.
\end{align*}
\end{proof}

\section{Reduction to isotropic design and transformed derivatives}
\label{sec:reduction-isotropic}

The previous section developed the derivative identities and correction matrices
in the isotropic setting. This section explains how the same analysis extends
to general covariance. This reduction is invoked at the start of the theorem
proofs in Section~\ref{sec:proof-main}. The key point is that, after a linear
change of variables, the transformed design has i.i.d.\ standard Gaussian rows
and the transformed algorithm preserves the original trajectory up to the
natural rescaling by \(\bSigma^{1/2}\). This allows the finite-sample analysis
to be carried out in the isotropic setting without loss of generality.

Consider the original linear model
\[
\by = \bX \bb^* + \bep,
\]
where the rows of $\bX$ have covariance matrix $\bSigma$. Define
\[
\bG := \bX \bSigma^{-1/2}, 
\qquad 
\tbb^* := \bSigma^{1/2} \bb^*.
\]
Then
\[
\by = \bG \tbb^* + \bep,
\]
and the rows of $\bG$ are i.i.d.\ $N(\boldsymbol{0}, \bI_p)$. Thus, the transformed design is isotropic.

We next define a transformed algorithm $(\tbu^t,\tbb^t)$ corresponding to the original iteration $(\bu^t,\hbb^t)$ in \eqref{eq:general-iteration}. Let
\[
\tbv^t = \frac{\by - \bG \tbb^t}{\sqrt n}, 
\qquad 
\tbfeta^t = \frac{\bG^\top \tbu^t}{\sqrt n}.
\]
We initialize the transformed iteration by
\[
\tbu^0 = \bu^0, 
\qquad 
\tbb^0 = \bSigma^{1/2} \hbb^0.
\]
For $t\ge 0$, the transformed recursion is
\begin{equation}
\label{eq:transformed}
\begin{aligned}
\tbu^t 
&= \tbphi^t(\tbu^0,\ldots,\tbu^{t-1},\tbv^0,\ldots,\tbv^{t-1}), \\
\tbb^t 
&= \tbpsi^t(\tbb^0,\ldots,\tbb^{t-1},\tbfeta^0,\ldots,\tbfeta^t).
\end{aligned}
\end{equation}

We choose $\tbphi^t$ and $\tbpsi^t$ so that the transformed recursion is exactly the original recursion expressed in the new coordinates. Specifically, we set $\tbphi^t=\bphi^t$ and define
\begin{equation}
\label{eq:tbpsi}
\tbpsi^t(\tbb^0,\ldots,\tbb^{t-1},\tbfeta^0,\ldots,\tbfeta^t)
:=
\bSigma^{1/2}
\bpsi^t(
\bSigma^{-1/2}\tbb^0,\ldots,\bSigma^{-1/2}\tbb^{t-1},
\bSigma^{1/2}\tbfeta^0,\ldots,\bSigma^{1/2}\tbfeta^t).
\end{equation}

With these definitions, the transformed iterates satisfy
\[
\tbu^t = \bu^t,
\qquad
\tbb^t = \bSigma^{1/2} \hbb^t
\]
for all $t$. Indeed, for any $s$,
\[
\tbv^s
= \frac{\by - \bG \tbb^s}{\sqrt n}
= \frac{\by - \bX \hbb^s}{\sqrt n}
= \bv^s,
\]
and
\[
\tbfeta^s
= \frac{\bG^\top \tbu^s}{\sqrt n}
= \frac{\bSigma^{-1/2}\bX^\top \bu^s}{\sqrt n}
= \bSigma^{-1/2}\bfeta^s.
\]
Therefore, the transformed residual sequence coincides with the original one, while the transformed dual summary $\tbfeta^s$ is simply a rescaled version of $\bfeta^s$. Substituting these identities into \eqref{eq:transformed} and \eqref{eq:tbpsi} gives
\[
\tbu^t = \bu^t,
\qquad
\tbb^t = \bSigma^{1/2} \hbb^t,
\]
which proves the equivalence between the transformed algorithm and the original one. In the remainder of this section, we keep the notation $\bu^t$ for the dual iterates of the transformed algorithm, since $\tbu^t=\bu^t$, and we write $\tbb^t$ for the transformed primal iterates.

\subsection{Derivatives for transformed algorithm}
We next record the derivative identities for the transformed algorithm \eqref{eq:transformed}. These are the analogues of Lemma~\ref{lem:dot-b-general} for the isotropic representation. We begin by defining the Jacobian blocks of the transformed update maps, in parallel with \eqref{eq:DJ-general}:
\begin{equation}
\label{eq:DJ-general-isotropic}
\begin{aligned}
\tPhi^{u}_{t,s} 
&= \pdv{\tbphi^t(\cdot)}{\tbu^s}
= \pdv{\bphi^t(\cdot)}{\bu^s} = \Phi^u_{t,s},\\
\tPhi^{v}_{t,s} 
&= \pdv{\tbphi^t(\cdot)}{\tbv^s}
= \pdv{\bphi^t(\cdot)}{\bv^s} = \Phi^v_{t,s},
\\
\tPsi^{b}_{t,s} 
&= \pdv{\tbpsi^t(\cdot)}{\tbb^s}
= \bSigma^{1/2} \pdv{\bpsi^t(\cdot)}{\bb^s} \bSigma^{-1/2} = \bSigma^{1/2} \Psi^b_{t,s} \bSigma^{-1/2},\\
\tPsi^{\eta}_{t,s} 
&= \pdv{\tbpsi^t(\cdot)}{\tbfeta^s}
= \bSigma^{1/2} \pdv{\bpsi^t(\cdot)}{\bfeta^s} \bSigma^{1/2} = \bSigma^{1/2} \Psi^\eta_{t,s} \bSigma^{1/2},
\end{aligned}
\end{equation}
where we used $\tbphi^t(\cdot)=\bphi^t(\cdot)$, the definition of $\tbpsi^t(\cdot)$ in \eqref{eq:tbpsi}, and the chain rule.

Let \(\tbh^t=\tbb^t-\tbb^*\), where \(\tbb^*=\bSigma^{1/2}\bb^*\), and set
\[
\tbH = [\tbh^0,\tbh^1,\ldots,\tbh^{T-1}] = \bSigma^{1/2} \bH.
\]

\begin{lemma}
\label{lem:dot-b-transformed}
For the transformed iteration \eqref{eq:transformed}, we have the following derivative formulae with respect to $g_{ij}$:
\begin{align*}
\pdv{\bu^t}{g_{ij}}
&= (\be_t^\top \otimes \bI_n)
\bigl(\bI_T \otimes [\bI_n, \boldzero_{n\times p}]\bigr) \tilde\calM^{-1} 
\bigl[-\bJ_{\tbPhi^v}\bigl((\tbH^\top \be_j) \otimes \be_i\bigr) + \bJ_{\tPsi^\eta} \bigl((\bF^\top \be_i) \otimes \be_j\bigr)\bigr],\\
\pdv{\tbb^t}{g_{ij}}
&= (\be_t^\top \otimes \bI_p)
\bigl(\bI_T \otimes [\boldzero_{p\times n}, \bI_p]\bigr) \tilde\calM^{-1} 
\bigl[-\bJ_{\tbPhi^v}\bigl((\tbH^\top \be_j) \otimes \be_i\bigr) + \bJ_{\tPsi^\eta} \bigl((\bF^\top \be_i) \otimes \be_j\bigr)\bigr],
\end{align*}
where $\tilde\calM, \bJ_{\tbPhi^v}, \bJ_{\tPsi^\eta}$ satisfy the following identities:
\begin{equation}
\label{eq:Minv-J-transformed}
\begin{aligned}
\bigl(\bI_T \otimes [\bI_n, \boldzero_{n\times p}]\bigr) \tilde\calM^{-1} \bJ_{\tbPhi^v}
&= \bigl(\bI_T \otimes [\bI_n, \boldzero_{n\times p}]\bigr) \calM^{-1} \bJ_{\bPhi^v},\\
\bigl(\bI_T \otimes [\bI_n, \boldzero_{n\times p}]\bigr) \tilde\calM^{-1} \bJ_{\tPsi^\eta} 
&= \bigl(\bI_T \otimes [\bI_n, \boldzero_{n\times p}]\bigr) \calM^{-1} \bJ_{\bPsi^\eta} (\bI_T \otimes \bSigma^{1/2}),\\
\bigl(\bI_T \otimes [\boldzero_{p\times n}, \bI_p]\bigr) \tilde\calM^{-1} \bJ_{\tbPhi^v}
&= (\bI_T \otimes \bSigma^{1/2})\bigl(\bI_T \otimes [\boldzero_{p\times n}, \bI_p]\bigr) \calM^{-1} \bJ_{\bPhi^v},\\
\bigl(\bI_T \otimes [\boldzero_{p\times n}, \bI_p]\bigr) \tilde\calM^{-1} \bJ_{\tPsi^\eta}
&= (\bI_T \otimes \bSigma^{1/2})\bigl(\bI_T \otimes [\boldzero_{p\times n}, \bI_p]\bigr) \calM^{-1} \bJ_{\bPsi^\eta} (\bI_T \otimes \bSigma^{1/2}).
\end{aligned}
\end{equation}
and \(\calM\), \(\bJ_{\bPhi^v}\), and \(\bJ_{\bPsi^\eta}\) are defined in
\eqref{eq:def-M} and \eqref{eq:def-JH-JF}.
\end{lemma}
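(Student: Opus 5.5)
The plan is to obtain Lemma~\ref{lem:dot-b-transformed} in two stages. First I apply Lemma~\ref{lem:dot-b-general} verbatim to the transformed recursion \eqref{eq:transformed}. Then I relate the transformed block operators to the original ones by a block similarity.

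\textbf{First stage.} The transformed recursion has exactly the form \eqref{eq:general-iteration}, with design \(\bG\), update maps \(\tbphi^t,\tbpsi^t\), and initial values \(\tbu^0=\bu^0\), \(\tbb^0=\bSigma^{1/2}\hbb^0\); both initial values are independent of \(\bG\). Lemma~\ref{lem:dot-b-general} therefore applies with \(\bX\) replaced by \(\bG\) and the Jacobian blocks replaced by \eqref{eq:DJ-general-isotropic}. Its primal-error matrix is \(\tbH\), and its dual matrix is still \(\bF\), because \(\tbu^t=\bu^t\). This gives the two displayed derivative formulas, with \(\tilde\calM\), \(\bJ_{\tbPhi^v}\) and \(\bJ_{\tPsi^\eta}\) built from \eqref{eq:def-M}--\eqref{eq:def-JH-JF} using \(\bG\) and the tilde blocks. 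The notation \(\bJ_{\tbPhi^v}\) is the same as \(\bJ_{\bPhi^v}\), since \(\tPhi^v=\Phi^v\).

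\textbf{Second stage: the identities \eqref{eq:Minv-J-transformed}.} Let \(\bS=\mathrm{blockdiag}(\bI_n,\bSigma^{1/2})\) and \(\bD=\bI_T\otimes\bS\). Using \(\bG=\bX\bSigma^{-1/2}\) together with \eqref{eq:DJ-general-isotropic}, I check the four blocks of the middle matrix in \eqref{eq:def-M}:
\[
\frac{1}{\sqrt n}\bPhi^v(\bL\otimes\bG)=\frac{1}{\sqrt n}\bPhi^v(\bL\otimes\bX)(\bI_T\otimes\bSigma^{-1/2}),
\]
\[
\frac{1}{\sqrt n}\tbPsi^\eta(\bI_T\otimes\bG^\top)=(\bI_T\otimes\bSigma^{1/2})\,\frac{1}{\sqrt n}\bPsi^\eta(\bI_T\otimes\bX^\top),
\]
\[
\tbPsi^b=(\bI_T\otimes\bSigma^{1/2})\,\bPsi^b\,(\bI_T\otimes\bSigma^{-1/2}),
\]
and the \(\bPhi^u\) block is unchanged. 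Here I use that \(\Psi^\eta\) is sandwiched as \(\bSigma^{1/2}\Psi^\eta\bSigma^{1/2}\), and that one \(\bSigma^{1/2}\) cancels against the \(\bSigma^{-1/2}\) inside \(\bG^\top\).

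From these block relations I get \(\tilde\calM=\bD\calM\bD^{-1}\), and hence \(\tilde\calM^{-1}=\bD\calM^{-1}\bD^{-1}\). For the \(\bJ\) factors, \(\bJ_{\tbPhi^v}=\bJ_{\bPhi^v}\) is supported on the \(n\)-blocks, so \(\bD^{-1}\bJ_{\tbPhi^v}=\bJ_{\bPhi^v}\). In addition,
\[
\bJ_{\tPsi^\eta}=\frac{1}{\sqrt n}\bB_2(\bI_T\otimes\bSigma^{1/2})\bPsi^\eta(\bI_T\otimes\bSigma^{1/2}),
\]
which gives \(\bD^{-1}\bJ_{\tPsi^\eta}=\bJ_{\bPsi^\eta}(\bI_T\otimes\bSigma^{1/2})\). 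Finally, left-multiplying by the coordinate selectors, the \(\bD\) in front acts as the identity on the \(u\)-coordinates and as \(\bI_T\otimes\bSigma^{1/2}\) on the \(b\)-coordinates. This produces exactly the four identities in \eqref{eq:Minv-J-transformed}.

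The main obstacle is bookkeeping. I need to confirm that the convention-dependent almost-everywhere Jacobians of \(\tbpsi^t\) really equal the conjugated ones in \eqref{eq:DJ-general-isotropic}; this holds because \eqref{eq:tbpsi} is a composition with fixed invertible linear maps. I also need to verify the commutation of \(\bB_1,\bB_2\) with \(\bD\), namely \(\bD\bB_2=\bB_2(\bI_T\otimes\bSigma^{1/2})\) and \(\bD\bB_1=\bB_1\). I would verify these Kronecker identities carefully, block by block.
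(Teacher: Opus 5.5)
Your proof is correct, but it runs in the opposite direction from the paper's.

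\textbf{The paper's route.} The paper never treats the transformed recursion as a new instance of \eqref{eq:general-iteration}. It writes each $x_{ik}$ as a linear function of $g_{i1},\ldots,g_{ip}$ and applies the chain rule $\partial/\partial g_{ij}=\sum_k(\bSigma^{1/2})_{kj}\,\partial/\partial x_{ik}$ to the original-coordinate formula of Lemma~\ref{lem:dot-b-general}. It then absorbs the resulting factors as $(\bH^\top\bSigma^{1/2}\be_j)\otimes\be_i=(\tbH^\top\be_j)\otimes\be_i$ and $\bJ_{\bPsi^\eta}(\bI_T\otimes\bSigma^{1/2})$, and finishes by matching terms with the stated form.

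\textbf{Your route.} You apply Lemma~\ref{lem:dot-b-general} directly, with design $\bG$ and the conjugated Jacobians. You then derive \eqref{eq:Minv-J-transformed} from the similarity $\tilde\calM=\bD\calM\bD^{-1}$, where $\bD=\bI_T\otimes\mathrm{blockdiag}(\bI_n,\bSigma^{1/2})$, together with $\bD^{-1}\bJ_{\tbPhi^v}=\bJ_{\bPhi^v}$ and $\bD^{-1}\bJ_{\tPsi^\eta}=\bJ_{\bPsi^\eta}(\bI_T\otimes\bSigma^{1/2})$. These block relations check out.

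\textbf{Comparison.} The paper's route is shorter. It also needs no separate check of the Jacobian convention for $\tbpsi^t$, because it only uses linearity of the change of variables in the design entries. However, term-matching alone does not establish \eqref{eq:Minv-J-transformed} as operator identities. Comparing the two derivative formulas only tests them on the specific vectors $(\tbH^\top\be_j)\otimes\be_i$ and $(\bF^\top\be_i)\otimes\be_j$. The later computation of $\widetilde\calM_H$, $\widetilde\calM_F$, $\cbK_*$, $\cbA_*$, $\bW_*$ in \eqref{eq:K-W-A-transformed} uses them as full matrix identities. Your similarity argument supplies exactly these. The cost is the Jacobian check you already flag, which holds because \eqref{eq:tbpsi} composes $\bpsi^t$ with fixed invertible linear maps.
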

\begin{proof}[Proof of Lemma~\ref{lem:dot-b-transformed}]
\label{proof:lem:dot-b-transformed}

Since $\bG = \bX \bSigma^{-1/2}$ and $\bX = \bG \bSigma^{1/2}$, the entries of $\bG$ and $\bX$ satisfy
\begin{align*}
g_{ij}
&= \be_i^\top \bG \be_j
= \be_i^\top \bX \bSigma^{-1/2} \be_j
= \sum_{k=1}^p x_{ik}\, (\bSigma^{-1/2})_{kj},\\
x_{ij}
&= \be_i^\top \bX \be_j
= \be_i^\top \bG \bSigma^{1/2} \be_j
= \sum_{k=1}^p g_{ik}\, (\bSigma^{1/2})_{kj}.
\end{align*}
Therefore, each entry $x_{ik}$ is a linear function of $\{g_{i1},\ldots,g_{ip}\}$. Applying the chain rule and the formula of $\pdv{\bu^t}{x_{ik}}$ in Lemma~\ref{lem:dot-b-general} gives
{\small
\setlength{\jot}{1pt}
\begin{align*}
	\pdv{\bu^t}{g_{ij}}
	&= \sum_{k=1}^p \pdv{\bu^t}{x_{ik}} \cdot \pdv{x_{ik}}{g_{ij}}\\
	&= \sum_{k=1}^p \pdv{\bu^t}{x_{ik}} \cdot (\bSigma^{1/2})_{kj},\\
	&= \sum_{k=1}^p 
	(\be_t^\top \otimes \bI_n)
\bigl(\bI_T \otimes [\bI_n, \boldzero_{n\times p}]\bigr) \calM^{-1}
\bigl[-\bJ_{\bPhi^v} \bigl((\bH^\top \be_k) \otimes \be_i\bigr) + \bJ_{\bPsi^\eta} \bigl((\bF^\top \be_i) \otimes \be_k\bigr)\bigr]
\cdot (\bSigma^{1/2})_{kj}\\
&= (\be_t^\top \otimes \bI_n)
\bigl(\bI_T \otimes [\bI_n, \boldzero_{n\times p}]\bigr) \calM^{-1}
\bigl[-\bJ_{\bPhi^v} \bigl((\bH^\top \bSigma^{1/2}\be_j) \otimes \be_i\bigr) + \bJ_{\bPsi^\eta} \bigl((\bF^\top \be_i) \otimes \bSigma^{1/2}\be_j\bigr)\bigr]\\
&= (\be_t^\top \otimes \bI_n)
\bigl(\bI_T \otimes [\bI_n, \boldzero_{n\times p}]\bigr) \calM^{-1}\\
&\quad {}\times
\left[
\begin{aligned}
&-\bJ_{\bPhi^v} \bigl((\tbH^\top \be_j) \otimes \be_i\bigr)\\[-2pt]
&\quad {}+ \bJ_{\bPsi^\eta} (\bI_T \otimes \bSigma^{1/2})
\\[-2pt]
&\qquad {}\times \bigl((\bF^\top \be_i) \otimes \be_j\bigr)
\end{aligned}
\right].
\end{align*}
}

Similarly, we have by $\tbb^t = \bSigma^{1/2} \hbb^t$ that 
{\small
\setlength{\jot}{1pt}
\begin{align*}
	\pdv{\tbb^t}{g_{ij}}
	&= \sum_{k=1}^p \pdv{\tbb^t}{x_{ik}} \cdot (\bSigma^{1/2})_{kj}\\
	&= \bSigma^{1/2}\sum_{k=1}^p  \pdv{\hbb^t}{x_{ik}} \cdot (\bSigma^{1/2})_{kj}\\
&= \bSigma^{1/2} \sum_{k=1}^p 
(\be_t^\top \otimes \bI_p)
\!\bigl(\bI_T \otimes [\boldzero_{p\times n}, \bI_p]\bigr)\!\calM^{-1}
\bigl[-\bJ_{\bPhi^v} \bigl((\bH^\top \be_k) \otimes \be_i\bigr) + \bJ_{\bPsi^\eta} \bigl((\bF^\top \be_i) \otimes \be_k\bigr)\bigr]
\cdot (\bSigma^{1/2})_{kj}\\
&= \bSigma^{1/2}  
(\be_t^\top \otimes \bI_p)
\!\bigl(\bI_T \otimes [\boldzero_{p\times n}, \bI_p]\bigr)\!\calM^{-1}
\bigl[-\bJ_{\bPhi^v} \bigl((\bH^\top \bSigma^{1/2} \be_j) \otimes \be_i\bigr) + \bJ_{\bPsi^\eta} \bigl((\bF^\top \be_i) \otimes (\bSigma^{1/2} \be_j)\bigr)\bigr]\\
&=  
(\be_t^\top \otimes \bI_p)
(\bI_T \otimes \bSigma^{1/2})\!\!
\bigl(\bI_T \otimes [\boldzero_{p\times n}, \bI_p]\bigr)\!\calM^{-1}\\
&\quad {}\times
\left[
\begin{aligned}
&-\bJ_{\bPhi^v} \bigl((\tbH^\top \be_j) \otimes \be_i\bigr)\\[-2pt]
&\quad {}+ \bJ_{\bPsi^\eta} (\bI_T \otimes \bSigma^{1/2})
\\[-2pt]
&\qquad {}\times \bigl((\bF^\top \be_i) \otimes \be_j \bigr)
\end{aligned}
\right].
\end{align*}
}
This completes the proof of the lemma by matching the terms in the last lines of the above two displays with the expressions in the statement of the lemma.
\end{proof}

Let $(\bW_*, \cbA_*, \cbK_*)$ denote the analogue matrices of
$(\bW, \cbA, \cbK)$ for the transformed iteration \eqref{eq:transformed}. They
are defined in the same way as in Lemma~\ref{lem:sum-derivative}, but with
\(\bJ_{\tbPhi^v}\), \(\bJ_{\tPsi^\eta}\), and \(\tilde\calM\) in place of
\(\bJ_{\bPhi^v}\), \(\bJ_{\bPsi^\eta}\), and \(\calM\). Using the identities in
\eqref{eq:Minv-J-transformed}, we derive the corresponding expressions for
$(\bW_*, \cbA_*, \cbK_*)$:
{\small
\setlength{\jot}{1pt}
\begin{equation}
\label{eq:K-W-A-transformed}
	\begin{aligned}
		\cbK_* 
		&= 
		\frac{1}{\sqrt n}\sum_{i=1}^n
		(\bI_T\otimes \be_i^\top)\widetilde\calM_H(\bI_T\otimes \be_i)\\
		&= 
		\frac{1}{\sqrt n}\sum_{i=1}^n
		(\bI_T\otimes \be_i^\top)\calM_H(\bI_T\otimes \be_i)\\
		&= \cbK,\\
		\cbA_*
		&=
		-\frac{1}{\sqrt n}\sum_{j=1}^p
		(\bI_T\otimes \be_j^\top\bG^\top)\widetilde\calM_F(\bI_T\otimes \be_j)\\
		&=
		-\frac{1}{\sqrt n}\sum_{j=1}^p
		(\bI_T\otimes \be_j^\top\bX^\top)\calM_F 
		(\bI_T\otimes \be_j)\\
		&= \cbA,\\
		\bW_*
		&= \sum_{j=1}^p
		(\bI_T\otimes \be_j^\top)
	(\bL\otimes \bI_p)
	\bigl(\bI_T \otimes [\boldzero_{p\times n}, \bI_p]\bigr)\\
	&\quad {}\times \tilde\calM^{-1} \bJ_{\tPsi^\eta}
	(\bI_T\otimes \be_j)\\
		&= \sum_{j=1}^p
		(\bI_T\otimes \be_j^\top)
	(\bL\otimes \bI_p)
	(\bI_T\otimes \bSigma^{1/2})\\
	&\quad {}\times
	\bigl(\bI_T \otimes [\boldzero_{p\times n}, \bI_p]\bigr)
	\calM^{-1} \bJ_{\bPsi^\eta}
	(\bI_T\otimes \bSigma^{1/2})
	(\bI_T\otimes \be_j)\\
		&= \sum_{j=1}^p
		(\bI_T\otimes \be_j^\top)
	(\bL\otimes \bSigma)
	\bigl(\bI_T \otimes [\boldzero_{p\times n}, \bI_p]\bigr)\\
	&\quad {}\times \calM^{-1} \bJ_{\bPsi^\eta}
	(\bI_T\otimes \be_j).
	\end{aligned}
\end{equation}
}
The final equality is the basis contraction. The equalities of $\cbK_*$ and $\cbA_*$ use the relationships
\begin{align*}
	\widetilde\calM_H
&=
-\bI_{nT}
+
(\bL\otimes \bG)
\bigl(\bI_T\otimes[\boldzero_{p\times n},\bI_p]\bigr)
\tilde\calM^{-1}\bJ_{\tbPhi^v},\\
&=
-\bI_{nT}
+
(\bL\otimes \bG)
(\bI_T \otimes \bSigma^{1/2})
\bigl(\bI_T\otimes[\boldzero_{p\times n},\bI_p]\bigr)
\calM^{-1}\bJ_{\bPhi^v},\\
&=
-\bI_{nT}
+
(\bL\otimes \bX)
\bigl(\bI_T\otimes[\boldzero_{p\times n},\bI_p]\bigr)
\calM^{-1}\bJ_{\bPhi^v},\\
&= \calM_H,
\end{align*}
and 
\begin{align*}
	\widetilde\calM_F
	&=
	(\bL\otimes \bG)
	\bigl(\bI_T\otimes[\boldzero_{p\times n},\bI_p]\bigr)
	\tilde\calM^{-1}\bJ_{\tPsi^\eta}\\
	&=
	(\bL\otimes \bG) (\bI_T\otimes \bSigma^{1/2})
	\bigl(\bI_T\otimes[\boldzero_{p\times n},\bI_p]\bigr)
	\calM^{-1}\bJ_{\bPsi^\eta}(\bI_T\otimes \bSigma^{1/2})\\
	&=
	(\bL\otimes \bX) 
	\bigl(\bI_T\otimes[\boldzero_{p\times n},\bI_p]\bigr)
	\calM^{-1}\bJ_{\bPsi^\eta}(\bI_T\otimes \bSigma^{1/2}).
\end{align*}

In summary, for general covariance $\bSigma \ne \bI_p$, the weight matrix $\bW$ is transformed to a new matrix $\bW_*$ that depends on $\bSigma$, whereas the matrices $\cbK$ and $\cbA$ remain unchanged under the change of variables. In particular, $\hbW\defas \cbK^{-1}\cbA$ also remains unchanged under the change of variables. Therefore, once the results are proved under the isotropic design with $\bSigma=\bI_p$, the theorem follows for general covariance matrices $\bSigma$ as well. The only change is that the constants in the bounds also depend on $\kappa$ because of the transformed update formula \eqref{eq:tbpsi}.

\section{Norm and moment bounds}
\label{sec:norm-moment-bounds}
This section gathers the uniform operator-norm and moment estimates used in the approximation bounds for \(\widetilde{\calR}_t\) and \(\widehat{\calR}_t\).
These results are invoked repeatedly in Sections~\ref{sec:auxiliary-proofs} and \ref{sec:proof-ingredients}.

\begin{lemma}
\label{lem:opnorm-Minv-JH-JF}
Fix \(T\ge 1\) and suppose Assumptions~\ref{assu:X}--\ref{assu:algorithm} hold and $\bSigma = \bI_p$. Then
\begin{align*}
\bigl\|\calM^{-1}\bJ_{\bPhi^v}\bigr\|_{\op}
\vee
\bigl\|\calM^{-1}\bJ_{\bPsi^\eta}\bigr\|_{\op}
&\le
\frac{C(T,\zeta)}{\sqrt n}\,
\poly_{T-1}\!\left(\frac{\|\bX\|_{\op}}{\sqrt n}\right).
\end{align*}
Here \(\poly_k(x)\) denotes a polynomial in \(x\) of degree at most \(k\), whose coefficients depend only on \(T\) and \(\zeta\).

Moreover,
\begin{align*}
\|\calM_H\|_{\op}
\vee
\|\calM_F\|_{\op}
&\le
C(T,\zeta)\,
\poly_T\!\left(\frac{\|\bX\|_{\op}}{\sqrt n}\right).
\end{align*}
\end{lemma}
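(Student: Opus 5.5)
The plan is to exploit the block lower-triangular (in time) structure of \(\calM\) and to invert it by forward substitution rather than via a Neumann series. Write \(\calM=\bI-\calN\), where \(\calN\) collects the off-identity blocks in \eqref{eq:def-M}. Index the unknowns by time \(t\in[T]\), each block carrying an \(n\)-component and a \(p\)-component. The nonzero blocks of \(\calN\) are:
\begin{itemize}
\item \(\Phi^u_{t,s}\) for \(s<t\);
\item \(n^{-1/2}\Phi^v_{t,s'-1}\bX\) coming from \(\bPhi^v(\bL\otimes\bX)\), which is strictly lower in time because of the lag \(\bL\);
\item \(n^{-1/2}\Psi^\eta_{t,s}\bX^\top\) for \(s\le t\), including the diagonal \(s=t\);
\item \(\Psi^b_{t,s}\) for \(s<t\).
\end{itemize}
The only same-time coupling is therefore the \(u\to b\) map \(n^{-1/2}\Psi^\eta_{t,t}\bX^\top\). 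Within one time slice this block is nilpotent of order two, since there is no \(b\to u\) same-time entry. It follows that \(\calN\) is nilpotent with \(\calN^{2T}=0\), so \(\calM^{-1}=\sum_{k<2T}\calN^k\) is a finite sum.

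A crude bound on this finite sum gives degree \(2T-1\) in \(\|\bX\|_{\op}/\sqrt n\), which is too large. To get the sharper degree \(T-1\), I will solve \(\calM\bq=\bJ\br\) recursively in \(t\) and track powers. Let \(x=\|\bX\|_{\op}/\sqrt n\). Every Jacobian block has operator norm at most \(\zeta\), by Assumption~\ref{assu:algorithm} and the Lipschitz chain-rule convention. The recursion reads
\[
\dot\bu^t=\textstyle\sum_{s<t}\Phi^u_{t,s}\dot\bu^s+\sum_{s<t}\Phi^v_{t,s}\bigl(n^{-1/2}\bX\dot\bb^{s}+\text{source}\bigr),
\qquad
\dot\bb^t=\sum_{s<t}\Psi^b_{t,s}\dot\bb^s+\sum_{s\le t}\Psi^\eta_{t,s}\bigl(n^{-1/2}\bX^\top\dot\bu^s+\text{source}\bigr).
\]
Sources enter through \(\bJ_{\bPhi^v}\) or \(\bJ_{\bPsi^\eta}\), each carrying a factor \(n^{-1/2}\), and they already sit in the \(u\)- or \(b\)-component at some time. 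I will prove by induction on \(t\) the bound \(\|\dot\bu^t\|\vee\|\dot\bb^{t-1}\|\le C(t,\zeta)(1+x)^{t-1}n^{-1/2}\|\br\|\). The point is that passing from \(\bu^t\) to \(\bb^t\) costs one factor \(x\), and so does passing from \(\bb^{t-1}\) to \(\bu^t\). Each time step therefore adds one factor \(x\) to the pair \((\bu^t,\bb^{t-1})\).

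The bookkeeping needs care at the endpoints. The \(\bb^T\) component, which costs \(x^T\), is never reached because \(\bb\) only runs up to index \(T-1\) inside \(\bH\) and inside the \(\bL\)-shifted products. I will therefore check precisely that the column range of \(\bJ\) and the row blocks of \(\calM^{-1}\) we use cap the degree at \(T-1\). If this bookkeeping fails, I will settle for the bound \(\poly_{2T}\), since every later use of the lemma only needs some polynomial moment, which is controlled by Lemma~\ref{lem:moment-X}. Taking the supremum over \(\br\) then gives the first claim.

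For the second claim, \(\calM_H=-\bI+(\bL\otimes\bX)\,\mathrm{proj}_b\,\calM^{-1}\bJ_{\bPhi^v}\). Its operator norm is at most
\[
1+\|\bX\|_{\op}\,\|\calM^{-1}\bJ_{\bPhi^v}\|_{\op}\le1+\sqrt n\,x\cdot C n^{-1/2}\poly_{T-1}(x)=C\,\poly_T(x),
\]
using \(\|\bL\otimes\bX\|_{\op}\le\|\bX\|_{\op}\) and the first claim. The bound for \(\calM_F\) follows identically. The main obstacle is the exact degree count in the induction. Everything else is triangle inequalities over at most \(T^2\) blocks, each of norm at most \(\zeta\).
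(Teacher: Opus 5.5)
\paragraph{The degree-$(T-1)$ induction fails.} Your hypothesis $\|\dot\bu^t\|\vee\|\dot\bb^{t-1}\|\le C(1+x)^{t-1}n^{-1/2}\|\br\|$ already breaks at $t=2$. There, $\dot\bu^2$ contains $n^{-1/2}\Phi^v_{2,1}\bX\dot\bb^1$, and $\dot\bb^1$ contains $n^{-1/2}\Psi^\eta_{1,1}\bX^\top\dot\bu^1$. A source entering at $\dot\bu^1$ therefore reaches $\dot\bu^2$ carrying $x^2$.

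Your own sentence says that both cross-passages, $\bb^{t-1}\to\bu^t$ and $\bu^t\to\bb^t$, cost a factor $x$. Each time step therefore adds two factors, not one. Along the chain $\bu^1\to\bb^1\to\bu^2\to\cdots$, the degree of $\dot\bu^t$ is $2t-2$ and that of $\dot\bb^t$ is $2t-1$.

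Discarding $\bb^T$ does not rescue this. The lemma bounds the operator norm of the whole matrix $\calM^{-1}\bJ_{\bPhi^v}$, whose rows include the $\dot\bb^T$ block. Moreover, $\dot\bu^T$ alone already has degree $2T-2>T-1$ for $T\ge 2$.

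\paragraph{The stated bound is false.} For $T=1$ one has $\bL=0$ and $\bN^2=0$. Then $\calM^{-1}\bJ_{\bPhi^v}$ has $n$-row block $n^{-1/2}\Phi^v_{1,0}$ and $p$-row block $n^{-1}\Psi^\eta_{1,1}\bX^\top\Phi^v_{1,0}$. Take square-root ridge with $\sigma\|\tilde\by\|_2<1$, so that $\Phi^v_{1,0}=-\sigma\bI_n$ and $\Psi^\eta_{1,1}=-c_{\lambda,n}\tau\bI_p$. The $p$-row block is then $c_{\lambda,n}\tau\sigma\bX^\top/n$, with norm $c_{\lambda,n}\tau\sigma x/\sqrt n$. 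No bound of the form $C(T,\zeta)\poly_0(x)/\sqrt n$ can dominate this. The same chain shows that $\calM_H$ has degree $2T-2$, which exceeds $T$ once $T\ge 3$.

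\paragraph{Where the paper's proof goes wrong, and what to do instead.} The paper reaches degree $T-1$ only by asserting that $\calM$ has identity diagonal time-blocks, so that $\bN^T=0$. This overlooks exactly the same-time block $-n^{-1/2}\Psi^\eta_{t,t}\bX^\top$ that you identified; it is visible in \eqref{eq:def-M-explicit}. Your observation is the correct one. $\bN$ is strictly lower triangular in the interleaved ordering $(\bu^1,\bb^1,\ldots,\bu^T,\bb^T)$, hence $\bN^{2T}=0$, and generically $\bN^{2T-1}\neq 0$. No bookkeeping can deliver the stated degree.

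Drop the induction and commit to your fallback, which is the paper's Neumann-series argument with the corrected index:
\begin{itemize}
\item Write $\calM^{-1}=\sum_{k=0}^{2T-1}(-\bN)^k$.
\item Use $\|\bN\|_{\op}\le 2\zeta T(1+x)$ and $\|\bJ_{\bPhi^v}\|_{\op}\vee\|\bJ_{\bPsi^\eta}\|_{\op}\le \zeta T/\sqrt n$.
\item This gives $\|\calM^{-1}\bJ_{\bPhi^v}\|_{\op}\vee\|\calM^{-1}\bJ_{\bPsi^\eta}\|_{\op}\le C(T,\zeta)\,n^{-1/2}\poly_{2T-1}(x)$.
\item Your final display then yields $\|\calM_H\|_{\op}\vee\|\calM_F\|_{\op}\le C(T,\zeta)\poly_{2T}(x)$.
\end{itemize}
This weaker version is all the paper needs. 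Every downstream use requires only finite polynomial moments of $\|\bX\|_{\op}/\sqrt n$ (Lemma~\ref{lem:moment-X}) or its boundedness on $\Omega$, so the exact degree never enters a rate.
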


Before proving the lemma, we record the moment consequence that will be used
below. For every finite integer \(k\),
\begin{equation}
	\E[ \big\|\calM^{-1} \bJ_{\bPhi^v}\big\|_{\op}^{2k}] 
	\vee \E[ \big\|\calM^{-1} \bJ_{\bPsi^\eta}\big\|_{\op}^{2k}]
	\le \frac{1}{n^k} C(T,\gamma,\zeta,k).
\end{equation}

\begin{lemma}
\label{lem:op-bound-WAK}
Suppose Assumptions~\ref{assu:X}--\ref{assu:algorithm} hold and $\bSigma = \bI_p$. Then the following hold
\begin{align*}
\|\bW\|_{\op}
&\le
C(T,\gamma,\zeta)\,\sqrt n\,
\poly_{T-1}\!\left(\frac{\|\bX\|_{\op}}{\sqrt n}\right),\\
\|\cbK\|_{\op}
&\le
C(T,\gamma,\zeta)\,\sqrt n\,
\poly_T\!\left(\frac{\|\bX\|_{\op}}{\sqrt n}\right),\\
\|\cbA\|_{\op}
&\le
C(T,\gamma,\zeta)\,n\,
\poly_{T+1}\!\left(\frac{\|\bX\|_{\op}}{\sqrt n}\right).
\end{align*}
Here \(\poly_k(x)\) denotes a polynomial in \(x\) of degree at most \(k\), whose coefficients depend only on \(T\) and \(\zeta\).

Consequently, we have 
\begin{align*}
	\E[\opnorm{\cbK}^2] \le n C(T,\gamma,\zeta),~
\E[\opnorm{\bW}^2] \le n C(T,\gamma,\zeta),~
\text{ and } ~
\E[\opnorm{\cbA}^2] \le n^2 C(T,\gamma,\zeta).
\end{align*}

In addition, on the event \(\Omega\) defined in \eqref{eq:event-Omega}, we have 
\begin{align*}
\opnorm{\cbK}
\le 
\sqrt{n} C(T,\gamma,\zeta),
~
\opnorm{\bW}
\le 
\sqrt{n} C(T,\gamma,\zeta),
~ \text{ and } ~
\opnorm{\cbA}
\le 
n C(T,\gamma,\zeta).
\end{align*}
\end{lemma}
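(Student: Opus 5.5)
The three operator-norm bounds will follow from the explicit formulas for \(\bW\), \(\cbK\), \(\cbA\) in Lemma~\ref{lem:sum-derivative}, combined with Lemma~\ref{lem:opnorm-Minv-JH-JF}. The basic tool is a contraction inequality. For any matrix \(\bM\) with \(T\times T\) blocks of size \(m\times m\), the partial trace \(\sum_{k=1}^m(\bI_T\otimes\be_k^\top)\bM(\bI_T\otimes\be_k)\) has \((t,s)\) entry \(\trace(M_{t,s})\). Since \(|\trace(M_{t,s})|\le m\|M_{t,s}\|_{\op}\le m\|\bM\|_{\op}\), its operator norm is at most \(Tm\|\bM\|_{\op}\).

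\textbf{Bound on \(\bW\).} Apply the contraction inequality with \(m=p\) to \(\bM=(\bL\otimes\bI_p)(\bI_T\otimes[\boldzero,\bI_p])\calM^{-1}\bJ_{\bPsi^\eta}\). This gives
\[
\|\bW\|_{\op}\le Tp\,\|\calM^{-1}\bJ_{\bPsi^\eta}\|_{\op}.
\]
Inserting Lemma~\ref{lem:opnorm-Minv-JH-JF} and \(p\le\gamma n\) yields \(C(T,\gamma,\zeta)\sqrt n\,\poly_{T-1}(\|\bX\|_{\op}/\sqrt n)\).

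\textbf{Bound on \(\cbK\).} Apply it with \(m=n\) to \(\calM_H\). This gives
\[
\|\cbK\|_{\op}\le T\sqrt n\,\|\calM_H\|_{\op}\le C\sqrt n\,\poly_T(\|\bX\|_{\op}/\sqrt n).
\]

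\textbf{Bound on \(\cbA\).} The contraction \(\sum_j(\bI_T\otimes\be_j^\top\bX^\top)\calM_F(\bI_T\otimes\be_j)\) has \((t,s)\) entry \(\trace(\bX^\top(\calM_F)_{t,s})\), where \((\calM_F)_{t,s}\) is an \(n\times p\) block. I would bound this entry by
\[
\min(n,p)\,\|\bX\|_{\op}\|\calM_F\|_{\op}.
\]
Dividing by \(\sqrt n\) and using \(\|\bX\|_{\op}=\sqrt n\cdot(\|\bX\|_{\op}/\sqrt n)\) gives
\[
\|\cbA\|_{\op}\le C(T,\gamma)\,n\,(\|\bX\|_{\op}/\sqrt n)\,\poly_T(\cdot),
\]
a polynomial of degree \(T+1\), as claimed.

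\textbf{Moment bounds.} These follow by squaring the deterministic bounds and taking expectations. Each right-hand side is a polynomial in \(\|\bX\|_{\op}/\sqrt n\), so Lemma~\ref{lem:moment-X} bounds all the needed moments by constants depending on \(T,\gamma,\zeta\) (and \(\kappa\), which is absorbed because \(\bSigma=\bI_p\)).

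\textbf{Bounds on \(\Omega\).} On \(\Omega\) we have \(\|\bX\|_{\op}/\sqrt n\le 2+\sqrt\gamma\), so every polynomial factor is bounded by a constant depending on \(T,\gamma,\zeta\). This gives the last display directly.

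The main obstacle is not in this argument itself but in the input: the polynomial-degree bookkeeping in Lemma~\ref{lem:opnorm-Minv-JH-JF}, which I take as given. Within this proof, the delicate step is getting the right power of \(n\) for \(\cbA\). The \(\sqrt n\) coming from \(\|\bX\|_{\op}\) must combine with the trace dimension \(\min(n,p)\) and the \(1/\sqrt n\) prefactor to produce exactly \(n\). This works only because \(\|\calM_F\|_{\op}\) is \(O(1)\), with no additional \(1/\sqrt n\) factor, which should be verified against Lemma~\ref{lem:opnorm-Minv-JH-JF}.
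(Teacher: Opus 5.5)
Your argument is correct and is essentially the paper's proof: the paper bounds each contraction by the triangle inequality over the summed index, obtaining $p\|\calM^{-1}\bJ_{\bPsi^\eta}\|_{\op}$, $\sqrt n\|\calM_H\|_{\op}$ and $p\|\bX\|_{\op}\|\calM_F\|_{\op}/\sqrt n$, and then invokes Lemma~\ref{lem:opnorm-Minv-JH-JF}; this matches your entrywise trace bounds up to a harmless factor $T$, and your rank bound $\min(n,p)$ for $\cbA$ even avoids using $p\le\gamma n$. The only slip is cosmetic: on $\Omega$ the threshold is $\sqrt{\kappa}(2+\sqrt\gamma)$ rather than $2+\sqrt\gamma$, so the constants there may also depend on $\kappa$.
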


\begin{proof}[Proof of Lemma~\ref{lem:op-bound-WAK}]
\label{pf:op-bound-WAK}
By Lemma~\ref{lem:opnorm-Minv-JH-JF},
\begin{equation}
\label{eq:bound-MinvJ-short}
\bigl\|\calM^{-1}\bJ_{\bPhi^v}\bigr\|_{\op}
\vee
\bigl\|\calM^{-1}\bJ_{\bPsi^\eta}\bigr\|_{\op}
\le
\frac{C(T,\zeta)}{\sqrt n}\,
\poly_{T-1}\!\left(\frac{\|\bX\|_{\op}}{\sqrt n}\right),
\end{equation}
and
\begin{equation}
\label{eq:bound-MHMF-short}
\|\calM_H\|_{\op}\vee \|\calM_F\|_{\op}
\le
C(T,\zeta)\,
\poly_T\!\left(\frac{\|\bX\|_{\op}}{\sqrt n}\right).
\end{equation}

We first bound \(\bW\). By definition,
\[
\bW
=
\sum_{j=1}^p
(\bI_T\otimes \be_j^\top)
(\bL\otimes \bI_p)
\bigl(\bI_T\otimes [\boldzero_{p\times n},\bI_p]\bigr)
\calM^{-1}\bJ_{\bPsi^\eta}
(\bI_T\otimes \be_j).
\]
Using the triangle inequality, \(\|\bL\|_{\op}\le 1\), and \eqref{eq:bound-MinvJ-short},
\begin{align*}
\|\bW\|_{\op}
&\le
\sum_{j=1}^p
\left\|
(\bI_T\otimes \be_j^\top)
(\bL\otimes \bI_p)
\bigl(\bI_T\otimes [\boldzero_{p\times n},\bI_p]\bigr)
\calM^{-1}\bJ_{\bPsi^\eta}
(\bI_T\otimes \be_j)
\right\|_{\op}\\
&\le
p\,\bigl\|\calM^{-1}\bJ_{\bPsi^\eta}\bigr\|_{\op}\\
&\le
C(T,\zeta)\,\frac{p}{\sqrt n}\,
\poly_{T-1}\!\left(\frac{\|\bX\|_{\op}}{\sqrt n}\right).
\end{align*}
Using \(p/n\le \gamma\), we obtain
\[
\|\bW\|_{\op}
\le
C(T,\gamma,\zeta)\,\sqrt n\,
\poly_{T-1}\!\left(\frac{\|\bX\|_{\op}}{\sqrt n}\right).
\]

Next, for \(\cbK\),
\[
\cbK
=
\frac{1}{\sqrt n}\sum_{i=1}^n
(\bI_T\otimes \be_i^\top)\calM_H(\bI_T\otimes \be_i).
\]
Hence, by the triangle inequality and \eqref{eq:bound-MHMF-short},
\begin{align*}
\|\cbK\|_{\op}
&\le
\frac{1}{\sqrt n}\sum_{i=1}^n
\left\|(\bI_T\otimes \be_i^\top)\calM_H(\bI_T\otimes \be_i)\right\|_{\op}\\
&\le
\frac{n}{\sqrt n}\,\|\calM_H\|_{\op}\\
&\le
C(T,\zeta)\,\sqrt n\,
\poly_T\!\left(\frac{\|\bX\|_{\op}}{\sqrt n}\right),
\end{align*}
which proves the second bound.

Finally, for \(\cbA\),
\[
\cbA
=
-\frac{1}{\sqrt n}\sum_{j=1}^p
(\bI_T\otimes \be_j^\top \bX^\top)\calM_F(\bI_T\otimes \be_j).
\]
Using the triangle inequality and \eqref{eq:bound-MHMF-short},
\begin{align*}
\|\cbA\|_{\op}
&\le
\frac{1}{\sqrt n}\sum_{j=1}^p
\left\|
(\bI_T\otimes \be_j^\top \bX^\top)\calM_F(\bI_T\otimes \be_j)
\right\|_{\op}\\
&\le
\frac{p}{\sqrt n}\,\|\bX\|_{\op}\,\|\calM_F\|_{\op}\\
&\le
C(T,\zeta)\,\frac{p}{\sqrt n}\,\|\bX\|_{\op}\,
\poly_T\!\left(\frac{\|\bX\|_{\op}}{\sqrt n}\right).
\end{align*}
Using \(p/n\le \gamma\), this yields
\[
\|\cbA\|_{\op}
\le
C(T,\gamma,\zeta)\,n\,
\poly_{T+1}\!\left(\frac{\|\bX\|_{\op}}{\sqrt n}\right).
\]
This completes the proof.
\end{proof}

\begin{proof}[Proof of Lemma~\ref{lem:opnorm-Minv-JH-JF}]
\label{proof:lem:opnorm-Minv-JH-JF}
Recall that
\[
\calM
=
\bI_{T(n+p)}
+
\begin{bmatrix}
\bB_1 & \bB_2
\end{bmatrix}
\begin{bmatrix}
-\bPhi^u & \frac{1}{\sqrt n}\bPhi^v(\bL\otimes \bX)\\[1mm]
-\frac{1}{\sqrt n}\bPsi^\eta(\bI_T\otimes \bX^\top) & -\bPsi^b
\end{bmatrix}
\begin{bmatrix}
\bB_1^\top\\[2mm]
\bB_2^\top
\end{bmatrix}.
\]
Write
\[
\calM=\bI_{T(n+p)}+\bN,
\qquad
\bN:=\calM-\bI_{T(n+p)}.
\]
Since \(\calM\) is block lower triangular with identity diagonal blocks, \(\bN\)
is strictly block lower triangular. Hence \(\bN^{m}=0\) for every \(m\ge T\),
and therefore the finite Neumann series gives
\begin{equation}
\label{eq:Minv-series-poly}
\calM^{-1}=\sum_{m=0}^{T-1}(-\bN)^m,
\qquad
\|\calM^{-1}\|_{\op}\le \sum_{m=0}^{T-1}\|\bN\|_{\op}^m.
\end{equation}

Next, using
\[
\|\bB_1\|_{\op}\vee \|\bB_2\|_{\op}\le 1,
\qquad
\|\bL\otimes \bX\|_{\op}\le \|\bX\|_{\op},
\qquad
\|\bI_T\otimes \bX^\top\|_{\op}=\|\bX\|_{\op},
\]
we obtain
\begin{align*}
\|\bN\|_{\op}
&\le
\|\bPhi^u\|_{\op}
+\|\bPsi^b\|_{\op}
+\frac{\|\bX\|_{\op}}{\sqrt n}\bigl(\|\bPhi^v\|_{\op}+\|\bPsi^\eta\|_{\op}\bigr).
\end{align*}
Under Assumption~\ref{assu:algorithm}, all Jacobian blocks are bounded by \(\zeta\), and each of the block lower-triangular matrices \(\bPhi^u,\bPhi^v,\bPsi^b,\bPsi^\eta\) has at most \(T\) nonzero block columns in each row. Hence
\[
\|\bPhi^u\|_{\op}\vee \|\bPhi^v\|_{\op}\vee \|\bPsi^b\|_{\op}\vee \|\bPsi^\eta\|_{\op}
\le \zeta T,
\]
and so
\[
\|\bN\|_{\op}
\le
2\zeta T\left(1+\frac{\|\bX\|_{\op}}{\sqrt n}\right).
\]
Substituting this into \eqref{eq:Minv-series-poly} yields
\begin{align*}
\|\calM^{-1}\|_{\op}
&\le
\sum_{m=0}^{T-1}
\left[
2\zeta T\left(1+\frac{\|\bX\|_{\op}}{\sqrt n}\right)
\right]^m\\
&\le
C(T,\zeta)\,
\poly_{T-1}\!\left(\frac{\|\bX\|_{\op}}{\sqrt n}\right).
\end{align*}

Now recall that
\[
\bJ_{\bPhi^v}=\frac{1}{\sqrt n}\bB_1\bPhi^v,
\qquad
\bJ_{\bPsi^\eta}=\frac{1}{\sqrt n}\bB_2\bPsi^\eta.
\]
Therefore,
\[
\|\bJ_{\bPhi^v}\|_{\op}\vee \|\bJ_{\bPsi^\eta}\|_{\op}
\le
\frac{1}{\sqrt n}
\bigl(\|\bPhi^v\|_{\op}\vee \|\bPsi^\eta\|_{\op}\bigr)
\le
\frac{\zeta T}{\sqrt n}.
\]
Hence
\begin{align*}
\bigl\|\calM^{-1}\bJ_{\bPhi^v}\bigr\|_{\op}
\vee
\bigl\|\calM^{-1}\bJ_{\bPsi^\eta}\bigr\|_{\op}
&\le
\|\calM^{-1}\|_{\op}
\bigl(\|\bJ_{\bPhi^v}\|_{\op}\vee \|\bJ_{\bPsi^\eta}\|_{\op}\bigr)\\
&\le
\frac{C(T,\zeta)}{\sqrt n}\,
\poly_{T-1}\!\left(\frac{\|\bX\|_{\op}}{\sqrt n}\right).
\end{align*}
This proves the first bound.

Finally, by the definitions
\[
\calM_H
=
-\bI_{nT}
+
(\bL\otimes \bX)
\bigl(\bI_T\otimes [\boldzero_{p\times n},\bI_p]\bigr)\calM^{-1}\bJ_{\bPhi^v},
\]
and
\[
\calM_F
=
(\bL\otimes \bX)
\bigl(\bI_T\otimes [\boldzero_{p\times n},\bI_p]\bigr)\calM^{-1}\bJ_{\bPsi^\eta},
\]
together with
\[
\|\bL\otimes \bX\|_{\op} = \|\bX\|_{\op},
\qquad
\left\|\bI_T\otimes [\boldzero_{p\times n},\bI_p]\right\|_{\op}=1,
\]
we obtain
\begin{align*}
\|\calM_H\|_{\op}
&\le
1+\|\bX\|_{\op}\,\bigl\|\calM^{-1}\bJ_{\bPhi^v}\bigr\|_{\op}\\
&\le
1+
C(T,\zeta)\,
\frac{\|\bX\|_{\op}}{\sqrt n}\,
\poly_{T-1}\!\left(\frac{\|\bX\|_{\op}}{\sqrt n}\right),\\
\|\calM_F\|_{\op}
&\le
\|\bX\|_{\op}\,\bigl\|\calM^{-1}\bJ_{\bPsi^\eta}\bigr\|_{\op}\\
&\le
C(T,\zeta)\,
\frac{\|\bX\|_{\op}}{\sqrt n}\,
\poly_{T-1}\!\left(\frac{\|\bX\|_{\op}}{\sqrt n}\right).
\end{align*}
Since
\[
1+
\frac{\|\bX\|_{\op}}{\sqrt n}\,
\poly_{T-1}\!\left(\frac{\|\bX\|_{\op}}{\sqrt n}\right)
\le
C(T,\zeta)\,
\poly_T\!\left(\frac{\|\bX\|_{\op}}{\sqrt n}\right),
\]
the simplified common bound
\[
\|\calM_H\|_{\op}\vee \|\calM_F\|_{\op}
\le
C(T,\zeta)\,
\poly_T\!\left(\frac{\|\bX\|_{\op}}{\sqrt n}\right)
\]
follows. This completes the proof.
\end{proof}

\begin{lemma}
\label{lem:moment-H-F}
Let 
\[
\bH = [\bh^0, \bh^1, \cdots, \bh^{T-1}], 
\qquad
\bF = [\bu^1, \bu^2, \cdots, \bu^T], 
\qquad
\cbF = [\check \bu^1, \check \bu^2, \cdots, \check \bu^T].
\]
Under Assumptions~\ref{assu:X}--\ref{assu:algorithm}, we have for any finite integer $k \ge 1$,
\begin{align*}
	\E[\fnorm{\bH}^{2k}] \le C(T,\gamma,\zeta,k),  
	\quad
	\E[\fnorm{\bF}^{2k}] \le C(T,\zeta,k),
	\text{ and } \quad
	\E[\fnorm{\cbF}^{2k}] \le C(T,\gamma,\zeta,k).
\end{align*}

Let $\norm{\bA}_{\partial} := [\sum_{i=1}^n \sum_{j=1}^p \|\pdv{\bA}{x_{ij}}\|_{\rm F}^2]^{1/2}$. 
Under Assumptions~\ref{assu:X}--\ref{assu:algorithm}, we have
\begin{align*}
\E[\norm{\bH}_{\partial}^{2k}] 
\vee \E[\norm{\bF}_{\partial}^{2k}] 
\vee \E[\norm{\cbF}_{\partial}^{2k}] 
\le C(T,\gamma,\zeta,k).
\end{align*}
\end{lemma}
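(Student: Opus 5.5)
We establish the moment bounds in Lemma~\ref{lem:moment-H-F} by first proving deterministic pathwise bounds in terms of \(\lambda_X^{1/2}=\|\bX\|_{\op}/\sqrt n\) and \(\|\bep\|/\sqrt n\), and then taking expectations with Lemma~\ref{lem:moment-X}.

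\textbf{Step 1: the dual trajectory \(\bF\).} Assumption~\ref{assu:algorithm} gives \(\sup\|\bphi^t\|\le\zeta\), so \(\|\bu^t\|\le\zeta\) deterministically for every \(t\ge1\). Hence \(\fnorm{\bF}^2\le T\zeta^2\), and every moment of \(\fnorm{\bF}\) is bounded by \(C(T,\zeta,k)\).

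\textbf{Step 2: the primal trajectory \(\bH\).} Since \(\bpsi^t\) is \(\zeta\)-Lipschitz and \(\|\bpsi^t(\boldzero)\|\le\zeta\),
\[
\|\hbb^t\|\le\zeta+\zeta\Bigl(\sum_{s<t}\|\hbb^s\|+\sum_{s\le t}\|\bfeta^s\|\Bigr).
\]
Combining \(\|\bfeta^s\|\le\lambda_X^{1/2}\zeta\) with a discrete Gr\"onwall argument over the fixed horizon gives \(\max_t\|\hbb^t\|\le C(T,\zeta)(1+\lambda_X^{1/2})\). With \(\|\bb^*\|\le\zeta\), this yields \(\fnorm{\bH}\le C(T,\zeta)(1+\lambda_X^{1/2})\). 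This is the deterministic bound cited in the proofs of Lemmas~\ref{lem:deleted-moments} and~\ref{lem:loo-perturbation}. Moreover, \(\fnorm{\cbF}\le\lambda_X^{1/2}\fnorm{\bH}\). Raising both bounds to the power \(2k\) and applying Lemma~\ref{lem:moment-X} gives the first three claims with constants \(C(T,\gamma,\zeta,k)\); the bounds hold for \(\bSigma=\bI_p\) directly and pick up a dependence on \(\kappa\) in general.

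\textbf{Step 3: the derivative norms \(\|\cdot\|_\partial\).} Lemma~\ref{lem:dot-b-general} writes \(\partial\bu^t/\partial x_{ij}\) and \(\partial\hbb^t/\partial x_{ij}\) as a fixed matrix block times
\[
\bq_{ij}=-\calM^{-1}\bJ_{\bPhi^v}\bigl((\bH^\top\be_j)\otimes\be_i\bigr)+\calM^{-1}\bJ_{\bPsi^\eta}\bigl((\bF^\top\be_i)\otimes\be_j\bigr).
\]
Summing the squares over \((i,j)\), and using \(\sum_{i,j}\|(\bH^\top\be_j)\otimes\be_i\|^2=n\fnorm{\bH}^2\) and \(\sum_{i,j}\|(\bF^\top\be_i)\otimes\be_j\|^2=p\fnorm{\bF}^2\), we get
\[
\sum_{i,j}\|\bq_{ij}\|^2\le 2\|\calM^{-1}\bJ_{\bPhi^v}\|_{\op}^2\,n\fnorm{\bH}^2+2\|\calM^{-1}\bJ_{\bPsi^\eta}\|_{\op}^2\,p\fnorm{\bF}^2.
\]
Lemma~\ref{lem:opnorm-Minv-JH-JF} bounds both operator norms by \(n^{-1/2}\poly_{T-1}(\lambda_X^{1/2})\), so the factors \(n\) and \(p\le\gamma n\) cancel. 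This gives \(\norm{\bF}_\partial^2+\norm{\bH}_\partial^2\le C\,\poly(\lambda_X^{1/2})(\fnorm{\bH}^2+\fnorm{\bF}^2)\), with a factor \(T\) from summing over \(t\).

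For \(\cbF\) we argue the same way, starting from Lemma~\ref{lem:dot-check-u}. There the prefactor \(n^{-1/2}\) appears together with the operator norms of \(\calM_H\) and \(\calM_F\), which are \(\poly_T(\lambda_X^{1/2})\) by Lemma~\ref{lem:opnorm-Minv-JH-JF}. The sums over \((i,j)\) again produce \(n\fnorm{\bH}^2+p\fnorm{\bF}^2\), and these are balanced by the \(1/n\) coming from the squared prefactor.

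Finally, we substitute the deterministic bounds from Steps 1 and 2, apply Cauchy--Schwarz, and use Lemma~\ref{lem:moment-X} to control all moments of \(\poly(\lambda_X^{1/2})\).

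\textbf{Main obstacle.} The delicate point is the Jacobian convention. The Lipschitz maps are only a.e.\ differentiable, so we must rely on the modified-derivative chain rule noted after \eqref{eq:DJ-general} to justify Lemma~\ref{lem:dot-b-general} almost everywhere, and on the Jacobian blocks having operator norm at most \(\zeta\). Once that is in place, the rest is careful bookkeeping of the \(n\) and \(p\) scalings in the Kronecker sums.
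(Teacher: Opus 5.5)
Your proposal is correct and follows essentially the same route as the paper. It uses the deterministic bound $\|\bu^t\|\le\zeta$ and a Gr\"onwall bound $\max_t\|\hbb^t\|\le C(T,\zeta)(1+\|\bX\|_{\op}/\sqrt n)$, then the derivative representations of Lemmas~\ref{lem:dot-b-general} and~\ref{lem:dot-check-u} together with the operator-norm bounds of Lemma~\ref{lem:opnorm-Minv-JH-JF} and the moments in Lemma~\ref{lem:moment-X}. Your explicit Kronecker bookkeeping, in which $n\fnorm{\bH}^2$ and $p\fnorm{\bF}^2$ cancel against the $1/n$ from the squared operator norms, is exactly what the paper compresses into ``arguing exactly as before''; the $\|\bep\|/\sqrt n$ term you announce at the outset is never needed, since the bounded dual maps keep the noise out of every bound.
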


\begin{proof}[Proof of Lemma~\ref{lem:moment-H-F}]
\label{proof:moment-H-F}
We first prove the moment bounds for \(\bH,\bF,\cbF\), and then bound the derivative matrices.

By Assumption~\ref{assu:algorithm}, the maps \(\bphi^t\) are uniformly bounded by \(\zeta\). Hence, for every \(t\in[T]\),
\[
\|\bu^t\|_2 \le \zeta.
\]
Therefore
\[
\|\bF\|_{\rm F}^2
=
\sum_{t=1}^T \|\bu^t\|_2^2
\le T\zeta^2.
\]
It follows that, for every finite integer \(k\ge 1\),
\[
\E[\|\bF\|_{\rm F}^{2k}] \le (T\zeta^2)^k.
\]

We next control \(\bH\). Recall that \(\bh^t=\hbb^t-\bb^*\), so it suffices to bound \(\max_{t\le T}\|\hbb^t\|_2\). Since \(\hbb^t\) is defined recursively through \(\bpsi^t\), we use the Lipschitz property of \(\bpsi^t\).

Write the \(b\)-update abstractly as
\[
\hbb^t
=
\bpsi^t(\hbb^0,\dots,\hbb^{t-1},\bfeta^1,\dots,\bfeta^t),
\qquad
\bfeta^s=\frac{\bX^\top \bu^s}{\sqrt n}.
\]
By Assumption~\ref{assu:algorithm}, each \(\bpsi^t\) is \(\zeta\)-Lipschitz. Hence
\[
\|\hbb^t\|_2
\le
\|\bpsi^t(\mathbf 0)\|_2
+
\zeta\sum_{r=0}^{t-1}\|\hbb^r\|_2
+
\zeta\sum_{s=1}^t\|\bfeta^s\|_2.
\]
Using \(\|\bu^s\|_2\le \zeta\), we have
\[
\|\bfeta^s\|_2
=
\left\|\frac{\bX^\top \bu^s}{\sqrt n}\right\|_2
\le
\frac{\|\bX\|_{\op}}{\sqrt n}\,\|\bu^s\|_2
\le
\zeta\,\frac{\|\bX\|_{\op}}{\sqrt n}.
\]
Therefore,
\[
\|\hbb^t\|_2
\le
\|\bpsi^t(\mathbf 0)\|_2
+
\zeta\sum_{r=0}^{t-1}\|\hbb^r\|_2
+
t\zeta^2\frac{\|\bX\|_{\op}}{\sqrt n}.
\]
Since \(T\) is fixed, a discrete Grönwall argument yields
\[
\max_{0\le t\le T-1}\|\hbb^t\|_2
\le
C(T,\zeta)\left(
1+\max_{1\le t\le T}\|\bpsi^t(\mathbf 0)\|_2
+\frac{\|\bX\|_{\op}}{\sqrt n}
\right).
\]
Under Assumption~\ref{assu:algorithm}, \(\|\bpsi^t(\mathbf 0)\|_2\le \zeta\), so
\[
\max_{0\le t\le T-1}\|\hbb^t\|_2
\le
C(T,\zeta)\left(1+\frac{\|\bX\|_{\op}}{\sqrt n}\right).
\]
By Assumption~\ref{assu:X}, \(\|\bb^*\|_2\le \zeta\). Hence
\[
\max_{0\le t\le T-1}\|\bh^t\|_2
\le
C(T,\zeta)\left(1+\frac{\|\bX\|_{\op}}{\sqrt n}\right),
\]
and therefore
\[
\|\bH\|_{\rm F}^{2k}
\le
C(T,\zeta,k)\left(1+\frac{\|\bX\|_{\op}}{\sqrt n}\right)^{2k}.
\]
Taking expectation and using Assumption~\ref{assu:X} gives
\[
\E[\|\bH\|_{\rm F}^{2k}] \le C(T,\gamma,\zeta,k).
\]

It remains to control \(\cbF\). By definition,
\[
\check\bu^t
=
\frac{\bX\bh^{t-1}}{\sqrt n}.
\]
Using the bound on \(\max_{t\le T}\|\bh^t\|_2\) obtained above,
\[
\|\check\bu^t\|_2
\le
C(T,\zeta)
\frac{\|\bX\|_{\op}}{\sqrt n}
\left(1+\frac{\|\bX\|_{\op}}{\sqrt n}\right).
\]
Since \(\|\bX\|_{\op}/\sqrt n\) has bounded moments of every fixed order,
\[
\E[\|\cbF\|_{\rm F}^{2k}]\le C(T,\gamma,\zeta,k).
\]

We now turn to the derivative moment bounds.

For \(\bF\), by Lemma~\ref{lem:dot-b-general},
\begin{align*}
\pdv{\bu^t}{x_{ij}}
&=
(\be_t^\top\otimes \bI_n)
(\bI_T\otimes [\bI_n,\boldzero_{n\times p}])\calM^{-1}
\Bigl[
-\bJ_{\bPhi^v}\bigl((\bH^\top\be_j)\otimes \be_i\bigr)
+
\bJ_{\bPsi^\eta}\bigl((\bF^\top\be_i)\otimes \be_j\bigr)
\Bigr].
\end{align*}
Arguing exactly as before,
\[
\|\bF\|_{\partial}^2
\le
C(T,\gamma,\zeta)\,
\poly_{2T-2}\!\left(\frac{\|\bX\|_{\op}}{\sqrt n}\right)
\bigl(\|\bH\|_{\rm F}^2+\|\bF\|_{\rm F}^2\bigr).
\]
Raising both sides to the \(k\)-th power and using \((a+b)^k\le C(k)(a^k+b^k)\),
\[
\|\bF\|_{\partial}^{2k}
\le
C(T,\gamma,\zeta,k)\,
\poly_{(2T-2)k}\!\left(\frac{\|\bX\|_{\op}}{\sqrt n}\right)
\bigl(\|\bH\|_{\rm F}^{2k}+\|\bF\|_{\rm F}^{2k}\bigr).
\]
Taking expectation and using the moment bounds proved above yields
\[
\E[\|\bF\|_{\partial}^{2k}] \le C(T,\gamma,\zeta,k).
\]

The same argument applied to
\[
\pdv{\hbb^t}{x_{ij}}
=
(\be_t^\top\otimes \bI_p)
(\bI_T\otimes [\boldzero_{p\times n},\bI_p])\calM^{-1}
\Bigl[
-\bJ_{\bPhi^v}\bigl((\bH^\top\be_j)\otimes \be_i\bigr)
+
\bJ_{\bPsi^\eta}\bigl((\bF^\top\be_i)\otimes \be_j\bigr)
\Bigr]
\]
gives
\[
\E[\|\bH\|_{\partial}^{2k}] \le C(T,\gamma,\zeta,k).
\]

Finally, for \(\cbF\), using
\[
\pdv{\check\bu^t}{x_{ij}}
=
\frac{1}{\sqrt n}
(\be_t^\top\otimes \bI_n)
\Bigl[
-\calM_H\bigl((\bH^\top\be_j)\otimes \be_i\bigr)
+
\calM_F\bigl((\bF^\top\be_i)\otimes \be_j\bigr)
\Bigr],
\]
together with Lemma~\ref{lem:opnorm-Minv-JH-JF}, we obtain
\[
\|\cbF\|_{\partial}^2
\le
C(T,\gamma,\zeta)\,
\poly_{2T}\!\left(\frac{\|\bX\|_{\op}}{\sqrt n}\right)
\bigl(\|\bH\|_{\rm F}^2+\|\bF\|_{\rm F}^2\bigr),
\]
and hence
\[
\E[\|\cbF\|_{\partial}^{2k}] \le C(T,\gamma,\zeta,k).
\]

Combining the above estimates proves the lemma.
\end{proof}

\subsection{Properties of $\cbK$}

Recall the event \(\Omega\) is defined in 
\eqref{eq:event-Omega}. 
Under Assumptions~\ref{assu:X} and \ref{assu:regime}, we have 
\[
\P(\Omega)\ge 1-e^{-n/2}.
\]

\begin{lemma}
\label{lem:Kinv-opnorm}
On the event \(\Omega\), we have 
\[
\|\cbK^{-1}\|_{\op}\le \frac{C(T,\gamma,\zeta)}{\sqrt n}.
\]
\end{lemma}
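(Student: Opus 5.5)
We write $\cbK=-\sqrt n(\bI_T-\bN_K)$, with $\bN_K:=\bI_T+\cbK/\sqrt n$. Since $\cbK$ is lower triangular with diagonal entries exactly $-\sqrt n$, the matrix $\bN_K$ is strictly lower triangular, so $\bN_K^T=\boldzero$. The finite Neumann series then gives
\[
\cbK^{-1}=-\frac{1}{\sqrt n}\sum_{m=0}^{T-1}\bN_K^m,
\qquad
\|\cbK^{-1}\|_{\op}\le\frac{1}{\sqrt n}\sum_{m=0}^{T-1}\|\bN_K\|_{\op}^m .
\]
It therefore suffices to show $\|\bN_K\|_{\op}\le C(T,\gamma,\zeta)$ on $\Omega$. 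The triangular structure itself was recorded after Lemma~\ref{lem:sum-derivative}: it comes from the block lower triangularity of $\calM^{-1}\bJ_{\bPhi^v}$ and the forward shift by $\bL$, so only the $-\bI_{nT}$ part of $\calM_H$ reaches the diagonal.

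To bound $\bN_K$, use $\bN_K=\bI_T+\cbK/\sqrt n$ together with $\|\cbK\|_{\op}\le\sqrt n\,C(T,\gamma,\zeta)$ on $\Omega$, from Lemma~\ref{lem:op-bound-WAK}. This gives $\|\bN_K\|_{\op}\le 1+C(T,\gamma,\zeta)$. The on-$\Omega$ bound in turn comes from $\|\calM_H\|_{\op}\le C(T,\zeta)\poly_T(\|\bX\|_{\op}/\sqrt n)$ in Lemma~\ref{lem:opnorm-Minv-JH-JF}, evaluated with $\|\bX\|_{\op}/\sqrt n\le\sqrt\kappa(2+\sqrt\gamma)$. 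The lemma places the $\kappa$ dependence silently in the isotropic setting; under the reduction of Section~\ref{sec:reduction-isotropic}, $\kappa$ is either fixed or absorbed into the constant. Substituting into the series gives $\|\cbK^{-1}\|_{\op}\le T(1+C)^{T-1}/\sqrt n$.

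An alternative route expands directly: $\cbK/\sqrt n=-\bI_T+n^{-1}\sum_i(\bI_T\otimes\be_i^\top)(\bL\otimes\bX)(\cdots)\calM^{-1}\bJ_{\bPhi^v}(\bI_T\otimes\be_i)$. Each summand has operator norm at most $\|\bX\|_{\op}\|\calM^{-1}\bJ_{\bPhi^v}\|_{\op}$, which is $O(1)$ on $\Omega$, and averaging over $i$ keeps the sum $O(1)$. This reproduces the same bound.

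The only real obstacle is confirming that the diagonal of $\cbK$ is exactly $-\sqrt n$. Otherwise a small pivot could blow up the inverse, and no bound on $\|\cbK\|_{\op}$ alone would suffice. Verifying this means checking that the $t$th diagonal block of $(\bL\otimes\bX)(\bI_T\otimes[\boldzero,\bI_p])\calM^{-1}\bJ_{\bPhi^v}$ vanishes. It does: $\bL$ maps block row $t-1$ to block row $t$, and $\calM^{-1}\bJ_{\bPhi^v}$ has no $\hbb$-component at block $(t-1,t)$, since $\bPhi^v$ is lower triangular with $(\bPhi^v)_{t,s}$ nonzero only for $s\le t$. The $\hbb$-part at time $t-1$ depends only on inputs at times at most $t-1$, so the $(t,t)$ block is zero. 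Once this is checked, the Neumann argument is routine.
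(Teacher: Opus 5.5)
Your proposal is correct and is essentially the paper's argument: the paper splits $\cbK/\sqrt n$ into its diagonal part $\bDelta=-\bI_T$ and its strictly lower-triangular part $\bDelta_l$ (your $\bN_K$), bounds $\|\bDelta_l\|_{\op}$ using the on-$\Omega$ estimate $\|\cbK\|_{\op}\le C\sqrt n$ from Lemma~\ref{lem:op-bound-WAK}, and then sums the finite Neumann series exactly as you do. Your remark that the constant implicitly picks up $\kappa$, through the threshold in the definition of $\Omega$, is also accurate; the paper absorbs that dependence without comment.
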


\begin{proof}[Proof of Lemma~\ref{lem:Kinv-opnorm}]
\label{proof:lem:Kinv-opnorm}
Recall that \(\cbK\) is lower triangular with diagonal entries
\[
\cbK_{t,t}
=
-\sqrt n,
\qquad t=1,\dots,T.
\]
Thus the diagonal is deterministic and satisfies
\[
|\cbK_{t,t}|=\sqrt n,\qquad t=1,\dots,T.
\]

We now write
\[
\frac{\cbK}{\sqrt n}=\bDelta+ \bDelta_l,
\]
where
\[
\bDelta:=\diag(\delta_1,\dots,\delta_T),
\qquad
\delta_t:=\frac{\cbK_{t,t}}{\sqrt n},
\]
and \(\bDelta_l\) is the strictly lower triangular part of \(\cbK/\sqrt n\). The previous bound implies
\[
|\delta_t|=1,
\qquad t=1,\dots,T,
\]
and hence
\[
\|\bDelta^{-1}\|_{\op}
=
1.
\]

On the other hand, by Lemma~\ref{lem:op-bound-WAK}, on \(\Omega\),
\[
\|\cbK\|_{\op}\le C(T,\gamma,\zeta)\sqrt n.
\]
Therefore
\[
\left\|\frac{\cbK}{\sqrt n}\right\|_{\op}\le C(T,\gamma,\zeta).
\]
Since
\[
\frac{\cbK}{\sqrt n}=\bDelta+\bDelta_l,
\]
we get
\[
\|\bDelta_l\|_{\op}
\le
\left\|\frac{\cbK}{\sqrt n}\right\|_{\op}+\|\bDelta\|_{\op}
\le
C(T,\gamma,\zeta),
\]
where we used \(\|\bDelta\|_{\op}=1\).

Because \(\bDelta_l\) is strictly lower triangular, it is nilpotent of order at most \(T\). Hence
\[
(\bDelta+\bDelta_l)^{-1}
=
\sum_{k=0}^{T-1}(-\bDelta^{-1}\bDelta_l)^k\bDelta^{-1}.
\]
Therefore
\begin{align*}
\|\cbK^{-1}\|_{\op}
&=
\frac{1}{\sqrt n}\|(\bDelta+\bDelta_l)^{-1}\|_{\op}\\
&\le
\frac{1}{\sqrt n}
\sum_{k=0}^{T-1}
\|\bDelta^{-1}\bDelta_l\|_{\op}^k
\|\bDelta^{-1}\|_{\op}\\
&\le
\frac{1}{\sqrt n}\,
C(T,\gamma,\zeta)
\sum_{k=0}^{T-1}
C(T,\gamma,\zeta)^k.
\end{align*}
Since \(T\) is fixed, the finite sum is bounded by a constant depending only on
the parameters already fixed in the assumptions and in the event construction.
Absorbing the auxiliary constants into the generic constant, we obtain
\[
\|\cbK^{-1}\|_{\op}\le \frac{C(T,\gamma,\zeta)}{\sqrt n}.
\]
This completes the proof.
\end{proof}

\section{Auxiliary proof for the derivative formula}
\label{sec:auxiliary-proofs}
This section contains the proof of the general derivative formula from
Lemma~\ref{lem:dot-b-general}. The leave-one-out stability estimates are stated
and proved with the other preliminary lemmas in
Section~\ref{sec:preliminary-lemmas}.

\subsection{Proof of the general derivative formula}

\begin{proof}[Proof of Lemma~\ref{lem:dot-b-general}]
\label{proof:lem:dot-b-general}
Recall the general iteration in \eqref{eq:general-iteration}:
\begin{equation}
\label{eq:general-iteration-v-eta}
\boxed{
\begin{aligned}
\bu^{t}&=\bphi^{t}(\bu^0,\ldots,\bu^{t-1},
\bv^0,\ldots,\bv^{t-1}),\\
\hbb^{t}&=\bpsi^{t}(\hbb^0,\ldots,\hbb^{t-1},
\bfeta^0,\ldots,\bfeta^{t}),
\end{aligned}
}
\end{equation}
where
\[
\bv^t = \frac{\by - \bX\hbb^t}{\sqrt{n}}, 
\qquad 
\bfeta^t = \frac{\bX^\top\bu^t}{\sqrt{n}}.
\]

Using the Jacobian notation introduced in \eqref{eq:DJ-general}, define
\[
\Phi^{u}_{t,s} = \frac{\partial \bphi^t}{\partial \bu^s},
\quad
\Phi^{v}_{t,s} = \frac{\partial \bphi^t}{\partial \bv^s},
\quad
\Psi^{b}_{t,s} = \frac{\partial \bpsi^t}{\partial \hbb^s},
\quad
\Psi^{\eta}_{t,s} = \frac{\partial \bpsi^t}{\partial \bfeta^s}.
\]

For notational simplicity, let
\[
\dot\bu^t = \frac{\partial \bu^t}{\partial x_{ij}},
\qquad
\dot\bb^t = \frac{\partial \hbb^t}{\partial x_{ij}},
\qquad
\dot\bv^t = \frac{\partial \bv^t}{\partial x_{ij}},
\quad
\dot\bfeta^t = \frac{\partial \bfeta^t}{\partial x_{ij}},
\qquad
\dot\bX = \frac{\partial \bX}{\partial x_{ij}}.
\]

By definition of \(\bv^t\) and \(\bfeta^t\), and using \(\bh^t = \hbb^t - \bb^*\), we have by the product rule
\begin{align*}
\dot\bv^t
&= \frac{1}{\sqrt{n}}
\pdv{\bep - \bX(\hbb^t-\bb^*)}{x_{ij}}
= -\frac{1}{\sqrt{n}}\bigl(\dot\bX\bh^t + \bX\dot\bb^t\bigr),\\
\dot\bfeta^t
&= \frac{1}{\sqrt{n}}
(\dot\bX^\top\bu^t + \bX^\top\dot\bu^t),
\end{align*}
where \(\dot\bX = \be_i \be_j^\top\).

For \(t\ge1\), applying the chain rule to \eqref{eq:general-iteration-v-eta} gives
\begin{align*}
\dot\bu^t 
&= \sum_{s=0}^{t-1} \Phi^{u}_{t,s}\dot\bu^s
   + \sum_{s=0}^{t-1} \Phi^{v}_{t,s}\dot\bv^s \\
&= \sum_{s=0}^{t-1}\Phi^u_{t,s}\dot\bu^s
   -\sum_{s=0}^{t-1}\Phi^v_{t,s}
   \left(\frac{1}{\sqrt{n}}\dot\bX\bh^s
         +\frac{1}{\sqrt{n}}\bX\dot\bb^s\right),\\
\dot\bb^t 
&= \sum_{s=0}^{t-1} \Psi^{b}_{t,s}\dot\bb^s
   + \sum_{s=0}^{t} \Psi^{\eta}_{t,s}\dot\bfeta^s \\
&= \sum_{s=0}^{t-1}\Psi^b_{t,s}\dot\bb^s
   +\sum_{s=0}^{t}\Psi^\eta_{t,s}
   \left(\frac{1}{\sqrt{n}}\dot\bX^\top\bu^s
         +\frac{1}{\sqrt{n}}\bX^\top\dot\bu^s\right).
\end{align*}

Rearranging by collecting the terms \((\dot\bu^t, \dot\bb^t)\) on the left-hand side gives
\begin{align*}
&\dot\bu^t 
- \sum_{s=0}^{t-1}\Phi^u_{t,s}\dot\bu^s 
+ \frac{1}{\sqrt{n}} \sum_{s=0}^{t-1}\Phi^v_{t,s} \bX\dot\bb^s
= -\frac{1}{\sqrt{n}} \sum_{s=0}^{t-1}\Phi^v_{t,s} \dot\bX\bh^s, \\
&\dot\bb^t
- \sum_{s=0}^{t-1}\Psi^b_{t,s}\dot\bb^s
- \frac{1}{\sqrt{n}}\sum_{s=0}^{t}\Psi^\eta_{t,s} \bX^\top\dot\bu^s
= \frac{1}{\sqrt{n}}\sum_{s=0}^{t}\Psi^\eta_{t,s} \dot\bX^\top\bu^s.
\end{align*}

Since \(\dot\bu^0 = \bzero_n\) and \(\dot\bb^0 = \bzero_p\) (because the initialization \(\bu^0,\hbb^0\) is independent of \(\bX\)), we can remove the terms with \(s=0\) in the left-hand side summations. For the case \(t=1\), we adopt the convention that \(\sum_{s=1}^{0} = 0\). The equations therefore simplify to
\begin{align*}
&\dot\bu^t 
- \sum_{s=1}^{t-1}\Phi^u_{t,s}\dot\bu^s 
+ \frac{1}{\sqrt{n}} \sum_{s=1}^{t-1}\Phi^v_{t,s} \bX\dot\bb^s
= -\frac{1}{\sqrt{n}} \sum_{s=0}^{t-1}\Phi^v_{t,s} \dot\bX\bh^s, \\
&\dot\bb^t
- \sum_{s=1}^{t-1}\Psi^b_{t,s}\dot\bb^s
- \frac{1}{\sqrt{n}}\sum_{s=1}^{t}\Psi^\eta_{t,s} \bX^\top\dot\bu^s
= \frac{1}{\sqrt{n}}\sum_{s=0}^{t}\Psi^\eta_{t,s} \dot\bX^\top\bu^s.
\end{align*}
Assuming $\bu^0 = \bzero_n$, the summation in the right-hand side of the second equation starts from $s=1$ instead of $s=0$. 

Denote the right-hand side of the above equations by \(\bw_u^t\) and \(\bw_b^t\), respectively.

We can now stack the above equations for $\dot\bu^t$ and $\dot\bb^t$ ($t=1,\ldots,T$) into the linear system:
\begin{equation}
\label{eq:block-linear-system}
\calM 
\underbrace{
\begin{bmatrix}
\dot\bu^{1}\\ \dot\bb^{1}\\
\dot\bu^{2}\\ \dot\bb^{2}\\
\vdots\\
\dot\bu^{T}\\ \dot\bb^{T}
\end{bmatrix}
}_{\displaystyle \bvartheta \in \R^{T(n+p)}}
= 
\underbrace{\begin{bmatrix}
\bw_u^{1}\\ \bw_b^{1}\\
\bw_u^{2}\\ \bw_b^{2}\\
\vdots\\
\bw_u^{T}\\ \bw_b^{T}
\end{bmatrix}
}_{\bw \in \R^{T(n+p)}}.
\end{equation}

In the above, $\calM\in \R^{T(n+p)\times T(n+p)}$ is a block lower-triangular matrix:
{\tiny
\begin{equation}
\label{eq:def-M-explicit}
\calM=
\begin{bmatrix}
\begin{bmatrix}
\bI_n & \bzero_{n\times p}\\
-\frac{1}{\sqrt n}\Psi^\eta_{1,1}\bX^\top & \bI_p
\end{bmatrix}
& \bzero & \cdots & \bzero\\

\begin{bmatrix}
-\Phi^u_{2,1} & \frac{1}{\sqrt n}\Phi^v_{2,1}\bX\\
-\frac{1}{\sqrt n}\Psi^\eta_{2,1}\bX^\top & -\Psi^b_{2,1}
\end{bmatrix}
&
\begin{bmatrix}
\bI_n & \bzero_{n\times p}\\
-\frac{1}{\sqrt n}\Psi^\eta_{2,2}\bX^\top & \bI_p
\end{bmatrix}
& \ddots & \vdots\\

\vdots & \ddots & \ddots & \bzero\\

\begin{bmatrix}
-\Phi^u_{T,1} & \frac{1}{\sqrt n}\Phi^v_{T,1}\bX\\
-\frac{1}{\sqrt n}\Psi^\eta_{T,1}\bX^\top & -\Psi^b_{T,1}
\end{bmatrix}
& \cdots &
\begin{bmatrix}
-\Phi^u_{T,T-1} & \frac{1}{\sqrt n}\Phi^v_{T,T-1}\bX\\
-\frac{1}{\sqrt n}\Psi^\eta_{T,T-1}\bX^\top & -\Psi^b_{T,T-1}
\end{bmatrix}
&
\begin{bmatrix}
\bI_n & \bzero_{n\times p}\\
-\frac{1}{\sqrt n}\Psi^\eta_{T,T}\bX^\top & \bI_p
\end{bmatrix}
\end{bmatrix}.
\end{equation}
}
It is important to note that $\calM$ is a block lower-triangular matrix with invertible diagonal blocks. Therefore, $\calM$ is invertible, and the solution to \eqref{eq:block-linear-system} is given by
\[
\bvartheta = \calM^{-1}\bw.
\]
We now derive a more compact representation of $\calM$ and $\bw$ that will be useful for the subsequent analysis.

To simplify the notation, we introduce the matrices
{\small
\begin{equation}
\begin{aligned}
	\bPhi^u 
	&=
	\begin{bmatrix}
	\boldzero_{n\times n} &        &        & \\
	\Phi^u_{2,1} & \boldzero_{n\times n} &        & \\
	\vdots    & \ddots & \ddots & \\
	\Phi^u_{T,1} & \cdots & \Phi^u_{T,T-1} & \boldzero_{n\times n}
	\end{bmatrix} \in \R^{Tn\times Tn},
	\quad 
	\bPhi^v
	=
	\begin{bmatrix}
	\Phi^v_{1,0} &        &        & \\
	\Phi^v_{2,0} & \Phi^v_{2,1} &        & \\
	\vdots    & \ddots & \ddots & \\
	\Phi^v_{T,0} & \cdots & \Phi^v_{T,T-2} & \Phi^v_{T,T-1}
	\end{bmatrix} \in \R^{Tn\times Tn},\\
	\bPsi^b
	&=
	\begin{bmatrix}
	\boldzero_{p\times p} &        &        & \\
	\Psi^b_{2,1} & \boldzero_{p\times p} &        & \\
	\vdots     & \ddots & \ddots & \\
	\Psi^b_{T,1} & \cdots & \Psi^b_{T,T-1} & \boldzero_{p\times p}
	\end{bmatrix} \in \R^{Tp\times Tp},
	\quad
	\bPsi^\eta
	=
	\begin{bmatrix}
	 \Psi^\eta_{1,1} &        &        & \\
	\Psi^\eta_{2,1} & \Psi^\eta_{2,2} &        & \\
	\vdots     & \vdots     & \ddots & \ddots & \\
	\Psi^\eta_{T,1} & \cdots & \Psi^\eta_{T,T-1} & \Psi^\eta_{T,T}
	\end{bmatrix}\in \R^{Tp\times Tp}.
\end{aligned}
\end{equation}}

Now we can write $\calM$ in a more compact form
\begin{equation}
\calM
=
\bI_{T(n+p)}
+
\begin{bmatrix}
\bB_1 & \bB_2
\end{bmatrix}
\begin{bmatrix}
-\bPhi^u & \frac{1}{\sqrt n}\,\bPhi^v(\bL\otimes \bX)\\[2mm]
-\frac{1}{\sqrt n}\,\bPsi^\eta (\bI_T\otimes \bX^\top) & -\bPsi^b
\end{bmatrix}
\begin{bmatrix}
\bB_1^\top \\[6pt]
\bB_2^\top
\end{bmatrix},
\end{equation}
where $\bL = \sum_{t=2}^T \be_t \be_{t-1}^\top$ is the $T\times T$ lag matrix, 
and 
\begin{align*}
	\bB_1 := \bI_T \otimes
\begin{bmatrix}
\bI_n\\
\bzero_{p\times n}
\end{bmatrix}
\in\R^{T(n+p)\times Tn},
\qquad
\bB_2 := \bI_T \otimes
\begin{bmatrix}
\bzero_{n\times p}\\
\bI_p
\end{bmatrix}
\in\R^{T(n+p)\times Tp}.
\end{align*}

The compact representation \eqref{eq:def-M} preserves the block lower-triangular structure of $\calM$. Indeed, $\bPhi^u$ and $\bPsi^b$ are strictly lower triangular, $\bPsi^\eta$ is lower triangular, and $\bPhi^v(\bL\otimes \bX)$ is strictly lower triangular because multiplication by $\bL$ removes the $s=0$ column and shifts the remaining columns one step to the left. Hence \eqref{eq:def-M} has no nonzero blocks above the diagonal in the interleaved ordering.

To write $\bw$ compactly, since
\begin{align*}
\bw
&= 
\begin{bmatrix}
-\frac{1}{\sqrt n}\sum_{s=0}^{0}\Phi^v_{1,s} \dot\bX\bh^s\\
\frac{1}{\sqrt n}\sum_{s=0}^{1}\Psi^\eta_{1,s} \dot\bX^\top\bu^s\\
-\frac{1}{\sqrt n}\sum_{s=0}^{1}\Phi^v_{2,s} \dot\bX\bh^s\\
\frac{1}{\sqrt n}\sum_{s=0}^{2}\Psi^\eta_{2,s} \dot\bX^\top\bu^s\\	
\vdots\\
-\frac{1}{\sqrt n}\sum_{s=0}^{T-1}\Phi^v_{T,s} \dot\bX\bh^s\\
\frac{1}{\sqrt n}\sum_{s=0}^{T}\Psi^\eta_{T,s} \dot\bX^\top\bu^s
\end{bmatrix}\\
&= -\frac{1}{\sqrt n} \begin{bmatrix}
\sum_{s=0}^{0}\Phi^v_{1,s} \dot\bX\bh^s\\
\bzero_p\\
\sum_{s=0}^{1}\Phi^v_{2,s} \dot\bX\bh^s\\
\bzero_p\\	
\vdots\\
\sum_{s=0}^{T-1}\Phi^v_{T,s} \dot\bX\bh^s\\
\bzero_p
\end{bmatrix}
+ 
\frac{1}{\sqrt n}
\begin{bmatrix}
\bzero_n\\
\sum_{s=0}^{1}\Psi^\eta_{1,s} \dot\bX^\top\bu^s\\
\bzero_n\\
\sum_{s=0}^{2}\Psi^\eta_{2,s} \dot\bX^\top\bu^s\\	
\vdots\\
\bzero_n\\
\sum_{s=0}^{T}\Psi^\eta_{T,s} \dot\bX^\top\bu^s
\end{bmatrix}\\
&= -\frac{1}{\sqrt n}\bB_1 \bPhi^v \vec(\dot\bX\bH) 
+ \frac{1}{\sqrt n}\bB_2 \bPsi^\eta \vec(\dot\bX^\top \bF)\\
&:= - \bJ_{\bPhi^v}\vec(\dot\bX\bH) + \bJ_{\bPsi^\eta} \vec(\dot\bX^\top \bF).
\end{align*}
where 
\begin{align}
\bJ_{\bPhi^v}
 = \frac{1}{\sqrt n}\bB_1 \bPhi^v,
\quad
\bJ_{\bPsi^\eta} = \frac{1}{\sqrt n}\bB_2 \bPsi^\eta. 
\end{align}
Equivalently, $\bJ_{\bPhi^v}$ and $\bJ_{\bPsi^\eta}$ can be written as block lower-triangular matrices:
\begin{equation}
\begin{aligned}
\bJ_{\bPhi^v}
&=
\frac{1}{\sqrt n}
\begin{bmatrix}
\begin{bmatrix}\Phi^v_{1,0}\\ \bzero_{p\times n}\end{bmatrix} & & & \\
\begin{bmatrix}\Phi^v_{2,0}\\ \bzero_{p\times n}\end{bmatrix} &
\begin{bmatrix}\Phi^v_{2,1}\\ \bzero_{p\times n}\end{bmatrix} & & \\
\vdots & \ddots & \ddots & \\
\begin{bmatrix}\Phi^v_{T,0}\\ \bzero_{p\times n}\end{bmatrix} &
\cdots &
\begin{bmatrix}\Phi^v_{T,T-2}\\ \bzero_{p\times n}\end{bmatrix} &
\begin{bmatrix}\Phi^v_{T,T-1}\\ \bzero_{p\times n}\end{bmatrix}
\end{bmatrix} \in \R^{T(n+p)\times nT},
\\[6pt]
\bJ_{\bPsi^\eta}
&=
\frac{1}{\sqrt n}
\begin{bmatrix}
\begin{bmatrix}\bzero_{n\times p}\\ \Psi^\eta_{1,1}\end{bmatrix} & & & \\
\begin{bmatrix}\bzero_{n\times p}\\ \Psi^\eta_{2,1}\end{bmatrix} &
\begin{bmatrix}\bzero_{n\times p}\\ \Psi^\eta_{2,2}\end{bmatrix} & & \\
\vdots & \ddots & \ddots & \\
\begin{bmatrix}\bzero_{n\times p}\\ \Psi^\eta_{T,1}\end{bmatrix} &
\cdots &
\begin{bmatrix}\bzero_{n\times p}\\ \Psi^\eta_{T,T-1}\end{bmatrix} &
\begin{bmatrix}\bzero_{n\times p}\\ \Psi^\eta_{T,T}\end{bmatrix}
\end{bmatrix} \in \R^{T(n+p)\times pT}.
\end{aligned}
\end{equation}
Solving the linear system \eqref{eq:block-linear-system} yields
\[
\bvartheta = \calM^{-1} \bw,
\]
and 
\begin{align*}
	\dot\bu^t = (\be_t^\top \otimes [\bI_n, \boldzero_{n\times p}]) \bvartheta,
	\quad
	\dot\bb^t = (\be_t^\top \otimes [\boldzero_{p\times n}, \bI_p]) \bvartheta.
\end{align*}
This completes the proof of Lemma~\ref{lem:dot-b-general}.
\end{proof}

\section{Proof of Theorem~\ref{thm:main-universality-sqrt-ridge}}
\label{sec:universality}

This appendix proves Theorem~\ref{thm:main-universality-sqrt-ridge}. We work
at fixed \(T\) with square-root ridge, deterministic signal, deterministic
covariance, and squared test loss up to the zero-prediction centering
convention. The non-Gaussian covariance model is
\[
\bX=\bZ\bSigma_n^{1/2}
\]
and the matched Gaussian model is
\[
\bX_{\rm G}=\bG\bSigma_n^{1/2},
\]
where the deterministic covariance \(\bSigma_n\) is the same in the two models.
The proof has three ingredients: a finite Krylov representation of the
trajectory, matched-Gaussian universality for a finite list of primitive scalar
statistics, and finite-dimensional closure of the correction matrices. These
ingredients are then used to prove the true-risk, covariance-dependent, and
covariance-free parts of the theorem.

Throughout this section, all statements are for the original covariance
coordinates. We consider the square-root ridge problem
\[
\min_{\bb\in\R^p} \|\by-\bX\bb\|_2 + \frac{\lambda}{2}\|\bb\|_2^2,
\qquad
\lambda>0,
\]
and the Chambolle--Pock iteration with fixed parameters
\((\sigma,\tau,\theta)\in(0,\infty)^2\times[0,1]\) through
\(\sigma_n=\sigma/\sqrt n\) and \(\tau_n=\tau/\sqrt n\). In the square-root case the
dual update map is
\[
\bphi(\bu)=\frac{\bu}{\max\{1,\|\bu\|_2\}},
\]
and in the ridge case the primal map is linear,
\[
\bpsi_n(\bb)=\frac{1}{1+\tau_n\lambda}\,\bb.
\]
We write
\[
\begin{aligned}
c_{\lambda,n}
&:=\frac{1}{1+\tau_n\lambda}
=\frac{1}{1+\tau\lambda/\sqrt n},
\qquad 
\tilde\by
:=\frac{\by}{\sqrt n},\\
\bA_n
&:=\frac{\bX\bX^\top}{n},
\qquad
\bB_n:=\frac{\bX\bSigma_n\bX^\top}{n},
\qquad
\bS_n:=\frac{\bX^\top\bX}{n}.
\end{aligned}
\]

\subsection{Trajectory representation}

We begin by showing that the square-root ridge trajectory stays in a finite
Krylov space generated by \(\bA_n\) and \(\tilde\by\).

For \(t\ge 0\), define scalar-valued functions \(q_t^{(n)},r_t^{(n)},b_t^{(n)}\)
recursively as follows. Set
\[
q_0^{(n)}(x)\equiv 1,
\qquad
b_0^{(n)}(x)\equiv 0.
\]
For \(t=1\), define
\[
a_1^{(n)}(x):=-\sigma q_0^{(n)}(x)\equiv -\sigma,
\qquad
\alpha_1^{(n)}
:=
\frac{1}{\max\left\{1,\left(\tilde\by^\top (a_1^{(n)}(\bA_n))^2\tilde\by\right)^{1/2}\right\}},
\]
and set
\[
r_1^{(n)}:=\alpha_1^{(n)} a_1^{(n)},
\qquad
b_1^{(n)}:=c_{\lambda,n} b_0^{(n)}-c_{\lambda,n}\tau r_1^{(n)},
\qquad
q_1^{(n)}(x):=1-x b_1^{(n)}(x).
\]
For \(t\ge 2\), having defined \(q_{t-1}^{(n)},q_{t-2}^{(n)},r_{t-1}^{(n)}\) and
\(b_{t-1}^{(n)}\), let
\[
a_t^{(n)}(x)
:=
r_{t-1}^{(n)}(x)
-\sigma\bigl((1+\theta)q_{t-1}^{(n)}(x)-\theta q_{t-2}^{(n)}(x)\bigr),
\]
\[
\alpha_t^{(n)}
:=
\frac{1}{\max\left\{1,\left(\tilde\by^\top (a_t^{(n)}(\bA_n))^2\tilde\by\right)^{1/2}\right\}},
\]
and set
\[
r_t^{(n)}:=\alpha_t^{(n)} a_t^{(n)},
\qquad
b_t^{(n)}:=c_{\lambda,n} b_{t-1}^{(n)}-c_{\lambda,n}\tau r_t^{(n)},
\qquad
q_t^{(n)}(x):=1-x b_t^{(n)}(x).
\]

\begin{lemma}
\label{lem:universality-krylov}
For every fixed \(t\ge 0\), the iterates of the square-root ridge
Chambolle--Pock recursion satisfy
\[
\bu^t = r_t^{(n)}(\bA_n)\tilde\by,
\qquad
\bv^t = q_t^{(n)}(\bA_n)\tilde\by,
\qquad
\hbb^t = \frac{1}{\sqrt n}\bX^\top b_t^{(n)}(\bA_n)\tilde\by.
\]
In particular, all three iterates belong to the Krylov space generated by
\(\bA_n\) and \(\tilde\by\).
\end{lemma}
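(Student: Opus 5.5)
The plan is a strong induction on \(t\), proving all three identities together and using only the compact recursion \eqref{eq:CP-update} specialized to square-root ridge. Two elementary facts are used throughout. The first is the push-through identity \(\bX f(\bX^\top\bX/n)=f(\bA_n)\bX\) for polynomials \(f\), and in fact \(\bX\bX^\top g(\bA_n)/n=\bA_n g(\bA_n)\). The second is that every \(r_t^{(n)},q_t^{(n)},b_t^{(n)}\) is a polynomial in \(x\), because the scalars \(\alpha_t^{(n)}\) and \(c_{\lambda,n}\) are data-dependent constants and do not depend on \(x\). As a result, all matrices of the form \(f(\bA_n)\) commute with one another.

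The base case \(t=0\) is immediate. We have \(\bu^0=\boldzero=r_0\tilde\by\) with the convention \(r_0^{(n)}\equiv0\), and \(\hbb^0=\boldzero\) since \(b_0^{(n)}\equiv0\). Also \(\bv^0=\by/\sqrt n=\tilde\by=q_0^{(n)}(\bA_n)\tilde\by\).

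For the inductive step, suppose the claims hold up to \(t-1\).

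\textbf{Dual update.} In \eqref{eq:CP-update}, the argument of \(\bphi\) is
\[
\bu^{t-1}-\sigma\bigl((1+\theta)\bv^{t-1}-\theta\bv^{t-2}\bigr)
\]
for \(t\ge2\), and \(\bu^0-\sigma\bv^0\) for \(t=1\). By the inductive hypothesis this equals \(a_t^{(n)}(\bA_n)\tilde\by\). One point needs care here: for square-root loss, the entry for \(\prox[\sigma_n f_{\by}^*]\) in Table~\ref{tab:loss-function} is applied to \(\bu-\sigma_n\by\). I will check that after the residual-form rewriting, with \(\sigma_n\by=\sigma\tilde\by\) and the \(\by\) contribution absorbed into \(\bv\), the map \(\bphi\) acts on exactly this vector. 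That matches the definition of \(a_t^{(n)}\) containing \(q\)'s and not an extra \(\tilde\by\).

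The projection \(\bz\mapsto\bz/\max\{1,\|\bz\|_2\}\) multiplies its argument by the scalar \(1/\max\{1,\|\bz\|\}\). Since \(a_t^{(n)}(\bA_n)\) is symmetric,
\[
\|a_t^{(n)}(\bA_n)\tilde\by\|_2^2=\tilde\by^\top a_t^{(n)}(\bA_n)^2\tilde\by,
\]
so this scalar is exactly \(\alpha_t^{(n)}\). Hence \(\bu^t=r_t^{(n)}(\bA_n)\tilde\by\).

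\textbf{Primal update.} By Table~\ref{tab:penalty-function},
\[
\hbb^t=c_{\lambda,n}\bigl(\hbb^{t-1}-\tau\bfeta^t\bigr),
\qquad
\bfeta^t=\frac{\bX^\top\bu^t}{\sqrt n}=\frac{\bX^\top r_t^{(n)}(\bA_n)\tilde\by}{\sqrt n}.
\]
Substituting the inductive form of \(\hbb^{t-1}\) gives
\[
\hbb^t=\frac{\bX^\top}{\sqrt n}\bigl(c_{\lambda,n}b_{t-1}^{(n)}-c_{\lambda,n}\tau r_t^{(n)}\bigr)(\bA_n)\tilde\by
=\frac{\bX^\top b_t^{(n)}(\bA_n)\tilde\by}{\sqrt n}.
\]

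\textbf{Residual.} Then
\[
\bv^t=\tilde\by-\frac{\bX\hbb^t}{\sqrt n}=\tilde\by-\bA_n b_t^{(n)}(\bA_n)\tilde\by=q_t^{(n)}(\bA_n)\tilde\by.
\]

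The Krylov membership follows because each iterate is a polynomial in \(\bA_n\) applied to \(\tilde\by\); for \(\hbb^t\), this is after the fixed map \(\bX^\top/\sqrt n\). The only real obstacle is the bookkeeping step flagged above, namely confirming that the \(-\sigma_n\by\) shift in the dual prox is consistent with the \(t=1\) convention \(a_1^{(n)}=-\sigma q_0^{(n)}\). I would resolve it by directly expanding \eqref{eq:CP-1} at \(t=1\) with \(\bu^0=\boldzero\) and \(\hbb^0=\boldzero\). Everything else is routine algebra.
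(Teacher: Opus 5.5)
Your proof is correct and follows the paper's argument essentially verbatim: induction on \(t\), identifying the projection argument as \(a_t^{(n)}(\bA_n)\tilde\by\) whose projection scalar is \(\alpha_t^{(n)}\), then pushing through the linear ridge update and the identity \(\bX\bX^\top/n=\bA_n\). The bookkeeping point you flag resolves immediately, since \(\sigma_n(\by-\bX\bar{\bb}^{t-1})=\sigma\bigl((1+\theta)\bv^{t-1}-\theta\bv^{t-2}\bigr)\) (and equals \(\sigma\bv^0\) at \(t=1\)), which is exactly how the residual form \eqref{eq:CP-update} is obtained; your convention \(r_0^{(n)}\equiv 0\) then lets the \(t=1\) case, which the paper treats separately, fit the general step.
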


\begin{proof}[Proof of Lemma~\ref{lem:universality-krylov}]
\label{proof:lem:universality-krylov}
We argue by induction on \(t\). Since \(\hbb^0=\bzero\), we have
\[
\bv^0=\frac{\by-\bX\hbb^0}{\sqrt n}=\tilde\by=q_0^{(n)}(\bA_n)\tilde\by,
\qquad
\hbb^0=\frac1{\sqrt n}\bX^\top b_0^{(n)}(\bA_n)\tilde\by.
\]
For \(t=1\),
\[
\bu^1
=
\bphi(-\sigma \bv^0)
=
\frac{-\sigma\tilde\by}{\max\{1,\sigma\|\tilde\by\|_2\}}
=
r_1^{(n)}(\bA_n)\tilde\by.
\]
Since \(\hbb^1=\bpsi_n(-\tau\bfeta^1)= -c_{\lambda,n}\tau \bfeta^1\) and
\(\bfeta^1=\bX^\top\bu^1/\sqrt n\),
\[
\hbb^1
=
-\frac{c_{\lambda,n}\tau}{\sqrt n}\bX^\top r_1^{(n)}(\bA_n)\tilde\by
=
\frac1{\sqrt n}\bX^\top b_1^{(n)}(\bA_n)\tilde\by.
\]
Consequently,
\[
\bv^1
=
\tilde\by-\frac1{\sqrt n}\bX\hbb^1
=
\tilde\by-\frac1n \bX\bX^\top b_1^{(n)}(\bA_n)\tilde\by
=
q_1^{(n)}(\bA_n)\tilde\by.
\]

Now assume the claim holds up to time \(t-1\), with \(t\ge 2\). Then
\[
\bu^{t}
=
\bphi\!\left(\bu^{t-1}-\sigma\bigl((1+\theta)\bv^{t-1}-\theta\bv^{t-2}\bigr)\right).
\]
By the induction hypothesis, the argument of \(\bphi\) equals
\[
\left(
r_{t-1}^{(n)}(\bA_n)
-\sigma\bigl((1+\theta)q_{t-1}^{(n)}(\bA_n)-\theta q_{t-2}^{(n)}(\bA_n)\bigr)
\right)\tilde\by
=
a_t^{(n)}(\bA_n)\tilde\by.
\]
Since \(\bphi(\bz)=\bz/\max\{1,\|\bz\|_2\}\), we obtain
\[
\bu^t = \alpha_t^{(n)} a_t^{(n)}(\bA_n)\tilde\by = r_t^{(n)}(\bA_n)\tilde\by.
\]
Next,
\[
\begin{aligned}
\hbb^t
&=
c_{\lambda,n}\left(\hbb^{t-1}-\tau\frac{\bX^\top\bu^t}{\sqrt n}\right)
=
\frac{c_{\lambda,n}}{\sqrt n}\bX^\top
\bigl(b_{t-1}^{(n)}(\bA_n)-\tau r_t^{(n)}(\bA_n)\bigr)\tilde\by\\
&=
\frac1{\sqrt n}\bX^\top b_t^{(n)}(\bA_n)\tilde\by.
\end{aligned}
\]
Finally,
\[
\bv^t
=
\tilde\by-\frac1{\sqrt n}\bX\hbb^t
=
\tilde\by-\frac1n \bX\bX^\top b_t^{(n)}(\bA_n)\tilde\by
=
q_t^{(n)}(\bA_n)\tilde\by.
\]
This completes the induction.
\end{proof}

\subsection{Admissible primitive scalars}

The proof reduces all fixed-time quantities to finitely many scalar polynomial
functionals. We call each such statistic a \emph{primitive scalar}. An
admissible primitive scalar is one of the finite-degree statistics listed
below, or a finite linear combination of them; a primitive vector is a finite
vector of primitive scalars. The symbols \(\xi_{n,j}\) and
\(\zeta_{n,\ell}\) denote primitive scalars, while \(\vartheta_n\) denotes a
primitive vector. Once a quantity is a continuous function of finitely many
admissible primitive scalars, matched-Gaussian universality follows from
convergence of that finite vector.

\begin{assumption}[Primitive comparison setup]
\label{assu:primitive-comparison}
Let \(\bX=\bZ\bSigma_n^{1/2}\), where the entries of \(\bZ\) are iid with mean
\(0\), variance \(1\), and uniformly bounded sub-Gaussian norm. Let
\(\bX_{\rm G}=\bG\bSigma_n^{1/2}\), where \(\bG\) has iid \(N(0,1)\) entries.
Both designs use the same deterministic covariance matrix \(\bSigma_n\), the
same deterministic signal \(\bb^*=\bb_n^*\), and the same noise vector \(\bep\).
The noise variables are iid, independent of the designs, with mean \(0\),
variance \(\sigma_\ep^2\), and finite fourth moment. Moreover,
\[
p/n\to\gamma_0\in(0,\infty),\qquad
\|\bSigma_n\|_{\op}\vee\|\bSigma_n^{-1}\|_{\op}\le C,\qquad
\|\bb^*\|_2\le C.
\]
For every fixed \(k\ge0\), the two deterministic quantities
\[
\frac1p\trace(\bSigma_n^k),
\qquad
\bb^{*\top}\bSigma_n^k\bb^*
\]
have limits. The second convergence is the usual signal-alignment condition for
\(\bb^*\) relative to the eigenspaces of \(\bSigma_n\).
\end{assumption}

Define
\[
\bA_n=\frac{\bX\bX^\top}{n},\qquad
\bS_n=\frac{\bX^\top\bX}{n},\qquad
\bR_n=\frac{\bX^\top\tilde\by\tilde\by^\top\bX}{n}.
\]

Let \(W(\bS_n,\bSigma_n)\) denote a product of finitely many factors, each equal
to \(\bS_n\) or \(\bSigma_n\). A primitive scalar is \emph{admissible} if it is
a finite linear combination of scalars of the forms
\[
\begin{aligned}
&n^{-1/2},\quad n^{-1},\quad
\frac1n\trace(W(\bS_n,\bSigma_n)),\quad
\bb^{*\top}W(\bS_n,\bSigma_n)\bb^*,\\
&\tilde\by^\top \pi(\bA_n)\tilde\by,\quad
\frac1n\trace\!\left(
\bM_n W_1(\bS_n,\bSigma_n)\bR_n W_2(\bS_n,\bSigma_n)
\right).
\end{aligned}
\]
where \(W,W_1,W_2\) denote such finite products, \(\pi\) is a finite-degree
polynomial, \(\bM_n\in\{\bI_p,\bSigma_n\}\), and
\(\tilde\by=(\bX\bb^*+\bep)/\sqrt n\). For example,
\(n^{-1}\trace(\bS_n\bSigma_n\bS_n)\) is admissible.
The matched-Gaussian version of an admissible primitive scalar is obtained by
replacing \(\bX,\bA_n,\bS_n,\bR_n,\tilde\by\) by the corresponding objects built
from \(\bX_{\rm G}\), while the deterministic scalars \(n^{-1/2}\) and
\(n^{-1}\) are unchanged. 
We include \(n^{-1/2}\) explicitly because the finite-\(n\) ridge proximal
factor is
\(
c_{\lambda,n}=\frac{1}{1+\tau\lambda n^{-1/2}}.
\)

The next lemma states that any fixed finite list of such
statistics has the same asymptotic value in the two matched designs.

\begin{lemma}[Primitive universality for the covariance model]
\label{lem:universality-primitive-moments}
Under Assumption~\ref{assu:primitive-comparison}, for every fixed finite
collection of admissible primitive scalars \(\{\xi_{n,j}\}_{j=1}^m\) and their
matched-Gaussian counterparts \(\{\xi_{n,j}^{\rm G}\}_{j=1}^m\),
\[
\max_{1\le j\le m}|\xi_{n,j}-\xi_{n,j}^{\rm G}|\limp 0.
\]
Moreover, each such finite collection has a deterministic limit depending only
on \(\gamma_0\), \(\sigma_\ep^2\), the limiting spectral moments of
\(\bSigma_n\), and the limiting signal-alignment moments.
\end{lemma}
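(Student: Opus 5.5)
The plan is to handle each of the four families of admissible primitive scalars separately. For each family I would show two things: the non-Gaussian and Gaussian versions each concentrate around their expectations, and the two expectations agree up to \(o(1)\). Since only finitely many scalars are involved, a union bound then gives the maximum statement.

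\textbf{Deterministic and pure-trace scalars.} The scalars \(n^{-1/2}\) and \(n^{-1}\) are identical in the two models. For \(n^{-1}\trace(W(\bS_n,\bSigma_n))\) and \(\bb^{*\top}W(\bS_n,\bSigma_n)\bb^*\), I would use the moment method for sample covariance matrices with general population covariance. With \(\bS_n=\bSigma_n^{1/2}\bZ^\top\bZ\bSigma_n^{1/2}/n\), expanding a word \(W\) gives a sum over index paths. The leading contribution comes from noncrossing (pair) partitions, and it depends only on the second moments of the entries of \(\bZ\) together with \(p^{-1}\trace(\bSigma_n^k)\), or \(\bb^{*\top}\bSigma_n^k\bb^*\) in the quadratic-form case. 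Partitions with higher-multiplicity blocks are \(O(n^{-1})\) because the sub-Gaussian moments are uniformly bounded. This gives matching expectations with identical deterministic limits, namely free multiplicative convolutions of Marchenko--Pastur with the limit spectral law of \(\bSigma_n\). Variance bounds of order \(O(n^{-1})\) (traces) and \(O(n^{-1})\) (quadratic forms with \(\|\bb^*\|\le C\)) would come from the Efron--Stein inequality applied row by row, using \(\opnorm{\bS_n}\) moment bounds. These bounds hold for sub-Gaussian rows by the same tail estimate as in Lemma~\ref{lem:moment-X}.

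\textbf{Scalars involving \(\tilde\by\).} For \(\tilde\by^\top\pi(\bA_n)\tilde\by\), I would expand \(\tilde\by=(\bX\bb^*+\bep)/\sqrt n\). The cross terms \(n^{-1}\bep^\top\bX\,\pi(\bS_n)\bb^*\) and their analogues have mean zero given \(\bX\), and conditional variance \(O(\sigma_\ep^2/n)\), so they vanish. The pure-noise term \(n^{-1}\bep^\top\pi(\bA_n)\bep\) concentrates around \(\sigma_\ep^2 n^{-1}\trace\pi(\bA_n)\) by the Hanson--Wright-type variance bound, which needs only the finite fourth moment of the noise. The signal term becomes \(\bb^{*\top}\bS_n\pi(\bS_n)\bb^*\), using \(\bX^\top\pi(\bA_n)\bX/n=\bS_n\pi(\bS_n)\). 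Both remaining terms are of the previous type.

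For the \(\bR_n\)-terms, \(\bR_n=\bX^\top\tilde\by\tilde\by^\top\bX/n\) has rank one. The trace therefore factors as \(n^{-1}\,\tilde\by^\top\bX W_2\bM_nW_1\bX^\top\tilde\by\cdot n^{-1}\), and it reduces in the same way to quadratic forms in \(\bb^*\) and \(\bep\) with words in \(\bS_n,\bSigma_n\). Because of the extra \(n^{-1}\) factor, these terms are themselves \(O_P(n^{-1})\) and tend to zero in both models.

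\textbf{Main obstacle.} The hardest step is the moment-method universality for quadratic forms \(\bb^{*\top}W(\bS_n,\bSigma_n)\bb^*\) with mixed words. Here \(\bb^*\) is deterministic but not delocalized, so the fourth-cumulant contributions of \(\bZ\) must be shown to be lower order. I would first try a Lindeberg replacement, swapping the entries \(z_{ik}\) for \(g_{ik}\) one at a time. The first- and second-order terms match, and the third-order remainder should be bounded by \(np\cdot n^{-3/2}\cdot\) polynomial moments of \(\opnorm{\bS_n}\), which is \(O(n^{-1/2})\). This requires derivative bounds on the polynomial functional, and those are available because every word has finite degree.

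Together, these steps give convergence of both versions to the same deterministic limit, and therefore the claimed maximum convergence in probability.
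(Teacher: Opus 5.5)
Your overall route is the paper's. You expand $\tilde\by=(\bX\bb^*+\bep)/\sqrt n$, remove the signal--noise cross terms and the centered noise quadratic forms by conditional variance bounds, and reduce everything to fixed-degree design functionals $\frac1n\trace W(\bS_n,\bSigma_n)$ and $\bb^{*\top}W(\bS_n,\bSigma_n)\bb^*$. Universality of these comes from a pairing/moment expansion or a Lindeberg swap. Your direct bound $\frac1n\trace(\bM_nW_1\bR_nW_2)=n^{-2}\,\tilde\by^\top\bX W_2\bM_nW_1\bX^\top\tilde\by=O_{\P}(n^{-1})$ is correct, and it disposes of the $\bR_n$-scalars more cleanly than the paper's expansion.

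However, the estimates you give for the quadratic-form class do not close as written, and this is the step you rightly call the crux.

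\emph{Lindeberg count.} The count $np\cdot n^{-3/2}$ is of order $n^{1/2}$, not $n^{-1/2}$. You need per-entry third-order bounds that are $o(n^{-2})$ on average. Operator-norm bounds alone cannot supply this: they give only $|\bx_i^\top M\bb^*|\lesssim\sqrt p$ and $|\bx_i^\top M\bx_i|\lesssim p$. So a third-derivative term such as $n^{-3}(\bb^{*\top}M_1\bx_i)(\bx_i^\top M_2\bx_i)(\be_k^\top M_3\bb^*)$ is bounded only by $n^{-3/2}$.

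\emph{Efron--Stein step.} The same defect affects your row-wise variance bound. Replacing row $i$ changes $\bb^{*\top}\bS_n^K\bb^*$ by terms $n^{-1}(\bb^{*\top}M\bx_i)(\bx_i^\top M'\bb^*)$. Operator-norm bounds control these only by $p/n$, which gives a variance bound of order $n$ rather than $n^{-1}$. Your trace case is fine, since there the perturbation has rank two.

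\emph{Missing ingredient.} You need the decoupling $\bS_n=\bS_n^{(i)}+n^{-1}\bx_i\bx_i^\top$, with $\bS_n^{(i)}$ independent of $\bx_i$. This gives $\bx_i^\top M\bb^*=O_{\P}(1)$ uniformly in $i$, and the factors $n^{-1}\bx_i^\top M'\bx_i$ it generates are $O(1)$. Combined with $\sum_k|\be_k^\top M\bb^*|\le\sqrt p\,\|M\bb^*\|_2$, it makes each row change $O(n^{-1})$ and the Lindeberg remainder $O(n^{-1/2})$.

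\emph{Alternative.} You can instead stay with the moment expansion, which is what the paper invokes. A higher-cumulant block forces row indices to coincide and costs a factor $n^{-1}$, while the $\bb^*$-weighted column sums stay bounded, e.g.\ $\sum_k(\be_k^\top\bSigma_n^{1/2}\bb^*)^4\le\|\bSigma_n^{1/2}\bb^*\|_2^4$. Expanding second moments the same way yields the variance bound without Efron--Stein.
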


\begin{proof}[Proof of Lemma~\ref{lem:universality-primitive-moments}]
\label{proof:lem:universality-primitive-moments}
We first prove one representative case. Take
\[
\xi_n=\tilde\by^\top \bA_n^2\tilde\by,
\qquad
\xi_n^{\rm G}=(\tilde\by^{\rm G})^\top(\bA_n^{\rm G})^2\tilde\by^{\rm G}.
\]
Since \(\tilde\by=(\bX\bb^*+\bep)/\sqrt n\),
\[
\xi_n
=
\bb^{*\top}\bS_n^3\bb^*
+\frac{2}{n}\bep^\top\bA_n^2\bX\bb^*
+\frac1n\bep^\top\bA_n^2\bep .
\]
The mixed term is negligible. Indeed, conditioning on \(\bX\), it has mean zero
and variance bounded by
\[
\frac{C}{n^2}\|\bA_n^2\bX\bb^*\|_2^2
\le
\frac{C}{n^2}\|\bA_n\|_{\op}^4\|\bX\bb^*\|_2^2
=O_{\P}(1/n),
\]
because \(\|\bA_n\|_{\op}=O_{\P}(1)\) and
\(\|\bX\bb^*\|_2^2=n\,\bb^{*\top}\bS_n\bb^*=O_{\P}(n)\). For the noise
quadratic form,
\[
\frac1n\bep^\top\bA_n^2\bep
=
\sigma_\ep^2\frac1n\trace(\bA_n^2)+o_{\P}(1)
=
\sigma_\ep^2\frac1n\trace(\bS_n^2)+o_{\P}(1).
\]
Thus
\[
\xi_n
=
\bb^{*\top}\bS_n^3\bb^*
+\sigma_\ep^2\frac1n\trace(\bS_n^2)
+o_{\P}(1).
\]
The same expansion holds for \(\xi_n^{\rm G}\), with \(\bS_n\) replaced by
\(\bS_n^{\rm G}\). The two remaining terms,
\(\bb^{*\top}\bS_n^3\bb^*\) and \(n^{-1}\trace(\bS_n^2)\), are fixed-degree
polynomial functionals of the design. By the standard Lindeberg replacement
argument, or equivalently by the fixed-degree moment expansion, these terms
have the same deterministic limits under \(\bZ\) and under the matched Gaussian
\(\bG\). Hence \(\xi_n-\xi_n^{\rm G}\limp0\) for this example.

The preceding example illustrates a general reduction. After expanding
\(\tilde\by=(\bX\bb^*+\bep)/\sqrt n\), and also expanding \(\bR_n\) whenever
it appears, every admissible primitive scalar is, apart from the deterministic
terms \(n^{-1/2}\) and \(n^{-1}\), a finite linear combination of terms of the
following forms:
\[
\frac1n\trace W(\bS_n,\bSigma_n),
\qquad
\bb^{*\top}W(\bS_n,\bSigma_n)\bb^*,
\qquad
\frac1n\{\bep^\top\bM_n\bep-\sigma_\ep^2\trace(\bM_n)\},
\qquad
\frac1n\bep^\top\bv_n.
\]
Here \(W\) is a fixed-degree product, while \(\bM_n\) and \(\bv_n\) are
fixed-degree functions of the design. The bounded-spectrum and proportional
asymptotic assumptions imply
\[
\|\bM_n\|_{\op}=O_{\P}(1),
\qquad
\|\bv_n\|_2^2=O_{\P}(n).
\]
The deterministic terms are identical in the two matched models and converge to
zero.

The trace and deterministic-signal terms are fixed-degree polynomial
functionals of the entries of \(\bZ\), with deterministic coefficients formed
from \(\bSigma_n\) and \(\bb^*\). The normalizations in the definition of
admissible primitive scalars, together with \(p/n=O(1)\),
\(\|\bSigma_n\|_{\op}=O(1)\), and \(\|\bb^*\|_2=O(1)\), give the usual
Lindeberg derivative bounds for Chatterjee's invariance principle
\cite[Theorem~1.1]{chatterjee2006generalization}. Equivalently, in the direct
moment expansion, only pairings of design entries contribute at order one, while
terms involving third or higher cumulants have fewer free indices and are
\(o(1)\). Therefore these terms have the same deterministic limits under
\(\bZ\) and under the matched Gaussian \(\bG\).

For the centered noise quadratic forms, condition on the design. The
quadratic-form variance bound gives
\[
\E\!\left[
\left(
\frac1n\bep^\top \bM_n\bep
-
\sigma_\ep^2\frac1n\trace(\bM_n)
\right)^2
\middle|\,\bX
\right]
\le
\frac{C}{n^2}\trace(\bM_n\bM_n^\top).
\]
The right-hand side is \(O_{\P}(1/n)\), hence \(o_{\P}(1)\), because
\[
\trace(\bM_n\bM_n^\top)=\|\bM_n\|_F^2
\le n\|\bM_n\|_{\op}^2
=O_{\P}(n).
\]
Thus the noise quadratic forms reduce to trace functionals already covered
above. The mixed terms have conditional mean zero and satisfy
\[
\E\!\left[
\left(\frac1n\bep^\top\bv_n\right)^2
\middle|\,\bX
\right]
=\frac{\sigma_\ep^2}{n^2}\|\bv_n\|_2^2
=O_{\P}(1/n),
\]
so they are \(o_{\P}(1)\) as well. The same expansion holds in the matched
Gaussian model, with the same limits for its design-only terms.

Combining these observations proves matched-Gaussian universality for each
primitive scalar. Applying the preceding expansion to products of two admissible
primitive scalars gives convergence of their variances, so the limits are
deterministic. These limits depend only on \(\gamma_0\), \(\sigma_\ep^2\), the
limiting covariance spectral moments, and the limiting signal-alignment moments.
Since the collection is finite, the coordinatewise statements imply the
displayed maximum convergence.
\end{proof}

We also need a version of the primitive-scalar comparison in which the
coefficients are continuous functions of finitely many primitive scalars.

\begin{lemma}[Random-coefficient primitive extension]
\label{lem:universality-random-polynomial}
Let \(\xi_n(\vartheta_n)\) be a scalar obtained by taking a finite linear
combination of admissible primitive scalars, with coefficients that are
continuous functions of a finite primitive vector \(\vartheta_n\). Let
\(\xi_n^{\rm G}(\vartheta_n^{\rm G})\) be the matched-Gaussian counterpart, with
the same deterministic coefficient map. Then
\[
\xi_n(\vartheta_n)-\xi_n^{\rm G}(\vartheta_n^{\rm G})\limp 0.
\]
\end{lemma}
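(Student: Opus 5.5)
The plan is to reduce the statement to Lemma~\ref{lem:universality-primitive-moments} plus the continuous mapping theorem. Write the scalar as
\[
\xi_n(\vartheta_n)=\sum_{\ell=1}^L f_\ell(\vartheta_n)\,\zeta_{n,\ell},
\qquad
\xi_n^{\rm G}(\vartheta_n^{\rm G})=\sum_{\ell=1}^L f_\ell(\vartheta_n^{\rm G})\,\zeta_{n,\ell}^{\rm G},
\]
where \(L\) is fixed, each \(f_\ell\) is a deterministic continuous map, and each \(\zeta_{n,\ell}\) is an admissible primitive scalar. Collect all primitive scalars involved into one finite vector \(\Xi_n=(\vartheta_n,\zeta_{n,1},\ldots,\zeta_{n,L})\), with matched-Gaussian counterpart \(\Xi_n^{\rm G}\). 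This is still a fixed finite collection of admissible primitive scalars, because the coordinates of \(\vartheta_n\) are themselves admissible primitive scalars.

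Apply Lemma~\ref{lem:universality-primitive-moments} to this collection. It gives two facts: \(\Xi_n-\Xi_n^{\rm G}\limp 0\), and both vectors converge in probability to the same deterministic limit \(\Xi_\infty=(\vartheta_\infty,\zeta_{\infty,1},\ldots,\zeta_{\infty,L})\). The second fact is what we need. The lemma says the limit of the non-Gaussian collection is deterministic and depends only on \(\gamma_0\), \(\sigma_\ep^2\) and the limiting moments. Combined with the first fact, the Gaussian collection has the same limit.

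Define \(g(x,z_1,\ldots,z_L)=\sum_\ell f_\ell(x)z_\ell\). This map is continuous on the whole space, so by the continuous mapping theorem \(\xi_n(\vartheta_n)=g(\Xi_n)\limp g(\Xi_\infty)\) and \(\xi_n^{\rm G}(\vartheta_n^{\rm G})=g(\Xi_n^{\rm G})\limp g(\Xi_\infty)\). Subtracting the two gives the claim.

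The main point to be careful about is using convergence to a common deterministic limit rather than only \(\Xi_n-\Xi_n^{\rm G}\limp0\). For a continuous \(g\) that is not uniformly continuous, closeness of the two inputs alone does not transfer through \(g\). That would require tightness of \(\Xi_n\), which is also available here, since convergence in probability to a constant implies tightness. Using the deterministic limit avoids this subtlety, because continuity of \(g\) at the single point \(\Xi_\infty\) suffices.
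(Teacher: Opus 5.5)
Your proposal is correct and follows essentially the same route as the paper: both apply Lemma~\ref{lem:universality-primitive-moments} to the enlarged finite collection \((\vartheta_n,\zeta_{n,1},\ldots,\zeta_{n,L})\) to obtain a common deterministic limit for it and for its matched-Gaussian counterpart, and then use continuity of the coefficient maps. The only difference is cosmetic: you finish with the continuous mapping theorem applied to \(g(x,z)=\sum_\ell f_\ell(x)z_\ell\) at that common limit, whereas the paper bounds \(|\xi_n-\xi_n^{\rm G}|\) by the triangle inequality, as coefficient-difference terms times \(|\zeta_{n,\ell}|\) plus \(|g_\ell(\vartheta_n^{\rm G})|\) times primitive-difference terms, with the multiplying factors \(O_{\P}(1)\).
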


\begin{proof}[Proof of Lemma~\ref{lem:universality-random-polynomial}]
\label{proof:lem:universality-random-polynomial}
First consider a concrete example with deterministic continuous coefficient
maps \(h\) and \(a\). Write
\[
\zeta_{n,1}=\frac1n\trace(\bS_n\bSigma_n\bS_n),
\qquad
\zeta_{n,2}=\tilde\by^\top\bA_n^2\tilde\by,
\]
so that
\[
\xi_n(\vartheta_n)=h(\vartheta_n)\zeta_{n,1}+a(\vartheta_n)\zeta_{n,2},
\qquad
\xi_n^{\rm G}(\vartheta_n^{\rm G})
=h(\vartheta_n^{\rm G})\zeta_{n,1}^{\rm G}
+a(\vartheta_n^{\rm G})\zeta_{n,2}^{\rm G}.
\]
The coordinates of \(\vartheta_n\), together with \(\zeta_{n,1}\) and
\(\zeta_{n,2}\), form a finite vector of admissible primitive scalars. Hence
Lemma~\ref{lem:universality-primitive-moments} gives a deterministic vector
\(v_*\) such that
\[
\left(\vartheta_n,\zeta_{n,1},\zeta_{n,2}\right)\limp v_*,
\qquad
\left(\vartheta_n^{\rm G},\zeta_{n,1}^{\rm G},\zeta_{n,2}^{\rm G}\right)
\limp v_* .
\]
It follows that
\[
\|\vartheta_n-\vartheta_n^{\rm G}\|_\infty
+|\zeta_{n,1}-\zeta_{n,1}^{\rm G}|
+|\zeta_{n,2}-\zeta_{n,2}^{\rm G}|
\limp 0.
\]
By continuity of \(h\) and \(a\),
\[
|h(\vartheta_n)-h(\vartheta_n^{\rm G})|
+|a(\vartheta_n)-a(\vartheta_n^{\rm G})|
\limp 0.
\]
The same convergence also implies
\[
|\zeta_{n,1}|+|\zeta_{n,2}|+
|h(\vartheta_n^{\rm G})|+|a(\vartheta_n^{\rm G})|
=O_{\P}(1).
\]
Therefore
\begin{align*}
&\left|\xi_n(\vartheta_n)-\xi_n^{\rm G}(\vartheta_n^{\rm G})\right| \\
&\quad\le
|h(\vartheta_n)-h(\vartheta_n^{\rm G})|\,|\zeta_{n,1}|
+|h(\vartheta_n^{\rm G})|\,|\zeta_{n,1}-\zeta_{n,1}^{\rm G}| \\
&\qquad+
|a(\vartheta_n)-a(\vartheta_n^{\rm G})|\,|\zeta_{n,2}|
+|a(\vartheta_n^{\rm G})|\,|\zeta_{n,2}-\zeta_{n,2}^{\rm G}|
=o_{\P}(1).
\end{align*}

For a general finite sum, write
\[
\xi_n(\vartheta_n)=\sum_{\ell=1}^L g_\ell(\vartheta_n)\zeta_{n,\ell},
\qquad
\xi_n^{\rm G}(\vartheta_n^{\rm G})
=\sum_{\ell=1}^L g_\ell(\vartheta_n^{\rm G})\zeta_{n,\ell}^{\rm G},
\]
where \(L\) is fixed, the \(\zeta_{n,\ell}\)'s are admissible primitive
scalars, and the \(g_\ell\)'s are deterministic continuous functions. Applying
Lemma~\ref{lem:universality-primitive-moments} and continuity,
\(\|\vartheta_n-\vartheta_n^{\rm G}\|_\infty\limp0\),
\(\max_\ell|\zeta_{n,\ell}-\zeta_{n,\ell}^{\rm G}|\limp0\), and
\(\max_\ell|g_\ell(\vartheta_n)-g_\ell(\vartheta_n^{\rm G})|\limp0\). The primitive
scalars and Gaussian coefficients are \(O_{\P}(1)\). Therefore
\begin{align*}
\left|\xi_n(\vartheta_n)-\xi_n^{\rm G}(\vartheta_n^{\rm G})\right|
&\le
\sum_{\ell=1}^L
\left|g_\ell(\vartheta_n)-g_\ell(\vartheta_n^{\rm G})\right|
|\zeta_{n,\ell}| \\
&\quad+
\sum_{\ell=1}^L
|g_\ell(\vartheta_n^{\rm G})|
\left|\zeta_{n,\ell}-\zeta_{n,\ell}^{\rm G}\right|
=o_{\P}(1).
\end{align*}
\end{proof}

\subsection{True-risk component}

For every fixed polynomial \(\pi\), Lemma~\ref{lem:universality-primitive-moments}
implies that
\[
\tilde\by^\top \pi(\bA_n)\tilde\by
\]
has the same deterministic limit in the non-Gaussian and matched Gaussian
models. Denote this limiting linear functional by \(\mathfrak m_\Sigma(\pi)\).
Unlike the isotropic case, \(\mathfrak m_\Sigma\) is not a simple
Marchenko--Pastur expression; it depends on the limiting spectral distribution
of \(\bSigma_n\) and on the limiting alignment of \(\bb^*\) with the eigenspaces
of \(\bSigma_n\). Define the deterministic square-root ridge recursion driven
by \(\mathfrak m_\Sigma\) as follows. The finite-sample ridge factor
\(c_{\lambda,n}\) converges to one because \(\tau_n=\tau/\sqrt n\), so the
limiting fixed-\(T\) recursion uses the coefficient \(1\) in place of
\(c_{\lambda,n}\). Set
\[
q_0(x)\equiv 1,
\qquad
b_0(x)\equiv 0.
\]
For \(t=1\), let
\[
a_1(x)\equiv -\sigma,
\qquad
\alpha_1
:=
\frac{1}{\max\left\{1,\left(\mathfrak m_\Sigma(a_1^2)\right)^{1/2}\right\}},
\]
and define
\[
r_1:=\alpha_1 a_1,
\qquad
b_1:=b_0-\tau r_1,
\qquad
q_1(x):=1-x b_1(x).
\]
For \(t\ge 2\), define recursively
\[
a_t(x):=
r_{t-1}(x)-\sigma\bigl((1+\theta)q_{t-1}(x)-\theta q_{t-2}(x)\bigr),
\]
\[
\alpha_t
:=
\frac{1}{\max\left\{1,\left(\mathfrak m_\Sigma(a_t^2)\right)^{1/2}\right\}},
\]
\[
r_t:=\alpha_t a_t,
\qquad
b_t:=b_{t-1}-\tau r_t,
\qquad
q_t(x):=1-x b_t(x).
\]
For every fixed \(t\), the coefficients of the finite-\(n\) polynomials
\(a_s^{(n)},r_s^{(n)},b_s^{(n)},q_s^{(n)}\), \(s\le t\), are continuous
functions of finitely many admissible primitive scalars, including \(n^{-1/2}\).
Consequently, Lemma~\ref{lem:universality-random-polynomial} implies that
the finite-\(n\) coefficient vector differs from the coefficient vector of
\((a_s,r_s,b_s,q_s)\) by \(o_{\P}(1)\), and the same convergence holds under
the matched Gaussian design.

These preparations yield the true-risk component of
Theorem~\ref{thm:main-universality-sqrt-ridge}.

\begin{lemma}[True-risk universality]
\label{lem:universality-true-risk-sqrt-ridge}
Under Assumption~\ref{assu:primitive-comparison}, for the matched
non-Gaussian and Gaussian square-root ridge experiments described above, the
conditional squared-loss risks satisfy, for every fixed
\(t\in\{0,\dots,T-1\}\),
\[
\calR_t^{\rm Z}-\calR_t^{\rm G}\limp 0.
\]
This conclusion does not require the no-boundary condition.
\end{lemma}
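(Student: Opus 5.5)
The plan is to write the conditional squared-loss risk explicitly and show it is a continuous function of finitely many admissible primitive scalars. With a fresh Gaussian test point of covariance \(\bSigma_n\) and independent noise, the centered squared loss gives
\[
\calR_t=(\hbb^t-\bb^*)^\top\bSigma_n(\hbb^t-\bb^*)-\bb^{*\top}\bSigma_n\bb^*
\]
(up to noise-only terms identical in both models). Hence it suffices to show that \(\hbb^{t\top}\bSigma_n\hbb^t\) and \(\bb^{*\top}\bSigma_n\hbb^t\) are universal. The term \(\bb^{*\top}\bSigma_n\bb^*\) is deterministic and cancels in the difference \(\calR_t^{\rm Z}-\calR_t^{\rm G}\).

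Next, apply Lemma~\ref{lem:universality-krylov}, which gives \(\hbb^t=n^{-1/2}\bX^\top b_t^{(n)}(\bA_n)\tilde\by\). Write \(b_t^{(n)}(x)=\sum_k \beta_{k}^{(n)}x^k\). Then
\[
\hbb^{t\top}\bSigma_n\hbb^t=\sum_{k,l}\beta_k^{(n)}\beta_l^{(n)}\,\tilde\by^\top\bA_n^k\bB_n\bA_n^l\tilde\by,
\qquad
\bb^{*\top}\bSigma_n\hbb^t=\sum_k\beta_k^{(n)}\, n^{-1/2}\bb^{*\top}\bSigma_n\bX^\top\bA_n^k\tilde\by .
\]
Using \(\bX^\top\bA_n^k=\bS_n^k\bX^\top\) and \(\tilde\by=(\bX\bb^*+\bep)/\sqrt n\), the quantities \(\tilde\by^\top\bA_n^k\bB_n\bA_n^l\tilde\by\) expand into \(\bb^{*\top}W(\bS_n,\bSigma_n)\bb^*\), plus noise quadratic forms of the type \(n^{-1}\bep^\top\bX W\bX^\top\bep\), plus mixed terms \(n^{-1}\bep^\top\bv_n\). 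These are exactly the reductions carried out in the proof of Lemma~\ref{lem:universality-primitive-moments}. The first statistic is itself admissible, since it equals \(n^{-1}\trace(\bSigma_n\bS_n^k\bR_n\bS_n^l)\) with \(\bM_n=\bSigma_n\). For the second statistic, the signal part is \(\bb^{*\top}\bSigma_n\bS_n^{k+1}\bb^*\) and the noise part is a mixed term \(n^{-1}\bep^\top\bv_n\) with \(\|\bv_n\|^2=O_\P(n)\), which is \(o_\P(1)\) in both models. I will check this directly, or argue it is admissible after the same expansion.

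The coefficients \(\beta^{(n)}_k\) are polynomial in \(c_{\lambda,n}\), \(\sigma,\tau,\theta\), and the normalizers \(\alpha_s^{(n)}\). Each \(\alpha_s^{(n)}=1/\max\{1,(\tilde\by^\top a_s^{(n)}(\bA_n)^2\tilde\by)^{1/2}\}\) is a continuous function of admissible scalars \(\tilde\by^\top\bA_n^m\tilde\by\) and of earlier coefficients. The map \(x\mapsto 1/\max\{1,\sqrt{x}\}\) is continuous, including at the kink, so the no-boundary condition is not needed. By induction on \(s\le t\), the full coefficient vector is a continuous function of a finite primitive vector \(\vartheta_n\) that includes \(n^{-1/2}\), and the same deterministic map applies to the Gaussian model. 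Lemma~\ref{lem:universality-random-polynomial} then gives \(\calR_t^{\rm Z}-\calR_t^{\rm G}\limp0\).

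The main obstacle is bookkeeping. The degree of \(b_t^{(n)}\) must be bounded by a function of \(t\) only, so that finitely many primitives suffice; this is immediate from the recursion, where each step raises the degree by at most one. In addition, every term arising from the \(\bB_n\)- and \(\bb^*\)-cross statistics must fall in the admissible class or be shown \(o_\P(1)\) uniformly in both models. I would first attempt to fit every such term into the admissible forms by using \(\bR_n\) and \(\bM_n=\bSigma_n\). Only if that fails would I treat the residual cross terms with the conditional-variance bound from Lemma~\ref{lem:universality-primitive-moments}.
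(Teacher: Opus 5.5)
Your proposal is correct and follows essentially the paper's route. The steps are the same: the Krylov representation of \(\hbb^t\); coefficients that are continuous functions of finitely many primitive scalars, where \(x\mapsto 1/\max\{1,\sqrt x\}\) is continuous at the kink, so no boundary condition is needed; an expansion into signal terms \(\bb^{*\top}W(\bS_n,\bSigma_n)\bb^*\), noise quadratic forms reduced to traces, and conditionally centered mixed terms that are \(o_{\P}(1)\); and finally Lemma~\ref{lem:universality-random-polynomial}. The only organizational difference is that the paper expands through \(\bb^*-\hbb^t=q_t^{(n)}(\bS_n)\bb^*-n^{-1}\bX^\top b_t^{(n)}(\bA_n)\bep\), whereas you expand through \(\hbb^{t\top}\bSigma_n\hbb^t\) and \(\bb^{*\top}\bSigma_n\hbb^t\).

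One correction: the identity is \(\tilde\by^\top\bA_n^k\bB_n\bA_n^l\tilde\by=\trace(\bSigma_n\bS_n^l\bR_n\bS_n^k)\), with no factor \(n^{-1}\). The normalized trace is only \(O_{\P}(1/n)\), because \(\bR_n\) is rank one with bounded norm. So your shortcut of declaring this statistic directly admissible fails, and you must rely on your explicit signal/noise/mixed expansion, which is exactly what the paper does.
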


\begin{proof}[Proof of Lemma~\ref{lem:universality-true-risk-sqrt-ridge}]
\label{proof:lem:universality-true-risk-sqrt-ridge}
Under squared loss and covariance \(\bSigma_n\), the out-of-sample risk is
\[
\calR_t
=
\sigma_\ep^2+
(\hbb^t-\bb^*)^\top \bSigma_n(\hbb^t-\bb^*),
\]
because \(\E[\bx_{\rm new}]=\bzero\),
\(\E[\bx_{\rm new}\bx_{\rm new}^\top]=\bSigma_n\), and
\((\bx_{\rm new},\ep_{\rm new})\) is independent of \(\hbb^t\).

By Lemma~\ref{lem:universality-krylov},
\[
\hbb^t=\frac1{\sqrt n}\bX^\top b_t^{(n)}(\bA_n)\tilde\by.
\]
Using \(\tilde\by=(\bX\bb^*+\bep)/\sqrt n\), we have
\[
\hbb^t
=
\bS_n b_t^{(n)}(\bS_n)\bb^*
+
\frac1n\bX^\top b_t^{(n)}(\bA_n)\bep.
\]
Thus
\[
\bb^*-\hbb^t
=
q_t^{(n)}(\bS_n)\bb^*
-
\frac1n\bX^\top b_t^{(n)}(\bA_n)\bep,
\]
and the risk decomposes as
\begin{align}
\calR_t
&=
\sigma_\ep^2
+
\bb^{*\top}q_t^{(n)}(\bS_n)\bSigma_n q_t^{(n)}(\bS_n)\bb^*
\nonumber\\
&\quad
+
\frac1n\bep^\top b_t^{(n)}(\bA_n)\bB_n b_t^{(n)}(\bA_n)\bep
-
\frac{2}{n}\bb^{*\top}q_t^{(n)}(\bS_n)\bSigma_n\bX^\top
b_t^{(n)}(\bA_n)\bep.
\label{eq:general-sigma-risk-decomp}
\end{align}

The coefficients of
\(a_s^{(n)},r_s^{(n)},b_s^{(n)},q_s^{(n)}\) up to any fixed time \(t\) are
continuous functions of finitely many quantities
\(\tilde\by^\top \bA_n^\ell\tilde\by\) and of the deterministic scalar
\(n^{-1/2}\), through \(c_{\lambda,n}=1/(1+\tau\lambda n^{-1/2})\). These are
admissible primitive scalars.
Lemma~\ref{lem:universality-random-polynomial} therefore implies that the
corresponding coefficient vectors for \(\bX\) and \(\bX_{\rm G}\) differ by
\(o_{\P}(1)\).

Each term in \eqref{eq:general-sigma-risk-decomp} is a finite linear
combination of admissible primitive scalars with those random coefficients. For the
noise term, conditioning on the design replaces the quadratic form by its
trace,
\[
\frac{\sigma_\ep^2}{n}\trace\!\left(
b_t^{(n)}(\bA_n)\bB_n b_t^{(n)}(\bA_n)
\right),
\]
up to \(o_{\P}(1)\); this trace equals a finite linear combination of
\(n^{-1}\trace(\bSigma_n\bS_n^\ell)\). The mixed signal-noise term has
conditional mean zero and conditional variance \(o_{\P}(1)\). Hence
Lemma~\ref{lem:universality-random-polynomial} gives the same limit for the
non-Gaussian and matched Gaussian versions of every term in
\eqref{eq:general-sigma-risk-decomp}. Therefore
\[
\calR_t^{\rm Z}-\calR_t^{\rm G}\limp0.
\]
\end{proof}

\subsection{Covariance-dependent estimator component}

Recall from the primitive-scalar setup that
\[
\bR_n:=\frac{\bX^\top \tilde\by \tilde\by^\top \bX}{n}\in\R^{p\times p}.
\]
In the general-covariance proof, \(\vartheta_n^{(t)}\) denotes a finite vector
of admissible primitive scalars large enough to contain all trace and
quadratic-form quantities generated by the closure argument below, including
\[
n^{-1/2},\quad n^{-1},\quad
\tilde\by^\top \bA_n^\ell\tilde\by,\quad
n^{-1}\trace W(\bS_n,\bSigma_n),
\]
and
\[
\frac1n\trace\!\left(
\bM_nW_1(\bS_n,\bSigma_n)\bR_nW_2(\bS_n,\bSigma_n)
\right),
\qquad \bM_n\in\{\bI_p,\bSigma_n\},
\]
for the finitely many products that arise up to time \(t\). Its
matched-Gaussian counterpart is denoted \(\vartheta_{n,{\rm G}}^{(t)}\).

The correction estimators use the Jacobian of the square-root projection, so we
also require the deterministic no-boundary condition
\[
\mathfrak m_\Sigma(a_s^2)\neq 1,\qquad s=1,\ldots,t.
\]
It ensures that the vector entering the projection at each fixed step stays on
one side of the nondifferentiability boundary \(\|\bz\|_2=1\) with probability
tending to one. This condition is not needed for true-risk universality.

\begin{lemma}[Covariance-dependent universality]
\label{lem:universality-rt-tilde-sqrt-ridge}
Fix \(t\in\{0,\ldots,T-1\}\). Under the assumptions of
Lemma~\ref{lem:universality-true-risk-sqrt-ridge}, if
\(\mathfrak m_\Sigma(a_s^2)\neq 1\) for \(s=1,\ldots,t\), the
covariance-dependent estimator satisfies
\[
\widetilde{\calR}_t^{\rm Z}-\widetilde{\calR}_t^{\rm G}\limp 0.
\]
\end{lemma}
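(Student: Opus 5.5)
The plan is to reduce $\widetilde{\calR}_t$ to continuous functions of finitely many admissible primitive scalars, and then apply Lemma~\ref{lem:universality-random-polynomial}. The argument has three steps.

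\textbf{Step 1: Jacobian blocks.} I first compute the Jacobian blocks explicitly. The primal map is linear, so $\Psi^b_{s,s-1}=c_{\lambda,n}\bI_p$ and $\Psi^\eta_{s,s}=-c_{\lambda,n}\tau\bI_p$, and all other primal blocks vanish.

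The dual map is $\bphi(\bz)=\bz/\max\{1,\|\bz\|_2\}$, evaluated at $\bz_s=a_s^{(n)}(\bA_n)\tilde\by$. It has Jacobian $\bI_n$ if $\|\bz_s\|_2<1$, and $\|\bz_s\|_2^{-1}(\bI_n-\bz_s\bz_s^\top/\|\bz_s\|_2^2)$ if $\|\bz_s\|_2>1$. Here $\|\bz_s\|_2^2=\tilde\by^\top (a_s^{(n)}(\bA_n))^2\tilde\by$ is a random-coefficient primitive scalar. By the coefficient convergence established before Lemma~\ref{lem:universality-true-risk-sqrt-ridge}, it converges in probability to $\mathfrak m_\Sigma(a_s^2)\neq 1$, in both models.

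I therefore work on the event $E_n$ where, for all $s\le t$, the regime (inside or outside the ball) agrees with the deterministic limit. This event has probability tending to one in both models. On $E_n$ each $\Phi^u_{s,s-1}$ and $\Phi^v_{s,\cdot}$ has the form $\beta_s\bI_n+\beta'_s\,\bz_s\bz_s^\top$, where $\beta_s,\beta'_s$ are continuous functions of primitive scalars. The rank-one factor is $\bz_s\bz_s^\top=a_s^{(n)}(\bA_n)\tilde\by\tilde\by^\top a_s^{(n)}(\bA_n)$.

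\textbf{Step 2: closure of $\bW$.} Next I show that, on $E_n$, every entry of $\bW$ is a finite linear combination of admissible primitive scalars with continuous primitive-valued coefficients. I use the isotropic-reduction formula for $\bW_*$ in \eqref{eq:K-W-A-transformed}, with $\bL\otimes\bSigma_n$. I expand $\calM^{-1}$ by its finite Neumann series, as in the proof of Lemma~\ref{lem:opnorm-Minv-JH-JF}. Every block of $\calM^{-1}\bJ_{\bPsi^\eta}$ restricted to $p\times p$ is then a finite sum of products alternating between:
\begin{itemize}
\item scalar multiples of $\bI$;
\item factors $\bX^\top(\beta\bI_n+\beta'\bz\bz^\top)\bX/n$;
\item powers of $\bS_n$.
\end{itemize}
Using $\bX^\top\pi(\bA_n)=\pi(\bS_n)\bX^\top$, each rank-one piece turns into $\pi_1(\bS_n)\bR_n\pi_2(\bS_n)$. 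Taking the trace against $\bSigma_n$ and normalizing, products containing no $\bR_n$ give $n^{-1}\trace W(\bS_n,\bSigma_n)$. Products with one $\bR_n$ give the admissible $\bR_n$-traces.

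Products with two or more $\bR_n$ factors need separate handling. They factor into products of quadratic forms of the form $n^{-1}\tilde\by^\top\bX\,\pi(\bS_n,\bSigma_n)\bX^\top\tilde\by$, times one trace. I will either add these finitely many quadratic forms to $\vartheta_n^{(t)}$ (after checking they are admissible in the same way as in Lemma~\ref{lem:universality-primitive-moments}), or observe that they carry an extra $n^{-1}$ and vanish.

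\textbf{Step 3: the estimator.} Write $d_i=\bx_i^\top\hbb^t-\be_i^\top\bF\bW^\top\be_{t+1}$, so that $\widetilde{\calR}_t=n^{-1}\sum_i\big(d_i^2-2y_id_i\big)$ for the centered squared loss. The vector $(d_i)_i$ equals $\bX\hbb^t-\bF\bW^\top\be_{t+1}$. Both pieces are polynomials in $\bA_n$ applied to $\tilde\by$, with primitive-valued coefficients, by Lemma~\ref{lem:universality-krylov}. Hence $\widetilde{\calR}_t$ is a finite combination of $\tilde\by^\top\pi(\bA_n)\tilde\by$ with continuous coefficients in $\vartheta_n^{(t)}$.

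Lemma~\ref{lem:universality-random-polynomial} then gives convergence on $E_n\cap E_n^{\rm G}$, and $\P(E_n^c)+\P((E_n^{\rm G})^c)\to0$ finishes the proof.

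The main obstacle is Step 2. It requires a careful inductive bookkeeping showing that the set of matrix words generated by the Neumann expansion closes within the admissible class, including the multiple-$\bR_n$ terms and the $\bSigma_n$-weighted contraction. I would carry this out by induction on the block index of $\calM^{-1}$.
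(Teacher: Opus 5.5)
Your proposal is correct and follows the paper's proof essentially step for step. Both arguments use the Krylov representation, then fix the branch of the square-root Jacobian on the no-boundary event so that $\bD_s=\alpha_s\bI_n+\gamma_s\bz_s\bz_s^\top$ has continuous primitive-valued coefficients. Both then close the blocks $\widetilde{\bQ}_{t,s}$ in a finite algebra generated by $\bS_n$ and $\bR_n$, so that the normalized weights $w_{t+1,s}/\sqrt n$ are continuous functions of primitive scalars, and finally write the corrected residual as a polynomial in $\bA_n$ applied to $\tilde\by$.

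The differences are only in bookkeeping. The paper runs the induction on the block recursion for $(\bQ,\widetilde{\bQ})$ with explicit classes $\mathcal H_d,\mathcal G_d$ rather than expanding the Neumann series; note that the $n$-space factors sandwiched between $\bX^\top$ and $\bX$ are products of several $\bD_s$, not a single one. Words with several $\bR_n$ factors collapse exactly to a quadratic form $\tilde\by^\top\pi(\bA_n)\tilde\by$ times a single $\bR_n$, so your first option is the one that works directly.
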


\begin{proof}[Proof of Lemma~\ref{lem:universality-rt-tilde-sqrt-ridge}]
\label{proof:lem:universality-rt-tilde-sqrt-ridge}
Lemma~\ref{lem:universality-krylov} gives, for each \(s\le t\),
\[
\bu^s=r_s^{(n)}(\bA_n)\tilde\by,
\qquad
\bv^s=q_s^{(n)}(\bA_n)\tilde\by,
\qquad
\hbb^s=\frac1{\sqrt n}\bX^\top b_s^{(n)}(\bA_n)\tilde\by.
\]
Since \(t\) is fixed, there exists a finite degree \(d_t\), depending only on
\(t\), such that the polynomials
\[
a_s^{(n)},\ r_s^{(n)},\ q_s^{(n)},\ b_s^{(n)},\qquad s\le t,
\]
all have degree at most \(d_t\), and their coefficients are continuous
functions of the primitive moments listed in \(\vartheta_n^{(t)}\). The
coordinate \(n^{-1/2}\) in \(\vartheta_n^{(t)}\) accounts for the factor
\(c_{\lambda,n}=1/(1+\tau\lambda n^{-1/2})\) introduced by the ridge proximal
map.

We next describe the square-root Jacobians on the no-boundary event. Let
\[
\bz_s
:=
\bu^{s-1}-\sigma\bigl((1+\theta)\bv^{s-1}-\theta \bv^{s-2}\bigr)
=
a_s^{(n)}(\bA_n)\tilde\by,
\qquad s\ge 1,
\]
with the convention \(\bv^{-1}=\bzero\). For the square-root map
\[
\bphi(\bz)=\frac{\bz}{\max\{1,\|\bz\|_2\}},
\]
its Jacobian at \(\bz_s\), whenever \(\|\bz_s\|_2\neq 1\), is
\[
\bD_s
:=
D\bphi(\bz_s)
=
\begin{cases}
\bI_n, & \|\bz_s\|_2<1,\\[1mm]
\|\bz_s\|_2^{-1}\bI_n-\|\bz_s\|_2^{-3}\bz_s\bz_s^\top, & \|\bz_s\|_2>1.
\end{cases}
\]
Now
\[
\|\bz_s\|_2^2
=
\tilde\by^\top (a_s^{(n)}(\bA_n))^2\tilde\by
\limp
\mathfrak m_\Sigma(a_s^2)
\]
by Lemma~\ref{lem:universality-random-polynomial}, using the
coefficient convergence of \(a_s^{(n)}\) to \(a_s\) and the inclusion of
\(n^{-1/2}\) among the primitives. Since
\(\mathfrak m_\Sigma(a_s^2)\neq 1\) for \(s\le t\), there exists an open neighborhood
\(\calU_t\) of the deterministic limit of \(\vartheta_n^{(t)}\) such that on
\(\{\vartheta_n^{(t)}\in\calU_t\}\), every \(\|\bz_s\|_2\) stays on one fixed
side of \(1\). Hence, on that event,
\[
\bD_s
=
\alpha_s(\vartheta_n^{(t)})\bI_n
+
\gamma_s(\vartheta_n^{(t)})\,
a_s^{(n)}(\bA_n)\tilde\by\tilde\by^\top a_s^{(n)}(\bA_n),
\]
where \(\alpha_s\) and \(\gamma_s\) are continuous on \(\calU_t\).

For square-root ridge, the only nonzero Jacobian blocks are therefore
\[
\begin{aligned}
\Phi^u_{s,s-1}&=\bD_s,\\
\Phi^v_{1,0}&=-\sigma \bD_1,\\
\Phi^v_{s,s-1}&=-\sigma(1+\theta)\bD_s,\\
\Phi^v_{s,s-2}&=\sigma\theta\,\bD_s \qquad (s\ge 2),
\end{aligned}
\]
and for the primal ridge map,
\[
\Psi^b_{s,s-1}=c_{\lambda,n} \bI_p,\qquad
\Psi^\eta_{s,s}=-c_{\lambda,n}\tau \bI_p,
\]
with all remaining \(\Phi^u,\Phi^v,\Psi^b,\Psi^\eta\) equal to zero.

We now show that every block of
\(\bQ=\calM^{-1}\bJ_{\bPsi^\eta}\) is described by finitely many admissible
primitive scalars. The classes below are stable under multiplication by
\(\bX\), multiplication by \(n^{-1}\bX^\top\), and application of \(\bD_s\).

Define the matrix classes
\[
\mathcal H_d
:=
\left\{
\sum_{\ell=0}^d a_\ell \bA_n^\ell \bX
+
\sum_{a,b=0}^d c_{ab}\,\bA_n^a \tilde\by \tilde\by^\top \bX \bS_n^b
\right\}\subset\R^{n\times p},
\]
\[
\mathcal G_d
:=
\left\{
\sum_{\ell=0}^d \bar a_\ell \bS_n^\ell
+
\sum_{a,b=0}^d \bar c_{ab}\,\bS_n^a \bR_n \bS_n^b
\right\}\subset\R^{p\times p},
\]
where all scalar coefficients are deterministic continuous functions of
\(\vartheta_n^{(t)}\) on \(\calU_t\); in particular, the coefficients may use
the coordinate \(n^{-1/2}\) through \(c_{\lambda,n}\).

The definitions above are designed so that the following three closure
relations hold.

If \(M\in\mathcal G_d\), then \(\bX M\in\mathcal H_{d+1}\). Indeed,
\[
\bX \bS_n^\ell=\bA_n^\ell \bX,
\qquad
\bX \bS_n^a \bR_n \bS_n^b
=
\bA_n^a \bX \frac{\bX^\top \tilde\by \tilde\by^\top \bX}{n}\bS_n^b
=
\bA_n^{a+1}\tilde\by \tilde\by^\top \bX \bS_n^b.
\]
If \(N\in\mathcal H_d\), then \(n^{-1}\bX^\top N\in\mathcal G_{d+1}\), since
\[
\frac1n\bX^\top \bA_n^\ell \bX=\bS_n^{\ell+1},
\qquad
\frac1n\bX^\top \bA_n^a \tilde\by \tilde\by^\top \bX \bS_n^b
=
\bS_n^a \bR_n \bS_n^b.
\]
Finally, if \(N\in\mathcal H_d\), then \(\bD_s N\in\mathcal H_{d+d_t}\). This is
immediate for the identity part of \(\bD_s\). For the rank-one part, writing
\(p_s:=a_s^{(n)}\), we have
\[
p_s(\bA_n)\tilde\by \tilde\by^\top p_s(\bA_n)\,\bA_n^\ell \bX
=
p_s(\bA_n)\tilde\by \tilde\by^\top \bX \bS_n^\ell\in\mathcal H_{d+d_t},
\]
and
\begin{align*}
p_s(\bA_n)\tilde\by \tilde\by^\top p_s(\bA_n)
\bA_n^a \tilde\by \tilde\by^\top \bX \bS_n^b
&=
\underbrace{\tilde\by^\top p_s(\bA_n)\bA_n^a \tilde\by}_{\text{primitive moment}}
\cdot p_s(\bA_n)\tilde\by \tilde\by^\top \bX \bS_n^b\\
&\in\mathcal H_{d+d_t}.
\end{align*}

We now prove by induction on \(r\le t\) that, for every \(1\le s\le r\), the
blocks of \(\bQ\) satisfy
\[
\bQ_{r,s}\in \frac1n \mathcal H_{L_t},
\qquad
\widetilde{\bQ}_{r,s}\in \frac1{\sqrt n}\mathcal G_{L_t},
\]
for some finite \(L_t\) depending only on \(t\).

For \(r=1\), the explicit initialization gives
\[
\bQ_1=\boldzero,
\qquad
\widetilde{\bQ}_{1,1}
=
\frac1{\sqrt n}\Psi^\eta_{1,1}
=
-\frac{c_{\lambda,n}\tau}{\sqrt n}\bI_p,
\]
so the claim holds.

For the inductive step, assume the claim holds up to time \(r-1\) and fix a
column index \(s\le r\). Extracting the \(s\)-th block column from
\eqref{eq:Qt-recursion-explicit} gives
\[
\bQ_{r,s}
=
\bD_r \bQ_{r-1,s}
+
\frac{\sigma(1+\theta)}{\sqrt n}\,\bD_r \bX \widetilde{\bQ}_{r-1,s}
-
\frac{\sigma\theta}{\sqrt n}\,\bD_r \bX \widetilde{\bQ}_{r-2,s},
\]
with the convention \(\widetilde{\bQ}_{0,s}=\widetilde{\bQ}_{-1,s}=\boldzero\).
By the induction hypothesis,
\[
\bQ_{r-1,s}\in \frac1n\mathcal H_{L_t},
\qquad
\widetilde{\bQ}_{r-1,s},\widetilde{\bQ}_{r-2,s}\in \frac1{\sqrt n}\mathcal G_{L_t}.
\]
Since \(\bX\mathcal G_{L_t}\subset \mathcal H_{L_t+1}\) and
\(\bD_r \mathcal H_{L_t+1}\subset \mathcal H_{L_t+1+d_t}\), each term on the
right-hand side belongs to \(n^{-1}\mathcal H_{L_t'}\) for a finite degree
\(L_t'\) depending only on \(t\). Enlarging \(L_t\) to dominate this degree
gives
\[
\bQ_{r,s}\in \frac1n\mathcal H_{L_t}.
\]
For the \(\widetilde{\bQ}\)-blocks, if \(s<r\), then the forcing term in the
\(s\)-th block column vanishes and we obtain
\[
\widetilde{\bQ}_{r,s}
=
c_{\lambda,n} \widetilde{\bQ}_{r-1,s}
-
\frac{c_{\lambda,n}\tau}{\sqrt n}\,\bX^\top \bQ_{r,s}.
\]
Here the first term belongs to \(n^{-1/2}\mathcal G_{L_t}\) by induction, while
the second belongs to \(n^{-1/2}\mathcal G_{L_t+1}\) because
\(n^{-1}\bX^\top \mathcal H_{L_t}\subset \mathcal G_{L_t+1}\). With the same
finite-degree convention,
\[
\widetilde{\bQ}_{r,s}\in \frac1{\sqrt n}\mathcal G_{L_t}
\qquad (s<r).
\]
For the diagonal block \(s=r\), the lower-triangular structure gives
\(\widetilde{\bQ}_{r-1,r}=\boldzero\), so
\[
\widetilde{\bQ}_{r,r}
=
-\frac{c_{\lambda,n}\tau}{\sqrt n}\bI_p
\in
\frac1{\sqrt n}\mathcal G_0.
\]
Thus every block in row \(r\) remains in the same finite-dimensional algebra,
which gives the required blockwise induction.

Since \(w_{t+1,s}=\trace(\bSigma_n\widetilde{\bQ}_{t,s})\) by
\eqref{eq:W-entrywise},
the previous representation implies that on \(\{\vartheta_n^{(t)}\in\calU_t\}\),
\[
\frac{w_{t+1,s}}{\sqrt n}
=
\omega_{t+1,s}(\vartheta_n^{(t)}),
\qquad s=1,\ldots,t,
\]
for deterministic continuous functions \(\omega_{t+1,s}\) on \(\calU_t\). The
normalization by \(\sqrt n\) is the trace scale: since
\(\widetilde{\bQ}_{t,s}\in n^{-1/2}\mathcal G_{L_t}\) and \(p\asymp n\),
tracing over \(p\) coordinates makes \(w_{t+1,s}\) of order \(\sqrt n\). The
needed trace reductions are admissible primitive scalars of the form
\[
\frac1n\trace(\bSigma_n W(\bS_n,\bSigma_n))
\quad\text{or}\quad
\frac1n\trace(\bM_n W_1(\bS_n,\bSigma_n)\bR_n
W_2(\bS_n,\bSigma_n)).
\]
Here \(\bM_n\) is either \(\bI_p\) or \(\bSigma_n\), depending on whether the
trace comes from a \(\cbK\)-type or \(\bW\)-type quantity.

It remains to express the corrected residual in the same finite-dimensional
form. Under squared loss,
\[
\widetilde{\calR}_t
=
\frac1n\left\|\by-\bX\hbb^t+\bF\bW^\top \be_{t+1}\right\|_2^2.
\]
By Lemma~\ref{lem:universality-krylov},
\[
\frac1{\sqrt n}\bigl(\by-\bX\hbb^t\bigr)=\bv^t=q_t^{(n)}(\bA_n)\tilde\by.
\]
Moreover,
\[
\frac1{\sqrt n}\bF\bW^\top \be_{t+1}
=
\sum_{s=1}^t \frac{w_{t+1,s}}{\sqrt n}\,\bu^s
=
\sum_{s=1}^t \omega_{t+1,s}(\vartheta_n^{(t)})\,r_s^{(n)}(\bA_n)\tilde\by.
\]
Hence, on \(\{\vartheta_n^{(t)}\in\calU_t\}\),
\[
\frac1{\sqrt n}\bigl(\by-\bX\hbb^t+\bF\bW^\top \be_{t+1}\bigr)
=
\rho_t(\vartheta_n^{(t)};\bA_n)\tilde\by,
\]
where \(\rho_t(\vartheta;\cdot)\) is a polynomial of degree at most \(L_t\)
whose coefficients are continuous functions of \(\vartheta\in\calU_t\). Indeed,
\(\rho_t\) is obtained by combining the polynomial \(q_t^{(n)}\) from
the Krylov representation with the finitely many random scalar correction
coefficients \(\omega_{t+1,s}(\vartheta_n^{(t)})\) obtained above. Therefore
\[
\widetilde{\calR}_t
=
\tilde\by^\top \bigl(\rho_t(\vartheta_n^{(t)};\bA_n)\bigr)^2 \tilde\by.
\]
Expanding the square shows that \(\widetilde{\calR}_t\) is a deterministic
continuous function of the finitely many moments listed in
\(\vartheta_n^{(t)}\). This proves the existence of \(\widetilde\Phi_t\).

Finally, by Lemma~\ref{lem:universality-primitive-moments},
\(\vartheta_n^{(t)}-\vartheta_{n,{\rm G}}^{(t)}\limp0\), and both
vectors converge to the same deterministic limit point. Since
\(\widetilde\Phi_t\) is continuous on a neighborhood of that limit point,
\[
\widetilde\Phi_t(\vartheta_n^{(t)})-\widetilde\Phi_t(\vartheta_{n,{\rm G}}^{(t)})
\limp 0.
\]
Because the identities
\[
\widetilde{\calR}_t^{\rm Z}=\widetilde\Phi_t(\vartheta_n^{(t)}),
\qquad
\widetilde{\calR}_t^{\rm G}
=
\widetilde\Phi_t(\vartheta_{n,{\rm G}}^{(t)})
\]
hold with probability tending to one, we conclude that
\[
\widetilde{\calR}_t^{\rm Z}-\widetilde{\calR}_t^{\rm G}\limp 0.
\]
\end{proof}

\begin{corollary}
\label{cor:universality-rt-tilde-sqrt-ridge}
Under the assumptions of Lemma~\ref{lem:universality-true-risk-sqrt-ridge} and
Lemma~\ref{lem:universality-rt-tilde-sqrt-ridge}, for every fixed
\(t\in\{0,\ldots,T-1\}\),
\[
\widetilde{\calR}_t^{\rm Z}-\calR_t^{\rm Z}\limp 0.
\]
This is the covariance-dependent consistency conclusion in the non-Gaussian
model of Assumption~\ref{assu:primitive-comparison}.
\end{corollary}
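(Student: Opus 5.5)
The plan is to deduce the corollary by a triangle inequality through the matched Gaussian model:
\[
|\widetilde{\calR}_t^{\rm Z}-\calR_t^{\rm Z}|
\le
|\widetilde{\calR}_t^{\rm Z}-\widetilde{\calR}_t^{\rm G}|
+|\widetilde{\calR}_t^{\rm G}-\calR_t^{\rm G}|
+|\calR_t^{\rm G}-\calR_t^{\rm Z}|.
\]
The first term is \(o_{\P}(1)\) by Lemma~\ref{lem:universality-rt-tilde-sqrt-ridge}, which requires the no-boundary condition. The third term is \(o_{\P}(1)\) by Lemma~\ref{lem:universality-true-risk-sqrt-ridge}.

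The middle term is the only one that needs work. It should follow from Theorem~\ref{thm:rt-tilde} applied to the Gaussian model \((\bX_{\rm G},\by_{\rm G})\), which gives \(\E|\widetilde{\calR}_t^{\rm G}-\calR_t^{\rm G}|\le C/\sqrt n\). Markov's inequality then turns this into convergence in probability. Before invoking the theorem, its assumptions have to be checked in the square-root ridge setting.

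\textbf{Assumptions~\ref{assu:X} and~\ref{assu:regime}.} These hold directly. The rows of \(\bX_{\rm G}\) are iid \(N(\boldzero,\bSigma_n)\). The eigenvalues of \(\bSigma_n\) are bounded in \([\kappa^{-1},\kappa]\). The signal norm \(\|\bb^*\|\) is bounded, and the noise has mean zero with \(\E\ep_i^2=\sigma_\ep^2\). Since \(p/n\to\gamma_0\), we have \(p/n\le\gamma\) for large \(n\).

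\textbf{Assumption~\ref{assu:test-function}(ii).} The centered squared loss differs from the uncentered squared loss used in Lemma~\ref{lem:universality-true-risk-sqrt-ridge} only by the baseline \(y^2\) terms. The population and empirical baselines cancel in the difference \(\widetilde{\calR}_t-\calR_t\) only up to \(n^{-1}\sum y_i^2-\E y^2=o_{\P}(1)\), by the law of large numbers given finite fourth moments. So the same conclusion holds in either convention, provided that convention is used consistently in both lemmas.

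\textbf{Assumption~\ref{assu:algorithm}.} This is the main obstacle. The dual map is the projection onto the unit ball, so it is \(1\)-Lipschitz and bounded by \(1\). The primal map is multiplication by \(c_{\lambda,n}\le1\), which is Lipschitz and vanishes at zero. However, the assumption concerns the history-dependent maps \(\bphi^t,\bpsi^t\) in the general form \eqref{eq:general-iteration}. For Chambolle--Pock these use only the most recent arguments with fixed coefficients \(\sigma,\theta,\tau\). So the Lipschitz constants are bounded by a constant depending on \((\sigma,\tau,\theta)\), and we take \(\zeta\) to be the maximum of that constant and the signal bound.

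With the assumptions verified, the constant in Theorem~\ref{thm:rt-tilde} is uniform in \(n\). Combining the three terms gives \(\widetilde{\calR}_t^{\rm Z}-\calR_t^{\rm Z}\limp0\).
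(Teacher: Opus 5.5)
Your proposal is correct and matches the paper's proof. Both use the same triangle inequality through the matched Gaussian model. The first and third terms are handled by Lemmas~\ref{lem:universality-rt-tilde-sqrt-ridge} and~\ref{lem:universality-true-risk-sqrt-ridge}, and the middle term by Theorem~\ref{thm:rt-tilde} plus Markov's inequality.

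You also check the hypotheses of Theorem~\ref{thm:rt-tilde} explicitly, which the paper leaves implicit. In particular, you convert between the uncentered squared loss used in the universality section and the centered loss of Assumption~\ref{assu:test-function}(ii), via $n^{-1}\sum_i y_i^2-\E[y_{\rm new}^2]=o_{\P}(1)$.
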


\begin{proof}[Proof of Corollary~\ref{cor:universality-rt-tilde-sqrt-ridge}]
\label{proof:cor:universality-rt-tilde-sqrt-ridge}
Let \(\calR_t^{\rm G}\) and \(\widetilde{\calR}_t^{\rm G}\) denote the true risk
and the covariance-dependent estimator under the matched Gaussian design. Then
\[
\bigl|\widetilde{\calR}_t^{\rm Z}-\calR_t^{\rm Z}\bigr|
\le
\bigl|\widetilde{\calR}_t^{\rm Z}-\widetilde{\calR}_t^{\rm G}\bigr|
+
\bigl|\widetilde{\calR}_t^{\rm G}-\calR_t^{\rm G}\bigr|
+
\bigl|\calR_t^{\rm G}-\calR_t^{\rm Z}\bigr|.
\]
The first term is \(o_{\P}(1)\) by
Lemma~\ref{lem:universality-rt-tilde-sqrt-ridge}; the third term is
\(o_{\P}(1)\) by Lemma~\ref{lem:universality-true-risk-sqrt-ridge}; and the
middle term is \(o_{\P}(1)\) by Theorem~\ref{thm:rt-tilde}, applied to the
matched Gaussian design. The claim follows.
\end{proof}

\subsection{Covariance-free estimator component}

In this subsection we use the covariance-free estimator from
Theorem~\ref{thm:rt-hat}, specialized to the signal-error matrix
\(\cbF=\bX\bH/\sqrt n\). Let
\(\cbK\), \(\cbA\), \(\hbW\), and \(\widehat{\calR}_t^{\rm Z}\) denote the
resulting correction matrices and estimator under the non-Gaussian design, and
let \(\widehat{\calR}_t^{\rm G}\) denote the matched-Gaussian counterpart.

\begin{lemma}[Covariance-free universality]
\label{lem:universality-rhat-sqrt-ridge}
Fix \(t\in\{0,\ldots,T-1\}\). Under the assumptions of
Lemma~\ref{lem:universality-rt-tilde-sqrt-ridge}, the covariance-free
estimator satisfies
\[
\widehat{\calR}_t^{\rm Z}-\widehat{\calR}_t^{\rm G}\limp 0.
\]
\end{lemma}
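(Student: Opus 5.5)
The plan is to reuse the closure argument from the proof of Lemma~\ref{lem:universality-rt-tilde-sqrt-ridge}. The goal is to show that, on the event $\{\vartheta_n^{(t)}\in\calU_t\}$, the empirical matrices $n^{-1/2}\cbK$ and $n^{-1}\cbA$ are entrywise deterministic continuous functions of an enlarged finite primitive vector $\vartheta_n^{(t)}$. Then $\hbW=\cbK^{-1}\cbA$, rescaled by $n^{-1/2}$, is also such a function. The point is that $n^{-1/2}\cbK$ is lower triangular with diagonal $-1$, so its inverse is a polynomial in its strictly lower entries (finite Neumann series) and hence continuous everywhere. Once this is in place, the squared-loss estimator takes the form
\[
\widehat{\calR}_t=\tilde\by^\top\bigl(\hat\rho_t(\vartheta_n^{(t)};\bA_n)\bigr)^2\tilde\by,
\]
where $\hat\rho_t$ is built from $q_t^{(n)}$ and the correction coefficients $n^{-1/2}\hat w_{t+1,s}$ multiplying $r_s^{(n)}$. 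The conclusion then follows from Lemma~\ref{lem:universality-primitive-moments}, together with the fact that the event above has probability tending to one under both designs (the no-boundary condition).

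The key steps, in order, are as follows.

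\textbf{Step 1.} Keep the Jacobian description from the previous proof. On the no-boundary event, $\bD_s$ equals a scalar times $\bI_n$ plus a continuous coefficient times the rank-one matrix $a_s^{(n)}(\bA_n)\tilde\by\tilde\by^\top a_s^{(n)}(\bA_n)$. The ridge blocks are multiples of $\bI_p$.

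\textbf{Step 2.} Extend the blockwise induction to $\calM^{-1}\bJ_{\bPhi^v}$. Its $u$-blocks should lie in $n^{-1/2}$ times an algebra of $n\times n$ matrices spanned by $\bA_n^\ell$ and $\bA_n^a\tilde\by\tilde\by^\top\bA_n^b$. Its $b$-blocks should lie in $n^{-1}\bX^\top$ applied to that algebra. For $\calM^{-1}\bJ_{\bPsi^\eta}$, the classes $\mathcal H_d,\mathcal G_d$ from the previous proof already suffice. Closure under $\bX$, $n^{-1}\bX^\top$ and $\bD_s$ is checked exactly as before, with one addition: $n\times n$ rank-one products collapse through the scalars $\tilde\by^\top\pi(\bA_n)\tilde\by$.

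\textbf{Step 3.} Insert these blocks into $\calM_H$ and $\calM_F$ from \eqref{eq:MHMF}. Then compute $\cbK$ as $n^{-1/2}$ times a partial trace over $i$, and $\cbA$ as $-n^{-1/2}$ times a partial trace over $j$ weighted by $\bX^\top$. Each entry reduces to normalized traces of the form $n^{-1}\trace(\bA_n^\ell)=n^{-1}\trace(\bS_n^\ell)$, quadratic forms $\tilde\by^\top\pi(\bA_n)\tilde\by$, and terms $n^{-1}\trace(W_1\bR_nW_2)$ with $\bM_n=\bI_p$. These are all admissible.

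\textbf{Step 4.} Check the scaling. The diagonal of $\cbK$ is $-\sqrt n$, so $n^{-1/2}\cbK=-\bI_T+O(1)$ continuous entries. Likewise $\cbA$ has order $n$ by Lemma~\ref{lem:op-bound-WAK}. Hence $n^{-1/2}\hbW=(n^{-1/2}\cbK)^{-1}(n^{-1}\cbA)$ has continuous coefficients. This reproduces the structure of $\omega_{t+1,s}$ and allows the conclusion of the previous proof to be copied verbatim.

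\textbf{Main obstacle.} I expect Step 3 for $\cbA$ to be the hardest part. Its definition carries the extra factor $\be_j^\top\bX^\top$ before $\calM_F$, and that factor contains $\bL\otimes\bX$. One has to verify that $\sum_j(\bI_T\otimes\be_j^\top\bX^\top)N(\bI_T\otimes\be_j)$ for $N\in\bX\mathcal G_d$ gives $\trace(\bX^\top\bX\,\cdot)$-type expressions with the correct power of $n$. In particular, the rank-one $\bR_n$ terms must produce $n^{-1}\trace(\bS_n^a\bR_n\bS_n^b)$, which is $O(1)$, rather than unnormalized quantities. I would first test this on the $(2,1)$ entry by explicit expansion, and then fix the normalization bookkeeping in the class definitions before running the general induction.
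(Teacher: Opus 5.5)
Your proposal is correct and follows essentially the same route as the paper. The paper's $n\times n$ class $\mathcal N_d$ is exactly your algebra spanned by $\bA_n^\ell$ and $\bA_n^a\tilde\by\tilde\by^\top\bA_n^b$, and its $p\times n$ class $\mathcal T_d$ coincides with $\bX^\top\mathcal N_d$; it proves $\bP_{r,s}\in n^{-1/2}\mathcal N_{L_t}$ and $\widetilde{\bP}_{r,s}\in n^{-1}\mathcal T_{L_t}$ by the same blockwise induction, reuses the $\mathcal H_d,\mathcal G_d$ closure for $\bQ$, and concludes through continuity of $(\cbK/\sqrt n)^{-1}(\cbA/n)$ and the Krylov form $\widehat{\calR}_t=\tilde\by^\top(\widehat\rho_t)^2\tilde\by$. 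Your anticipated obstacle is settled by the normalization $\cbA_{t+1,s}/n=-n^{-1}\trace(\bS_n\cdot\sqrt n\,\widetilde{\bQ}_{t,s})$ with $\sqrt n\,\widetilde{\bQ}_{t,s}\in\mathcal G_{L_t}$, where the $\bR_n$ terms reduce to $n^{-1}\tilde\by^\top\bA_n^{k}\tilde\by=O(1/n)$; the correct scaling then comes from this representation itself, so you should not invoke Lemma~\ref{lem:op-bound-WAK}, which is stated only for Gaussian designs.
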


\begin{proof}[Proof of Lemma~\ref{lem:universality-rhat-sqrt-ridge}]
\label{proof:lem:universality-rhat-sqrt-ridge}
We enlarge \(\vartheta_n^{(t)}\), if needed, by finitely many admissible
primitive scalars generated below; Lemma~\ref{lem:universality-primitive-moments}
still applies to this finite vector. Let \(\widehat{\calU}_t\) be an open
neighborhood of its common deterministic limit where the no-boundary formulas
from Lemma~\ref{lem:universality-rt-tilde-sqrt-ridge} hold. We prove the
analogue of the \(Q\)-closure argument for
\(\bP=\calM^{-1}\bJ_{\bPhi^v}\), whose blocks define \(\cbK\), using an
\(n\times n\) residual-space class and a \(p\times n\)
coefficient-residual class.

Define
\[
\mathcal N_d
:=
\left\{
\sum_{\ell=0}^d a_\ell \bA_n^\ell
+
\sum_{a,b=0}^d c_{ab}\,\bA_n^a \tilde\by \tilde\by^\top \bA_n^b
\right\}\subset\R^{n\times n},
\]
\[
\mathcal T_d
:=
\left\{
\sum_{\ell=0}^d \bar a_\ell \bX^\top \bA_n^\ell
+
\sum_{a,b=0}^d \bar c_{ab}\,\bS_n^a \bX^\top \tilde\by \tilde\by^\top \bA_n^b
\right\}\subset\R^{p\times n},
\]
where all coefficients are deterministic continuous functions of
\(\vartheta_n^{(t)}\) on \(\widehat{\calU}_t\), including the dependence on
\(c_{\lambda,n}\) through the deterministic coordinate \(n^{-1/2}\).

The following closure relations are the reason for these definitions:
\[
\bD_r \mathcal N_d \subset \mathcal N_{d+d_t},
\qquad
\bX^\top \mathcal N_d \subset \mathcal T_d,
\qquad
\frac1n \bX \mathcal T_d \subset \mathcal N_{d+1}.
\]
The inclusion \(\bD_r \mathcal N_d \subset \mathcal N_{d+d_t}\) follows from
the representation of \(\bD_r\) obtained in the covariance-dependent component
above. The inclusion \(\bX^\top \mathcal N_d \subset \mathcal T_d\) is built
into the definition of \(\mathcal T_d\). Finally,
\(\frac1n\bX\mathcal T_d \subset \mathcal N_{d+1}\) follows from
\[
\frac1n \bX \bX^\top \bA_n^\ell=\bA_n^{\ell+1},
\qquad
\frac1n \bX \bS_n^a \bX^\top \tilde\by \tilde\by^\top \bA_n^b
=
\bA_n^{a+1}\tilde\by \tilde\by^\top \bA_n^b.
\]

We claim that, after enlarging \(L_t\) if necessary,
\[
\bP_{r,s}\in \frac1{\sqrt n}\mathcal N_{L_t},
\qquad
\widetilde{\bP}_{r,s}\in \frac1n \mathcal T_{L_t},
\qquad 1\le s\le r\le t.
\]
For \(r=1\),
\[
\bP_{1,1}
=
\frac1{\sqrt n}\Phi^v_{1,0}
=
-\frac{\sigma}{\sqrt n}\bD_1
\in
\frac1{\sqrt n}\mathcal N_{L_t},
\]
and
\[
\widetilde{\bP}_{1,1}
=
\frac1{\sqrt n}\Psi^\eta_{1,1}\bX^\top \bP_{1,1}
=
-\frac{c_{\lambda,n}\tau}{\sqrt n}\bX^\top \bP_{1,1}
\in
\frac1n \mathcal T_{L_t}.
\]
Now assume the claim holds up to time \(r-1\). Extracting block columns from
\eqref{eq:Pt-recursion-explicit} and using the specialized Jacobian structure
from Lemma~\ref{lem:universality-rt-tilde-sqrt-ridge}, we obtain
\[
\bP_{r,s}
=
\bD_r \bP_{r-1,s}
+
\frac{\sigma(1+\theta)}{\sqrt n}\,\bD_r \bX \widetilde{\bP}_{r-1,s}
-
\frac{\sigma\theta}{\sqrt n}\,\bD_r \bX \widetilde{\bP}_{r-2,s}
+
\frac1{\sqrt n}\Phi^v_{r,s-1},
\]
where \(\Phi^v_{r,s-1}\) is either zero or a constant multiple of \(\bD_r\). By
the induction hypothesis and the closure relations above, each term on the
right-hand side belongs to \(n^{-1/2}\mathcal N_{L_t'}\) for a finite degree
\(L_t'\) depending only on \(t\). Enlarging \(L_t\) to dominate this degree gives
\[
\bP_{r,s}\in \frac1{\sqrt n}\mathcal N_{L_t}.
\]
Similarly, \eqref{eq:Ptilde-recursion-explicit} gives
\[
\widetilde{\bP}_{r,s}
=
c_{\lambda,n} \widetilde{\bP}_{r-1,s}
-
\frac{c_{\lambda,n}\tau}{\sqrt n}\,\bX^\top \bP_{r,s},
\]
so the induction hypothesis and \(\bX^\top\mathcal N_{L_t}\subset\mathcal T_{L_t}\)
imply
\[
\widetilde{\bP}_{r,s}\in \frac1n \mathcal T_{L_t}.
\]
The induction proves the asserted closure for \(\bP\) and \(\widetilde{\bP}\).

We next identify \(\cbK\), \(\cbA\), and \(\hbW\) as continuous functions of
\(\vartheta_n^{(t)}\). The signal-error row-recursion formulas
\eqref{eq:K-row-recursion-offdiag}--\eqref{eq:A-row-recursion} reduce to
\[
\begin{aligned}
\cbK_{t+1,s}
&=
\frac1{\sqrt n}\trace(\bX\widetilde{\bP}_{t,s}),
&
\cbK_{t+1,t+1}&=-\sqrt n,\\
\cbA_{t+1,s}
&=
-\frac1{\sqrt n}\trace(\bX^\top \bX \widetilde{\bQ}_{t,s}).
\end{aligned}
\]
The relevant scales are \(\cbK/\sqrt n\), \(\cbA/n\), and hence
\(\hbW/\sqrt n\), since \(\hbW=\cbK^{-1}\cbA\).

By the \(P\)-closure just proved,
\[
\widetilde{\bP}_{t,s}\in \frac1n \mathcal T_{L_t},
\]
so
\[
\frac{\cbK_{t+1,s}}{\sqrt n}
=
\frac1n\trace(\bX\widetilde{\bP}_{t,s})
\]
is a deterministic continuous function of \(\vartheta_n^{(t)}\). Indeed, each
term reduces either to \(n^{-1}\trace(\bS_n^\ell)\) or to
\(\tilde\by^\top \bA_n^\ell\tilde\by\), possibly multiplied by the
deterministic scalars \(n^{-1/2}\) or \(n^{-1}\); all of these are included
among the admissible primitive scalars in \(\vartheta_n^{(t)}\).

By the \(Q\)-closure argument in Lemma~\ref{lem:universality-rt-tilde-sqrt-ridge},
\[
\widetilde{\bQ}_{t,s}\in \frac1{\sqrt n}\mathcal G_{L_t}.
\]
Therefore
\[
\frac{\cbA_{t+1,s}}{n}
=
-\frac1n \trace\!\left(\bS_n \cdot \sqrt n\,\widetilde{\bQ}_{t,s}\right)
\]
is also a deterministic continuous function of \(\vartheta_n^{(t)}\). The
terms are normalized traces of finite products in \(\bS_n\) and \(\bR_n\), hence
are admissible primitive scalars.

Thus the scaled matrices
\[
\frac{\cbK}{\sqrt n},
\qquad
\frac{\cbA}{n}
\]
are deterministic continuous functions of \(\vartheta_n^{(t)}\). Since
\(\cbK/\sqrt n\) is lower triangular with diagonal entries all
equal to \(-1\), it is invertible for every realization. Hence
\[
\frac{\hbW}{\sqrt n}
=
\left(\frac{\cbK}{\sqrt n}\right)^{-1}
\left(\frac{\cbA}{n}\right)
\]
is again a deterministic continuous function of \(\vartheta_n^{(t)}\).

It remains to express the covariance-free corrected residual in Krylov form.
Under squared loss,
\[
\widehat{\calR}_t
=
\frac1n\left\|\by-\bX\hbb^t+\bF\hbW^\top \be_{t+1}\right\|_2^2.
\]
By Lemma~\ref{lem:universality-krylov},
\[
\frac1{\sqrt n}\bigl(\by-\bX\hbb^t\bigr)=\bv^t=q_t^{(n)}(\bA_n)\tilde\by,
\]
and
\[
\frac1{\sqrt n}\bF\hbW^\top \be_{t+1}
=
\sum_{s=1}^t \frac{\hat w_{t+1,s}}{\sqrt n}\,\bu^s
=
\sum_{s=1}^t \widehat\omega_{t+1,s}(\vartheta_n^{(t)})\,
r_s^{(n)}(\bA_n)\tilde\by,
\]
where \(\widehat\omega_{t+1,s}\) are deterministic continuous scalar maps
obtained from \(\hbW/\sqrt n\). Therefore, on
\(\{\vartheta_n^{(t)}\in\widehat{\calU}_t\}\),
\[
\frac1{\sqrt n}\bigl(\by-\bX\hbb^t+\bF\hbW^\top \be_{t+1}\bigr)
=
\widehat\rho_t(\vartheta_n^{(t)};\bA_n)\tilde\by,
\]
where \(\widehat\rho_t(\vartheta;\cdot)\) is a polynomial in its
matrix argument whose coefficients depend continuously on
\(\vartheta\in\widehat{\calU}_t\). Consequently,
\[
\widehat{\calR}_t
=
\tilde\by^\top
\bigl(\widehat\rho_t(\vartheta_n^{(t)};\bA_n)\bigr)^2
\tilde\by.
\]
Expanding the square shows that
\(\widehat{\calR}_t\) is a deterministic continuous function of the
primitive vector \(\vartheta_n^{(t)}\). This proves the existence of
\(\widehat\Phi_t\).

By Lemma~\ref{lem:universality-primitive-moments},
\(\vartheta_n^{(t)}-\vartheta_{n,{\rm G}}^{(t)}\limp0\), and both
vectors converge to the same deterministic limit point. By continuity of
\(\widehat\Phi_t\),
\[
\widehat\Phi_t(\vartheta_n^{(t)})
-
\widehat\Phi_t(\vartheta_{n,{\rm G}}^{(t)})
\limp 0.
\]
Since the identities
\[
\widehat{\calR}_t^{\rm Z}
=
\widehat\Phi_t(\vartheta_n^{(t)}),
\qquad
\widehat{\calR}_t^{\rm G}
=
\widehat\Phi_t(\vartheta_{n,{\rm G}}^{(t)})
\]
hold with probability tending to one, we conclude that
\[
\widehat{\calR}_t^{\rm Z}-\widehat{\calR}_t^{\rm G}\limp 0.
\]
\end{proof}

\begin{corollary}
\label{cor:universality-rhat-transfer}
Under the assumptions of
Lemma~\ref{lem:universality-rhat-sqrt-ridge},
\[
\widehat{\calR}_t^{\rm Z}-\calR_t^{\rm Z}\limp 0.
\]
This is the covariance-free consistency conclusion in the non-Gaussian model of
Assumption~\ref{assu:primitive-comparison}.
\end{corollary}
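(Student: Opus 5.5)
This corollary follows from a three-term triangle inequality, with the matched Gaussian design as the bridge. I would write
\[
\bigl|\widehat{\calR}_t^{\rm Z}-\calR_t^{\rm Z}\bigr|
\le
\bigl|\widehat{\calR}_t^{\rm Z}-\widehat{\calR}_t^{\rm G}\bigr|
+
\bigl|\widehat{\calR}_t^{\rm G}-\calR_t^{\rm G}\bigr|
+
\bigl|\calR_t^{\rm G}-\calR_t^{\rm Z}\bigr|
\]
and show that each term is \(o_{\P}(1)\).

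The first term is \(o_{\P}(1)\) by Lemma~\ref{lem:universality-rhat-sqrt-ridge}, which applies because the no-boundary condition is part of the standing assumptions. The third term is \(o_{\P}(1)\) by Lemma~\ref{lem:universality-true-risk-sqrt-ridge}, and that lemma needs no boundary condition.

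The middle term is the only point requiring a check: Theorem~\ref{thm:rt-hat} must apply to the matched Gaussian square-root ridge experiment. I will verify its hypotheses one by one.

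\begin{itemize}
\item Assumption~\ref{assu:X}: the rows of \(\bX_{\rm G}\) are \(N(\boldzero,\bSigma_n)\), the eigenvalues of \(\bSigma_n\) are bounded above and below, \(\|\bb^*\|_2\) is bounded, and the noise is iid, centered and independent of the design.
\item Assumption~\ref{assu:regime}: since \(p/n\to\gamma_0\), we have \(p/n\le 2\gamma_0\) for all large \(n\).
\item Assumption~\ref{assu:algorithm}: the paper already notes that Chambolle--Pock with the square-root loss and the ridge penalty satisfies it. The dual map is a projection onto the unit ball, hence bounded by \(1\). The ridge proximal map is linear with \(c_{\lambda,n}\le 1\), hence Lipschitz. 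The initializations are zero.
\item Assumption~\ref{assu:test-function}(ii): squared test loss is in the centered squared-error regime, with \(\E\ep_i^2=\sigma_\ep^2\).
\end{itemize}

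One technical gap concerns centering: the universality section works with uncentered squared loss, while Theorem~\ref{thm:rt-hat} is stated for the centered loss. I will close it by subtracting the common baseline \(y^2\) from both sides. In the true risk this baseline is \(\E[y_{\rm new}^2]\); in the estimator it is the empirical average \(n^{-1}\sum_i y_i^2\). These two differ by \(o_{\P}(1)\) by the law of large numbers, using the finite fourth moment of the noise. The same argument is applied under both designs.

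With the hypotheses verified, Theorem~\ref{thm:rt-hat} gives \(\P(|\widehat{\calR}_t^{\rm G}-\calR_t^{\rm G}|>\epsilon)\le e^{-n/2}+C\epsilon^{-1}n^{-1/4}\), which tends to \(0\). All three terms are therefore \(o_{\P}(1)\), which proves the claim.
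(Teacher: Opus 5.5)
Your proposal is correct and follows the paper's proof. Both use the same three-term triangle inequality through the matched Gaussian design: the first term is handled by Lemma~\ref{lem:universality-rhat-sqrt-ridge}, the middle term by Theorem~\ref{thm:rt-hat} applied to the Gaussian model, and the last term by Lemma~\ref{lem:universality-true-risk-sqrt-ridge}. Your explicit check of the hypotheses and your reconciliation of the centered and uncentered squared loss, via the baseline $n^{-1}\sum_i y_i^2$ versus $\E[y_{\rm new}^2]$, are details the paper leaves implicit, and both are correct.
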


\begin{proof}[Proof of Corollary~\ref{cor:universality-rhat-transfer}]
\label{proof:cor:universality-rhat-transfer}
By the triangle inequality,
\[
\bigl|\widehat{\calR}_t^{\rm Z}-\calR_t^{\rm Z}\bigr|
\le
\bigl|\widehat{\calR}_t^{\rm Z}-\widehat{\calR}_t^{\rm G}\bigr|
+
\bigl|\widehat{\calR}_t^{\rm G}-\calR_t^{\rm G}\bigr|
+
\bigl|\calR_t^{\rm G}-\calR_t^{\rm Z}\bigr|.
\]
The first term is \(o_{\P}(1)\) by
Lemma~\ref{lem:universality-rhat-sqrt-ridge}; the second is \(o_{\P}(1)\)
by Theorem~\ref{thm:rt-hat} applied to the matched Gaussian square-root ridge
model with the same signal-error matrix construction. The assumptions of that
theorem hold here because the square-root dual map is bounded and Lipschitz,
the ridge proximal map is Lipschitz, and Lemma~\ref{lem:moment-H-F} gives the
required moment bounds for \(\cbF\). The third term is \(o_{\P}(1)\) by
Lemma~\ref{lem:universality-true-risk-sqrt-ridge}.
\end{proof}

\subsection{Completion of Theorem~\ref{thm:main-universality-sqrt-ridge}}

Assumption~\ref{assu:primitive-comparison} restates the square-root ridge
universality setting from
Assumption~\ref{assu:sqrt-ridge-universality}. Lemma~\ref{lem:universality-true-risk-sqrt-ridge}
therefore proves the first assertion of
Theorem~\ref{thm:main-universality-sqrt-ridge}, namely
\(\calR_t^{\rm Z}-\calR_t^{\rm G}\limp0\).

If Assumption~\ref{assu:sqrt-ridge-no-boundary} also holds, then its
deterministic boundary limit \(\mu_s\) is the quantity
\(\mathfrak m_\Sigma(a_s^2)\) used above. Hence
Lemma~\ref{lem:universality-rt-tilde-sqrt-ridge} proves
\(\widetilde{\calR}_t^{\rm Z}-\widetilde{\calR}_t^{\rm G}\limp0\), and
Lemma~\ref{lem:universality-rhat-sqrt-ridge} proves
\(\widehat{\calR}_t^{\rm Z}-\widehat{\calR}_t^{\rm G}\limp0\). These are the
two estimator universality assertions in the theorem.

It remains only to justify the two consistency conclusions after the word
``Consequently'' in Theorem~\ref{thm:main-universality-sqrt-ridge}. These are
exactly Corollaries~\ref{cor:universality-rt-tilde-sqrt-ridge}
and~\ref{cor:universality-rhat-transfer}, which combine the three universality
lemmas above with the Gaussian consistency theorems
\ref{thm:rt-tilde} and~\ref{thm:rt-hat}. This completes the proof of
Theorem~\ref{thm:main-universality-sqrt-ridge}.

\begin{remark}
The key algebraic point is that, at fixed \(T\), the iterate path, dual
Jacobians, auxiliary matrices, and correction weights close on a
finite-dimensional algebra generated by \(\bA_n\), \(\bS_n\), \(\bSigma_n\),
and \(\bX^\top \tilde\by \tilde\by^\top \bX/n\). Extending the universality
argument beyond ridge or squared test loss remains open, as does the
correction-based boundary case \(\mathfrak m_\Sigma(a_s^2)=1\).
\end{remark}

\section{Computational details}
\label{sec:computation-tricks}
This section is logically independent of the proof of the main theorems and
serves mainly as an implementation guide.
Directly forming the matrices in \eqref{eq:WKA} is computationally expensive because they are defined through the inverse of the large block matrix \(\calM\). In this section, we show how to compute the \(T\times T\) matrices \(\bW\), \(\cbK\), and \(\cbA\) efficiently. The key idea is to first compute the auxiliary matrices \(\bP=\calM^{-1}\bJ_{\bPhi^v}\) and \(\bQ=\calM^{-1}\bJ_{\bPsi^\eta}\) recursively, and then recover \(\bW\), \(\cbK\), and \(\cbA\) row by row from their block entries. We also explain how Hutchinson's method can be used to approximate the trace terms when \(n\) or \(p\) is large.

\subsection{Formal correction definitions}
\label{sec:formal-correction-definitions}

We first give the full matrix definitions used in
Theorems~\ref{thm:rt-tilde} and~\ref{thm:rt-hat}. The population correction
matrix is \(\bW\), and the covariance-free correction is
\(\hbW=\cbK^{-1}\cbA\).

The derivative bookkeeping behind the proposed estimator uses the
matrices
\[
\begin{aligned}
\cbF
\defas\frac{\bX\bH}{\sqrt n}
&=
\left[
\frac{\bX(\hbb^0-\bb^*)}{\sqrt n},
\ldots,
\frac{\bX(\hbb^{T-1}-\bb^*)}{\sqrt n}
\right] \in \R^{n\times T},\\
\bH
&=[\hbb^0-\bb^*,\ldots,\hbb^{T-1}-\bb^*]
\in \R^{p\times T}
.
\end{aligned}
\]

\begin{definition}
We define $\bW,\cbK,\cbA\in\R^{T\times T}$ as follows. Standard basis vectors
are interpreted in the dimension dictated by the surrounding matrix product:
\(\be_i\in\R^n\), \(\be_j\in\R^p\), and \(\be_t\in\R^T\).
\begin{equation}
\label{eq:WKA}
\begin{aligned}
	\bW
	&=
	\sum_{j=1}^p
	(\bI_T\otimes \be_j^\top)
	(\bL\otimes \bSigma)
	\bigl(\bI_T\otimes [\boldzero_{p\times n},\bI_p]\bigr)
	\calM^{-1}\bJ_{\bPsi^\eta}
	(\bI_T\otimes \be_j),\\
	\cbK
	&=
	\frac{1}{\sqrt n}\sum_{i=1}^n
	(\bI_T\otimes \be_i^\top)\calM_H(\bI_T\otimes \be_i),\\
	\cbA
	&=
	-\frac{1}{\sqrt n}\sum_{j=1}^p
	(\bI_T\otimes \be_j^\top\bX^\top)\calM_F(\bI_T\otimes \be_j),
\end{aligned}
\end{equation}
where
\(
\bL=\sum_{t=2}^T \be_t\be_{t-1}^\top
\)
is the \(T\times T\) shift matrix with ones on the subdiagonal and zeros
elsewhere.
\end{definition}

The remaining quantities in \eqref{eq:WKA} are as follows. We first stack the
Jacobian blocks from \eqref{eq:DJ-general} across time. The matrices
\(\bPhi^u,\bPhi^v\in\R^{Tn\times Tn}\) and
\(\bPsi^b,\bPsi^\eta\in\R^{Tp\times Tp}\) are block lower-triangular matrices.
Their \((t,s)\) blocks, for \(1\le t,s\le T\), are
\[
(\bPhi^u)_{t,s}
=
\begin{cases}
\Phi^u_{t,s}, & s<t,\\
\boldzero_{n\times n}, & s\ge t,
\end{cases}
\qquad
(\bPhi^v)_{t,s}
=
\begin{cases}
\Phi^v_{t,s-1}, & s\le t,\\
\boldzero_{n\times n}, & s>t,
\end{cases}
\]
and
\[
(\bPsi^b)_{t,s}
=
\begin{cases}
\Psi^b_{t,s}, & s<t,\\
\boldzero_{p\times p}, & s\ge t,
\end{cases}
\qquad
(\bPsi^\eta)_{t,s}
=
\begin{cases}
\Psi^\eta_{t,s}, & s\le t,\\
\boldzero_{p\times p}, & s>t.
\end{cases}
\]
Thus the columns of \(\bPhi^v\) correspond to the residual sequence
\((\bv^0,\ldots,\bv^{T-1})\), while the columns of \(\bPsi^\eta\) correspond to
\((\bfeta^1,\ldots,\bfeta^T)\). The initial quantities \(\bu^0,\hbb^0\), and
\(\bfeta^0\) are fixed under our initialization and therefore do not appear as
unknowns in this stacked linear system.

Next define
\begin{align*}
\calM
&=
\bI_{T(n+p)}
+
\begin{bmatrix}
\bB_1 & \bB_2
\end{bmatrix}
\begin{bmatrix}
-\bPhi^u & \frac{1}{\sqrt n}\,\bPhi^v(\bL\otimes \bX)\\[2mm]
-\frac{1}{\sqrt n}\,\bPsi^\eta(\bI_T\otimes \bX^\top) & -\bPsi^b
\end{bmatrix}
\begin{bmatrix}
\bB_1^\top\\[6pt]
\bB_2^\top
\end{bmatrix},
\end{align*}
where
\begin{align*}
\bB_1
&:=
\bI_T\otimes
\begin{bmatrix}
\bI_n\\
\bzero_{p\times n}
\end{bmatrix}
\in\R^{T(n+p)\times Tn},
\qquad
\bB_2
:=
\bI_T\otimes
\begin{bmatrix}
\bzero_{n\times p}\\
\bI_p
\end{bmatrix}
\in\R^{T(n+p)\times Tp}.
\end{align*}
We also define
\begin{align*}
\bJ_{\bPhi^v}
=
\frac{1}{\sqrt n}\bB_1\bPhi^v,
\qquad
\bJ_{\bPsi^\eta}
=
\frac{1}{\sqrt n}\bB_2\bPsi^\eta,
\end{align*}
and
\begin{equation}
\begin{aligned}
\calM_H
&=
-\bI_{nT}
+
(\bL\otimes \bX)
\bigl(\bI_T\otimes[\boldzero_{p\times n},\bI_p]\bigr)
\calM^{-1}\bJ_{\bPhi^v},\\
\calM_F
&=
(\bL\otimes \bX)
\bigl(\bI_T\otimes[\boldzero_{p\times n},\bI_p]\bigr)
\calM^{-1}\bJ_{\bPsi^\eta}.
\end{aligned}
\end{equation}

\subsection{Recursive computation of the auxiliary matrices}
\label{sec:WAK-recursion}
We begin with recursive formulas for the auxiliary matrices \(\bP\) and \(\bQ\), since these matrices are the basic ingredients from which \(\bW\), \(\cbK\), and \(\cbA\) are assembled.
Define
\[
\bP:=\calM^{-1}\bJ_{\bPhi^v} \in \R^{T(n+p)\times Tn},
\qquad
\bQ:=\calM^{-1}\bJ_{\bPsi^\eta} \in \R^{T(n+p)\times Tp}.
\]
Since \(\calM\) is block lower triangular with invertible diagonal blocks,
\(\calM^{-1}\) is also block lower triangular. Moreover, \(\bJ_{\bPhi^v}\) and
\(\bJ_{\bPsi^\eta}\) are block lower triangular. Consequently, both \(\bP\) and
\(\bQ\) are block lower triangular.

Partition these matrices by block rows as
\[
\bP=
\begin{bmatrix}
\bP_1\\ \widetilde{\bP}_1\\
\bP_2\\ \widetilde{\bP}_2\\
\vdots\\
\bP_T\\ \widetilde{\bP}_T
\end{bmatrix},
\qquad
\bQ=
\begin{bmatrix}
\bQ_1\\ \widetilde{\bQ}_1\\
\bQ_2\\ \widetilde{\bQ}_2\\
\vdots\\
\bQ_T\\ \widetilde{\bQ}_T
\end{bmatrix},
\]
where
\[
\bP_t\in\R^{n\times Tn},\quad
\widetilde{\bP}_t\in\R^{p\times Tn},\quad
\bQ_t\in\R^{n\times Tp},\quad
\widetilde{\bQ}_t\in\R^{p\times Tp}.
\]

We first derive a recursion for \(\bP\) from
\[
\calM\bP=\bJ_{\bPhi^v}.
\]
The $t$-th block row of $\calM$ has the form
\[
\begin{bmatrix}
\begin{bmatrix}
-\Phi^u_{t,1} & \frac{1}{\sqrt n}\Phi^v_{t,1}\bX
\end{bmatrix}
& \cdots &
\begin{bmatrix}
-\Phi^u_{t,t-1} & \frac{1}{\sqrt n}\Phi^v_{t,t-1}\bX
\end{bmatrix}
&
\begin{bmatrix}
\bI_n & \boldzero_{n\times p}
\end{bmatrix}
& \boldzero
\\[6pt]
\begin{bmatrix}
-\frac{1}{\sqrt n}\Psi^\eta_{t,1} \bX^\top & -\Psi^b_{t,1}
\end{bmatrix}
& \cdots &
\begin{bmatrix}
-\frac{1}{\sqrt n}\Psi^\eta_{t,t-1} \bX^\top & -\Psi^b_{t,t-1}
\end{bmatrix}
&
\begin{bmatrix}
-\frac{1}{\sqrt n}\Psi^\eta_{t,t} \bX^\top & \bI_p
\end{bmatrix}
& \boldzero
\end{bmatrix}.
\]
The \(t\)-th block row of \(\bJ_{\bPhi^v}\) is
\[
\frac{1}{\sqrt n}
\begin{bmatrix}
\Phi^v_{t,0} & \Phi^v_{t,1} & \cdots & \Phi^v_{t,t-1} & \boldzero & \cdots & \boldzero\\
\boldzero & \boldzero & \cdots & \boldzero & \boldzero & \cdots & \boldzero
\end{bmatrix}.
\]
Hence, for each \(t=1,\ldots,T\),
\begin{align}
\bP_t
-\sum_{s=1}^{t-1}\Phi^u_{t,s}\bP_s
+\frac{1}{\sqrt n}\sum_{s=1}^{t-1}\Phi^v_{t,s}\bX\widetilde{\bP}_s
&=
\frac{1}{\sqrt n}
\begin{bmatrix}
\Phi^v_{t,0} & \Phi^v_{t,1} & \cdots & \Phi^v_{t,t-1} & \boldzero & \cdots & \boldzero
\end{bmatrix},
\label{eq:Pt-recursion}
\\
\widetilde{\bP}_t
-\sum_{s=1}^{t-1}\Psi^b_{t,s}\widetilde{\bP}_s
-\frac{1}{\sqrt n}\sum_{s=1}^{t}\Psi^\eta_{t,s}\bX^\top\bP_s
&=\boldzero.
\label{eq:Ptilde-recursion}
\end{align}
Equivalently,
\begin{align}
\bP_t
&=
\sum_{s=1}^{t-1}\Phi^u_{t,s}\bP_s
-\frac{1}{\sqrt n}\sum_{s=1}^{t-1}\Phi^v_{t,s}\bX\widetilde{\bP}_s
+\frac{1}{\sqrt n}
\begin{bmatrix}
\Phi^v_{t,0} & \Phi^v_{t,1} & \cdots & \Phi^v_{t,t-1} & \boldzero & \cdots & \boldzero
\end{bmatrix},
\label{eq:Pt-recursion-explicit}
\\
\widetilde{\bP}_t
&=
\sum_{s=1}^{t-1}\Psi^b_{t,s}\widetilde{\bP}_s
+\frac{1}{\sqrt n}\sum_{s=1}^{t}\Psi^\eta_{t,s}\bX^\top\bP_s.
\label{eq:Ptilde-recursion-explicit}
\end{align}

For \(t=1\),
\[
\bP_1
=
\frac{1}{\sqrt n}
\begin{bmatrix}
\Phi^v_{1,0} & \boldzero & \cdots & \boldzero
\end{bmatrix},
\qquad
\widetilde{\bP}_1
=
\frac{1}{\sqrt n}\Psi^\eta_{1,1}\bX^\top\bP_1.
\]
Thus \(\bP_t\) and \(\widetilde{\bP}_t\) can be computed recursively for \(t=2,\ldots,T\).

Similarly, from
\[
\calM\bQ=\bJ_{\bPsi^\eta},
\]
and the fact that the \(t\)-th block row of \(\bJ_{\bPsi^\eta}\) is
\[
\frac{1}{\sqrt n}
\begin{bmatrix}
\boldzero & \cdots & \boldzero\\
\Psi^\eta_{t,1} & \Psi^\eta_{t,2} & \cdots & \Psi^\eta_{t,t} & \boldzero & \cdots & \boldzero
\end{bmatrix},
\]
we obtain
\begin{align}
\bQ_t
-\sum_{s=1}^{t-1}\Phi^u_{t,s}\bQ_s
+\frac{1}{\sqrt n}\sum_{s=1}^{t-1}\Phi^v_{t,s}\bX\widetilde{\bQ}_s
&=\boldzero,
\label{eq:Qt-recursion}
\\
\widetilde{\bQ}_t
-\sum_{s=1}^{t-1}\Psi^b_{t,s}\widetilde{\bQ}_s
-\frac{1}{\sqrt n}\sum_{s=1}^{t}\Psi^\eta_{t,s}\bX^\top\bQ_s
&=
\frac{1}{\sqrt n}
\begin{bmatrix}
\Psi^\eta_{t,1} & \Psi^\eta_{t,2} & \cdots & \Psi^\eta_{t,t} & \boldzero & \cdots & \boldzero
\end{bmatrix}.
\label{eq:Qtilde-recursion}
\end{align}
Equivalently,
\begin{align}
\bQ_t
&=
\sum_{s=1}^{t-1}\Phi^u_{t,s}\bQ_s
-\frac{1}{\sqrt n}\sum_{s=1}^{t-1}\Phi^v_{t,s}\bX\widetilde{\bQ}_s,
\label{eq:Qt-recursion-explicit}
\\
\widetilde{\bQ}_t
&=
\sum_{s=1}^{t-1}\Psi^b_{t,s}\widetilde{\bQ}_s
+\frac{1}{\sqrt n}\sum_{s=1}^{t}\Psi^\eta_{t,s}\bX^\top\bQ_s
+\frac{1}{\sqrt n}
\begin{bmatrix}
\Psi^\eta_{t,1} & \Psi^\eta_{t,2} & \cdots & \Psi^\eta_{t,t} & \boldzero & \cdots & \boldzero
\end{bmatrix}.
\label{eq:Qtilde-recursion-explicit}
\end{align}

For \(t=1\),
\[
\bQ_1=\boldzero,
\qquad
\widetilde{\bQ}_1=
\frac{1}{\sqrt n}
\begin{bmatrix}
\Psi^\eta_{1,1} & \boldzero & \cdots & \boldzero
\end{bmatrix}.
\]
Thus \(\bQ_t\) and \(\widetilde{\bQ}_t\) can also be computed recursively for \(t=2,\ldots,T\).

\subsection{Entrywise formulas for the triangular matrices}

We now express \(\bW\), \(\cbK\), and \(\cbA\) in terms of the recursively computed blocks of \(\bP\) and \(\bQ\). These formulas show that each row of \(\bW\), \(\cbK\), and \(\cbA\) only depends on previously computed blocks, so the three matrices can themselves be assembled recursively.

First, partition \(\bP_t\) and \(\widetilde{\bP}_t\) by column blocks as
\[
\bP_t=
\begin{bmatrix}
\bP_{t,1} & \bP_{t,2} & \cdots & \bP_{t,T}
\end{bmatrix},
\qquad
\widetilde{\bP}_t=
\begin{bmatrix}
\widetilde{\bP}_{t,1} & \widetilde{\bP}_{t,2} & \cdots & \widetilde{\bP}_{t,T}
\end{bmatrix},
\]
where
\[
\bP_{t,s}\in\R^{n\times n},
\qquad
\widetilde{\bP}_{t,s}\in\R^{p\times n},
\qquad 1\le s\le T.
\]
Likewise, partition \(\bQ_t\) and \(\widetilde{\bQ}_t\) by column blocks as
\[
\bQ_t=
\begin{bmatrix}
\bQ_{t,1} & \bQ_{t,2} & \cdots & \bQ_{t,T}
\end{bmatrix},
\qquad
\widetilde{\bQ}_t=
\begin{bmatrix}
\widetilde{\bQ}_{t,1} & \widetilde{\bQ}_{t,2} & \cdots & \widetilde{\bQ}_{t,T}
\end{bmatrix},
\]
where
\[
\bQ_{t,s}\in\R^{n\times p},
\qquad
\widetilde{\bQ}_{t,s}\in\R^{p\times p},
\qquad 1\le s\le T.
\]

Since \(\bP\) and \(\bQ\) are block lower triangular, these column blocks satisfy
\[
\bP_{t,s}=\boldzero,\quad \widetilde{\bP}_{t,s}=\boldzero,\quad
\bQ_{t,s}=\boldzero,\quad \widetilde{\bQ}_{t,s}=\boldzero,
\qquad s>t.
\]
In other words, 
\begin{align*}
\bP_t&=
\begin{bmatrix}\bP_{t,1} & \bP_{t,2} & \cdots & \bP_{t,t} & \boldzero & \cdots & \boldzero\end{bmatrix},
\\
\widetilde{\bP}_t&=
\begin{bmatrix}\widetilde{\bP}_{t,1} & \widetilde{\bP}_{t,2} & \cdots & \widetilde{\bP}_{t,t} & \boldzero & \cdots & \boldzero\end{bmatrix},
\\
\bQ_t&=
\begin{bmatrix}\bQ_{t,1} & \bQ_{t,2} & \cdots & \bQ_{t,t} & \boldzero & \cdots & \boldzero\end{bmatrix},
\\
\widetilde{\bQ}_t&=
\begin{bmatrix}\widetilde{\bQ}_{t,1} & \widetilde{\bQ}_{t,2} & \cdots & \widetilde{\bQ}_{t,t} & \boldzero & \cdots & \boldzero\end{bmatrix}.
\end{align*}

\medskip
\noindent
\textbf{Entrywise form for \(\bW\).} 
By the definition of \(\bW\) in \eqref{eq:WKA},
\[
\bW
=
\sum_{j=1}^p
(\bI_T\otimes \be_j^\top)
(\bL\otimes \bSigma)
(\bI_T\otimes[\boldzero_{p\times n},\bI_p])
\bQ
(\bI_T\otimes \be_j).
\]
Using the block structure of \(\bL=\sum_{t=2}^T \be_t\be_{t-1}^\top\), we obtain
\begin{equation}
\label{eq:W-entrywise}
\bW
=
\begin{bmatrix}
0&\\
\trace(\bSigma\widetilde{\bQ}_{1,1})& 0\\
\vdots & \ddots &\ddots\\
\trace(\bSigma\widetilde{\bQ}_{T-1,1}) & \ldots & \trace(\bSigma\widetilde{\bQ}_{T-1,T-1})&0
\end{bmatrix}.
\end{equation}

\medskip
\noindent
\textbf{Entrywise form for \(\cbK\).} 
By definition,
\begin{align*}
\calM_H
&=
-\bI_{nT}
+
(\bL \otimes \bX)
\bigl(\bI_T \otimes [\boldzero_{p\times n}, \bI_p]\bigr)\calM^{-1}\bJ_{\bPhi^v}.
\end{align*}
Recall that 
\[
\calM^{-1}\bJ_{\bPhi^v}
=
\bP
=
\begin{bmatrix}
\bP_1\\
\widetilde{\bP}_1\\
\vdots\\
\bP_T\\
\widetilde{\bP}_T
\end{bmatrix},
\qquad
\widetilde{\bP}_t
= 
\begin{bmatrix}
\widetilde{\bP}_{t,1} & \cdots & \widetilde{\bP}_{t,t} & \boldzero & \cdots & \boldzero
\end{bmatrix}.
\]
where \(\widetilde{\bP}_{t,s}\in\R^{p\times n}\). 
Therefore,
\[
\bigl(\bI_T \otimes [\boldzero_{p\times n}, \bI_p]\bigr)\calM^{-1}\bJ_{\bPhi^v}
=
\begin{bmatrix}
\widetilde{\bP}_1\\
\vdots\\
\widetilde{\bP}_T
\end{bmatrix},
\]
and thus
\[
\calM_H
=
-\bI_{nT}
+
(\bL \otimes \bX)
\begin{bmatrix}
\widetilde{\bP}_1\\
\vdots\\
\widetilde{\bP}_T
\end{bmatrix}.
\]
Using the structure of \(\bL\), we obtain the block form
\[
\calM_H
=
\begin{bmatrix}
-\bI_n & \boldzero & \boldzero & \cdots & \boldzero\\
\bX\widetilde{\bP}_{1,1} & -\bI_n & \boldzero & \cdots & \boldzero\\
\bX\widetilde{\bP}_{2,1} & \bX\widetilde{\bP}_{2,2} & -\bI_n & \ddots & \vdots\\
\vdots & \vdots & \ddots & \ddots & \boldzero\\
\bX\widetilde{\bP}_{T-1,1} & \bX\widetilde{\bP}_{T-1,2} & \cdots & \bX\widetilde{\bP}_{T-1,T-1} & -\bI_n
\end{bmatrix}.
\]
Therefore, 
\begin{align*}
\cbK
&= \frac{1}{\sqrt{n}} \sum_{i=1}^n (\bI_T\otimes \be_i^\top)\calM_H (\bI_T \otimes \be_i),\\
&= \frac{1}{\sqrt{n}}
\begin{bmatrix}
-n & \boldzero & \boldzero & \cdots & \boldzero\\
\tr(\bX\widetilde{\bP}_{1,1}) & -n & \boldzero & \cdots & \boldzero\\
\tr(\bX\widetilde{\bP}_{2,1}) & \tr(\bX\widetilde{\bP}_{2,2}) & -n & \ddots & \vdots\\
\vdots & \vdots & \ddots & \ddots & \boldzero\\
\tr(\bX\widetilde{\bP}_{T-1,1}) & \tr(\bX\widetilde{\bP}_{T-1,2}) & \cdots & \tr(\bX\widetilde{\bP}_{T-1,T-1}) & -n
\end{bmatrix}.
\end{align*}

\medskip
\noindent
\textbf{Entrywise form for \(\cbA\).} 
Similar to $\cbK$, we now derive the entrywise expression for $\cbA$. 
Since 
\begin{align*}
		\calM^{-1}\bJ_{\bPsi^\eta} 
		= \bQ
		= 
		\begin{bmatrix}
	\bQ_1\\
	\widetilde{\bQ}_1\\
	\vdots\\
	\bQ_T\\
	\widetilde{\bQ}_T
	\end{bmatrix}
\end{align*}
where \(\widetilde{\bQ}_t = [\widetilde{\bQ}_{t,1}, \dots, \widetilde{\bQ}_{t,t}, \bzero, \dots, \bzero]\). We have 
\begin{align*}
\calM_F
&=
(\bL \otimes \bX)
\bigl(\bI_T \otimes [\boldzero_{p\times n}, \bI_p]\bigr)\calM^{-1}\bJ_{\bPsi^\eta}\\
&=
(\bL \otimes \bX)
\begin{bmatrix}
\widetilde{\bQ}_1\\
\vdots\\
\widetilde{\bQ}_T
\end{bmatrix}\\
	&=
	\begin{bmatrix}
	\boldzero & \boldzero & \boldzero & \cdots & \boldzero\\
	\bX\widetilde{\bQ}_{1,1} & \boldzero & \boldzero & \cdots & \boldzero\\
	\bX\widetilde{\bQ}_{2,1} & \bX\widetilde{\bQ}_{2,2} & \boldzero & \cdots & \boldzero\\
	\vdots & \vdots & \ddots & \ddots & \vdots\\
	\bX\widetilde{\bQ}_{T-1,1} & \bX\widetilde{\bQ}_{T-1,2} & \cdots & \bX\widetilde{\bQ}_{T-1,T-1} & \boldzero
	\end{bmatrix}.
\end{align*}
It follows that 
\begin{align*}
\cbA
&=-
\frac{1}{\sqrt n}\sum_{j=1}^p
(\bI_T\otimes \be_j^\top \bX^\top)
\calM_F(\bI_T\otimes \be_j)\\
&=-
	\frac{1}{\sqrt n}
	\begin{bmatrix}
	\boldzero & \boldzero & \boldzero & \cdots & \boldzero\\
	\tr(\bX^\top \bX\widetilde{\bQ}_{1,1}) & \boldzero & \boldzero & \cdots & \boldzero\\
	\tr(\bX^\top \bX\widetilde{\bQ}_{2,1}) & \tr(\bX^\top \bX\widetilde{\bQ}_{2,2}) & \boldzero & \cdots & \boldzero\\
	\vdots & \vdots & \ddots & \ddots & \vdots\\
	\tr(\bX^\top \bX\widetilde{\bQ}_{T-1,1}) & \tr(\bX^\top \bX\widetilde{\bQ}_{T-1,2}) & \cdots & \tr(\bX^\top \bX\widetilde{\bQ}_{T-1,T-1}) & \boldzero
	\end{bmatrix}.
\end{align*}

Combining the formulas above, we obtain the following row-by-row computation rule. For each \(t=1,\ldots,T-1\), once the blocks \(\widetilde{\bP}_{t,s}\) and \(\widetilde{\bQ}_{t,s}\) have been computed for \(1\le s\le t\), the \((t+1)\)-st rows of \(\bW\), \(\cbK\), and \(\cbA\) are given by
\begin{align}
\bW_{t+1,s}
&=
\trace(\bSigma\widetilde{\bQ}_{t,s}),
\qquad 1\le s\le t,
\label{eq:W-row-recursion}
\\
\cbK_{t+1,s}
&=
\frac{1}{\sqrt n}\trace(\bX\widetilde{\bP}_{t,s}),
\qquad 1\le s\le t,
\label{eq:K-row-recursion-offdiag}
\\
\cbK_{t+1,t+1}
&=
-\sqrt n,
\label{eq:K-row-recursion-diag}
\\
\cbA_{t+1,s}
&=
-\frac{1}{\sqrt n}\trace(\bX^\top\bX\widetilde{\bQ}_{t,s}),
\qquad 1\le s\le t.
\label{eq:A-row-recursion}
\end{align}
All remaining entries in the \((t+1)\)-st row are zero by lower triangularity. Therefore, after computing \(\bP_t,\widetilde{\bP}_t,\bQ_t,\widetilde{\bQ}_t\), one can immediately update the next rows of \(\bW\), \(\cbK\), and \(\cbA\).

\subsection{Worked example: the case \(T=3\)}

For convenience, we record the specialization of \eqref{eq:W-entrywise} when
\(T=3\). In this case,
\[
\bW=
\begin{bmatrix}
0 & 0 & 0\\
w_{2,1} & 0 & 0\\
w_{3,1} & w_{3,2} & 0
\end{bmatrix},
\]
where
\begin{align}
w_{2,1}
&=
\trace(\bSigma\widetilde{\bQ}_{1,1}),
\label{eq:W21-T3}
\\
w_{3,1}
&=
\trace(\bSigma\widetilde{\bQ}_{2,1}),
\label{eq:W31-T3}
\\
w_{3,2}
&=
\trace(\bSigma\widetilde{\bQ}_{2,2}).
\label{eq:W32-T3}
\end{align}

Using the recursion in \eqref{eq:Qt-recursion-explicit}--\eqref{eq:Qtilde-recursion-explicit}, these blocks can be written explicitly. Since
\[
\widetilde{\bQ}_{1,1}
=
\frac{1}{\sqrt n}\Psi^\eta_{1,1},
\qquad
\bQ_{2,1}
=
-\frac{1}{\sqrt n}\Phi^v_{2,1}\bX\widetilde{\bQ}_{1,1},
\]
we have
\begin{align}
\widetilde{\bQ}_{2,2}
&=
\frac{1}{\sqrt n}\Psi^\eta_{2,2},
\label{eq:Q22-T3}
\\
\widetilde{\bQ}_{2,1}
&=
\Psi^b_{2,1}\widetilde{\bQ}_{1,1}
+
\frac{1}{\sqrt n}\Psi^\eta_{2,1}
+
\frac{1}{\sqrt n}\Psi^\eta_{2,2}\bX^\top \bQ_{2,1}
\notag\\
&=
\Psi^b_{2,1}\widetilde{\bQ}_{1,1}
+
\frac{1}{\sqrt n}\Psi^\eta_{2,1}
-
\frac{1}{n}\Psi^\eta_{2,2}\bX^\top \Phi^v_{2,1}\bX\widetilde{\bQ}_{1,1}.
\label{eq:Q21-T3}
\end{align}
Therefore,
\begin{align}
w_{2,1}
&=
\frac{1}{\sqrt n}\trace(\bSigma\Psi^\eta_{1,1}),
\label{eq:W21-T3-explicit}
\\
w_{3,2}
&=
\frac{1}{\sqrt n}\trace(\bSigma\Psi^\eta_{2,2}),
\label{eq:W32-T3-explicit}
\\
w_{3,1}
&=
\frac{1}{\sqrt n}\trace(\bSigma\Psi^\eta_{2,1})
+
\frac{1}{\sqrt n}\trace(\bSigma\Psi^b_{2,1}\Psi^\eta_{1,1})
\notag\\
&\quad
-
\frac{1}{n^{3/2}}
\trace\!\bigl(
\bSigma\Psi^\eta_{2,2}\bX^\top \Phi^v_{2,1}\bX\Psi^\eta_{1,1}
\bigr).
\label{eq:W31-T3-explicit}
\end{align}
These formulas are simply the \(T=3\) specialization of the general block
recursion and make clear how the third-row entry \(w_{3,1}\) already contains
an interaction between the first and second updates through
\(\Phi^v_{2,1}\), \(\Psi^b_{2,1}\), and \(\Psi^\eta_{2,2}\).

The corresponding empirical matrix
\[
\hbW:=\cbK^{-1}\cbA
\]
has the same strictly lower-triangular form,
\[
\hbW=
\begin{bmatrix}
0 & 0 & 0\\
\hat w_{2,1} & 0 & 0\\
\hat w_{3,1} & \hat w_{3,2} & 0
\end{bmatrix}.
\]
Using the \(T=3\) forms of \(\cbK\) and \(\cbA\) and solving
\(\cbK\hbW=\cbA\) entry by entry, we obtain
\begin{align}
\hat w_{2,1}
&=
\frac{\trace(\bX^\top \bX\widetilde{\bQ}_{1,1})}{n},
\label{eq:chatW21-T3}
\\
\hat w_{3,2}
&=
\frac{\trace(\bX^\top \bX\widetilde{\bQ}_{2,2})}{n},
\label{eq:chatW32-T3}
\\
\hat w_{3,1}
&=
\frac{
\trace(\bX^\top \bX\widetilde{\bQ}_{2,1})
-
\trace(\bX\widetilde{\bP}_{2,2})\,\hat w_{2,1}
}{n}.
\label{eq:chatW31-T3}
\end{align}
Equivalently, after substituting \eqref{eq:Q22-T3} and \eqref{eq:Q21-T3},
\begin{align}
\hat w_{2,1}
&=
\frac{1}{\sqrt n}
\frac{\trace(\bX^\top \bX\Psi^\eta_{1,1})}{n},
\label{eq:chatW21-T3-explicit}
\\
\hat w_{3,2}
&=
\frac{1}{\sqrt n}
\frac{\trace(\bX^\top \bX\Psi^\eta_{2,2})}{n},
\label{eq:chatW32-T3-explicit}
\\
\hat w_{3,1}
&=
\frac{
\trace\!\Bigl(
\bX^\top \bX
\Bigl[
\Psi^b_{2,1}\widetilde{\bQ}_{1,1}
+
\frac{1}{\sqrt n}\Psi^\eta_{2,1}
-
\frac{1}{n}\Psi^\eta_{2,2}\bX^\top \Phi^v_{2,1}\bX\widetilde{\bQ}_{1,1}
\Bigr]
\Bigr)
-
\trace(\bX\widetilde{\bP}_{2,2})\,\hat w_{2,1}
}{n}.
\label{eq:chatW31-T3-explicit}
\end{align}
Thus the only additional feature in \(\hbW\), compared with \(\bW\), is the
lower-triangular back-substitution through \(\cbK^{-1}\), which introduces the
extra coupling term \(\trace(\bX\widetilde{\bP}_{2,2})\,\hat w_{2,1}\) in
\(\hat w_{3,1}\).

\subsection{Hutchinson's trace approximation}
The formulas in \eqref{eq:W-row-recursion}--\eqref{eq:A-row-recursion} reduce the computation of \(\bW\), \(\cbK\), and \(\cbA\) to a collection of trace evaluations. When \(n\) or \(p\) is large, computing these traces exactly may still be expensive. To accelerate the computation, we use Hutchinson's stochastic trace estimator \cite{hutchinson1990stochastic}.

If \(\bM\in\R^{d\times d}\) and \(\bzeta_1,\ldots,\bzeta_m\in\R^d\) are independent Rademacher vectors with entries in \(\{-1/\sqrt m,1/\sqrt m\}\), then
\[
\trace(\bM)\approx \sum_{r=1}^m \bzeta_r^\top \bM \bzeta_r.
\]
Equivalently, if \(\bZ=[\bzeta_1,\ldots,\bzeta_m]\in\R^{d\times m}\), then
\[
\trace(\bM)\approx \trace(\bZ^\top \bM \bZ).
\]

We apply this approximation separately to the \(n\times n\) traces in \(\cbK\) and the \(p\times p\) traces in \(\bW\) and \(\cbA\). Let
\[
\bxi=[\bxi_1,\ldots,\bxi_m]\in\R^{n\times m},
\qquad
\tbxi=[\tbxi_1,\ldots,\tbxi_m]\in\R^{p\times m},
\]
where the entries of \(\bxi\) and \(\tbxi\) are i.i.d.\ from \(\{-1/\sqrt m,1/\sqrt m\}\). Then, for \(1\le s\le t\le T-1\),
\begin{align}
\bW_{t+1,s}
&\approx
\trace\!\left(\tbxi^\top \bSigma \widetilde{\bQ}_{t,s}\tbxi\right),
\label{eq:W-hutch}
\\
\cbK_{t+1,s}
&\approx
\frac{1}{\sqrt n}\trace\!\left(\bxi^\top \bX\widetilde{\bP}_{t,s}\bxi\right),
\label{eq:K-hutch-offdiag}
\\
\cbA_{t+1,s}
&\approx
-\frac{1}{\sqrt n}\trace\!\left(\tbxi^\top \bX^\top \bX\widetilde{\bQ}_{t,s}\tbxi\right).
\label{eq:A-hutch}
\end{align}
The diagonal entries of \(\cbK\) are inexpensive to compute exactly from \eqref{eq:K-row-recursion-diag}, so in practice we keep
\[
\cbK_{t,t}
=
-\sqrt n
\]
without approximation.

To use \eqref{eq:W-hutch}--\eqref{eq:A-hutch} efficiently, we do not form the blocks \(\bP_{t,s}\), \(\widetilde{\bP}_{t,s}\), \(\bQ_{t,s}\), and \(\widetilde{\bQ}_{t,s}\) explicitly. Instead, we propagate only their products with the probe matrices \(\bxi\) and \(\tbxi\).

Define
\begin{align*}
\bP_t^*
&\defas
\begin{bmatrix}
\bP_{t,1}\bxi & \cdots & \bP_{t,t}\bxi
\end{bmatrix}
\in \R^{n\times tm},\\
\widetilde{\bP}_t^*
&\defas
\begin{bmatrix}
\widetilde{\bP}_{t,1}\bxi & \cdots & \widetilde{\bP}_{t,t}\bxi
\end{bmatrix}
\in \R^{p\times tm},
\end{align*}
and, for \(t\ge 2\),
\begin{align*}
\bQ_t^*
&\defas
\begin{bmatrix}
\bQ_{t,1}\tbxi & \cdots & \bQ_{t,t-1}\tbxi
\end{bmatrix}
\in \R^{n\times (t-1)m},\\
\widetilde{\bQ}_t^*
&\defas
\begin{bmatrix}
\widetilde{\bQ}_{t,1}\tbxi & \cdots & \widetilde{\bQ}_{t,t}\tbxi
\end{bmatrix}
\in \R^{p\times tm}.
\end{align*}
For convenience, set \(\bQ_1^*=\boldzero\).

To handle dimension mismatches in the recursion, we introduce the zero-padding operator
\[
\operatorname{pad}_{a\to b}(\bA)
=
\begin{bmatrix}
\bA & \boldzero
\end{bmatrix},
\]
where the zero block has \((b-a)m\) columns.

Define
\[
\bR_t^\xi
=
\begin{bmatrix}
\Phi^v_{t,0}\bxi & \Phi^v_{t,1}\bxi & \cdots & \Phi^v_{t,t-1}\bxi
\end{bmatrix},
\qquad
\widetilde{\bR}_t^{\tbxi}
=
\begin{bmatrix}
\Psi^\eta_{t,1}\tbxi & \Psi^\eta_{t,2}\tbxi & \cdots & \Psi^\eta_{t,t}\tbxi
\end{bmatrix}.
\]

Using the block recursions derived previously, we obtain
\begin{align}
\bP_t^*
&=
\sum_{r=1}^{t-1}\Phi^u_{t,r}\,\operatorname{pad}_{r\to t}(\bP_r^*)
-\frac{1}{\sqrt n}\sum_{r=1}^{t-1}\Phi^v_{t,r}\bX\,\operatorname{pad}_{r\to t}(\widetilde{\bP}_r^*)
+\frac{1}{\sqrt n}\bR_t^\xi,
\label{eq:Ptstar-recursion}
\\
\widetilde{\bP}_t^*
&=
\sum_{r=1}^{t-1}\Psi^b_{t,r}\,\operatorname{pad}_{r\to t}(\widetilde{\bP}_r^*)
+\frac{1}{\sqrt n}\sum_{r=1}^{t}\Psi^\eta_{t,r}\bX^\top\,\operatorname{pad}_{r\to t}(\bP_r^*),
\label{eq:Ptilde-star-recursion}
\end{align}
and, for \(t\ge 2\),
\begin{align}
\bQ_t^*
&=
\sum_{r=1}^{t-1}\Phi^u_{t,r}\,\operatorname{pad}_{r-1\to t-1}(\bQ_r^*)
-\frac{1}{\sqrt n}\sum_{r=1}^{t-1}\Phi^v_{t,r}\bX\,\operatorname{pad}_{r\to t-1}(\widetilde{\bQ}_r^*),
\label{eq:Qtstar-recursion}
\\
\widetilde{\bQ}_t^*
&=
\sum_{r=1}^{t-1}\Psi^b_{t,r}\,\operatorname{pad}_{r\to t}(\widetilde{\bQ}_r^*)
+\frac{1}{\sqrt n}\sum_{r=1}^{t}\Psi^\eta_{t,r}\bX^\top\,\operatorname{pad}_{r-1\to t}(\bQ_r^*)
+\frac{1}{\sqrt n}\widetilde{\bR}_t^{\tbxi}.
\label{eq:Qtilde-star-recursion}
\end{align}

The initial values are
\[
\bP_1^*=\frac{1}{\sqrt n}\Phi^v_{1,0}\bxi,
\qquad
\widetilde{\bP}_1^*=\frac{1}{n}\Psi^\eta_{1,1}\bX^\top\Phi^v_{1,0}\bxi,
\qquad
\bQ_1^*=\boldzero,
\qquad
\widetilde{\bQ}_1^*=\frac{1}{\sqrt n}\Psi^\eta_{1,1}\tbxi.
\]

Thus, the matrices \(\bP_t^*, \widetilde{\bP}_t^*, \bQ_t^*, \widetilde{\bQ}_t^*\) can be computed recursively without explicitly forming \(\bP\) or \(\bQ\). Once these quantities are available, the Hutchinson approximations of \(\bW\), \(\cbK\), and \(\cbA\) are obtained block by block from \eqref{eq:W-hutch}--\eqref{eq:A-hutch}. More precisely, the \(s\)-th \(m\)-column block of \(\widetilde{\bP}_t^*\) equals \(\widetilde{\bP}_{t,s}\bxi\), and the \(s\)-th \(m\)-column block of \(\widetilde{\bQ}_t^*\) equals \(\widetilde{\bQ}_{t,s}\tbxi\). We use the superscript \(H\) to denote these Hutchinson approximations. Therefore, for \(1\le s\le t\le T-1\),
\begin{align*}
\bW_{t+1,s}^{H}
&=
\trace\!\left(\tbxi^\top\bSigma\,(\widetilde{\bQ}_t^*)_{[s]}\right),\\
\cbK_{t+1,s}^{H}
&=
\frac{1}{\sqrt n}\trace\!\left(\bxi^\top\bX\,(\widetilde{\bP}_t^*)_{[s]}\right),\\
\cbA_{t+1,s}^{H}
&=
-\frac{1}{\sqrt n}\trace\!\left(\tbxi^\top\bX^\top\bX\,(\widetilde{\bQ}_t^*)_{[s]}\right),
\end{align*}
where \((\widetilde{\bP}_t^*)_{[s]}\) and \((\widetilde{\bQ}_t^*)_{[s]}\) denote the \(s\)-th \(m\)-column blocks of \(\widetilde{\bP}_t^*\) and \(\widetilde{\bQ}_t^*\), respectively. In summary, the computation proceeds in two stages: first recursively compute the probe products in \eqref{eq:Ptstar-recursion}--\eqref{eq:Qtilde-star-recursion}, and then recover the Hutchinson approximations of the entries of \(\bW\), \(\cbK\), and \(\cbA\) from the resulting block traces.

\section{Algorithmic and numerical supplements}
\label{sec:algorithmic-supplements}

This final appendix collects algorithmic examples and numerical material that
support, but are not needed for, the main theoretical development.

\subsection{Additional proximal formulas}
\label{sec:additional-proximal-formulas}

This subsection records the non-convex penalties used in the numerical experiments.
They are not needed for the main theoretical statements, but their proximal
maps fit the same update notation whenever the displayed denominators are
bounded away from zero.

The minimax concave penalty (MCP) \cite{zhang10-mc+} with parameters
\(\lambda>0\) and \(\gamma_{\mathrm{mcp}}>1\) is
\[
g(\bb)
=
\sum_{j=1}^p
\begin{cases}
\lambda |b_j|-\dfrac{|b_j|^2}{2\gamma_{\mathrm{mcp}}},
& |b_j|\le \gamma_{\mathrm{mcp}}\lambda, \\[8pt]
\dfrac{\gamma_{\mathrm{mcp}}\lambda^2}{2},
& |b_j|>\gamma_{\mathrm{mcp}}\lambda.
\end{cases}
\]
Its proximal map with a generic proximal parameter \(\delta>0\) is coordinatewise:
\[
\bigl(\prox[\delta g](\bb)\bigr)_j
=
\begin{cases}
0, & |b_j|\le \delta\lambda, \\[6pt]
\dfrac{\operatorname{sign}(b_j)\bigl(|b_j|-\delta\lambda\bigr)}
{1-\tfrac{\delta}{\gamma_{\mathrm{mcp}}}},
& \delta\lambda<|b_j|\le \gamma_{\mathrm{mcp}}\lambda, \\[10pt]
b_j, & |b_j|>\gamma_{\mathrm{mcp}}\lambda.
\end{cases}
\]

The smoothly clipped absolute deviation (SCAD) penalty \cite{fan2001variable}
with parameters \(\lambda>0\) and \(a>2\) is
\[
g(\bb)
=
\sum_{j=1}^p
\begin{cases}
\lambda |b_j|,
& |b_j|\le \lambda, \\[8pt]
\dfrac{-|b_j|^2+2a\lambda |b_j|-\lambda^2}{2(a-1)},
& \lambda<|b_j|\le a\lambda, \\[10pt]
\dfrac{(a+1)\lambda^2}{2},
& |b_j|>a\lambda.
\end{cases}
\]
Its proximal map with a generic proximal parameter \(\delta>0\) is coordinatewise:
\[
\bigl(\prox[\delta g](\bb)\bigr)_j
=
\begin{cases}
0, & |b_j|\le \delta\lambda, \\[6pt]
\operatorname{sign}(b_j)(|b_j|-\delta\lambda),
& \delta\lambda<|b_j|\le \lambda(1+\delta), \\[8pt]
\operatorname{sign}(b_j)\dfrac{(a-1)|b_j|-a\delta\lambda}{a-1-\delta},
& \lambda(1+\delta)<|b_j|\le a\lambda, \\[12pt]
b_j, & |b_j|>a\lambda.
\end{cases}
\]

\subsection{Linearized ADMM as a special case}
\label{sec:linearized-admm}

This subsection gives the reduction referenced after \eqref{eq:general-iteration}.
Introduce the scaled residual variable
\[
\br=\frac{\by-\bX\bb}{\sqrt n}
\]
and the residual loss \(H(\br):=f_{\by}(\by-\sqrt n\,\br)\). Then
\eqref{eq:penalized-regression} is equivalent to
\[
\min_{\bb,\br}\ H(\br)+g(\bb)
\qquad
\text{subject to}\qquad
\br+\frac{\bX\bb}{\sqrt n}=\frac{\by}{\sqrt n}.
\]
Starting from the scaled ADMM form \citep{boyd2011distributed} with penalty
parameter \(\rho>0\) and scaled multiplier \(\bfd^t\), linearizing the
\(\bb\)-subproblem with step size \(\alpha>0\), and taking the standard
initialization \(\bfd^0=\boldzero_n\), the residual variable and scaled
multiplier can be eliminated. With
\[
\bv^t=\frac{\by-\bX\hbb^t}{\sqrt n},
\qquad
\bfeta^t=\frac{\bX^\top\bu^t}{\sqrt n},
\]
the resulting recursion is
\[
\begin{aligned}
\bu^1
&=
\rho\bigl(\prox[H/\rho](\bv^0)-\bv^0\bigr),\\
\bu^t
&=
\bu^{t-1}
+\rho\left[
\prox[H/\rho]\left(
2\bv^{t-1}-\bv^{t-2}-\frac{\bu^{t-1}}{\rho}
\right)
+\bv^{t-2}-2\bv^{t-1}
\right],\\
\hbb^t
&=
\prox[\alpha g]\bigl(\hbb^{t-1}-\alpha\bfeta^t\bigr).
\end{aligned}
\]
The second line applies for \(t\ge2\). The right-hand sides of the first two
lines define the maps \(\bphi^1\) and \(\bphi^t\), evaluated at
\((\bu^0,\ldots,\bu^{t-1},\bv^0,\ldots,\bv^{t-1})\), and the last line defines
\(\bpsi^t\), evaluated at
\((\hbb^0,\ldots,\hbb^{t-1},\bfeta^0,\ldots,\bfeta^t)\). Hence this
linearized ADMM recursion is a special case of \eqref{eq:general-iteration}
whenever these maps satisfy the boundedness and Lipschitz assumptions imposed
in Assumption~\ref{assu:algorithm}. A fixed nonzero initial multiplier can be
absorbed into the first map, so the same inclusion still holds.

\subsection{Additional numerical figures}
\label{sec:additional-numerical-figures}

This subsection reports additional numerical results omitted from the main text.
Table~\ref{tab:additional-figure-guide} summarizes the contents of this
appendix figure subsection. These figures use the same matched test-loss
convention as Figure~\ref{fig:gaussian-risk-comparison}: LAD-based procedures
are evaluated by absolute-value risk, while square-root-loss procedures are
evaluated by squared risk.

\begin{table}[H]
\centering
\small
\begingroup
\setlength{\tabcolsep}{3pt}
\begin{tabular}{@{}p{0.22\textwidth}p{0.36\textwidth}p{0.36\textwidth}@{}}
\hline
\textbf{Figure} & \textbf{Setting} & \textbf{Contents} \\
\hline
Figure~\ref{fig:gaussian-risk-comparison-p2400}
& Risk-estimation comparison; \(p=2400\) (\(n<p\))
& Matched test-loss risk curves for all eight loss--penalty
combinations. \\
\hline
Figures~\ref{fig:universality-square-root-ridge-p2400}--\ref{fig:universality-square-root-scad}
& Square-root loss with ridge, Lasso, MCP, and SCAD
& Gaussian-versus-uniform comparisons by penalty; the ridge case is covered by
Theorem~\ref{thm:main-universality-sqrt-ridge}. \\
\hline
Figures~\ref{fig:universality-lad-ridge}--\ref{fig:universality-lad-scad}
& LAD loss with ridge, Lasso, MCP, and SCAD
& Empirical universality comparisons beyond square-root loss. \\
\hline
\end{tabular}
\endgroup
\caption{Guide to the additional numerical figures.}
\label{tab:additional-figure-guide}
\end{table}

\begin{figure}[p]
\centering
\includegraphics[width=0.98\textwidth]{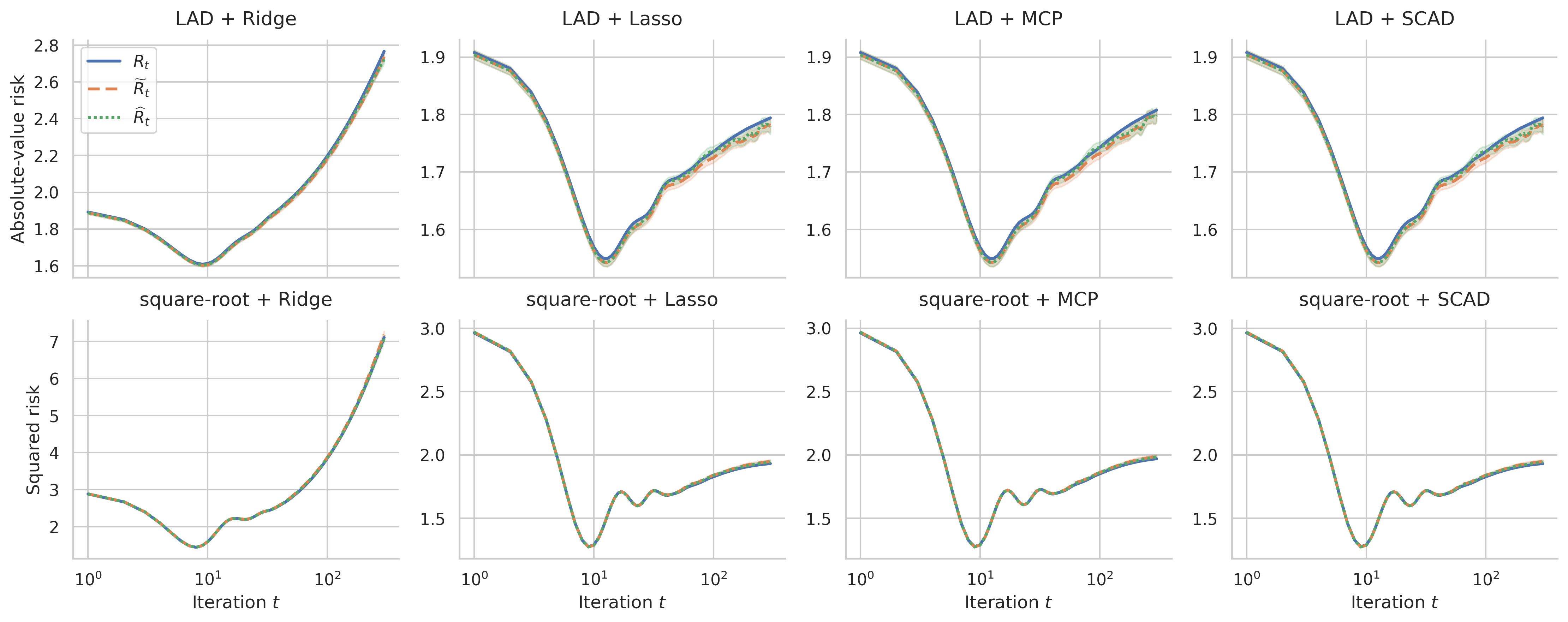}
\caption[Matched test-loss risk estimates for n less than p]{Risk estimates over the iteration path with \((n,p,T)=(2000,2400,300)\), corresponding to the \(n<p\) regime. The curves show \(\calR_t\), \(\widetilde{\calR}_t\), and \(\widehat{\calR}_t\). The LAD rows use absolute-value test loss with Student-\(t_2\) noise; the square-root rows use squared test loss with Gaussian noise. Columns correspond to penalties, and rows correspond to data-fitting loss.}
\label{fig:gaussian-risk-comparison-p2400}
\end{figure}

The remaining figures compare Gaussian and covariance-matched uniform designs
for additional loss--penalty combinations. Only the square-root ridge case is
covered by Theorem~\ref{thm:main-universality-sqrt-ridge}; the other panels are
included as empirical checks of the same phenomenon.

\begin{figure}[p]
\centering
\includegraphics[width=0.96\textwidth]{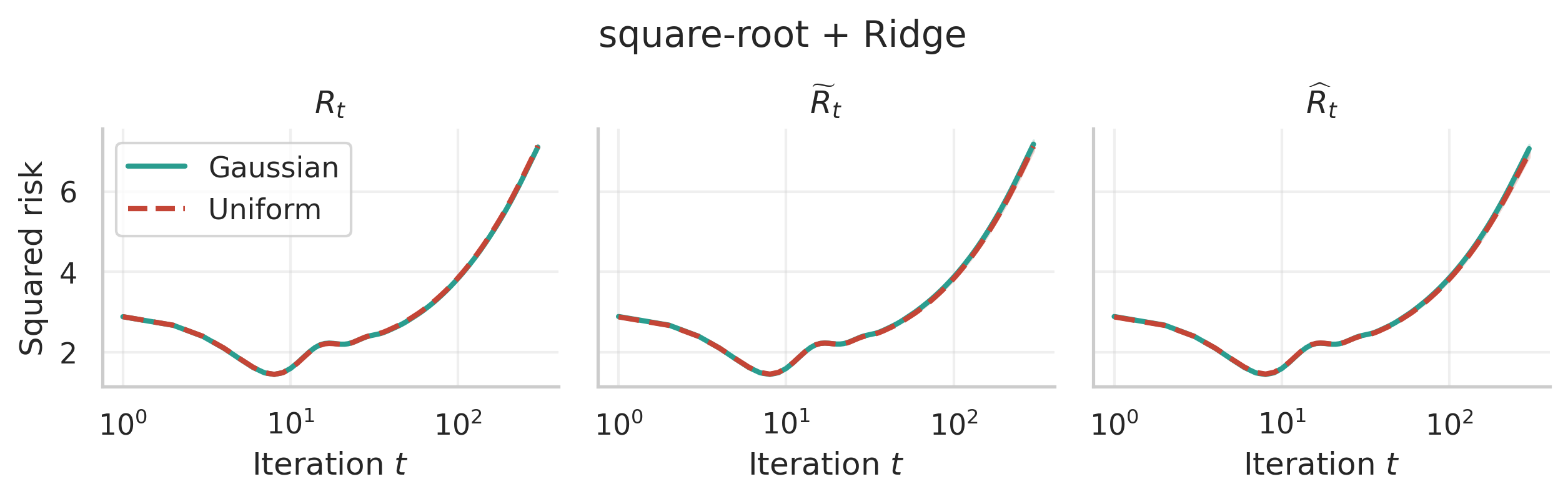}
\caption[Universality for square-root ridge when n is less than p]{Gaussian-versus-uniform comparison for square-root ridge with \((n,p,T)=(2000,2400,300)\), using squared-error risk. The left, middle, and right panels compare \(\calR_t\), \(\widetilde{\calR}_t\), and \(\widehat{\calR}_t\), respectively.}
\label{fig:universality-square-root-ridge-p2400}
\end{figure}

\begin{figure}[p]
\centering
\includegraphics[width=0.90\textwidth]{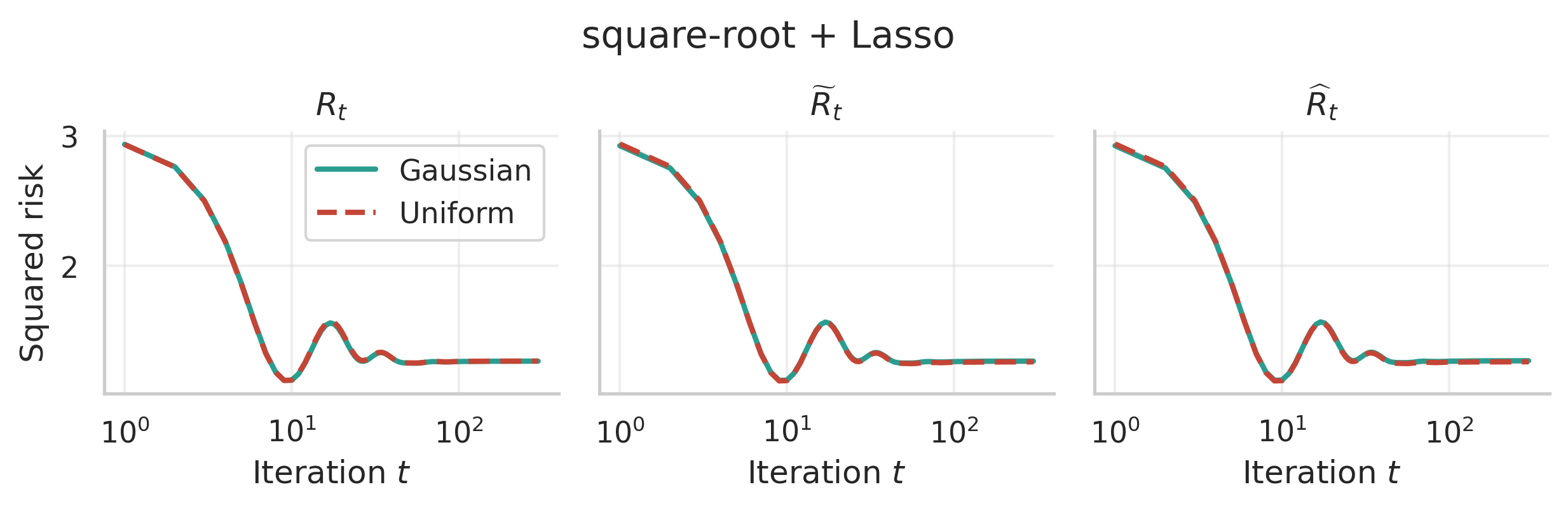}
\vspace{2mm}
\includegraphics[width=0.90\textwidth]{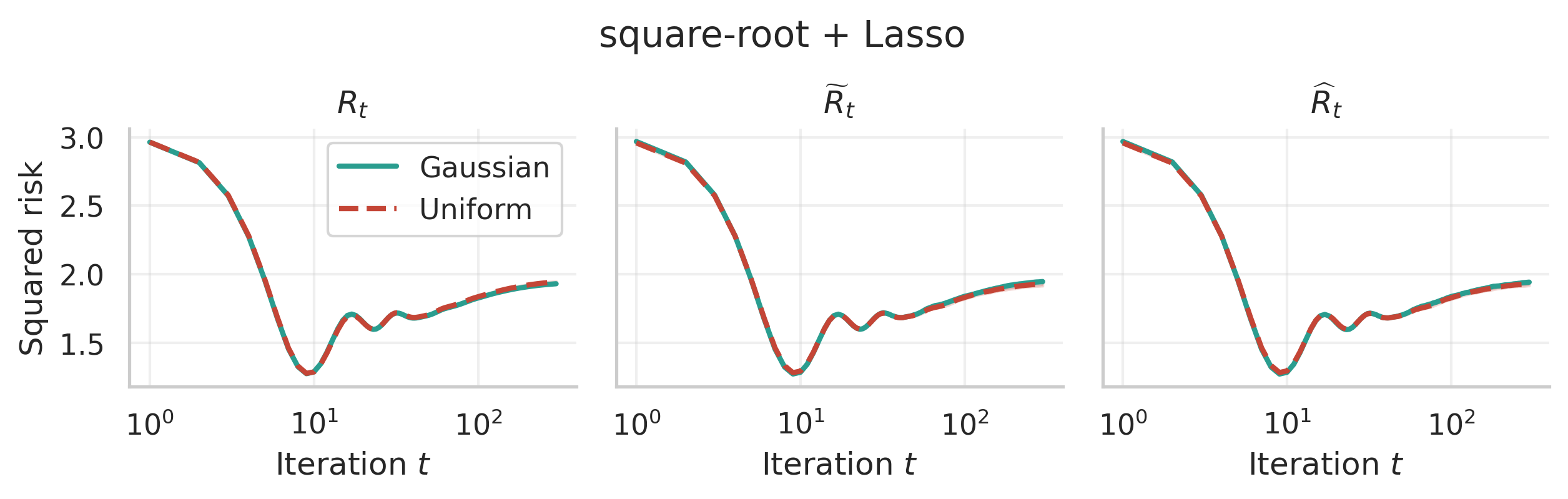}
\caption[Universality for square-root Lasso]{Gaussian-versus-uniform comparison for square-root Lasso with \((n,T)=(2000,300)\), using squared-error risk. The top panel uses \(p=1000\) and the bottom panel uses \(p=2400\). In each panel, the three subpanels compare \(\calR_t\), \(\widetilde{\calR}_t\), and \(\widehat{\calR}_t\).}
\label{fig:universality-square-root-lasso}
\end{figure}

\begin{figure}[p]
\centering
\includegraphics[width=0.90\textwidth]{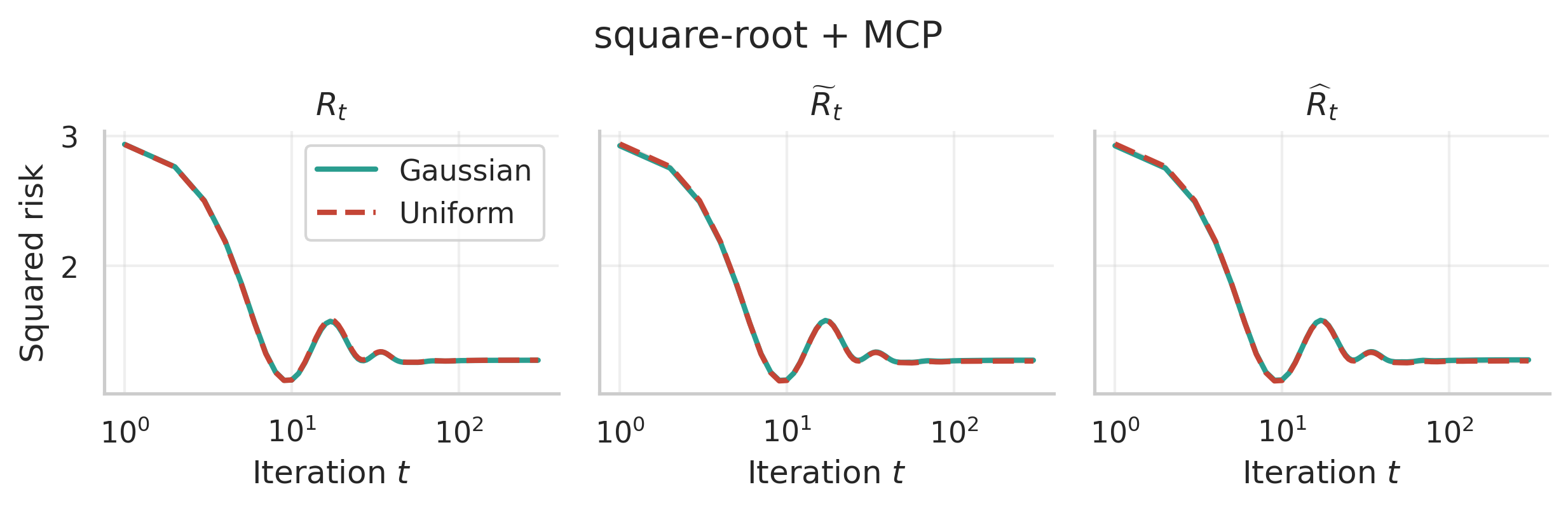}
\vspace{2mm}
\includegraphics[width=0.90\textwidth]{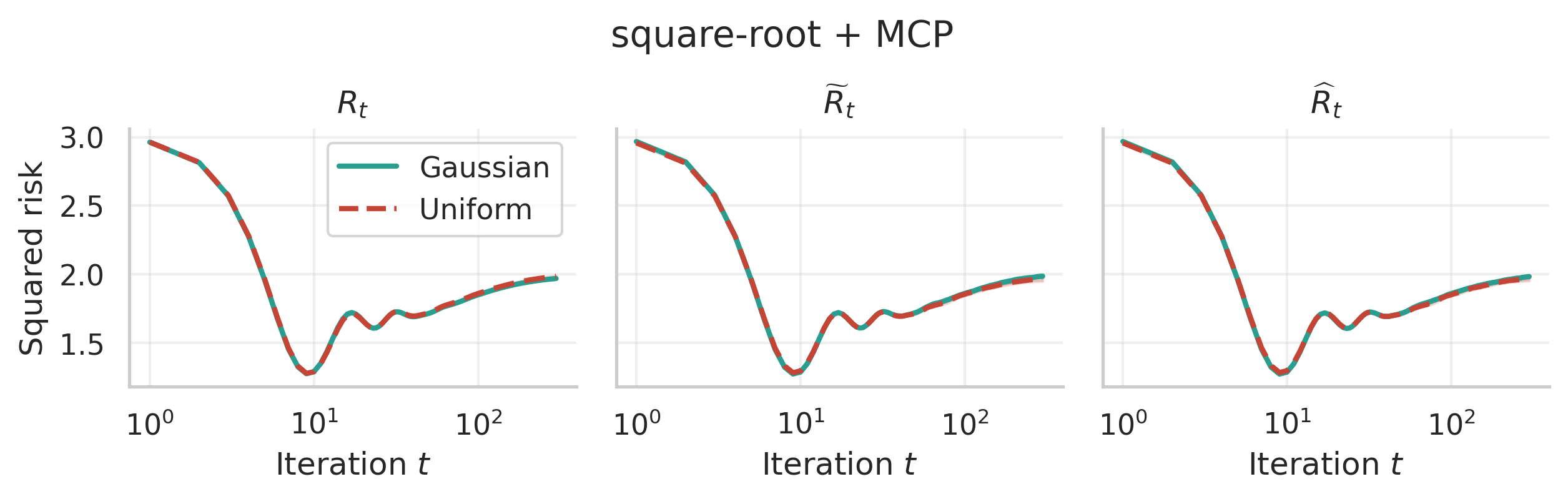}
\caption[Universality for square-root MCP]{Gaussian-versus-uniform comparison for square-root loss with the MCP penalty and \((n,T)=(2000,300)\), using squared-error risk. The top panel uses \(p=1000\) and the bottom panel uses \(p=2400\). In each panel, the three subpanels compare \(\calR_t\), \(\widetilde{\calR}_t\), and \(\widehat{\calR}_t\).}
\label{fig:universality-square-root-mcp}
\end{figure}

\begin{figure}[p]
\centering
\includegraphics[width=0.90\textwidth]{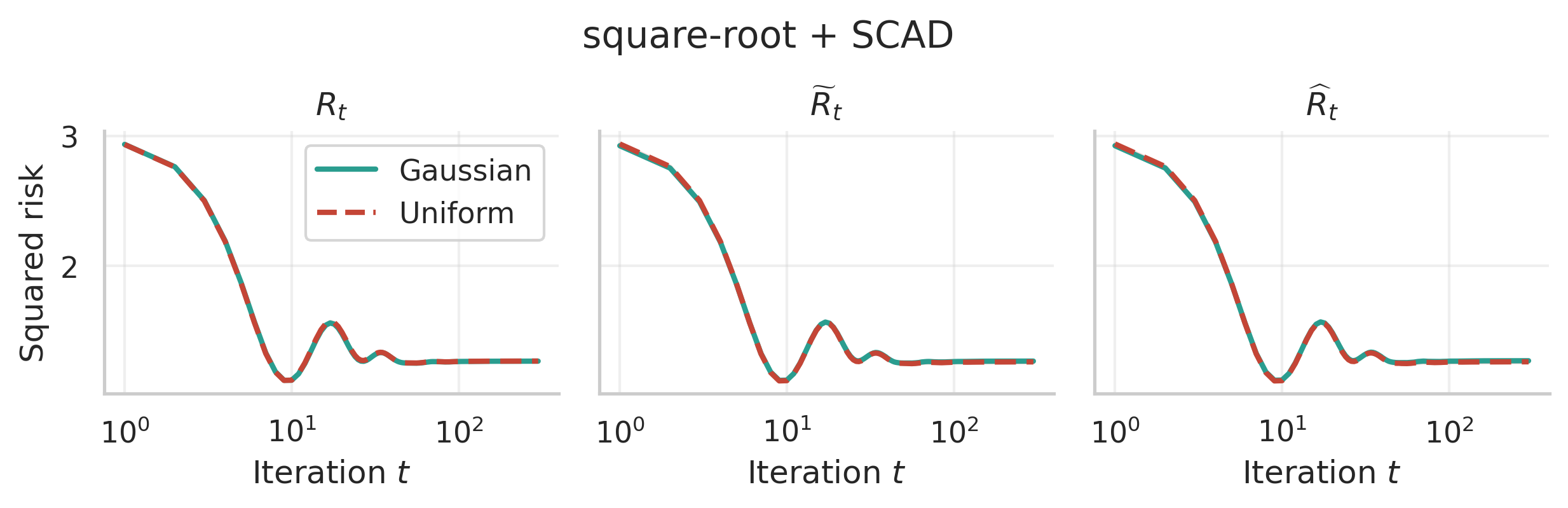}
\vspace{2mm}
\includegraphics[width=0.90\textwidth]{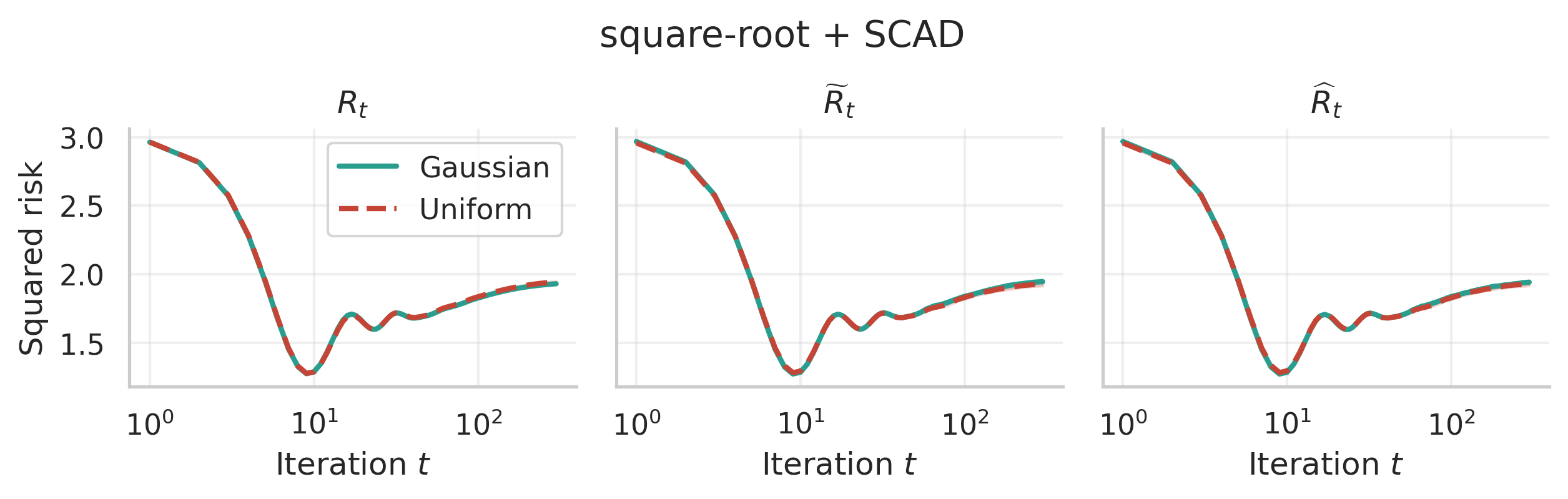}
\caption[Universality for square-root SCAD]{Gaussian-versus-uniform comparison for square-root loss with the SCAD penalty and \((n,T)=(2000,300)\), using squared-error risk. The top panel uses \(p=1000\) and the bottom panel uses \(p=2400\). In each panel, the three subpanels compare \(\calR_t\), \(\widetilde{\calR}_t\), and \(\widehat{\calR}_t\).}
\label{fig:universality-square-root-scad}
\end{figure}

\begin{figure}[p]
\centering
\includegraphics[width=0.90\textwidth]{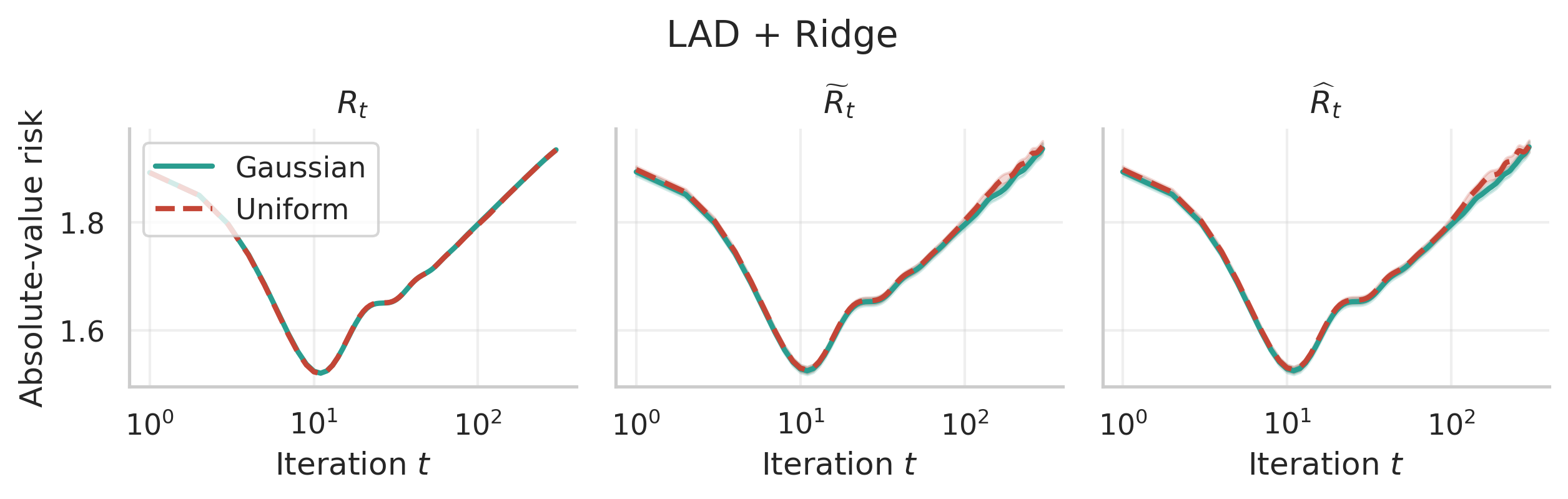}
\vspace{2mm}
\includegraphics[width=0.90\textwidth]{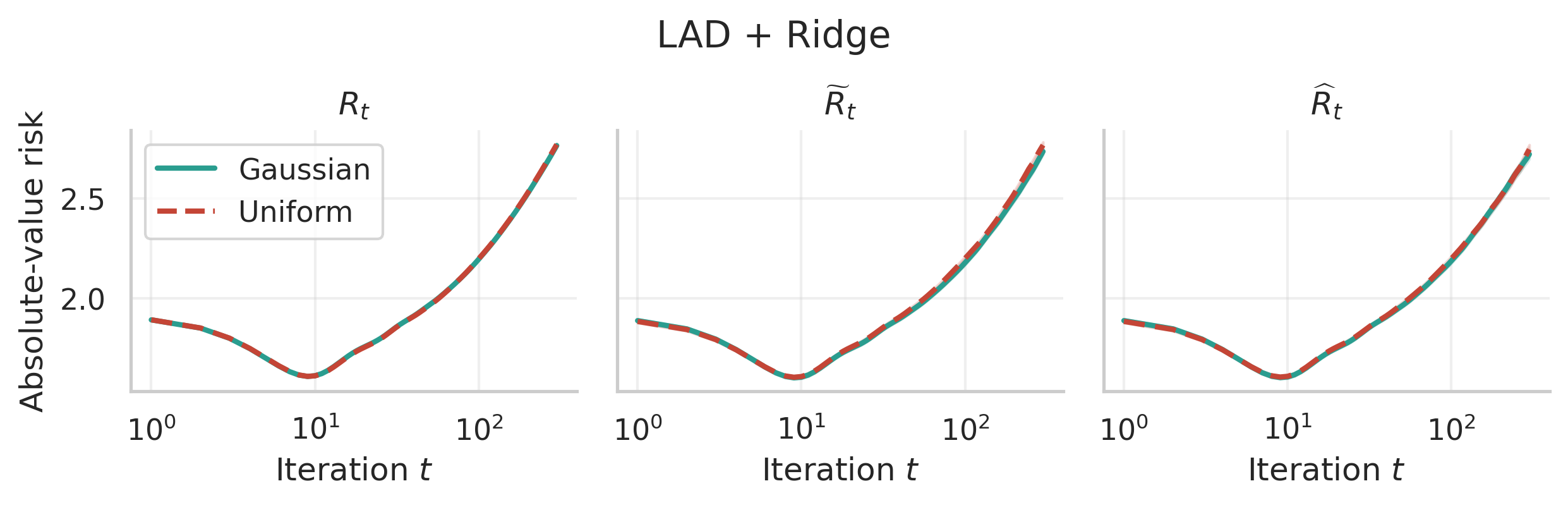}
\caption[Universality for LAD ridge]{Gaussian-versus-uniform comparison for LAD ridge with \((n,T)=(2000,300)\), using absolute-value risk. The top panel uses \(p=1000\) and the bottom panel uses \(p=2400\). In each panel, the three subpanels compare \(\calR_t\), \(\widetilde{\calR}_t\), and \(\widehat{\calR}_t\).}
\label{fig:universality-lad-ridge}
\end{figure}

\begin{figure}[p]
\centering
\includegraphics[width=0.90\textwidth]{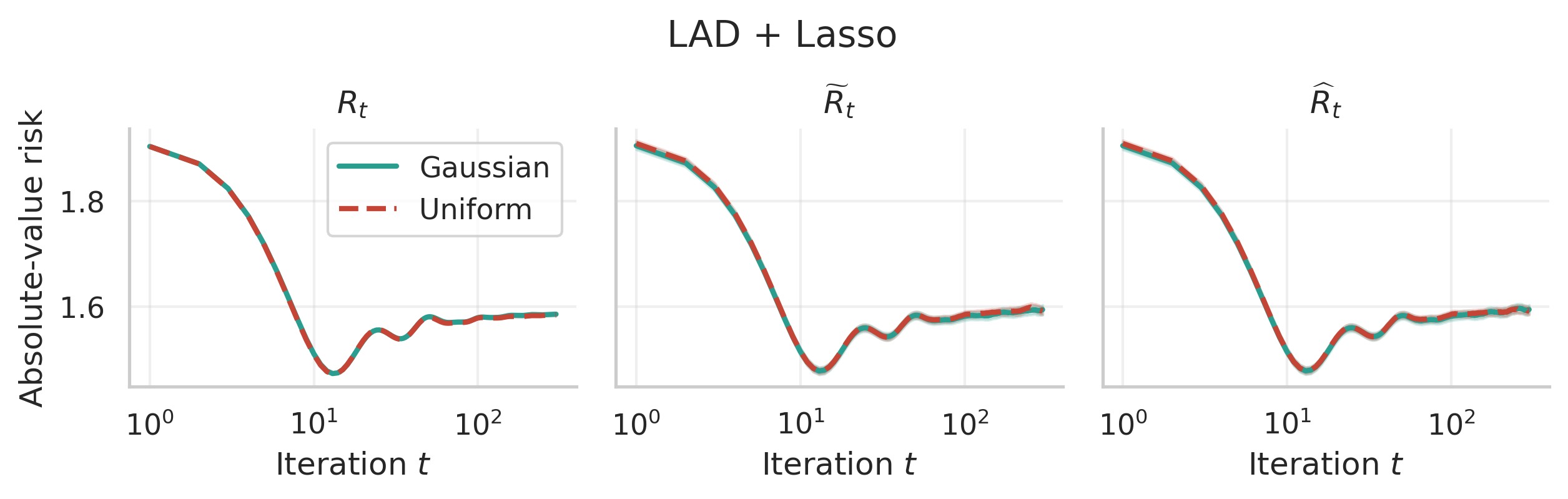}
\vspace{2mm}
\includegraphics[width=0.90\textwidth]{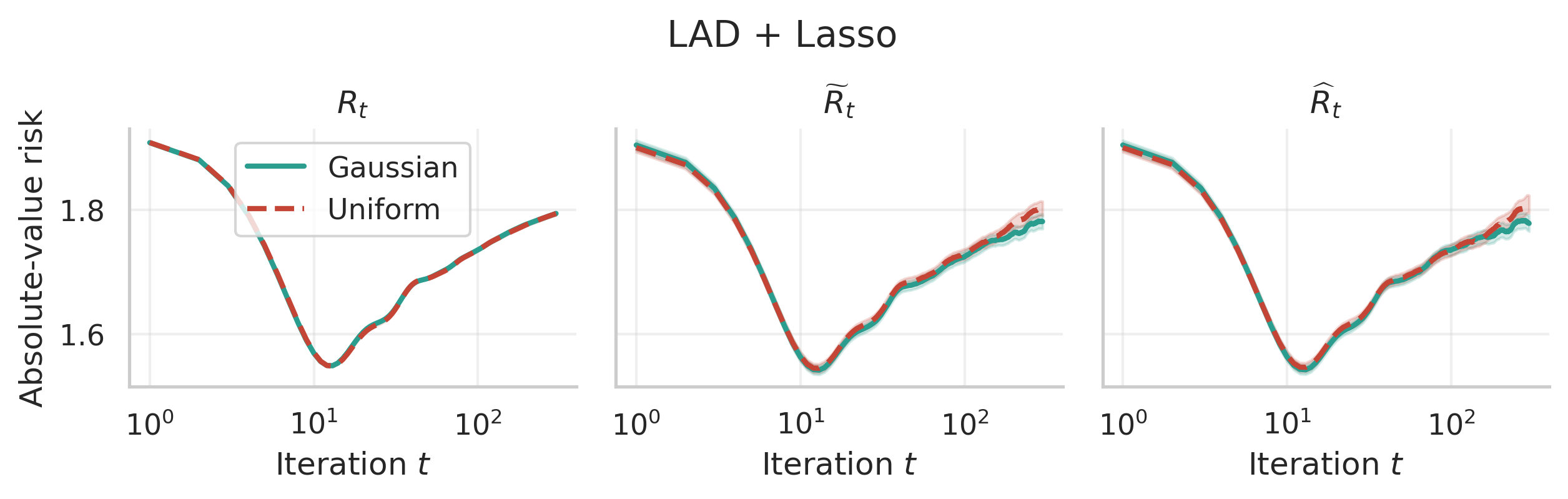}
\caption[Universality for LAD Lasso]{Gaussian-versus-uniform comparison for LAD Lasso with \((n,T)=(2000,300)\), using absolute-value risk. The top panel uses \(p=1000\) and the bottom panel uses \(p=2400\). In each panel, the three subpanels compare \(\calR_t\), \(\widetilde{\calR}_t\), and \(\widehat{\calR}_t\).}
\label{fig:universality-lad-lasso}
\end{figure}

\begin{figure}[p]
\centering
\includegraphics[width=0.90\textwidth]{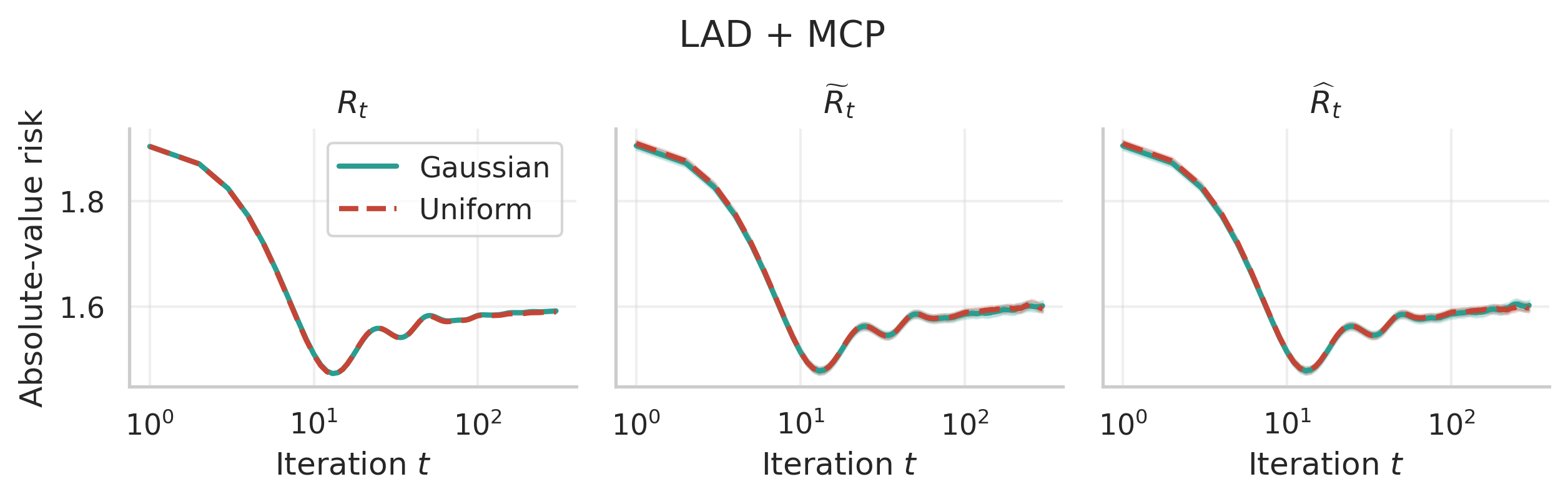}
\vspace{2mm}
\includegraphics[width=0.90\textwidth]{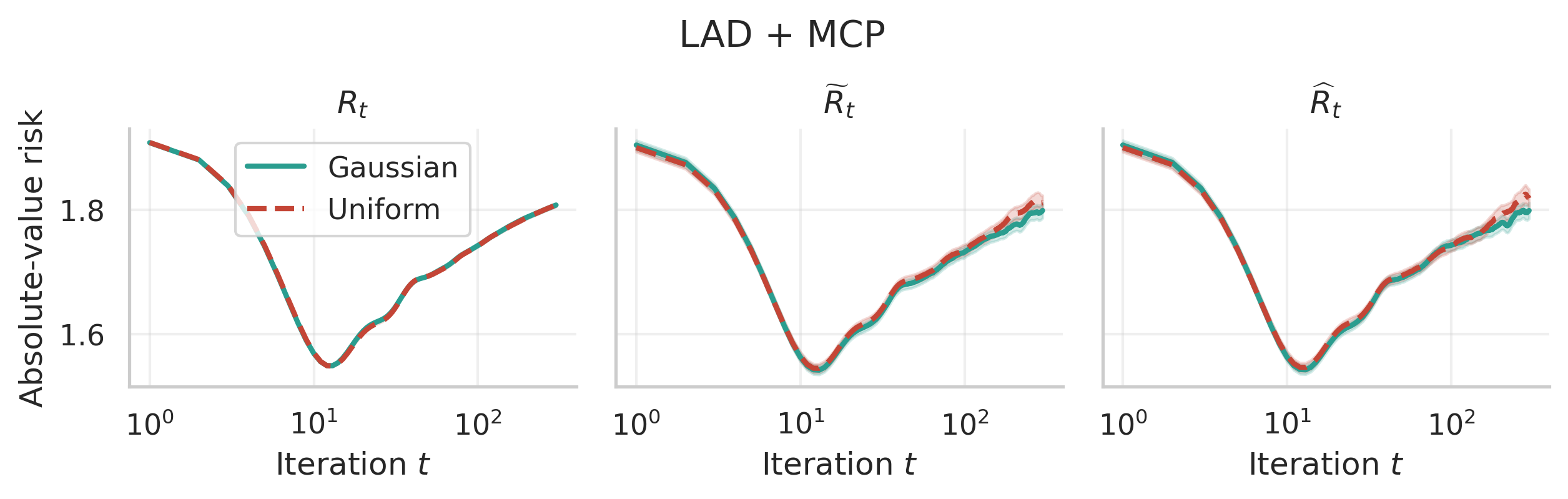}
\caption[Universality for LAD MCP]{Gaussian-versus-uniform comparison for LAD loss with the MCP penalty and \((n,T)=(2000,300)\), using absolute-value risk. The top panel uses \(p=1000\) and the bottom panel uses \(p=2400\). In each panel, the three subpanels compare \(\calR_t\), \(\widetilde{\calR}_t\), and \(\widehat{\calR}_t\).}
\label{fig:universality-lad-mcp}
\end{figure}

\begin{figure}[p]
\centering
\includegraphics[width=0.90\textwidth]{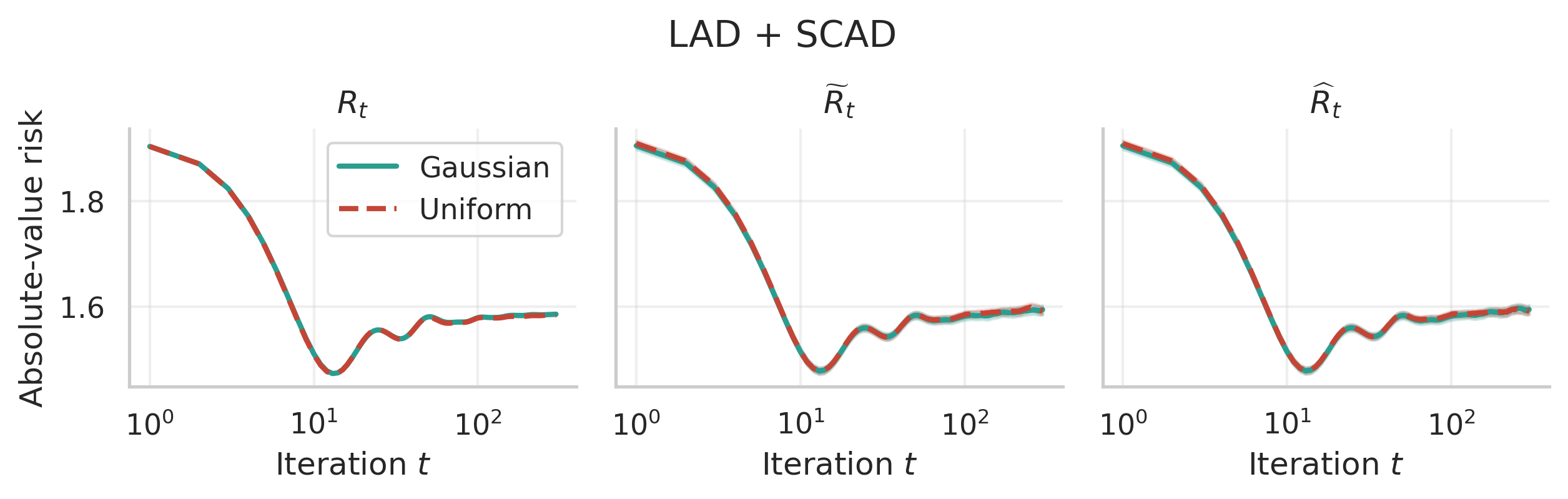}
\vspace{2mm}
\includegraphics[width=0.90\textwidth]{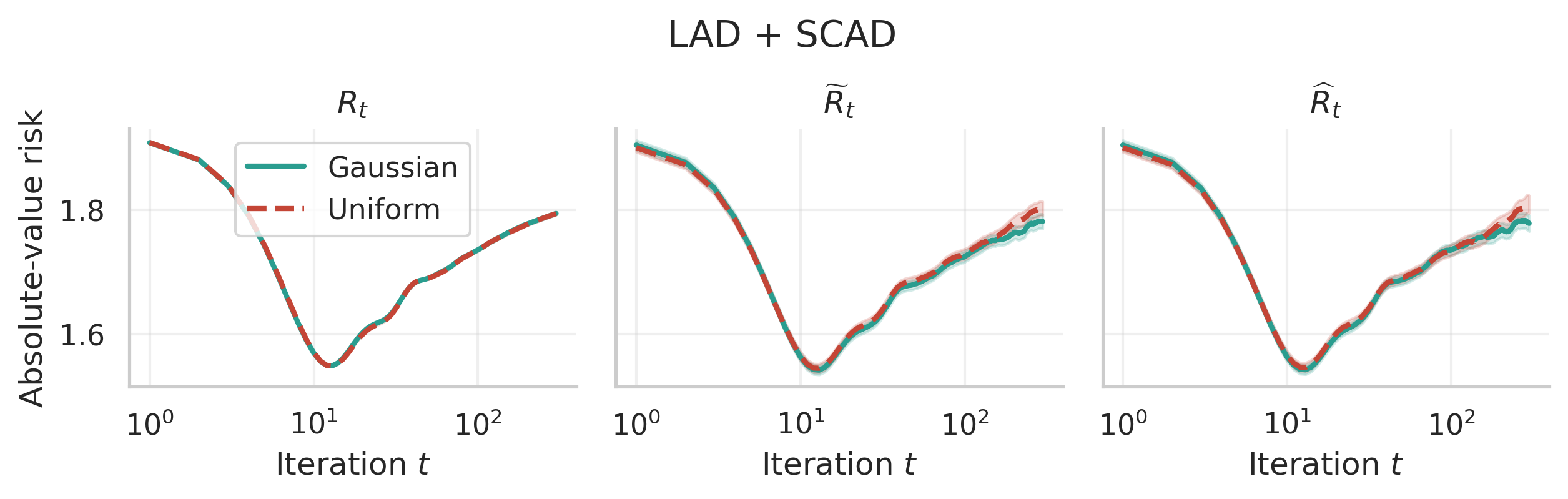}
\caption[Universality for LAD SCAD]{Gaussian-versus-uniform comparison for LAD loss with the SCAD penalty and \((n,T)=(2000,300)\), using absolute-value risk. The top panel uses \(p=1000\) and the bottom panel uses \(p=2400\). In each panel, the three subpanels compare \(\calR_t\), \(\widetilde{\calR}_t\), and \(\widehat{\calR}_t\).}
\label{fig:universality-lad-scad}
\end{figure}

\end{document}